\documentclass{amsart}
\usepackage{etex}

\usepackage[usenames,dvipsnames]{xcolor}
\usepackage[bookmarksopen,bookmarksdepth=2]{hyperref}
\hypersetup{colorlinks=true,citecolor=NavyBlue,linkcolor=BrickRed,urlcolor=Green} 

\usepackage{caption}
\usepackage{fancyhdr}

\newcommand{\Pone}{\mathbb{P}^1}

\RequirePackage{amsmath}
\RequirePackage{amssymb}
\RequirePackage{amsxtra}
\RequirePackage{amsfonts}
\RequirePackage{latexsym}
\RequirePackage{euscript}
\RequirePackage{amscd}
\RequirePackage{amsthm}
\RequirePackage{xypic}
\usepackage{stmaryrd}
\usepackage{mathtools}

\usepackage{mathrsfs}
\xyoption{all}

\usepackage{enumitem}

\def\free{\mathrm{free}}

\usepackage{tikz-cd}
\usetikzlibrary{shadings}
\newcommand{\xra}{\xrightarrow}
\newcommand{\thra}{\twoheadrightarrow}
\newcommand{\hra}{\hookrightarrow}
\newcommand{\sse}{\subseteq}
\newcommand{\bA}{\mathbf{A}}
\newcommand{\bG}{\mathbf{G}}
\newcommand{\bP}{\mathbf{P}}

\newcommand{\Sch}{\mathsf{Sch}}
\newcommand{\Set}{\mathsf{Set}}

\pgfdeclarefunctionalshading{Hermite-Gaussian modes}{\pgfpoint{-25bp}{-25bp}}{\pgfpoint{25bp}{25bp}}{}{
    10 atan sin 1000 mul cos 1 add
    exch
    10 atan sin 1000 mul cos 1 add
    mul 4 div
    dup dup
}

\renewcommand{\mathbb}{\mathbf}

\newcommand{\Acirc}{A^\circ}

\newcommand{\AKnrhat}{\widehat{\A}_K^{\nr}}
\newcommand{\AKnrhatA}{\widehat{\A}_{K,A}^{\nr}}

\newcommand{\lambdau}{\underline{\lambda}}

\newcommand{\piflat}{\pi^\flat}

\newcommand{\cyc}{\operatorname{cyc}}

\newcommand{\Kcyc}{K_{\cyc}}

\newcommand{\Aplus}{\A^+}
\newcommand{\AAA}{\A}

\newcommand{\negligible}{\mathrm{small}}
\newcommand{\OEA}{\cO_{\mathcal{E},A}}

\newcommand{\OEB}{\cO_{\mathcal{E},B}}

\newcommand{\cXbar}{\overline{\cX}}

\newcommand{\Frac}{\operatorname{Frac}}

\newcommand{\gMt}{\gM^{\inf}}



\usepackage{dsfont}

\let\emptyset\varnothing

\def\A{\mathbb A}

\def\C{\mathbb C}
\def\F{\mathbb F}
\def\Khat{\widehat{K}}

\def\Lhat{\widehat{L}}

\def\Q{\mathbb{Q}}

\def\Z{\mathbb{Z}}
\def\Fbar{\overline{\F}}

\def\Qptimes{\Q_p^{\times}}
\def\Qbar{\overline{\Q}}

\def\Zbar{\overline{\Z}}

\def\m{\mathfrak m}



\newcommand{\wh}{\widehat}

\def\chibar{\overline{\chi}}

\def\id{\mathrm{id}}

\def\red{\mathrm{red}}

\def\ab{\mathrm{ab}}
\def\nr{\mathrm{ur}}
\def\fl{\mathrm{fl}}

\def\ss{\mathrm{ss}}

\def\SL{\mathrm{SL}}

\def\ur{\mathrm{ur}}

\def\GL{\operatorname{GL}}

\def\Gal{\mathrm{Gal}}

\def\Sym{\mathrm{Sym}}
\def\Ext{\mathrm{Ext}}

\def\Hom{\mathop{\mathrm{Hom}}\nolimits}
\def\RHom{\mathop{\mathrm{RHom}}\nolimits}

\def\Spec{\mathop{\mathrm{Spec}}\nolimits}
\def\Spf{\mathop{\mathrm{Spf}}\nolimits}
\def\Frob{\mathop{\mathrm{Frob}}\nolimits}

\def\Ind{\mathop{\mathrm{Ind}}\nolimits}
\def\cInd{\mathop{c\mathrm{-Ind}}\nolimits}
\def\Fil{\mathop{\mathrm{Fil}}\nolimits}

\def\rhobar{\overline{\rho}}

\def\cotimes{\operatorname{\widehat{\otimes}}}

\def\crys{\mathrm{crys}}

\def\dR{\mathrm{dR}}

\def\WD{\mathrm{WD}}

\def\m{\mathfrak{m}}

\def\iso{\buildrel \sim \over \longrightarrow}

\swapnumbers

\newcommand{\ra}{\rightarrow}

\newcommand{\into}{\hookrightarrow}

\newcommand{\To}{\longrightarrow}
\newcommand{\isoto}{\stackrel{\sim}{\To}}

\newcommand{\textB}{\mathrm{B}}

 \newcommand{\BdR}{\textB_{\dR}}

\newlength{\ownl}


\newcommand{\ad}{{\operatorname{ad}\,}}

\newcommand{\es}{\emptyset}

\newcommand{\rec}{{\operatorname{rec}}}

\newcommand{\tr}{{\operatorname{tr}\,}}


\newcommand{\Gm}{{\mathbb{G}_m}}





\newcommand{\disc}{{\operatorname{disc}}}

\newcommand{\semis}{{\operatorname{ss}}}

\newcommand{\univ}{{\operatorname{univ}}}









\newcommand{\cA}{\mathcal{A}}

\newcommand{\cC}{\mathcal{C}}

\newcommand{\cE}{\mathcal{E}}
\newcommand{\cF}{\mathcal{F}}

\newcommand{\cI}{\mathcal{I}}

\newcommand{\cM}{\mathcal{M}}

\newcommand{\cO}{\mathcal{O}}

\renewcommand{\cR}{\mathcal{R}}

\newcommand{\cV}{\mathcal{V}}
\newcommand{\cW}{\mathcal{W}}
\newcommand{\cWW}{\mathcal{W}}
\newcommand{\cX}{\mathcal{X}}
\newcommand{\cY}{\mathcal{Y}}
\newcommand{\cZ}{\mathcal{Z}}

\newcommand{\gM}{{\mathfrak{M}}}

\newcommand{\gS}{{\mathfrak{S}}}

\newcommand{\barr}[1]{\overline{#1}}

\newcommand{\tC}{\widetilde{{C}}}

\newcommand{\tZ}{\widetilde{{Z}}}

\newcommand{\tc}{\widetilde{{c}}}

\newcommand{\tx}{\widetilde{{x}}}

\newcommand{\tz}{\widetilde{{z}}}

\newcommand{\alphabar   }{\overline{\alpha  }}

\newcommand{\varepsilonbar  }{\overline{\varepsilon}}

 \newcommand{\sigmabar   }{\overline{\sigma}}

 \newcommand{\tGamma     }{\widetilde{\Gamma}}
 \newcommand{\Gammat     }{\widetilde{\Gamma}}

\def\RCS$#1: #2 ${\expandafter\def\csname RCS#1\endcsname{#2}}
\RCS$Revision: 85 $
\RCS$Date: 2011-12-16 18:54:17 -0600 (Fri, 16 Dec 2011) $

\newcommand{\sq}{\square}





\DeclareMathOperator{\FL}{FL}

\newcommand{\set}[1]{\left\{#1\right\}}

 \newcommand{\Qp}{{\Q_p}}

\newcommand{\Zp}{{\Z_p}}
 
\newcommand{\Qpbar}{{\overline{\Q}_p}}

\newcommand{\Zpbartimes}{{\overline{\Z}_p^\times}}
\newcommand{\cbar}{\overline{c}}
\newcommand{\Zpbar}{{\overline{\Z}_p}}

\newcommand{\Fpbar}{{\overline{\F}_p}}
\newcommand{\Fpbartimes}{{\overline{\F}_p^\times}}

\newcommand{\Fp}{{\F_p}}
\newcommand{\Flbar}{\overline{\F}_l}





\newif\iffinalrun
\iffinalrun
\else
\fi

\iffinalrun
  \newcommand{\need}[1]{}
  \newcommand{\mar}[1]{}
\else
  \newcommand{\need}[1]{{\tiny *** #1}}
  \newcommand{\mar}[1]{\marginpar{\raggedright\tiny fixme #1}}
\fi

\newcommand{\Ainf}{\mathbf{A}_{\inf}}
\newcommand{\AAinf}[1]{\mathbf{A}_{\inf,#1}}

%
%

%

\newtheorem{theorem}[subsubsection]{Theorem}
\newtheorem{expectedtheorem}[subsubsection]{Expected Theorem}
\newtheorem{thm}[subsubsection]{Theorem}
\newtheorem{lemma}[subsubsection]{Lemma}
\newtheorem{lem}[subsubsection]{Lemma}

\newtheorem{cor}[subsubsection]{Corollary}
\newtheorem{conj}[subsubsection]{Conjecture}
\newtheorem{roughconj}[subsubsection]{Rough Conjecture}
\newtheorem{prop}[subsubsection]{Proposition}

\theoremstyle{definition}
\newtheorem{df}[subsubsection]{Definition}
\newtheorem{defn}[subsubsection]{Definition}
\newtheorem{definition}[subsubsection]{Definition}
\newtheorem{situation}[subsubsection]{Situation}

\theoremstyle{remark}
\newtheorem{remark}[subsubsection]{Remark}
\newtheorem{rem}[subsubsection]{Remark}

\newtheorem{example}[subsubsection]{Example}

\def\numequation{\addtocounter{subsubsection}{1}\begin{equation}}
\def\nummultline{\addtocounter{subsubsection}{1}\begin{multline}}
\def\anumequation{\addtocounter{subsection}{1}\begin{equation}}

\title{Moduli stacks of $(\varphi,\Gamma)$-modules: a survey}

\author[M. Emerton]{Matthew Emerton}\email{emerton@math.uchicago.edu}
\address{Department of Mathematics, University of Chicago,
5734 S.\ University Ave., Chicago, IL 60637}

\author[T. Gee]{Toby Gee} \email{toby.gee@imperial.ac.uk} \address{Department of
  Mathematics, Imperial College London,
  London SW7 2AZ, UK}

  \thanks{The first author was supported in part by the
   NSF grants DMS-1601871, DMS-1902307, and  DMS-2201242. 
  The second author was 
  supported in part by a Leverhulme Prize, EPSRC grant EP/L025485/1, Marie Curie Career
  Integration Grant 303605, and by
  ERC Starting Grant 306326.}
\begin{document}
\maketitle
\setcounter{tocdepth}{1}
\tableofcontents

This survey article is based on 10 lectures that we gave at the
September 2019 Hausdorff School in Bonn on ``the Emerton--Gee stack
and related topics'', and the sections correspond to the original
lectures. We have retained the somewhat informal style of some of
these lectures, but have filled them out with further details. We hope
that the first 9 lectures can serve as an extended introduction
to~\cite{emertongeepicture}; the reader might first wish to read the
actual introduction to~\cite{emertongeepicture}, which in particular
motivates the results described here. We also refer to
~\cite{emertongeepicture} for any definitions not given in these
lectures. The 10th lecture is a speculative look ahead to connections
between our stacks and the $p$-adic local Langlands correspondence,
and in particular explains some of our work in progress with Andrea
Dotto~\cite{DottoEG}.

We would like to thank the organisers (Johannes Ansch\"utz,
Arthur-C\'esar Le Bras, and Andreas Mihatsch) and participants of the
Hausdorff school for the many helpful questions and corrections
resulting from our lectures. We would especially like to thank Ashwin
Iyengar for transcribing M.E.'s lectures at the Hausdorff school, and
for allowing us to use his LaTeX files as a basis for some sections of
this article. Some material in Lecture~\ref{sec: Extensions and
  Crystalline Lifts} is based on the notes for a series of lectures
that TG gave at the ENS Lyon in June 2018, and we are grateful to the
organisers (Laurent Berger, Gabriel Dospinescu, Philippe Gille,
Wieslawa Niziol, Vincent Pilloni, Sandra Rozensztajn, Amaury
Thuillier) of the thematic trimester on ``Algebraic Groups and
Geometrization of the Langlands Program'' for the invitation to speak
there, and the encouragement to write up notes for the lectures. We
would also like to thank Matteo Tamiozzo for some comments on this material.

TG would like to thank the organisers (Najmuddin Fakhruddin, Eknath
Ghate, Arvind Nair, C.\ S.\ Rajan, and Sandeep Varma) of the
International Colloquium on Arithmetic Geometry at TIFR in January
2020, for the invitation to speak at the conference, for their
hospitality, and for the invitation to submit this article to the
proceedings, without which it is unlikely that we would have completed it.
Both authors thank the anonymous referee for their helpful comments and
corrections.




\section{Introduction}
\label{sec:intro}

 As the title suggests, the paper~\cite{emertongeepicture} is about moduli stacks of $(\varphi,\Gamma)$-modules. In
 fact, we're really interested in the moduli of (local) ($p$-adic and mod~$p$)
 Galois representations, so we'll first give some background on this
 problem, and the complications that arise. The theory of $(\varphi,\Gamma)$-modules
and the construction of our moduli stacks will be the topic of subsequent lectures. 

\subsection{Galois representations} Let~$p$ be prime, and let $K/\Qp$ be a finite extension. We will be
interested in $p$-adic and mod $p$ representations of the absolute
Galois group $G_K := \Gal(\barr{K}/K)$. In the mod $p$  case,
this  means that  we will fix some $d  \geq 1$ and study continuous homomorphisms
    \[ G_K := \Gal(\barr{K}/K) \to \GL_d(\Fpbar). \]
Note these factor through $\GL_d(\F_q)$ for some $q$. Similarly we could look at continuous homomorphisms
    \[ G_K \to\GL_d(\Qpbar); \]
 these will factor through $\GL_d(E)$ for some finite $E/\Qp$, and using compactness, they can be  conjugated so as to 
factor through $\GL_d(\cO_E)$.

We want to arrange these representations into algebraic families (this
is more-or-less what it  means to construct a moduli space of $d$-dimensional
representations of $G_K$). This was done by Carl Wang-Erickson
in~\cite{MR3831282}, 
  but the topological nature of $G_K$ means that these families are
  ``smaller'' than one might hope. (We discuss the relationship
  between our stacks and Wang-Erickson's in Lecture~\ref{subsec: closed points GK reps}.)  We will elaborate  on this, 
before turning to our ``larger'' families (which are defined by passing from
$G_K$-representations  to $(\varphi,\Gamma)$-modules).

\begin{example}
The first example of an ``algebraic family'' we might consider   is a family of unramified characters
	\[ \ur_x: G_K \to G_K/I_K \cong \Frob^{\wh{\Z}} \to \Fpbartimes \]
which sends Frobenius to $x \in \Fpbartimes$ (note this is well-defined because any element of $\Fpbartimes$ lives in $\F_q^\times$ for some $q$, so we define the unramified character by $\wh{\Z} \thra \Z/(q-1)\Z \ra 
\F_q^\times$). But this doesn't quite come together as an algebraic family: if we wanted to make this algebraic, we would let $\F_p[x,x^{-1}]^\times$ be our ring of coefficients and take specializations to $\Fpbartimes$. We can define a homomorphism $\Frob^{\Z} \to \F_p[x,x^{-1}]^\times$ taking $\Frob \mapsto x$, but this doesn't extend continuously to $\Frob^{\wh{\Z}}$. So this gives the first basic obstruction to constructing the right moduli space.
\end{example}

As we will explain in the rest of this section, to get around the
problem in the example, you could consider the Weil group, which is a
``decompletion'' of $\Frob^{\wh{\Z}}$; but ultimately, this only works well when
either  $d = 1$ or we work with coefficient rings in which~$p$ is
invertible, so when we work with $p$-adic coefficients in the case $d > 1$ we have to use
integral $p$-adic Hodge theory
(and specifically the theory of $(\varphi,\Gamma)$-modules), as in~\cite{emertongeepicture}.

\subsection{Weil--Deligne Case}
We begin by recalling the ``Weil--Deligne formalism'',
which is a standard and effective approach to dealing with topological issues in
the theory of $\ell$-adic Galois representations  of $G_K$,
when $\ell \neq p$.   As we will see, it is of more dubious merit in
the $p$-adic context.   The ideas we explain are dealt with in much more detail
(and for representations into more general groups than $\GL_d$)
in \cite{dat2020moduli} and \cite{zhu2020coherent}.

Note the local Galois group $G_K$ has a tame quotient and pro-$p$ wild inertia subgroup:
    \[ P_K \hra G_K \thra G_K^{\text{tame}} = (\Z \ltimes \Z[1/q])^\wedge \]
where $1\in\Z$ acts by multiplication by $q$ on~$\Z[1/q]$, and we take
the profinite completion.
We can then define the ``Weil--Deligne group'' $\WD_K$ as usual, which fits into the following diagram:
\begin{center}\begin{tikzcd}
	0 \rar & P_K \rar \dar & \WD_K \rar \dar & \Z \ltimes \Z[1/q] \rar \dar & 0 \\
	0 \rar & P_K \rar & G_K \rar & (\Z \ltimes \Z[1/q])^\wedge \rar & 0
\end{tikzcd}\end{center}
Then for any open and finite index subgroup $Q \leq P_K$, $\WD_K/Q$ is a finitely presented discrete group, and
    \[ \WD_K = \varprojlim_Q \WD_K/Q. \]
So we have essentially ``decompleted and discretized'' the tame part, but we are remembering the topology on the wild part of the Galois group. Furthermore, $G_K/Q = (\WD_K/Q)^\wedge$, and since representations of $\WD_K/Q$ with values in finite rings do extend over the profinite completion, this seems like a good first approach to defining a moduli space.\footnote{One technical point is that this definition of $\WD_K$,
and thus the consequent definition of a moduli space, depend on choices: namely,
a choice of a lift of Frobenius and also a choice of a generator of
tame inertia.  We don't dwell on this issue here, but note that
Scholze and Zhu have explained how to make the same construction
more canonically (involving no auxiliary choices) in the  case
of $\ell$-adic representations
when~ $\ell~\neq~p$~\cite[Props.~3.1.10, 3.1.11]{zhu2020coherent}.}

\begin{definition}
Let $V_Q \to \Spec \Z$ denote the scheme parameterizing representations $\rho: \WD_K/Q \to \GL_d$.
\end{definition}

Note that $\WD_K/Q$ is a finitely presented group, so it's easy to find a
finite presentation for $V_Q$ as an affine scheme. If $Q' \sse Q$,
then there is a natural closed immersion $V_Q \hra V_{Q'}$ (this is easy to check using the moduli description), so we can study the Ind-scheme
    \[ V := \varinjlim_Q V_Q. \]
By construction, any continuous $\barr\rho: G_K \to \GL_d(\F)$ (for $\F$ any finite field) factors through a finite quotient of $G_K$, and thus a finite quotient of $\WD_K$, and thus factors through one of the $\WD_K/Q$, so gives an $\F$-valued point of $V$. Conversely a representation $\WD_K/Q \to \GL_d(\F)$ extends continuously to $G_K/Q$ and thus to $G_K$. Therefore
	\[ V(\F) = \set{\text{continuous } \barr\rho: G_K \to \GL_d(\F)}. \]
If $R_{\barr\rho}^\sq$ denotes the universal lifting ring of
$\barr\rho$ one can check that there is a natural map
    \[ \Spf R_{\barr\rho}^\sq \to V. \]
In fact, this map is versal to $V$ at $\barr\rho$,
and so $V$ is some sort of geometric object that joins together
all the formal deformation theory of the various $\barr\rho$.
The next question,  then, is exactly what sort of geometric object is $V$?
While the $V_Q$ are finitely presented affine schemes,
their  colimit $V$ is more infinitary,
and we need to study what sort of geometry we can get when we take this
sort  of colimit. 

So what does $V$ look like? It could involve taking a countable disjoint union of varieties: this is still geometric, and it's locally of finite presentation with countably many components (recall each of the $V_Q$ are finitely presented).
A slightly more complicated possibility is taking a not-necessarily disjoint union
of varieties, each of which is an irreducible (though not necessarily
connected) component of their  union.
This isn't of finite presentation, but it is still a scheme, and it's still quite geometric.


 \[\begin{tikzpicture}[node distance = 2cm, auto, scale=1.5, transform shape]
\draw (1,-1) -- (1,1);
\draw (2,-1) -- (2,1);
\draw (3,-1) -- (3,1);
\draw (4,-1) -- (4,1);
\draw (5,-1) -- (5,1);
\draw [dotted, very thick] (6,0.2) -- (8,0.2);
\end{tikzpicture}\]
\begingroup
\captionof{figure}{An infinite union of connected components}
\endgroup
\medskip

   \begin{tikzpicture}[node distance = 2cm, auto, scale=1.5, transform shape]
\draw (0,-1) -- (1,1);
\draw (1.5,-1) -- (0.5,1);
\draw (1,-1) -- (2,1);
\draw (2.5,-1) -- (1.5,1);
\draw (2,-1) -- (3,1);
\draw (3.5,-1) -- (2.5,1);
\draw (3,-1) -- (4,1);
\draw (4.5,-1) -- (3.5,1);
\draw (4,-1) -- (5,1);
\draw (5.5,-1) -- (4.5,1);
\draw (5,-1) -- (6,1);
\draw [dotted, very thick] (6,0.2) -- (8,0.2);
\end{tikzpicture}
\begingroup
\captionof{figure}{An infinite union of irreducible components}
\endgroup

But it could fail to be a scheme! For instance, take
$\bA^1_{\Fpbar}$ and glue a copy of $\bA^1_{\Fpbar}$ to each
closed point.

 \[\begin{tikzpicture}[node distance = 2cm, auto, scale=1.5, transform shape]
\draw (0,0) -- (7,0);
\draw (1,-1) -- (1,1);
\draw (2,-1) -- (2,1);
\draw (3,-1) -- (3,1);
\draw (4,-1) -- (4,1);
\draw (5,-1) -- (5,1);
\draw [dotted, very thick] (6,0.2) -- (8,0.8);
\end{tikzpicture}\]
\begingroup
\captionof{figure}{An Ind-scheme which is not a scheme}\label{fig: herringbone}
\endgroup
We refer the reader to \cite[\S 4.3]{EGstacktheoreticimages} for many more examples
of Ind-schemes that aren't schemes.

So if $V$ is of one of the first two types, and is still a scheme, then we'd be in good shape because we can do geometry, but otherwise we haven't done much to simplify our original problem. In fact, it really depends on the characteristic of our coefficient field: in characteristic $p$ we will see that things behave badly, while in characteristic $0$, we will see that we get a well-behaved formal scheme.

\subsection{\texorpdfstring{Analysis of $V$ when $d = 1$ and $K =
    \Qp$}{Analysis of V when d = 1 and K = Qp}}\label{subsec:
  discussion of rank 1 case}

For simplicity, assume $p > 2$. In this case, we use local class field theory to replace $G_{\Qp}$ with
    \[ G_{\Qp}^{\ab} = \wh{\Qptimes} = \Z_p^\times \times p^{\wh{\Z}}. \]
Taking the Weil--Deligne subgroup gives
    \[ \WD_{\Qp}^{\ab} = \Qptimes = \Z_p^\times \times p^{\Z}. \]
In this case, the pro-$p$ wild ramification part is a copy of $\Z_p$,
embedded in $\WD_{\Qp}^{\ab}$ via $1+p\Z_p \hra \Z_p^\times$. So we
choose a sequence $Q=Q_n$ so that
    \[ \WD_{\Qp}^{\ab}/Q_n = (\Z/p^{n+1})^\times \times p^\Z \cong \Z/{p^n}\Z \times \Z/{(p-1)}\Z \times p^\Z. \]
Thus
\[V_{Q_n} 
= \Spec \Z[x]/(x^{p^n}-1) \times \Spec \Z[x]/(x^{p-1}-1) \times \bG_{m,\Z}. 
\]

\begin{enumerate}
	\item To see what happens away from $p$, we can invert $p$ on this scheme and see what we get:
\begin{multline*}
V_{\Z[1/p]} = \varinjlim_n V_{Q_n,\Z[1/p]} \\  =  (\varinjlim_n \Spec \Z[1/p][x]/(x^{p^n}-1)) \times \Spec \Z[1/p][x]/(x^{p-1}-1) \times \bG_{m,\Z[1/p]}. 
\end{multline*}
	If we now base change to $k = \barr{\Q}$ or $k = {\Flbar}$ for $\ell\neq p$, then $(x^{p^n}-1)$ is separable, so we are just adding more and more closed points as we go further along the directed system, and so we basically get infinitely many copies of $\Spec\Z[x]/(x^{p-1}-1) \times \bG_m$, indexed by $p$th roots of unity in $k$.

	\item But instead if we base change to $\Z_p$, then the closed immersions $V_{Q_n,\Z_p} \hra V_{Q_{n+1},\Z_p}$ are actually nilpotent thickenings, and in fact we end up with
	    \[ V_{\Z_p} = \Spf \Z_p[[T]] \times \Spec \Z[x]/(x^{p-1}-1) \times \bG_m. \]
       Now $\Spf \Z_p[[T]]$ is a nice Noetherian formal scheme; be
       warned that this is specific to dimension 1. Indeed already in dimension 2 the situation becomes much more complicated, as we will see  shortly. 
\end{enumerate}
	
Before  turning to the case of dimensions $2$ and higher, we give some
more examples  of Ind-schemes.
	First, here's a formal scheme that isn't Noetherian: Take
        $\bA^1_{\Fpbar}$ and add a 
extra formal direction
        at each closed point on the line (one can do this finitely
        many times and then take a filtered colimit over the finite
        steps). 
        This is  an affine formal scheme $\Spf A$ for~$A$
        some topological ring, but~$A$ is not Noetherian. More precisely, we let $\Fpbar = \set{a_0,a_1,\dots,}$ and set $A:=\varprojlim_j A_j$ where
		\[ A_j = \Fpbar[x] \times_{\Fpbar} \Fpbar[[x-a_0]] \times_{\Fpbar} \cdots \times_{\Fpbar} \Fpbar[[x-a_j]]. \]
	
Now here's something even worse. Consider the subsheaf
of~$\bA^2_{\Fpbar}$ given by taking  a ``horizontal''
~$\bA^1_{\Fpbar}$  and adding a ``vertical'' $\bA^1_{\Fpbar}$ through
each closed point, as in Figure~\ref{fig:
          herringbone}, and then formally
thickening each of the
vertical lines (in $\bA^2_{\Fpbar}$) in the horizontal
        direction, as in Figure~\ref{fig: thickened herringbone}.  \[\begin{tikzpicture}[node distance = 2cm, auto, scale=1.5, transform shape]
            \draw (0,0) -- (7,0);
 (1,-1) --
           (1,1);
 \shade[left color=white,right color=white,middle color=gray] (0.9,-1)
 rectangle (1.1,1);
  \shade[left color=white,right color=white,middle color=gray]
  (1.9,-1) rectangle (2.1,1);
   \shade[left color=white,right color=white,middle color=gray]
   (2.9,-1) rectangle (3.1,1);
    \shade[left color=white,right color=white,middle color=gray]
    (3.9,-1) rectangle (4.1,1);
     \shade[left color=white,right color=white,middle color=gray] (4.9,-1) rectangle (5.1,1);           \draw [dotted, very thick] (6,0.2) -- (8,0.8);
\end{tikzpicture}\]
\begingroup

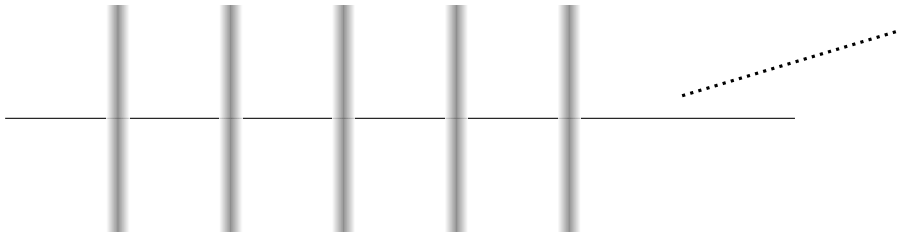
\captionof{figure}{An Ind-scheme with thickened vertical lines}\label{fig:
  thickened herringbone}\endgroup

\medskip

Then
        this is not even a formal scheme, and it's actually hard to
        tell this apart from $\bA^2$, because it has the same closed
        points and versal rings.
	
	We could even take the above construction 
        and then delete the point at the origin. Call this $V_\star$;
        we will return to it shortly.

\subsection{Analysis of $V$ in general}\label{subsec: FL vs WD}

For general $d$ and general $K$, $V_Q/\Z[1/p]$ is reduced,
 local complete intersection (so Cohen--Macaulay), and flat/$\Z[1/p]$ of
relative dimension $d^2$
\cite{dat2020moduli, zhu2020coherent},
and
    \[ V_Q/\Z[1/p] \hra V_{Q'}/\Z[1/p] \]
is just adding connected components. Note in particular that the local deformation rings of mod $\ell$ residual representations are just computed using this variety by looking at the complete local rings at the stalks of the corresponding points.

Now let's switch back to the characteristic $p$ setting. Let $K =
\Qp$, $d = 2$, and assume that $p > 2$.  Let's just study the
behaviour of~$V$ in characteristic~$p$; let's also fix the determinant
of our two-dimensional Galois representations to be~$\omega^i$,
where $\omega$  denotes the mod $p$ cyclotomic  character,
and we choose $1 \leq  i  \leq  p-2$.
We write $V^{\det = \omega^i}_{/\F_p}$ for the resulting Ind-scheme.

Let's  begin by considering  the $\Fpbar$-valued  points
of~$V^{\det=\omega^i}_{/\F_p}$;
these correspond to $\rhobar: \WD_{\Q_p} \to  \GL_2(\Fbar_p)$ of determinant~$\omega^i$.
There is one irreducible such~$\rhobar$ up to isomorphism, 
namely $\Ind_{\Q_{p^2}}^{\Qp} \omega_2^i$.   
Because of the choice of framing, this will not correspond to a literal single
closed point of~$V^{\det=\omega^i}_{/\F_p}$,  but rather to an~$\SL_2$ orbit.\footnote{``Fixing  the determinant'' means  choosing an  isomorphism of $\WD_{\Q_p}$-representations
$\wedge^2 \rho \iso \omega^i$, and so the ``change  of frame'' group is
reduced from $\GL_2$ to $\SL_2$.}
This orbit {\em will} be (the underlying reduced subscheme  of)
a connected  component of~$V^{\det=\omega^i}_{/\F_p}$.

Any reducible $\rhobar$ will  be  an  extension  of mod $p$ characters
of $\WD_{\Q_p}$, 
and any such character is an unramified twist of a  power of~$\omega$.
Any family of such extensions will induce a corresponding family
of semi-simplifications (more  precisely, of associated pseudo-characters),
and the powers of the  cyclotomic character  that  appear will be  locally
constant,  and thus  constant on a  given connected  component of
the family.   We will restrict attention to those  $\rhobar$
for which $\rhobar^{\semis} = \ur_{\lambda^{-1}} \omega^i \oplus  \ur_{\lambda},$
for some  $\lambda  \in \Fbar_p^{\times}$,
and in fact to those $\rhobar$ which are an  extension of $\ur_{\lambda}$
by  $\ur_{\lambda^{-1}} \omega^i$ (rather  than  an  extension in the
opposite  order). Any such non-split  extension is unique up
to  isomorphism (equivalently,
the corresponding  $\Ext^1$ is one-dimensional) unless $i  = 1$ and
$\lambda = \pm 1$; in this  latter case there is a one-dimensional
space of {\em peu ramifi\'ee} extensions, and it is the $\rhobar$
classified by this space that we will consider.

There is actually a  $p$-adic Hodge-theoretic framework that describes exactly
these~$\rhobar$  that  we  are singling out (the irreducible~$\rhobar,$
and  the particular reducible~$\rhobar$ that  we have described); namely,
they are {\em Fontaine--Laffaille} with  Hodge--Tate weights  $\set{0,i}$
(and with  determinant $\omega^i$).
And what we will now describe is the Ind-scheme
$V^{\FL,\det=\omega^i}_{/\F_p}$
which classifies families of $\WD_{\Q_p}$-representations
over finite type $\F_p$-algebras~$A$ that  are two-dimensional, have determinant fixed 
to be~$\omega^i$, and which are Fontaine--Laffaille  with Hodge weights $\set{0,i}$
when pushed forward to  any Artinian  quotient of~$A$. 

Actually, to  avoid grappling with the details of framings,
we will describe  the  quotient stack\footnote{See the  next lecture 
for more about stacks!}~$[V^{\FL,\det=\omega^i}_{/\F_p}/\SL_2].$
To state the answer, recall the  subsheaf $V_\star$ of  $\A^2_{/\F_p}$
described above.  There is an action of $\Gm$ on this  substack
obtained  by  restricting the  following  action of $\Gm$  on $\A^2$:
$$ a  \cdot (x,y) = (x, a^2 y).$$

\begin{prop}
\label{rotten  herring}
There is  an isomorphism of stacks
$$[V^{\FL,\det=\omega^i}_{/\F_p}/\SL_2] \iso [V_{\star}/\Gm].$$
\end{prop}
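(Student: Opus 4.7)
The plan is to exhibit the isomorphism by explicitly parametrizing both sides.

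First, I would choose, locally on the base, an adapted framing for a Fontaine--Laffaille module $M$ over an $\F_p$-algebra $A$: a trivialization $M = Ae_0 \oplus Ae_1$ with filtration line $M^i = Ae_1$. Writing $\varphi^0(\overline{e_0}) = \alpha e_0 + \gamma e_1$ and $\varphi^i(e_1) = \beta e_0 + \delta e_1$, the condition $\det = \omega^i$ becomes the single equation $\alpha\delta - \beta\gamma = 1$, so the adapted-framed moduli is $\SL_2 \subset \bA^4$. Since $\SL_2/B \cong \bP^1$ records the choice of filtration line in an $\SL_2$-framing, one obtains an equivalence $[V^{\FL,\det=\omega^i}_{/\F_p}/\SL_2] \simeq [\SL_2/B]$, reducing the problem to computing the Borel quotient of the adapted-framed moduli.

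Next, I would analyze the $B = T \ltimes U$ action. A direct basis-change computation with $e_0 \mapsto ae_0 + be_1$, $e_1 \mapsto a^{-1}e_1$ yields
\[
\alpha \mapsto \alpha, \quad \beta \mapsto a^{-2}\beta, \quad \gamma \mapsto a^2\gamma - a\alpha b, \quad \delta \mapsto \delta - a^{-1}\beta b.
\]
The $U$-action fixes $(\alpha, \beta)$ and translates $(\gamma, \delta)$ in the direction of $(\alpha, \beta)$; combined with $\alpha\delta - \beta\gamma = 1$, this identifies the scheme-theoretic $U$-quotient with the open subscheme $\bA^2 \setminus \{0\}$ of $\bA^2$, via projection to $(\alpha, \beta)$. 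The residual $T = \Gm$-action is $a \cdot (\alpha, \beta) = (\alpha, a^{-2}\beta)$, matching the $\Gm$-action on $V_\star$ in the statement (up to the inversion automorphism of $\Gm$).

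The main obstacle is to verify that $V^{\FL,\det=\omega^i}_{/\F_p}$ corresponds to the strict subsheaf $V_\star$ of $\bA^2$ rather than to the larger open subscheme $\bA^2 \setminus \{0\}$. The explanation is that $V^{\FL,\det=\omega^i}_{/\F_p}$ classifies $\WD_{\Q_p}$-representations which become Fontaine--Laffaille on every Artinian specialization, rather than \emph{raw} FL module data over $A$ itself, so the moduli functor incorporates a sheaf-theoretic constraint coming from the FL condition at each Artinian quotient. Concretely, whenever the non-splitness parameter $\beta$ is nonzero, this constraint forces the unramified-twist parameter $\alpha$ to lie in an infinitesimal thickening of a closed point of $\bA^1_{\Fpbar}$, exactly matching the description of $V_\star$ as a horizontal $\bA^1$ together with vertical $\bA^1$'s at each closed point, thickened horizontally. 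Carrying out this sheaf-theoretic identification, via a careful analysis of the Fontaine--Laffaille equivalence in families and of the continuity condition for Artinian extensions of WD-representations, is the technical heart of the proposition; once done, the claimed isomorphism of quotient stacks follows.
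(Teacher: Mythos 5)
Your adapted-framing parametrization of the moduli of Fontaine--Laffaille modules is a genuinely different route from the paper's: the paper decomposes both sides into connected components (irreducible vs.\ reducible on the source; $\{x\neq 0\}$ vs.\ $\{x=0, y\neq 0\}$ in $V_\star$ on the target) and treats each separately, whereas you work globally with an explicit framed moduli. The computation itself checks out: the basis-change formulas under $e_0\mapsto ae_0+be_1$, $e_1\mapsto a^{-1}e_1$ are correct, the $U$-quotient is $\bA^2\setminus\{0\}$ via $(\alpha,\beta)$, and the residual $T$-action matches the $\Gm$-action in the statement up to $a\mapsto a^{-1}$. This recovers the fact -- used but not spelled out in the paper -- that the Fontaine--Laffaille \emph{module} moduli stack is $[(\bA^2\setminus\{0\})/\Gm]$, into which $[V^{\FL,\det=\omega^i}_{/\F_p}/\SL_2]$ embeds.

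However, there is a genuine gap. The proposition is not about the FL-module moduli; it is about $V^{\FL,\det=\omega^i}_{/\F_p}$, the Ind-scheme classifying families of $\WD_{\Q_p}$-representations that are FL on every Artinian specialization, and the entire arithmetic content is the identification of that subfunctor with $V_\star$ specifically, as opposed to some other subsheaf of $\bA^2\setminus\{0\}$. You correctly call this ``the technical heart,'' but then dispatch it with a vague appeal to ``a careful analysis of the Fontaine--Laffaille equivalence in families and of the continuity condition.'' The paper's own sketch, though brief, names the precise mechanism. On the reducible locus the key fact is that a non-split extension of $\ur_\lambda$ by $\ur_{\lambda^{-1}}\omega^i$ that factors through $\WD_{\Q_p}/Q$ has $[\F_p(\lambda):\F_p]$ bounded in terms of $[P_{\Q_p}:Q]$; it is exactly this boundedness that produces the ``vertical lines only at closed points, with infinitesimal horizontal thickening'' herringbone shape rather than a full $2$-dimensional open. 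The irreducible locus ($\alpha=0$) is handled by a genuinely different argument, namely the computation that the mod $p$ FL deformation ring of $\Ind_{\Q_{p^2}}^{\Q_p}\omega_2^i$ is $\F_p[[x]]$. Your uniform phrase ``forces $\alpha$ to lie in an infinitesimal thickening of a closed point'' conflates these two distinct mechanisms, and without actually bringing either to bear your argument only shows that $[V^{\FL,\det=\omega^i}_{/\F_p}/\SL_2]$ is \emph{some} substack of $[(\bA^2\setminus\{0\})/\Gm]$, not that it is $[V_\star/\Gm]$.
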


Before sketching the  proof,
we  note
that $V_\star$ is rather nasty.  It is Zariski dense in $\A^2\setminus  \set{0}$
--- indeed, it has the same  set of $\Fbar_p$-points, and  the same
versal  rings at these  points, as $\A^2\setminus  \set{0}$ --- but it is disconnected, 
and it looks more like a formal scheme than an honest  scheme like $\A^2\setminus 
\set{0}.$
Given this, it makes sense to ask: 
can one study some related but  different moduli problem so as to actually get
$\bA^2\setminus \set{0}$  (or, rather, the  stack~$[(\bA^2\setminus \set{0})/\Gm]$)
as the answer?
Well, you can! If you study families of Fontaine--Laffaille \textit{modules} (the ``linear algebra perspective'' in $p$-adic Hodge theory) instead of \textit{representations} then this is exactly what you get.
The inclusion
$$[V^{\FL,\det=\omega^i}_{/\F_p}/\SL_2] \iso [V_{\star}/\Gm]
\hookrightarrow [(\bA^2\setminus \set{0})/\Gm]$$
identifies the source with the  locus in the Fontaine--Laffaille  moduli
stack over  which the Fontaine--Laffaille functor from 
Fontaine--Laffaille modules to Galois representations
(or,  more generally, to $\WD_{\Q_p}$-representations)
can be  applied so as to obtain genuine $\WD_{\Q_p}$-representations.

\begin{proof}[Sketch of proof  of Prop.~\ref{rotten herring}]
As already noted, both the  source and the target
of the claimed isomorphism are disconnected:  the source  is the 
disjoint union of the reducible  and irreducible loci,
while the target is the disjoint union of 
$[\bigl(V_{\star} \cap (\Gm \times \A^1)\bigr)/\Gm] $ and its
complement
(which equals $[ (\Spf \F_p[[x]] \times \Gm)/\Gm] =  [\Spf \F_p[[x]]/\mu_2]$).
We check the  isomorphism on each connected component  separately.
On the irreducible locus, it  amounts to  the fact that the mod $p$
Fontaine--Laffaille deformation ring of $\Ind_{\Q_{p^2}}^{\Q_p} \omega_2^i$
is isomorphic to $\F_p[[x]]$.
On the reducible locus, there  is  more to check, but the key point
is  that  if $\rhobar$ is a non-split  extension of $\ur_{\lambda}$
by $\ur_{\lambda^{-1}} \omega^i$,  for some $\lambda \in\Fbar_p^{\times}$,
 which is trivial on some given open subgroup  $Q$ of~$P_{\Q_p}$,
then the  degree  of $\lambda$ over $\F_p$ is bounded in terms
of the index $[P_{\Q_p}:Q]$.
\end{proof}


This example of $V^{\FL,\det=\omega^i}$ is illustrative 
of the overall behaviour of $V$:
when working over $\F_p$ or $\Z_p$, it is nasty as soon as $d > 1$.
In the special case of 
$V^{\FL,\det=\omega^i}$, the moduli space
of Fontaine--Laffaille modules served as a much more pleasant improvement.
Fontaine--Laffaille theory doesn't apply to general mod~$p$ or $p$-adic 
Galois representations (even for~$\Q_p$, but even more so if $K/\Q_p$
is ramified).
But  Fontaine's theory of $(\varphi,\Gamma)$-modules does!
And in fact the moduli space~$V$ does sit inside a moduli space of
$(\varphi,\Gamma)$-modules, and as we will see,
this latter space is a lot nicer.
Thus we are motivated to define moduli stacks of
$(\varphi,\Gamma)$-modules and do a thorough study of their
geometry.

\section{Stacks, etc.}In this lecture we give a very brief
introduction to the idea of a formal algebraic stack; details can be
found in~\cite{Emertonformalstacks}.

\subsection{Functors of points} There's a tension between rings and topological spaces in algebraic geometry, going back to Diophantus and Descartes. Grothendieck's formulation encompasses both, by giving you a topological space, but also giving you a sheaf of rings. You then solve equations by studying morphisms of schemes.

To study algebraic spaces and stacks, it is helpful to take the functorial point of view, which encompasses the theory of schemes, but gives you a framework to extend it. This perspective starts with the following observation.

\begin{lemma}[Yoneda, Grothendieck]
A scheme $X$ can be thought of as a \emph{(}pre\emph{)}sheaf $\Sch \xra{X} \Set$ by taking $Y \mapsto \Hom(Y,X)$. In fact it's a Zariski/\'etale/{\em fppf} sheaf.
\end{lemma}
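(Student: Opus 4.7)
The plan is to separate the statement into two parts: (a) that $h_X := \Hom(-,X)$ is a presheaf $\Sch \to \Set$, and (b) that it satisfies the sheaf axiom with respect to each of the Zariski, étale, and fppf topologies. Part (a) is the Yoneda embedding and is automatic: $\Hom(-, X)$ is a contravariant $\Set$-valued functor on $\Sch$ by construction, and functoriality is given by pre-composition. So the work is in verifying for each topology $\tau$ and each $\tau$-cover $\{Y_i \to Y\}$ that
\[
h_X(Y) \;\to\; \prod_i h_X(Y_i) \;\rightrightarrows\; \prod_{i,j} h_X(Y_i \times_Y Y_j)
\]
is an equalizer diagram of sets.

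For the Zariski topology I would argue directly from the definition of a scheme as a locally ringed space. Given morphisms $f_i \colon Y_i \to X$ which agree on the overlaps $Y_i \times_Y Y_j = Y_i \cap Y_j$, the underlying continuous maps glue to a continuous map $f \colon Y \to X$ by the universal property of a Zariski cover on topological spaces, and the pulled-back sheaf maps $f_i^\sharp \colon f_i^{-1}\cO_X \to \cO_{Y_i}$ glue to $f^\sharp \colon f^{-1}\cO_X \to \cO_Y$ because a morphism of sheaves on $Y$ is itself a sheaf-theoretic object and Zariski-locally determined. Injectivity on the left is similarly immediate from the set-theoretic Zariski sheaf condition on the underlying spaces.

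For the étale and fppf topologies the core input is faithfully flat descent. First I would replace the cover $\{Y_i \to Y\}$ by the single faithfully flat morphism $\coprod Y_i \to Y$, and then use the Zariski case already proved to reduce to $Y = \Spec B$ affine with an fppf/étale cover $Y' = \Spec B' \to Y$ (the fppf case uses quasi-compactness to refine to an affine cover). Suppose first that $X = \Spec A$ is affine. Then the sheaf condition translates to the question of whether
\[
\Hom_{\text{Ring}}(A, B) \;\to\; \Hom_{\text{Ring}}(A, B') \;\rightrightarrows\; \Hom_{\text{Ring}}(A, B' \otimes_B B')
\]
is an equalizer, which follows by applying the left-exact functor $\Hom_{\text{Ring}}(A, -)$ to the exact sequence $0 \to B \to B' \rightrightarrows B' \otimes_B B'$ of $B$-modules provided by faithfully flat descent for modules.

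The main obstacle, and the part requiring the most care, is promoting the affine-target descent to the case of a general scheme $X$. The issue is that a morphism $f' \colon Y' \to X$ whose two pullbacks to $Y' \times_Y Y'$ agree need not a priori factor through a Zariski-open subset of $Y'$ that descends to $Y$. The standard approach, which I would follow, is to choose an affine Zariski cover $\{U_\alpha\}$ of $X$, observe that the preimages $(f')^{-1}(U_\alpha)$ form an open cover of $Y'$ stable under the two projections from $Y' \times_Y Y'$, and then use that faithfully flat (qcqs) morphisms are submersive — so the opens $(f')^{-1}(U_\alpha)$ descend to opens $V_\alpha \subseteq Y$ whose pullback along $Y' \to Y$ recovers $(f')^{-1}(U_\alpha)$. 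The affine descent then yields morphisms $V_\alpha \to U_\alpha \hookrightarrow X$, and the already-established Zariski sheaf property lets me glue these into the desired morphism $Y \to X$.
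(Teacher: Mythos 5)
The paper states this lemma without proof --- it is classical background (the Yoneda embedding plus Grothendieck's descent theorem for morphisms of schemes, cf.\ \cite[\href{https://stacks.math.columbia.edu/tag/023Q}{Tag 023Q}]{stacks-project}) --- so there is no in-paper argument to compare against. Your proof follows the standard route and is essentially correct: Zariski gluing for locally ringed spaces; reduction of an \'etale/fppf cover to a single faithfully flat morphism of affines; exactness of the Amitsur complex $0 \to B \to B' \rightrightarrows B' \otimes_B B'$ for the affine-target case; and descent of the open cover $(f')^{-1}(U_\alpha)$ via submersiveness to handle a general target. This is exactly how the result is proved in the literature.

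Two small points you should tighten. First, you only verify injectivity of $h_X(Y) \to \prod_i h_X(Y_i)$ in the Zariski case; for the flat topologies you also need that a faithfully flat cover is an epimorphism of schemes, which reduces (after the same localization) to the injectivity of a faithfully flat ring map $B \to B'$ --- easy, but it is half of the equalizer condition and should be said. Second, in the last step the descended opens $V_\alpha \subseteq Y$ need not be affine, so ``the affine descent then yields morphisms $V_\alpha \to U_\alpha$'' hides one more reduction: cover each $V_\alpha$ by affines, apply the affine-target case there, and glue using the Zariski sheaf property (which you have already established, so the tools are all in place). Also, calling $\Hom_{\mathrm{Ring}}(A,-)$ ``left-exact'' is loose language since it is not additive; what you actually use is that the set-theoretic equalizer of $B' \rightrightarrows B' \otimes_B B'$ is the subring $B$, so a ring homomorphism $A \to B'$ equalizing the two maps factors through $B$. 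None of these affects the correctness of the approach.
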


But we can consider more general sheaves. For instance consider a system $X_1 \hra X_2 \hra \dots$ with closed immersions as transition maps, and define a sheaf
	\[ X := \varinjlim_i X_i \]
This is an Ind-scheme. Often, this will not actually be a scheme:
e.g.\ embed a point in a line in a plane, in $3$-space, etc. This gives
you some ``infinite dimensional affine space'', which is not a scheme:
note the identity map from this space to itself doesn't factor through
one of the finite steps, whereas a map from a quasi-compact scheme
does always factor in this  way.

This last remark illustrates one way that
finiteness/quasi-compactness assumptions are used in our theory;
they help us to detect finite-dimensional parts of possibly infinite-dimensional
objects.

\subsection{Algebraic Spaces}

Another example of a more general sheaf is an algebraic space. If $X$
is an {\em fppf} sheaf, and $U$ is a scheme and $U \to X$ is a morphism,
then we say that the morphism $U\to X$ is ``representable by schemes" if
in any fibre product diagram
\begin{center}\begin{tikzcd}
	T \times_X U \rar \dar & U \dar \\
	T \rar & X
\end{tikzcd}\end{center}
for which $T$ is a scheme, then $T \times_X U$ is also a scheme. It
usually suffices to check this for a certain subclass of $T$ (e.g.\
affine schemes). You could also require that the morphism $U \to X$ is surjective or \'etale, which are both properties that one can define via base change to schemes.

\begin{definition}
An \textit{algebraic space} is an {\em fppf} sheaf $X$
which admits a map $U \to X$ whose source is a scheme
and which is representable by schemes,
surjective, and \'etale.
\end{definition}

There is an obvious notion of an Ind-algebraic space.

\subsection{Formal algebraic spaces}

We want to be able to talk about formal algebraic spaces, and formal algebraic stacks.

Suppose $A$ is a complete topological ring with a countable basis of open neighborhoods of $0$ consisting of ideals~$I_n$. Assume that every $A/I_{n+1} \to A/I_n$ is a nilpotent thickening. Then we define
	\[ \Spf A = \varinjlim \Spec A/I_n. \]
For instance if $A = \Z_p$ and $I_n = p^n$, then we get $\Spf \Z_p$. This is a proper subsheaf of $\Spec \Z_p$. Interestingly, properties like flatness, reducedness, Cohen--Macaulayness need the ring $A$, and can't be seen on the quotients $A/I_n$. For instance, $\Z_p$ is reduced, while each $\Z/p^n\Z$ for $n > 1$ is not.

\begin{definition}
A \textit{formal algebraic space} is an {\em fppf} sheaf $X$
which admits a map $\bigsqcup_i U_i \to X$ for which each $U_i$ is an affine formal scheme,
and such that the map is representable by algebraic spaces, \'etale and surjective.
\end{definition}

Then if $X = \varinjlim X_i$, where the $X_i$ are algebraic spaces and the maps are nilpotent thickenings, one can show that $X$ is a formal algebraic space. Vice versa, you can write a formal algebraic space using an Ind-construction.

\subsection{Stacks}

Stacks are ``sheaves'' of groupoids. Some (small) groupoids are
contractible, and are equivalent to their underlying sets, but some
are not. For example, consider the category with one object $x$ and two morphisms $\set{1_x, \sigma}$ where $\sigma^2 = 1_x$: this is not contractible. The moral is that in sets, equality is a \textit{property}, but in higher category theory and the theory of stacks, equality is exhibited by an isomorphism which is some \textit{extra data}.

Really we want to say that a stack is a ``$2$-sheaf'', which means a $2$-functor from the category of schemes to the $2$-category of groupoids, satisfying a $2$-categorical analogue of the sheaf condition. This is technically possible to do, but is a bit complicated once you start trying to work out all the coherence conditions you need, and usually involves making some kind of choices of pullbacks.

On the other hand, one way to avoid this is to use the formalism of \textit{categories fibered in groupoids}, which~\cite{stacks-project} does, and which we will do. For example, a morphism of stacks is a fully faithful embedding of categories fibered in groupoids. An occasionally annoying terminological
issue is that ``isomorphism of stacks'' means an equivalence of
categories fibered in groupoids. From now on we will typically denote
stacks by calligraphic letters~$\cX$, and schemes (or more generally
sheaves) by roman letters~$X$.

There are some interesting things you get from the ``extra data'' involved in
having to {\em choose} an isomorphism. 
For example, the diagonal morphism
$\Delta: \cX \to \cX \times \cX$ is a monomorphism if and only if $\cX$ is equivalent to something that lands in sets (in other words, if and only if $\cX$ can
be regarded as being a usual sheaf of sets). In general,
the diagonal really tells you something about isomorphisms between objects in the groupoids. For instance, if $T$ is a scheme and
$f:T \to  \cX\times \cX$ is a morphism, induced by the pair of morphisms
$f_1,f_2: T \rightrightarrows \cX,$
then the pull-back of $\Delta$ along $f$ will be the Isom sheaf
classifying isomorphisms between the elements $f_1$ and $f_2$
of the groupoid $\cX(T)$.

\subsection{Algebraic and Formal Algebraic Stacks}
We make the following definitions.

\begin{definition}\label{defn: algebraic stack}
We say that an {\em fppf} stack~$\cX$ is an \emph{algebraic stack} if there
exists a morphism $U \to \cX$ whose source is a scheme, and which is
representable by algebraic spaces, surjective, and smooth. Note that
in this case asking for the morphism to be \'etale is strictly
stronger than asking for smooth, and defines the notion of a Deligne--Mumford stack,
which we will not make use of.
\end{definition}

\begin{example}
Recall the schemes~$V_Q$ from the previous lecture,
whose $A$-valued  points  correspond to representations $\WD_K/Q  \to \GL_d(A)$.
Usually in Galois representation  theory, we want  to study representations
up to isomorphism, which means that we should quotient out by
the  conjugation  action of $\GL_d$.  This motivates the
consideration of the quotient stack $[V_Q/\GL_d]$,
which is an algebraic  stack,
and is the {\em moduli stack} of $d$-dimensional  $\WD_K/Q$-representations. 

We can also form $[V/\GL_d] = \varinjlim_Q [V_Q/\GL_d]$,
which is an Ind-algebraic stack.   If  we  work  over  $\Z[1/p]$,
then  we saw in the last lecture  that $V$ becomes a usual scheme (though
not  of finite type), and so then  $[V/\GL_d]$ is again an algebraic
stack.
\end{example}

If  we  work  over~$\Z_p$,  then  we've  seen that $V,$
and so also $[V/\GL_d]$, is  not a very good object.
As we've already said, we will replace this  moduli stack of $\WD_K$-representations
by  a moduli stack of \'etale $(\varphi,\Gamma)$-modules. 
This won't  be  an algebraic stack, but it will be  a formal
algebraic stack, in the following sense.

\begin{definition}
We say that an {\em fppf} stack~$\cX$ is a \textit{formal algebraic stack} if
there exists a morphism $\bigsqcup_i U_i \to \cX$ which is
representable by algebraic spaces, surjective, and smooth, with the~ $U_i$ being affine formal schemes.
\end{definition}

If $\cX$ is qcqs, then we may write $\cX = \varinjlim \cX_i$, where $\cX_i$ are algebraic stacks and the transition maps are thickenings. The converse is true if $\varinjlim$ is countably indexed and the transition maps are thickenings.

If $f: \cX \to \cY$ is a morphism of stacks that is representable by
algebraic stacks (in the obvious sense), you can ascribe geometric properties to $f$. In
particular, if $\Delta: \cY \to \cY \times \cY$ is representable by
algebraic spaces, and~$\cX$ is an algebraic stack, then any morphism $\cX \to \cY$  is representable by algebraic stacks.

Now imagine $\cX$ is an algebraic stack, locally of finite type over
an excellent locally Noetherian scheme $S$, and consider a map $\cX \to
\cF$, 
where $\cF$ is an {\em fppf} stack for  which $\Delta_\cF$ is
representable by algebraic spaces. 
Then as remarked in  the previous paragraph,
the morphism $\cX \to \cF$ is 
representable by
algebraic stacks.  
In particular, it makes sense to ask that $\cX \to \cF$ be proper. In
practice, $\cX$ will be a stack of Breuil--Kisin modules of bounded height, and $\cF$ will be a stack of \'etale $\varphi$-modules.

We would like to form the ``scheme-theoretic  image'' of $\cF$  in~$\cX$.
Morally, this amounts to contracting the proper equivalence relation $\cF \times_{\cX}
\cF$ on~$\cF$.
In general you wouldn't expect contracting a proper equivalence
relation to give you something algebraic. However, we have the
following theorem, the main result
of~\cite{EGstacktheoreticimages}.

\begin{theorem}[Scheme Theoretic Images]\label{thm: scheme theoretic images}
Assume given an {\em fppf} sheaf  $\cF$ which is limit preserving and
whose diagonal is representable by algebraic spaces.
Suppose that $\cX$ is an algebraic stack, and that $\cX \to \cF$ is a proper morphism.
Assume further that $\cF$ admits versal rings at all finite type points,
and that these rings
satisfy the following effectivity property with respect to
the image of $\cX$: 
if $\Spf R \to \cF$ is a morphism realizing the pro-Artinian local ring $R$ as
a versal ring to~$\cF$, if we pull $\cX$ back over this morphism
to obtain a proper map $\cX_R \to \Spf R$, and if we let $\Spf S \hookrightarrow
\Spf R$ be the scheme-theoretic image of this map,
then the composite morphism $\Spf S \to \Spf R \to \cF$ factors through $\Spec S$.

Then there exists an algebraic closed substack $\cZ \hra \cF$ such that $\cX \to \cF$ factors through a morphism $\cX \to \cZ$
which is proper and scheme theoretically dominant.
Furthermore,
 the closed substack $\cZ$ is uniquely determined by these properties,
 and we refer to it as the \emph{scheme-theoretic image} of the
 morphism $\cX\to\cF$.
\end{theorem}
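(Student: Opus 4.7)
The plan is to define $\cZ$ directly as a subfunctor of $\cF$ and then verify in turn that it is closed in $\cF$, algebraic, and satisfies the stated universal property. For each scheme $T$ and morphism $T \to \cF$, I would declare that this morphism factors through $\cZ$ precisely when the proper morphism of algebraic stacks $\cX_T := \cX \times_\cF T \to T$ is scheme-theoretically dominant, i.e.~its scheme-theoretic image equals all of $T$. Because the formation of the scheme-theoretic image of a quasi-compact morphism commutes with flat base change, and the condition is plainly stable under isomorphism in $\cF(T)$, this defines an {\em fppf} substack $\cZ \hookrightarrow \cF$. For any $T \to \cF$, the fibre product $\cZ \times_\cF T \to T$ is by construction the closed subscheme of $T$ cut out by the ideal sheaf of the scheme-theoretic image of $\cX_T \to T$, so $\cZ \hookrightarrow \cF$ is representable by closed immersions. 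Since $\cZ \hookrightarrow \cF$ is a monomorphism, the diagonal of $\cZ$ is obtained by pullback from the diagonal of $\cF$, and so is representable by algebraic spaces.

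The factorization $\cX \to \cZ \hookrightarrow \cF$ is then automatic: pulling back $\cX \to \cF$ along itself yields $\cX \times_\cF \cX \to \cX$, which admits the diagonal as a section and so is scheme-theoretically dominant. The induced morphism $\cX \to \cZ$ is proper, being the base change of the proper morphism $\cX \to \cF$ along the closed immersion $\cZ \hookrightarrow \cF$, and it is scheme-theoretically dominant by construction. Uniqueness of $\cZ$ satisfying the stated properties follows from the universal property implicit in its defining moduli description.

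The substantive step is to verify that $\cZ$ is algebraic. I would proceed by producing effective versal rings at each finite type point $x$ of $\cZ$ and then invoking an Artin-style representability criterion. Regarding $x$ as a finite type point of $\cF$, the hypothesis provides a versal $\Spf R \to \cF$; pulling back $\cX$ yields a proper morphism $\cX_R \to \Spf R$, and I would let $\Spf S \hookrightarrow \Spf R$ be its scheme-theoretic image, with $S = R/I$ for some ideal $I \subset R$. The effectivity hypothesis is precisely what is needed to promote the composite $\Spf S \to \Spf R \to \cF$ to a morphism $\Spec S \to \cF$; one then checks that this morphism factors through $\cZ$ and is versal to $\cZ$ at $x$. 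The hardest part is the verification of versality: every Artinian $\cZ$-valued deformation of $x$ comes (by versality of $R$ for $\cF$) from a map out of $\Spf R$, and one must show that the condition of factoring through $\cZ$ forces the map to further factor through $\Spf S$. This reduces to the compatibility of the formation of scheme-theoretic image with Artinian base change along $\Spf R \to \cF$, and it is here that the effectivity hypothesis together with the properness of $\cX \to \cF$ is brought to bear.
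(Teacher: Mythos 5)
The survey states this result without proof (it is the main theorem of \cite{EGstacktheoreticimages}), so there is no in-paper argument to compare with; I will assess your proposal against the strategy of that reference. Your construction breaks at the very first step: the condition you use to define $\cZ$ --- that $\cX_T \to \cF\times_\cF T= T$ be scheme-theoretically dominant --- is not stable under arbitrary base change $T' \to T$, because the formation of scheme-theoretic images commutes with \emph{flat} base change but not with general base change. (Flat base change gives you fppf-local checkability over a fixed $T$, but it does not make the condition functorial in $T$.) Consequently your $\cZ$ is not a subfunctor of $\cF$ at all, and the assertion that $\cZ \times_\cF T$ is the closed subscheme cut out by the scheme-theoretic image of $\cX_T \to T$ is false. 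Concretely, already with $\cF$ a scheme: take $\cX = \A^1 \to \cF = \A^2$ over a field $k$, given by $t \mapsto (t^2,t^3)$. This is finite, hence proper, all hypotheses of the theorem hold, and the uniqueness clause forces $\cZ$ to be the cuspidal cubic $Z = V(y^2-x^3)$. The map $Z \to \cF$ satisfies your condition (the normalization $\A^1 \to Z$ is scheme-theoretically dominant), but its restriction to $T' = \Spec k[x]/(x^3) \hookrightarrow Z$ does not: here $\cX_{T'} = \Spec k[t]/(t^3)$ and the map $k[x]/(x^3) \to k[t]/(t^3)$, $x \mapsto t^2$, kills $x^2$. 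So $T'$ is a point of the true scheme-theoretic image that your functor excludes. The same phenomenon defeats the last step of your third paragraph: the ``compatibility of the formation of scheme-theoretic image with Artinian base change'' that you invoke is precisely what fails in general.

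This is not a repairable technicality; the failure of base-change compatibility is the central difficulty that the hypotheses on versal rings and effectivity are designed to circumvent. In \cite{EGstacktheoreticimages} the substack $\cZ$ is not given by a naive moduli description on all test schemes. Instead one works at finite type points: properness and the theorem on formal functions give enough coherence to control the scheme-theoretic images of $\cX_{R}\times_{\Spf R}\Spec R/\m^n \to \Spec R/\m^n$ at all Artinian levels simultaneously, producing the formal scheme-theoretic image $\Spf S \hookrightarrow \Spf R$; the effectivity hypothesis algebraizes this to $\Spec S \to \cF$; and an Artin-type representability criterion then assembles these into a closed algebraic substack, after which scheme-theoretic dominance of $\cX \to \cZ$ and the universal property are verified. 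Your third paragraph correctly gestures at this architecture, but without a valid definition of $\cZ$ whose versal rings the rings $S$ are supposed to be, and with the key verification resting on a false base-change statement, the argument as written does not go through.
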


We will use this theorem repeatedly in our context to construct the Ind-algebraic stacks that we want.

\section{Definitions related to \'etale $\varphi$-modules}
In this lecture we give the definitions of the
various kinds of $\varphi$-modules with coefficients that we make use
of; in the following lecture, we will begin to use these definitions to
define our moduli stacks, and to prove their basic properties. 
\subsection{Rings}\label{subsec:rings}We begin with some material
from~\cite[\S 2.1]{emertongeepicture}. We fix a finite extension $K$ 
of~$\Q_p$ with ring of integers~$\cO_K$ and residue field~$k$,
and regard~$K$ as a subfield of some fixed algebraic closure $\Qbar_p$
of~$\Q_p$. We let $\C$ denote the completion of 
$\Qbar_p$. It is a perfectoid field,
whose tilt
$\C^{\flat}$ is a complete 
non-archimedean valued perfect field of characteristic~$p$.
If $F$ is a perfectoid closed subfield of $\C$,
then its tilt $F^{\flat}$ is a closed, and perfect,
subfield of $\C^{\flat}$. We will only need to consider the following two examples of~$F$ (other than~$\C$
itself), which arise from the theories of $(\varphi,\Gamma)$-modules
and Breuil--Kisin modules.


\begin{example}[The cyclotomic case]
	\label{ex:cyclo}
	We write $K(\zeta_{p^{\infty}})$ to denote the 
	extension of~ $K$ obtained by adjoining all $p$-power
	roots of unity.  It is an infinite degree Galois extension
	of $K$, whose Galois group is naturally
	identified with an open subgroup of $\Z_p^{\times}$.
	We let $K_{\cyc}$ denote the unique subextension of 
	$K(\zeta_{p^{\infty}})$ whose Galois group
	over $K$ is isomorphic to $\Z_p$ (so $K_{\cyc}$ is
	the ``cyclotomic $\Z_p$-extension'' of $K$).
	If we let $\Khat_{\cyc}$ denote the closure 
	of $K_{\cyc}$ in $\C$,
	then $\Khat_{\cyc}$ is a perfectoid subfield of $\C$.
\end{example}

\begin{example}[The Kummer case]
	\label{ex:kummer}
	If we choose a uniformizer $\pi$ of $K$,
	as well as a compatible system of $p$-power roots
	$\pi^{1/p^n}$ of $\pi$ (here, ``compatible'' has
	the obvious meaning, namely that $(\pi^{1/p^{n+1}})^p = \pi^{1/p^n}$),
	then we define $K_{\infty} = K(\pi^{1/p^{\infty}}) :=
	\bigcup_n K(\pi^{1/p^n}).$
	If we let $\Khat_{\infty}$ denote the closure
	of $K_{\infty}$ in $\C$,
	then $\Khat_{\infty}$ is again a perfectoid subfield of $\C$.
\end{example}

\begin{rem}
  \label{rem: Krasner Ax Tate Sen}Let~$L=K_{\cyc}$
  or~$K_\infty$. Then~$\Lhat\otimes_L\Qpbar$ is an algebraic closure
  of~$\Lhat$, so that the absolute Galois groups of~$L$ and~$\Lhat$
  are canonically identified. The action of~$G_L$ on~$\Qpbar$ extends
  to an action on~$\C$, and by a theorem of
  Ax--Tate--Sen~\cite{MR0263786}, we have~$\C^{G_L}=\Lhat$. 
\end{rem}

If $\cO_F$ denotes the ring of integers in $F$ (with~$F=\C$ or one of
the two possibilities just discussed), then $\cO_F^\flat$ is the ring
of integers in~$F^\flat$. Since $F^\flat$ and
$\cO_F^\flat$ are perfect,
we may form their rings of (truncated) Witt vectors $W(\cO_{F}^\flat)$,
 $W(F^\flat)$, $W_a(F^\flat)$, $W_a(\cO_F^\flat)$ (where~$a\ge 1$ is an integer).
We write  $\Ainf:=W(\cO_{\C}^\flat)$.

We always consider these rings of Witt vectors as topological rings
with the so-called {\em weak topology}, which admits the following
description: if $x$ is any element of $\cO_F^\flat$ of positive
valuation, and if $[x]$ denotes the Teichm\"uller lift of $x$, then we
endow $W_a(\cO_F^\flat)$ with the $[x]$-adic topology, so that
$W(\cO_F^\flat)$ is then endowed with the $(p,[x]$)-adic topology.
The topology on $W_a(F^\flat)$ is then characterized by the fact that
$W_a(\cO_F^\flat)$ is an open subring (and the topology on
$W(F^\flat)$ is the inverse limit topology).

While we could formulate all of our results in terms of
$\varphi$-modules over these rings of Witt vectors, for the purposes
of proving our structural results about the stacks (and for connecting
to the usual theories of $(\varphi,\Gamma)$-modules and Breuil--Kisin
modules), we need to consider various smaller (in particular
Noetherian) subrings. These come from the Fontaine--Wintenberger
theory of the field of norms, but we will skip over this and go
straight to the definitions.

Firstly, in the Kummer case, we set~$\gS=W(k)[[u]]$, with an
       endomorphism~$\varphi$ determined by the properties that it is
       semilinear for the Frobenius on~$W(k)$, and satisfies~$\varphi(u)=u^p$, and
       let~$\cO_{\cE}$ be the $p$-adic completion of~$\gS[1/u]$. The choice of compatible
       system of $p$-power roots of~$\pi$ gives an element
       $\pi^{1/p^\infty}\in \cO_{\widehat{K_\infty}}^\flat$, and there
       is a continuous $\varphi$-equivariant
       embedding \[\gS\into W(\cO_{\widehat{K_\infty}}^\flat)\]
       sending $u\mapsto [\pi^{1/p^\infty}]$. This embedding extends
       to a continuous $\varphi$-equivariant
       embedding \[\cO_{\cE}\into W((\widehat{K_\infty})^\flat).\]

   Now consider the cyclotomic case, where we write $\tGamma_K :=
   \Gal(K(\zeta_{p^{\infty}})/K)$, and  the
       cyclotomic character induces an embedding
       $\chi:\tGamma_K \hookrightarrow \Z_p^{\times}$.  Consequently,
       there is an isomorphism
       $\tGamma_K \cong \Gamma_K \times \Delta$, where
       $\Gamma_K \cong \Z_p$ and $\Delta$ is finite.  We have
       $K_{\cyc} = (K(\zeta_{p^{\infty}}))^{\Delta}$. We will usually
       write~$\Gamma$ for~$\Gamma_K$ from now on.
       
       {\bf We now make a simplifying assumption for the purposes of
         exposition: assume that $K/\Qp$ is unramified.} We will keep
       this assumption in place for the rest of this lecture and the
       following two lectures. (All of our results hold for general
       $K/\Qp$, but there is a subtlety: for general~$K/\Qp$, 
       there is no~$\varphi$-stable analogue of the
       ring~$\A_K^+$ defined in the next paragraph. This means that in many arguments
       in~\cite{emertongeepicture} we reduce to the
       unramified case by somewhat technical although essentially
       straightforward arguments. We want to avoid these
       complications in these lectures.)

       If we choose a compatible system of $p^n$th roots of $1$,
       then these give rise in the usual way to an element
       $\varepsilon \in (\widehat{K(\zeta_{p^{\infty}})})^{\flat}.$
       There is then a continuous embedding
       $$W(k)[[T]] \hookrightarrow 
       W( \cO_{\widehat{K(\zeta_{p^{\infty}})}}^\flat)$$
       (the source being endowed with its $(p,T)$-adic topology,
       and the target with its weak topology),
       defined via $T \mapsto [\varepsilon]- 1$.
       We denote the image of this embedding by $(\A'_{K})^+$.
       This embedding extends to an embedding
       $$\widehat{W(k)((T))} \hookrightarrow W\bigl( (\widehat{K(\zeta_{p^{\infty}})})^\flat
       \bigr)$$
       (here the source is the $p$-adic completion of the Laurent
       series ring $W(k)((T))$),
       whose image we denote by $\A'_{K}$. Write $T'_{K}\in\A'_K$ for
       the image of~$T$. The actions of ~$\varphi$ and~$\gamma\in\Gammat_{K}$
on~$T'_{K}\in\A_{K}'$ are given by the explicit formulae
\numequation\label{eqn: phi on T basic case with
  epsilon}\varphi(T'_{K})=(1+T'_{K})^p-1, \end{equation}
\numequation\label{eqn: gamma on T basic case with
  epsilon}\gamma(1+T'_{K})=(1+T'_{K})^{\chi(\gamma)},
\end{equation}where~$\chi:\Gammat_{K}\to\Z_p^\times$ again denotes the
cyclotomic character. We set
$\A_K := (\A'_K)^{\Delta}$,
$T_{K}=\tr_{\A_{K}'/\A_{K}}(T_{K}')$, and
$\A_{K}^+=W(k)[[T_{K}]]$; then we
have~$\A_{K}=W(k)((T_{K}))^{\wedge}$, and~$\A_{K}^+$ is
$(\varphi,\Gamma_{K})$-stable. 
We have $\varphi(T_{K})\in T_{K}\A_{K}^+$, and $g(T_{K})\in
T_{K}\A_{K}^+$ for all~$g\in\Gamma_{K}$. From now on we will
often write~$T$ for~$T_K$.

\begin{rem}
  \label{rem: could work with full cyclotomic but Zp is convenient}We
  could equally well work with the rings $\A_{K}'$ and the theory
  of~$(\varphi,\Gammat)$-modules, but it is often convenient to work
  with the procyclic group~$\Gamma$ and not have to carry around the
  finite group~$\Delta$.
\end{rem}

\begin{rem}
  \label{rem: our coefficient rings have the same forms in both the
    Kummer and cyclotomic cases}Note that in both the Kummer and
  cyclotomic settings, we are considering rings which are abstractly a
  power series ring over the Witt vectors, equipped with a lift of
  Frobenius. This will mean that various foundational parts of the
  theory can be developed in parallel for the two cases.
\end{rem}
\subsection{Coefficients}\label{subsec: coefficients}
We now consider coefficients. Recall that if~$A$ is a $p$-adically
    complete $\Zp$-algebra, then~$A$ is said to be \emph{topologically of finite
    type over~$\Zp$} if it can be written as a quotient of a
  restricted formal power series ring in finitely many variables~$\Zp\langle\langle
  X_1,\dots,X_n\rangle\rangle$; equivalently, if and only if~$A/p$ is a finite
    type $\Fp$-algebra (\cite[\S0, Prop.\
    8.4.2]{MR3752648}). In particular, if~$A$ is a
    $\Z/p^a$-algebra for some~$a\ge 1$, then~$A$ is
    topologically of finite type over~$\Zp$ if and only if it is of
    finite type over~$\Z/p^a$. Our coefficient rings will always be
    assumed to be topologically of finite
    type over~$\Zp$, and they will usually also be (finite type)
    $\Z/p^a$-algebras for some~$a\ge 1$. 

    \begin{rem}
      \label{rem: bigger rings do exist}Our stacks are all limit
      preserving, so that values on $A$-valued points (for any~$A$)
      are determined by their values on~$A$ which are (topologically)
      of finite type. It therefore does not cause us any problems to
      restrict to coefficients~$A$ of this kind. In addition many of
      our arguments with $\varphi$-modules require this assumption
      on~$A$, and we do not know if the results hold without it.
    \end{rem}


Our various coefficient rings are all defined by
taking completed tensor products. 
More precisely, 
 if $a \geq 1$,
let $v$ denote an element of the maximal ideal
of $W_a(\cO_\C^\flat)$
whose image
in $\cO_\C^\flat$ is non-zero.
We then set
$$
W_a(\cO_\C^\flat)_A =
W_a(\cO_\C^\flat) \cotimes_{\Z_p}  A
:= \varprojlim_i \bigl( W_a(\cO_\C^\flat)\otimes_{\Z_p}A\bigr)/v^i$$
(so that the indicated completion is the $v$-adic completion).
Note that any two choices of $v$ induce the same topology
on $W_a(\cO_\C^\flat)\otimes_{\Z_p} A$, so
that 
$W_a(\cO_\C^\flat) \cotimes_{\Z_p}  A$
is well-defined independent of the choice of $v$.
We then define
$$W_a(\C^\flat)_A =  W_a(\C^\flat) \cotimes_{\Z_p} A
:= W_a(\cO_\C^\flat)_A[1/v];$$
this ring is again well-defined independently of the choice of $v$. We set
$$
W(\cO_\C^\flat)_A =
W(\cO_\C^\flat)\cotimes_{\Z_p} A := \varprojlim_a W_a(\cO_\C^\flat)_A,
$$ and similarly
$$
W(\C^\flat)_A =
W(\C^\flat)\cotimes_{\Z_p} A := \varprojlim_a W_a(\C^\flat)_A.
$$
In keeping with our notation above, we will usually write
$$\AAinf{A} :=  W(\cO_\C^\flat)_A.$$

We define the imperfect coefficient rings in the Kummer and cyclotomic
cases in the same way, but we can be more explicit: we have $\gS_A=(W(k)\otimes_{\Zp}A)[[u]]$, and~$\OEA$ is the $p$-adic
      completion of~$\gS_A[1/u]$, and similarly for $\A^+_{K,A}$ and $\A_{K,A}$.

      \subsubsection{A digression on flatness and completion}
Since~$\Ainf$ is not Noetherian, we have to be a bit
careful when taking completions, and for example it does not seem to
be obvious that the natural map $\gS_A\to\AAinf{A}$ is injective. The
following is~\cite[Prop.\ 2.2.12]{emertongeepicture}, and is proved using~\cite[\S 5]{MR2774689} and ~\cite[Rem.\ 4.31]{2016arXiv160203148B}.

\begin{prop}
  \label{prop: maps of coefficient rings are faithfully flat injections}
Suppose that~$A\to B$ is a flat homomorphism
  of finite type $\Z/p^a\Z$-algebras for
  some $a\ge 1$. Then all the maps in the following diagram are flat.
  Furthermore the vertical arrows are all injections,
while the  horizontal arrows are all faithfully flat {\em (}and
so in particular also injections{\em )}.
If $A\to B$ is furthermore faithfully flat, then the same is true of the
diagonal arrows.

\[\xymatrix{&\A_{K,B}^+\ar[rr]\ar[dd]&&\AAinf{B}\ar[dd]&&\gS_B
    \ar[dd]\ar[ll]\\
    \A_{K,A}^+\ar[rr]\ar[dd]\ar[ur]&&\AAinf{A}\ar[dd]\ar[ur]&&\gS_A
    \ar[dd]\ar[ll]\ar[ur]\\
&\A_{K,B}\ar[rr]&& W(\C^\flat)_B && \OEB\ar[ll]\\
\A_{K,A}\ar[rr]\ar[ur]&& W(\C^\flat)_A\ar[ur] &
& \OEA\ar[ll]\ar[ur]}\]
\end{prop}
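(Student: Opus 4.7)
The plan is to prove the many assertions by reducing to a ``base case'' of no coefficients (i.e., $A = B = \Z/p^a$) and then upgrading by base change along $A \to B$. A useful preliminary observation is that, because $A$ and $B$ are finite-type $\Z/p^a$-algebras for some $a \ge 1$, the rings $\gS_A = (W(k) \otimes_{\Z_p} A)[[u]]$ and $\A_{K,A}^+$ are $p$-nilpotent. Hence the $p$-adic completions appearing in the definitions of $\OEA$ and $\A_{K,A}$ are trivial: we have $\OEA = \gS_A[1/u]$ and $\A_{K,A} = \A_{K,A}^+[1/T]$. Likewise, $\AAinf{A}$ and $W(\C^\flat)_A$ are each determined by a single $v$-adic completion of a tensor product, with no further inverse limit over $a$. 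This simplification reduces all of the statements to questions about $v$-adically (or $u$- or $T$-adically) completed tensor products.

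In the no-coefficient case, the three vertical arrows $\gS \to \cO_{\cE}$, $\A_K^+ \to \A_K$, $\Ainf \to W(\C^\flat)$ are localizations at the nonzerodivisors $u$, $T$, and $[\pi^\flat]$ respectively (followed by $p$-adic completion), and so are flat and injective. The four horizontal arrows $\gS \to \Ainf$, $\A_K^+ \to \Ainf$, $\cO_{\cE} \to W(\C^\flat)$, and $\A_K \to W(\C^\flat)$ are faithfully flat; this is the substantive input, and is supplied by the integral $p$-adic Hodge theory of \cite[Rem.\ 4.31]{2016arXiv160203148B}, together with the classical/cyclotomic analysis in \cite[\S 5]{MR2774689}. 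Note that the bottom-face horizontal maps are obtained from the top-face ones by inverting the relevant uniformizer, so one really only needs to handle the top face.

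To pass to coefficients, one applies $- \otimes_{\Z/p^a} A$ and then the appropriate completion. For the Noetherian rings ($\gS_\bullet$, $\A_{K,\bullet}^+$, and their localizations), this is routine: flatness (resp.\ faithful flatness) of $A \to B$ persists through the formation of $(W(k) \otimes -)[[u]]$ and through localization, giving the diagonal arrows, and faithful flatness of the horizontal arrows follows by base change from the no-coefficient case. The main obstacle lies in handling the non-Noetherian rings $\AAinf{\bullet}$ and $W(\C^\flat)_\bullet$: one must verify that $v$-adic completion preserves the desired flatness properties along the maps $\Z_p \to A \to B$. This is precisely what is furnished by \cite[\S 5]{MR2774689} and \cite[Rem.\ 4.31]{2016arXiv160203148B}. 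With these results granted, all remaining claims---horizontal faithful flatness in the coefficient setting, vertical flat injectivity, and diagonal (faithful) flatness---follow by base change from the base case, using that $u$, $T$, and $[\pi^\flat]$ remain nonzerodivisors after the tensor products and completions involved (which can be seen either directly or a posteriori from the horizontal faithful flatness in the completed cube).
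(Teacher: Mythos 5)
Your proposal is correct and takes essentially the same route as the paper, which offers no argument for this proposition beyond citing \cite[Prop.\ 2.2.11]{emertongeepicture} together with \cite[\S 5]{MR2774689} and \cite[Rem.\ 4.31]{2016arXiv160203148B}: the Noetherian rings $\gS_A$, $\A_{K,A}^+$ and the vertical localizations are routine, and you correctly isolate the real content --- that the $v$-adic completions defining $\AAinf{A}$ and $W(\C^\flat)_A$ do not destroy (faithful) flatness --- and defer it to the same two references. One correction: \cite{MR2774689} is Yekutieli's commutative-algebra paper on flatness and completion over Noetherian rings, not a ``classical/cyclotomic'' input; it enters exactly at the completion step, while the faithful flatness of $\gS\to\Ainf$ itself rests on $\cO_{\C}^{\flat}$ being a valuation ring (so torsion-free implies flat) together with the local criterion for flatness to pass from mod $p$ to mod $p^a$. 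Relatedly, since $\AAinf{B}$ is the $v$-adic completion of $\AAinf{A}\otimes_A B$ rather than that tensor product itself, ``follows by base change'' is never literally true for the perfect rings; your argument is complete only conditional on the completion-flatness results you invoke, which is also the paper's position.
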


It is straightforward to show that $\varphi$ extends to a continuous endomorphism
of~$\AAinf{A}$, $W(\cO_C^\flat)_A$, and so on; and the natural action
of~$G_K$ on these rings is continuous, as is the action of~$\Gamma$
on~$\A^+_{K,A}$ and ~$\A_{K,A}$.

\subsection{$\varphi$-modules, Breuil--Kisin(--Fargues) modules, $(\varphi,\Gamma)$-modules, and Galois
  representations}\label{subsec: phi modules and Galois representations} Let~$R$ be a $\Zp$-algebra, equipped
  with a  ring endomorphism~$\varphi$, which is congruent to the
  ($p$-power) Frobenius modulo~$p$. If~$M$ is an $R$-module, we write \[\varphi^*M:=R\otimes_{R,\varphi}M.\]  
\begin{defn}
  \label{defn: general phi module} An \emph{\'etale
    $\varphi$-module over~$R$} is a finite $R$-module~$M$, equipped
  with a $\varphi$-semilinear endomorphism $\varphi_M:M\to M$, which
  has the property that the induced $R$-linear morphism
  \[\Phi_M:\varphi^*M\stackrel{1\otimes\varphi_M}{\longrightarrow}M \]
  is an isomorphism.  A morphism of
   \'etale $\varphi$-modules is a morphism of the underlying
  $R$-modules which commutes with the
  morphisms~$\Phi_{M}$. We say that~$M$ is \emph{projective} (resp.\
  \emph{free}) if it is projective of constant rank (resp.\ free of
  constant rank) as an $R$-module. 
\end{defn}
We apply this in particular with~$R=\A_{K,A}$ or~$\OEA$. In the former
case, an \emph{\'etale $(\varphi,\Gamma)$-module} is an \'etale
$\varphi$-module over~$\A_{K,A}$, equipped with a commuting continuous
semilinear
action of~$\Gamma$. In both cases there is a relationship with Galois
representations as follows; the case without coefficients is due to
Fontaine (and was the motivation for introducing
$(\varphi,\Gamma)$-modules in the first place), and the version with
coefficients is due to Dee~\cite{MR1805474}.

Let~$\AKnrhat$ denote the $p$-adic
completion of the ring of integers
 of the
maximal unramified extension of~$\A_K[1/p]$ in~$W(\C^\flat)[1/p]$;
this is preserved by the natural actions of~$\varphi$ and~$G_K$
on~$W(\C^\flat)[1/p]$. Define  $\AKnrhatA$ as usual. Then if~$A$ is an
Artinian $\Zp$-algebra, we have an equivalence of categories between the category of finite projective  \'etale
$(\varphi,\Gamma)$-modules~$M$ with $A$-coefficients, and the category of
finite free $A$-modules~$V$ with a continuous action of~$G_K$, given by
the functors  \[V\mapsto (\AKnrhatA 
  \otimes_{A}V)^{G_{\Kcyc}},\]  \[M\mapsto (  \AKnrhatA \otimes_{\A_{K,A}}M)^{\varphi=1}.\]
Taking limits (and forming $\AKnrhatA$ with respect to the $\m_A$-adic
topology), we can extend this to the case that~$A$ is complete local
Noetherian, or ~$A=\Fpbar$; this is important later on for
identifying the versal rings of our moduli stacks with Galois
deformation rings. Note though that for more general~$A$ (e.g.\ $A=\Fp[X]$), it is not the
case that there is an equivalence between $(\varphi,\Gamma)$-modules
and Galois representations 
(\emph{cf.} the discussion in Lecture~\ref{subsec: FL vs WD}). 

There is also a version of this in the Kummer setting: if $\cO_{\widehat{\cE^{\nr}}}$ is the $p$-adic
completion of the ring of integers
in the
maximal unramified extension of~$\Frac(\cO_{\cE})$
in~$W(\C^\flat)[1/p]$, then for~$A$ as above, we have an equivalence
of categories between \'etale $\varphi$-modules over~$\OEA$ and
$G_{K_\infty}$-representations, given by \[V\mapsto (\widehat{\cO}_{\widehat{\cE^{\nr}},A} 
  \otimes_{A}V)^{G_{K_\infty}},\]  \[M\mapsto (  \widehat{\cO}_{\widehat{\cE^{\nr}},A} \otimes_{\OEA}M)^{\varphi=1}.\]

In the next few lectures we will define moduli stacks of \'etale
$\varphi$-modules and \'etale $(\varphi,\Gamma)$-modules. One of the
key tools in proving the basic properties of these stacks is to study
the corresponding stacks of \emph{finite height}
$\varphi$-modules over~$\A_{K,A}^+$ or~$\gS_A$. We'll do this more
generally in the next lecture, but for now we'll just have the following
definition, which is important in the crystalline and semistable
theory. (Actually, because~$K/\Qp$ is assumed unramified, the finite
height modules over $\A^+_{K,A}$ have an intrinsic utility for
describing crystalline representations in terms of \emph{Wach
  modules}. However, this doesn't extend to general $K/\Qp$, or to the
semistable case, so we will not make any use of this property of Wach
modules in these lectures.)

\begin{defn}
Let~$E(u)$ be the minimal polynomial over~$W(k)$ of our fixed
uniformiser~$\pi$ of~$K$.
We define a (projective) Breuil--Kisin module 
(resp.\ a Breuil--Kisin--Fargues module) of height at
  most~$h$ with $A$-coefficients to be a finitely generated projective $\gS_{A}$-module
  (resp.\ $\AAinf{A}$-module)
  $\gM$, equipped with a~$\varphi$-semilinear morphism
  $\varphi:\gM\to\gM$, with the property that the corresponding
  morphism 
  $\Phi_{\gM}:\varphi^*\gM\to\gM$ is
  injective, with cokernel killed by~$E(u)^h$. 
If~$\gM$ is a Breuil--Kisin module   then
$\AAinf{A}\otimes_{\gS_{A}}\gM$ is a Breuil--Kisin--Fargues module.
\end{defn}
Note that our Breuil--Kisin(--Fargues) modules are ``effective'', i.e.\ $\varphi$-stable. This means
that we will end up only considering non-negative Hodge--Tate weights;
the general case follows by twisting by a sufficiently large power
of~$E(u)$, so this is harmless.

The connection between Breuil--Kisin(--Fargues) modules  and
crystalline/semistable Galois representations will be explained in Lecture~\ref{sec: crystalline and
  semistable}.
\subsection{Almost Galois descent}\label{subsec: almost Galois descent}Let $F$ be a closed perfectoid
subfield of $\C$. The following is~\cite[Thm.\
2.4.1]{emertongeepicture}, which is proved by a somewhat elaborate
argument making use of almost Galois descent.
\begin{theorem}
	\label{thm:descending projective modules}
			Let $A$ be a finite type $\Z/p^a$-algebra,
			for some $a \geq 1$.
			The inclusion
			$W(F^\flat)_A  \to W(\C^\flat)_A$
			is a faithfully flat morphism
			of Noetherian rings,
			and
	the functor $M \mapsto W(\C^\flat)_A\otimes_{W(F^\flat)_A} M$
	induces an equivalence between the category of finitely generated 
	projective $W(F^\flat)_A$-modules and the category of
	finitely generated projective $W(\C^\flat)_A$-modules endowed
	with a continuous semilinear $G_F$-action. A quasi-inverse
        functor is given by $N\mapsto N^{G_F}$.
      \end{theorem}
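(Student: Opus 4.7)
The plan is to first establish the ring-theoretic assertions (Noetherianness and faithful flatness), and then deduce the equivalence of categories from almost Galois descent for the perfectoid extension $\cO_{F^\flat}\subset\cO_{\C^\flat}$.

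For the ring-theoretic properties, the approach is a devissage: handle the case $a=1$ first, in which we are working with $v$-adic completions of tensor products of the perfectoid fields of characteristic~$p$ with~$A$, then bootstrap up by induction on~$a$ using the filtration $p^iW_a$ (whose successive quotients are of the form $W_1(\cdot)_A$), and finally pass to $W(\cdot)_A$ by taking an inverse limit. The $a=1$ case rests on two ingredients: the faithful flatness of $\cO_{F^\flat}\to\cO_{\C^\flat}$ (almost, by almost purity; honest after inverting a pseudo-uniformizer $v$, which yields the faithful flatness of the field extension $F^\flat\to\C^\flat$), and the fact that tensoring with a finite-type $\F_p$-algebra and $v$-adically completing preserves this, which is essentially contained in Proposition~\ref{prop: maps of coefficient rings are faithfully flat injections}. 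Noetherianness is more subtle: it requires showing that the $v$-adic completion of $\cO_{\C^\flat}\otimes_{\F_p}A$ is Noetherian, which can be extracted by combining the completion-theoretic results of~\cite{MR2774689} with the perfectoid machinery of~\cite{2016arXiv160203148B}, making essential use of the finite-type hypothesis on~$A$.

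For the equivalence of categories, the key input is almost Galois descent for the extension $\cO_{F^\flat}\subset\cO_{\C^\flat}$, which is almost faithfully flat \'etale and $G_F$-Galois, the fixed field being $F^\flat$ by Ax--Tate--Sen (Remark~\ref{rem: Krasner Ax Tate Sen}). This yields, at the almost level, an equivalence between almost finitely presented projective $\cO_{F^\flat}$-modules and almost finitely presented projective $\cO_{\C^\flat}$-modules equipped with a continuous semi-linear $G_F$-action, with quasi-inverse given by almost $G_F$-invariants. We pass from almost to honest modules by inverting $v$ (replacing the rings of integers by the perfectoid fields themselves), tensor with~$A$, and $v$-adically complete, to obtain the result for $W_1(F^\flat)_A$ and $W_1(\C^\flat)_A$. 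Induction on~$a$ along $p^iW_a$ and an inverse-limit argument then yield the result for $W_a$ and $W$. The quasi-inverse $N\mapsto N^{G_F}$ is identified with the completed almost-invariants functor by appealing to the continuity of the $G_F$-action together with the ring-theoretic faithful flatness established in the first half.

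The main obstacle is the passage from almost to honest modules in the presence of coefficients: one must verify that ``almost isomorphisms'' become genuine isomorphisms after inverting $v$ and tensoring with~$A$, and that the resulting functors still send finitely generated projective modules to finitely generated projective modules, while compatibly handling the $A$-action, the $G_F$-action, and the topologies. The Noetherianness assertion is also a genuine difficulty, since perfectoid rings are typically non-Noetherian; it is the finite-type hypothesis on~$A$, together with the $v$-adic completion, that makes the argument go through.
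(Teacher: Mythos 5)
Your proposal is correct in outline and follows essentially the same route as the paper, which proves this result precisely by an elaborate almost Galois descent argument for the almost faithfully flat, almost \'etale extension $\cO_{F^\flat}\subset\cO_{\C^\flat}$, combined with d\'evissage in $a$, inverse limits, and the completion/flatness results underlying Proposition~\ref{prop: maps of coefficient rings are faithfully flat injections}. The only quibble is the order of operations in passing to coefficients (one tensors with $A$ and $v$-adically completes \emph{before} inverting $v$, since $W_a(F^\flat)_A:=W_a(\cO_F^\flat)_A[1/v]$), but your argument clearly intends this and it does not affect the substance.
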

We will find this result very useful later on, because it lets us
combine the Kummer and cyclotomic settings. The key extra ingredient
(which also takes some work) is that when we have the additional
structure of a $\varphi$-module, we can descend from the perfect
coefficient ring $W(F^\flat)_A$ (with~$F=\widehat{K}_{\cyc}$ or
$F=\widehat{K}_\infty$) to the imperfect coefficient rings~$\A_{K,A}$
and~$\OEA$ respectively, in the following way.

\begin{defn}
  \label{defn: GK module over Ainf}Let~$A$ be a $p$-adically complete $\Zp$-algebra. 
An \emph{\'etale $(\varphi,G_K)$-module
with $A$-coefficients} (resp.\ an \emph{\'etale $(\varphi,G_{\Kcyc})$-module
with $A$-coefficients}, resp.\ an \emph{\'etale $(\varphi,G_{K_\infty})$-module
with $A$-coefficients}) is by definition a finitely
generated~$W(\C^\flat)_A$-module $M$ equipped with an 
isomorphism of $W(\C^\flat)_A$-modules 
\[\Phi_M:\varphi^*M\isoto M,\] and
a~$W(\C^\flat)_A$-semilinear action of~$G_K$ (resp.\ $G_{\Kcyc}$,
resp.\ $G_{K_\infty}$), which is continuous 
and commutes with~$\Phi_M$. We say that~$M$ is projective if it is projective of constant rank as
a~$W(\C^\flat)_A$-module.
\end{defn}

      The following is~\cite[Prop.\ 2.7.8]{emertongeepicture}.
\begin{prop}
  \label{prop: equivalences of categories to Ainf}Let~$A$ be a
  finite type $\Z/p^a$-algebra for some~$a\ge 1$.
  \begin{enumerate}
  \item The functor $M\mapsto W(\C^\flat)_A\otimes_{\A_{K,A}}M$ is an
    equivalence between the category of finite projective \'etale
$\varphi$-modules over~$\A_{K,A}$ and the category of
finite projective \'etale $(\varphi,G_{K_{\cyc}})$-modules
with $A$-coefficients.

It induces  an
    equivalence of categories between the  category of finite projective \'etale
$(\varphi,\Gamma_K)$-modules with $A$-coefficients and the category of
finite projective \'etale $(\varphi,G_K)$-modules
with $A$-coefficients. 

\item The functor $M\mapsto W(\C^\flat)_A\otimes_{\OEA}M$ is an
    equivalence of categories between the  category of finite projective \'etale
$\varphi$-modules over~$\OEA$ and the category of
finite projective \'etale $(\varphi,G_{K_\infty})$-modules
with $A$-coefficients. 
  \end{enumerate}

\end{prop}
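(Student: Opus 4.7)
My strategy is to factor each claimed equivalence through the perfect coefficient ring $W(F^\flat)_A$, where $F = \widehat{K}_{\cyc}$ in part (1) and $F = \widehat{K}_{\infty}$ in part (2). The descent from $W(\C^\flat)_A$ down to $W(F^\flat)_A$ is handled by Theorem~\ref{thm:descending projective modules} (almost Galois descent), while a separate ``perfection'' descent handles the passage from $W(F^\flat)_A$ down to the imperfect rings $\A_{K,A}$ and $\OEA$. These two descents are essentially orthogonal: the first uses the Galois action and the second uses the~$\varphi$-action.

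For the Galois step, I apply Theorem~\ref{thm:descending projective modules} with $F$ as above to obtain an equivalence between finite projective $W(\C^\flat)_A$-modules equipped with a continuous semi-linear $G_F$-action, and finite projective $W(F^\flat)_A$-modules, via $M \mapsto M^{G_F}$. Since~$\varphi$ preserves $W(F^\flat)_A$ and commutes with the~$G_F$-action on $W(\C^\flat)_A$, a $\varphi$-structure $\Phi_M$ on~$M$ that commutes with $G_F$ restricts to a $\varphi$-structure on $M^{G_F}$; conversely, the extension of scalars $W(\C^\flat)_A \otimes_{W(F^\flat)_A} (-)$ transports $\varphi$-modules to $\varphi$-modules with $G_F$-action. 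The étaleness condition (that $\Phi_M$ is an isomorphism) is preserved in both directions since the base change $W(F^\flat)_A \to W(\C^\flat)_A$ is faithfully flat.

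The main obstacle is the perfection descent: showing that for $R = \A_{K,A}$ (resp.\ $R = \OEA$), the base change functor $M \mapsto W(F^\flat)_A \otimes_R M$ is an equivalence between étale $\varphi$-modules over~$R$ and étale $\varphi$-modules over $W(F^\flat)_A$. I would proceed by dévissage in~$a$, using Proposition~\ref{prop: maps of coefficient rings are faithfully flat injections} to reduce inductively to the case~$a=1$, i.e.\ to coefficients in a finite type $\F_p$-algebra. In the mod~$p$ case, the key point is that although $W(F^\flat)_A / p$ is much larger than $R/p$, the ``relative'' perfection is controlled by~$\varphi$: an étale $\varphi$-module over $W(F^\flat)_A/p$ is trivialized by a finite extension after applying $\varphi^{-n}$ for~$n$ large, and the $\varphi$-invariants recover the étale $\varphi$-module over $R/p$. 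Concretely, for full faithfulness one shows that $\Hom_\varphi(M,M') \isoto \Hom_\varphi(W(F^\flat)_A \otimes_R M, W(F^\flat)_A \otimes_R M')$ using that morphisms are determined by their values on a basis after suitably applying~$\Phi_M^{-1}$. For essential surjectivity, given a projective étale $\varphi$-module~$N$ over $W(F^\flat)_A$, I would construct the descended module locally by choosing a basis and using that the transition matrices, initially in $\GL_d(W(F^\flat)_A)$, can be ``rectified'' to lie in $\GL_d(R)$ by iterating $\varphi^{-1}$ and using the étaleness to guarantee convergence; projectivity is preserved because faithfully flat descent of projective modules holds by Proposition~\ref{prop: maps of coefficient rings are faithfully flat injections}.

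Finally, for the $(\varphi,\Gamma_K)$-module statement in part~(1), I combine the equivalence for bare $\varphi$-modules just established with the observation that $G_K / G_{K_{\cyc}} \cong \Gamma_K$ (recalling that $\Delta$ acts trivially on $\A_{K,A} = (\A_{K,A}')^\Delta$): a continuous semi-linear $G_K$-action on the $W(\C^\flat)_A$-module side whose restriction to~$G_{K_{\cyc}}$ gives an étale $(G_{K_{\cyc}},\varphi)$-module structure corresponds, under the equivalence already proved, to a continuous semi-linear action of $\Gamma_K$ on the descended $\A_{K,A}$-module commuting with~$\varphi$, i.e.\ to a $(\varphi,\Gamma_K)$-module structure. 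Continuity passes through the descent because the weak topology on $W(\C^\flat)_A$ restricts to the natural topology on~$\A_{K,A}$ via the embedding discussed in \S\ref{subsec:rings}.
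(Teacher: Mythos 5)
Your two-step factorization through $W(F^\flat)_A$ --- Galois descent via Theorem~\ref{thm:descending projective modules}, followed by a $\varphi$-descent from the perfect ring to the imperfect subring --- is exactly the architecture the paper uses, and the handling of the $\Gamma_K$-action at the end is right. However, one claim in your perfection-descent paragraph is wrong as stated: taking ``$\varphi$-invariants'' of an \'etale $\varphi$-module over $W(F^\flat)_A/p$ does not recover a $\varphi$-module over $R/p$. The functor $(-)^{\varphi=1}$ applied at the level of $W(F^\flat)_A$ would yield at best a small $A$-module, not a module over $R$; the $\varphi$-invariants construction belongs to the passage to Galois representations over the much larger rings $\AKnrhatA$ or $\widehat{\cO}_{\widehat{\cE^{\nr}},A}$, which is a different functor entirely.

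The mechanism you actually want is the one you gesture at in your ``concretely'' passage, and it should replace the $\varphi$-invariants sentence: trivialize locally, write the matrix $B \in \GL_d(W(F^\flat)_A)$ of $\Phi_N$, and show that a suitable $\varphi$-conjugate $g^{-1}B\varphi(g)$ lies in $\GL_d(R)$. The successive approximation uses two facts: that $\bigcup_n \varphi^{-n}(R)$ is dense in $W(F^\flat)_A$ for the weak topology, and that \'etaleness of $\Phi_N$ gives the contraction needed for convergence (compare the mechanism of Lemma~\ref{lem: phi conjugacy approximation}). Full faithfulness is the same in spirit: a $\varphi$-equivariant matrix $h$ over $W(F^\flat)_A$ satisfies $h = \Phi_{M'}\,\varphi(h)\,\Phi_M^{-1}$, and iterating this relation forces the entries of $h$ into $R$.
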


\section{Moduli stacks of $\varphi$-modules}\label{sec:stacks of phi modules}We now define our first
moduli stacks, using the objects introduced in the previous
lecture. We maintain our assumption (made purely for the purposes of
exposition) that~$K/\Qp$ is unramified.
\subsection{Results of Pappas--Rapoport}We begin our discussion of stacks with
$\varphi$-modules; the more complicated case of
$(\varphi,\Gamma)$-modules will be built on this. These stacks were
first studied by Pappas--Rapoport~\cite{MR2562795} in the context of
Breuil--Kisin modules; they showed (among other things) that there  are algebraic stacks
of Breuil--Kisin modules, and that the morphisms to the stacks of
\'etale $\varphi$-modules are well-behaved. These results were built
upon in~\cite[\S 5]{EGstacktheoreticimages}, which showed that the stack of
\'etale $\varphi$-modules is itself well-behaved, and in particular is
Ind-algebraic.

We begin by recalling the results of Pappas--Rapoport (in a slightly
more general context, where~$\varphi$ is not necessarily given by
$\varphi(u)=u^p$; this makes some arguments messier but doesn't change
any of the key points). 

\begin{situation}
  \label{subsubsec:general framework}
  Fix a finite extension~$k/\Fp$ and
  write~$\Aplus:=W(k)[[T]]$. Write~$\A$ for the $p$-adic completion
  of~$\Aplus[1/T]$.


  If $A$ is a $p$-adically complete~$\Zp$-algebra, we write
  $\Aplus_A:=(W(k)\otimes_{\Zp}A)[[T]]$; we equip~$\Aplus_A$ with its
  $(p,T)$-adic topology, so that it is a topological $A$-algebra
  (where~$A$ has the $p$-adic topology). Let $\AAA_A$ be the $p$-adic
  completion of~ $\Aplus_A[1/T]$, which we regard as a topological
  $A$-algebra by declaring $\Aplus_A$ to be an open subalgebra.

  Let~$\varphi$ be a ring endomorphism of~$\A$ which is congruent to
  the ($p$-power) Frobenius endomorphism modulo~$p$, and satisfies~$\varphi(\A^+)\subseteq \A^+$.

By~\cite[Lem.\ 5.2.2 and
  5.2.5]{EGstacktheoreticimages} and~\cite[Lem.\ 3.2.5,
  3.2.6]{emertongeepicture}, 
  $\varphi$ is faithfully flat, and induces the usual 
  Frobenius on~$W(k)$, and it extends uniquely to an $A$-linear
  continuous endomorphism of~$\A^+_A$ and~$\A_A$.

  Note in particular that the formation of~$\gS_A$, $\OEA$,
  $\A_{K,A}^+$ and~$\A_{K,A}$ above are particular instances of this
  construction.

\end{situation}

Fix a polynomial $F\in W(k)[T]$ which is congruent to a positive power
of~$T$ modulo~$p$. (For example, if we are working in the Breuil--Kisin
setting, we will take~$T=u$, $\varphi(u)=u^p$, and~$F=E(u)$.)

In order to know that the various categories in groupoids that we will
 define are actually stacks, we use the following result of
 Drinfeld~\cite[Thm.\ 3.11]{MR2181808} (see~\cite[Thm.\
 5.1.18]{EGstacktheoreticimages} for the precise statements given here).

\begin{thm}
  \label{thm: fpqc locality of projective and locally free}
  The following notions are local for the fpqc topology on $\Spec A$.
  \begin{enumerate}
  \item A finitely generated projective $\A_A$-module.
\item A projective $\A_A$-module of rank~$d$.
  \item A finitely generated projective $\A_A$-module which is fpqc
    locally free of rank~$d$.
  \item A finitely generated projective $\A_A^+$-module.
\item A projective $\A_A^+$-module of rank~$d$.
  \item A finitely generated projective $\A_A^+$-module which is fpqc
    locally free of rank~$d$.
  \end{enumerate}

\end{thm}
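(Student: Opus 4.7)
The plan is to reduce everything to Drinfeld's fpqc descent theorem \cite[Thm.~3.11]{MR2181808} after checking that $\A_A^+$ and $\A_A$ fall within its scope. First I would observe that $\A_A^+ = (W(k)\otimes_{\Zp}A)[[T]]$ is a $(p,T)$-adically complete, topologically $A$-flat (by construction) ring in which the ideal $(p,T)$ defines a complete linear topology with the ``Tate'' quotient $\A_A^+/(p,T)^n$ finitely presented over $A$. Similarly, $\A_A$ is the $p$-adic completion of $\A_A^+[1/T]$, and so is a complete Tate $A$-algebra with ring of definition $\A_A^+$ and pseudo-uniformizer $T$. These two constructions are fpqc-local in $A$ in the naive sense (i.e.\ formation of $\A_A^+$ and $\A_A$ commutes with faithfully flat base change $A\to B$ in the completed-tensor sense), which is what Drinfeld's framework requires.

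Next I would apply Drinfeld's theorem to obtain items (1) and (4): given a faithfully flat map $A\to B$ and a finitely generated projective module $M_B$ over $\A_B^{+}$ (resp.\ $\A_B$) equipped with a descent datum along $B\otimes_A B$, Drinfeld's result produces a unique finitely generated projective module $M_A$ over $\A_A^+$ (resp.\ $\A_A$) whose base change recovers $M_B$ with its descent datum. Items (2) and (5) then follow immediately, because the rank function is locally constant on $\Spec\A_A^{+}$ (resp.\ on $\Spec\A_A$), and constancy of rank can be tested after the faithfully flat base change along $A\to B$.

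For items (3) and (6), I would first apply (1) or (4) to descend the module itself, and then use that the property ``fpqc locally free of rank $d$'' is itself stable under fpqc descent on $\Spec A$: if $M_A$ becomes locally free of rank $d$ on an fpqc cover $A\to B\to C$, then the pullback to $\A_C^{(+)}$ is locally free of rank $d$, and this witnesses the fpqc local freeness of $M_A$ on a cover of $\Spec A$ directly, without any further argument. No additional descent is needed beyond what is already given by items (1) and (4).

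The main obstacle is the verification that the hypotheses of Drinfeld's theorem really hold for our rings: concretely, one must check the ``admissibility'' of $\A_A^{+}$ as a topological $A$-algebra in Drinfeld's sense, and check that $\A_A$ is a Tate $A$-algebra with ring of definition to which the $(p,T)$-adic version of the theorem applies. This is essentially bookkeeping, relying on \cite[Lem.~5.2.2, 5.2.5]{EGstacktheoreticimages} for the flatness and topological properties of the constructions $A\mapsto \A_A^{+}$ and $A\mapsto \A_A$; once these inputs are in place, the descent statements are formal consequences of Drinfeld's result.
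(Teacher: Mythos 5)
Your proposal is correct and follows essentially the same route as the paper: the result is presented there as a direct consequence of Drinfeld's fpqc descent theorem \cite[Thm.~3.11]{MR2181808}, with the precise verification that $\A_A^+$ and $\A_A$ fall within its scope carried out in \cite[Thm.~5.1.18]{EGstacktheoreticimages}, exactly as you outline. Your reductions of (2), (5) and (3), (6) to (1) and (4) (via surjectivity of $\Spec\A_B\to\Spec\A_A$ for a faithfully flat cover $A\to B$, and via composing fpqc covers $A\to B\to C$) are the standard ones and are fine.
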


\begin{rem}
  \label{rem: what fpqc locality means for projective etc}More
  precisely, saying that the notion of a finitely generated projective
  $\A_A$-module is local for the \emph{fpqc} topology on $\Spec A$ means
  the following (and the  meanings of the other statements in Theorem~\ref{thm: fpqc
    locality of projective and locally free} are entirely
  analogous):

If $A'$ is any faithfully flat $A$-algebra, set
  $A'':=A'\otimes_A A'$. Then the category of finitely generated
  projective $\A_A$-modules is canonically equivalent to the
  category of finitely generated projective $\A_{A'}$-modules $M'$
  which are equipped
  with an isomorphism \[M' \otimes_{\A_{A'},a\mapsto 1\otimes a}\A_{A''}
  \isoto M' \otimes_{\A_{A'},a\mapsto a\otimes 1}\A_{A''} \] which satisfies the usual cocycle condition.
\end{rem}

If we fix integers $a,d \geq 1,$
then by Theorem~\ref{thm: fpqc locality of projective and locally free} we may define
an \emph{fpqc} stack in groupoids $\cR_d^a$ over
$\Spec \Z/p^a$ as follows: For any $\Z/p^a$-algebra $A$,
we define $\cR^a_d(A)$ to be the groupoid of
 \'etale $\varphi$-modules~$M$ over~$\A_A$ which are projective 
 of rank~$d$. There is a closely related version of this considered
in~\cite{MR2562795}, namely~$\cR_{d,\free}^a$, where we demand
that~$M$ is furthermore \emph{fpqc}-locally free. (In fact, we don't
know whether or not this is a consequence of~$M$ being projective, so
we don't know whether $\cR_{d,\free}^a=\cR_d^a$.)
In either case, if $A \to B$ is a morphism of $\Zp$-algebras, and $M$
is an object of $\cR_d^a(A)$,
then the pull-back of $M$ to $\cR_d^a(B)$
is defined to be the tensor product~$\AAA_B\otimes_{\AAA_A} M$.

From now on we regard $\cR^a_d$ as an {\em fppf} stack over~$\Z/p^a$. 
By~\cite[\href{http://stacks.math.columbia.edu/tag/04WV}{Tag
  04WV}]{stacks-project}, 
we may also regard the stack $\cR_d^a$ as an {\em fppf} stack over $\Zp$,
and as $a$ varies, we may form the $2$-colimit $\cR := \varinjlim_a \cR^a_d$,
which is again an {\em fppf} stack over $\Zp$.

The following definition generalises that of a Breuil--Kisin
module. 
\begin{defn}
  \label{defn: phi module of finite E-height}
 Let $h$ be a non-negative
integer. A  \emph{$\varphi$-module of
   $F$-height at most~$h$ over~$\A^+_A$}
is  a pair $(\gM,\varphi_M)$
consisting of a finitely generated $T$-torsion free
$\Aplus_A$-module~$\gM$, and a 
$\varphi$-semilinear map $\varphi_\gM:\gM\to\gM$, with the further
properties that if we
write \[\Phi_{\gM}:=1\otimes \varphi_{\gM}:\varphi^*\gM\to\gM,\] then 
$\Phi_{\gM}$ is injective, and the cokernel of $\Phi_{\gM}$ is killed by $F^h$.

A \emph{$\varphi$-module of finite $F$-height over~$\A^+_A$} is a $\varphi$-module of
  $F$-height at most~$h$  for some $h\ge 0$. A
  morphism of $\varphi$-modules is a morphism of the underlying
  $\Aplus_A$-modules which commutes with the morphisms~$\Phi_\gM$.

  We say that a $\varphi$-module of finite $F$-height is projective of
  rank~$d$ if it is a finitely generated projective~$\Aplus_A$-module of
  constant rank~$d$.
\end{defn}
If we fix integers $a,d\ge 1$ and an integer $h\ge 0$,
then we may define an 
\emph{fpqc} stack in groupoids $\cC_{d,h}^a$
over $\Spec \Z/p^a$ as follows: For any $\Z/p^a$-algebra $A$,
we define $\cC^a_{d,h}(A)$ to be the groupoid of $\varphi$-modules of
$F$-height at most $h$ over~$\A^+_A$ which are projective
 of rank~$d$.

Just as for the stack $\cR_d^a$,
we may and do also regard the stack $\cC_{d,h}^{a}$ as an {\em fppf}
stack over $\Zp$,
and we then, allowing $a$ to vary,   define $\cC_{d,h}:=\varinjlim_{a}\cC_{d,h}^a$,
obtaining an {\em fppf} stack over $\Spf\Zp$.
There are  canonical morphisms $\cC_{d,h}^a\to\cR_d^a$ and $\cC_{d,h}\to\cR_d$ given 
  by tensoring with $\A_A$ over~$\A_A^+$. One can show that any
  projective~$\A^+_A$-module is even Zariski locally free, so these
  morphisms factor through the Pappas--Rapoport versions
  $\cR_{d,\free}^a$ (resp.\ $\cR_{d,\free}$). Reassuringly, we have
  the following lemma~\cite[Lem.\ 3.1.3]{emertongeepicture}.
\begin{lem}
  If $A$ is a $p$-adically complete $\Zp$-algebra, then there
is a canonical equivalence between the groupoid of morphisms
$\Spf A\to\cR_{d}$ and the groupoid of rank~$d$
 \'etale $\varphi$-modules over~$\A_A$;
and there is a canonical equivalence between the groupoid of morphisms
  $\Spf A\to\cC_{d,h}$ 
and the groupoid of $\varphi$-modules of rank~$d$ and $F$-height at most~$h$
 over~$\A^+_A$.
\end{lem}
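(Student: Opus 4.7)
The plan is to interpret $\Hom(\Spf A,\cR_d)$ as a $2$-limit using the colimit presentation $\cR_d=\varinjlim_a\cR_d^a$, and then to package such a compatible system as a single rank~$d$ \'etale $\varphi$-module over $\A_A$.  First I would unwind the meaning of $\Hom(\Spf A,\cR_d)$.  Since $\cR_d$ is an \emph{fppf} stack over $\Z_p$, we have the tautological equivalence
\[
\Hom(\Spf A,\cR_d) \;\simeq\; \varprojlim_a \cR_d(A/p^a).
\]
Since $A/p^a$ is annihilated by $p^a$ and $\cR_d=\varinjlim_a\cR_d^a$, any morphism $\Spec A/p^a\to\cR_d$ factors canonically through $\cR_d^a$, so the right-hand side is equivalent to the groupoid of compatible systems $(M_a)_{a\ge 1}$, with each $M_a$ a projective rank~$d$ \'etale $\varphi$-module over $\A_{A/p^a}$ and the transition isomorphisms realising $\A_{A/p^{a-1}}\otimes_{\A_{A/p^a}}M_a\iso M_{a-1}$.

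Next I would verify $\A_A/p^a\cong\A_{A/p^a}$: by construction $\A_A$ is the $p$-adic completion of $\A^+_A[1/T]$, and since $\A^+_{A/p^a}[1/T]$ is already $p$-adically complete, this is immediate, and in particular $\A_A=\varprojlim_a\A_{A/p^a}$.  The candidate functors are then clear: send a projective rank~$d$ \'etale $\varphi$-module $M$ over $\A_A$ to its reductions $(M/p^aM)_a$, and conversely send $(M_a)_a$ to $M:=\varprojlim_a M_a$, equipped with the inverse limit of the $\varphi_{M_a}$.

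The essential content is showing that the limit $M:=\varprojlim_a M_a$ is projective of rank~$d$ over $\A_A$, satisfies $M/p^aM\cong M_a$, and has $\Phi_M$ an isomorphism.  Once projectivity is in hand, the identification $M/p^aM\cong M_a$ and the fact that $\Phi_M$ is an isomorphism follow from Nakayama-style arguments, since $\A_A$ is $p$-adically complete and both properties can be detected after reduction modulo $p^a$.  The main obstacle is projectivity of rank~$d$.  I would argue \emph{fpqc}-locally: by Theorem~\ref{thm: fpqc locality of projective and locally free}, projectivity of rank~$d$ descends along faithfully flat covers, so after a faithfully flat base change $A\to A'$ one may assume that $M_1\otimes_{\A_{A/p}}\A_{A'/p}$ is free of rank~$d$.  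Lifting a basis successively through the tower $(M_a\otimes_{\A_{A/p^a}}\A_{A'/p^a})_a$---which is possible at each stage because the reduction maps are surjective with kernel contained in the Jacobson radical---yields a compatible family of bases giving a free basis for $M\otimes_{\A_A}\A_{A'}$; descending along $A\to A'$ then exhibits $M$ as projective of rank~$d$ over $\A_A$.

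Finally, the case of $\cC_{d,h}$ is entirely parallel, using $\A^+$ in place of $\A$ throughout.  The only additional point is that the bounded $F$-height condition passes through the limit: the cokernel of $\Phi_\gM$ is finitely generated over the Noetherian ring $\A^+_A$, hence $p$-adically separated, and its reduction modulo $p^a$ is $\coker\Phi_{\gM_a}$, which is killed by $F^h$ by hypothesis, so $F^h\cdot\coker\Phi_\gM\subseteq\bigcap_a p^a\cdot\coker\Phi_\gM=0$.
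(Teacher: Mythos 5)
Your overall architecture --- identifying $\Hom(\Spf A,\cR_d)$ with the $2$-limit $\varprojlim_a\cR_d(A/p^a)$, checking $\A_A/p^a\cong\A_{A/p^a}$, and passing to the inverse limit of a compatible system $(M_a)$ --- is the right skeleton, and the treatment of $\Phi_M$ and of the height bound is essentially routine once projectivity of $M=\varprojlim_a M_a$ is established. But the projectivity step, which you correctly identify as the crux, has a genuine gap: you propose to find a faithfully flat $A\to A'$ over which $M_1$ becomes \emph{free} and then lift a basis up the tower. Whether a finitely generated projective $\A_{A/p}$-module of rank $d$ is fpqc-locally free on $\Spec (A/p)$ is exactly the question that this survey flags as open --- it is the reason the stacks $\cR_d$ and $\cR_{d,\free}$ are distinguished and not known to coincide. (The relevant localization is on $\Spec A$, not on $\Spec \A_A$, so Zariski-local freeness over $\A_A$ is of no help; and Theorem~\ref{thm: fpqc locality of projective and locally free} only says that projectivity \emph{descends} along a cover, not that a trivializing cover exists.) As written, your argument proves the analogue of the lemma for $\cR_{d,\free}$, not for $\cR_d$.

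The repair is to lift a direct-summand presentation rather than a basis: choose a surjection $\A_{A/p}^n\onto M_1$ with a splitting, lift the surjection through the tower (generators lift because each $M_{a+1}\to M_a$ is surjective with kernel $p^aM_{a+1}$, and $p^a$ is nilpotent in $A/p^{a+1}$), and lift the splittings compatibly using projectivity of each $M_a$. This yields a compatible system of idempotent endomorphisms of $\A_{A/p^a}^n$, hence an idempotent endomorphism of $\A_A^n$ whose image is $\varprojlim_a M_a$; projectivity, the identification $M/p^aM\cong M_a$, and constancy of the rank follow with no cover of $\Spec A$ needed. This also sidesteps the secondary worry that recovering $M$ from $M\otimes_{\A_A}\A_{A'}$ by descent requires faithful flatness of $\A_A\to\A_{A'}$, which Proposition~\ref{prop: maps of coefficient rings are faithfully flat injections} only provides for finite type $\Z/p^a$-algebras. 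Two smaller points: $\A^+_A$ is not Noetherian for general $p$-adically complete $A$, so your separatedness argument for $\coker\Phi_{\gM}$ should be replaced, e.g.\ by constructing $\Psi=\varprojlim_a\Psi_a$ with $\Phi_{\gM}\circ\Psi=F^h$ from the maps $\Psi_a$ furnished by injectivity of $\Phi_{\gM_a}$ and the height bound; and injectivity of $\Phi_{\gM}$ itself should be recorded (it follows from $p$-adic separatedness of the finitely generated projective module $\varphi^*\gM$).
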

The following is essentially~\cite[Thm.\ 2.1~(a), Cor.\ 2.6]{MR2562795}.

\begin{thm}
  \label{thm: C is a $p$-adic formal algebraic stack and related
    properties}\leavevmode
  \begin{enumerate}
  \item The stack $\cC_{d,h}^a$ is an algebraic stack of finite
    presentation over $\Spec\Z/p^a$, with affine diagonal.
  \item The morphism $\cC_{d,h}^a\to\cR_{d,\free}^a$ is representable by
    algebraic spaces, proper, and of finite presentation.
  \item The diagonal morphism
    $\Delta:\cR_{d,\free}^a\to\cR_{d,\free}^a\times_{\Z/p^a}\cR_{d,\free}^a$ is
    representable by algebraic spaces, affine, and of finite
    presentation.
  \end{enumerate}
\end{thm}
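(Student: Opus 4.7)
The plan is to prove the three parts in the order (3), (1), (2), since the diagonal result in (3) feeds into the representability arguments for the other two, and the algebraic presentation of $\cC_{d,h}^a$ in (1) then makes the properness verification in (2) concrete.

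For (3), given a test scheme $\Spec A$ and a pair $(M_1,\Phi_1),(M_2,\Phi_2)$ of \'etale $\varphi$-modules over $\A_A$ classifying a morphism $\Spec A \to \cR_{d,\free}^a\times_{\Z/p^a}\cR_{d,\free}^a$, the pullback of the diagonal is the sheaf of $\varphi$-equivariant $\A_A$-module isomorphisms $M_1 \isoto M_2$. After an \emph{fpqc} cover (harmless by Theorem~\ref{thm: fpqc locality of projective and locally free}) we may assume both modules are free and that $\Phi_1,\Phi_2$ are represented by matrices $X_1,X_2\in\GL_d(\A_A)$; an isomorphism is then a matrix $Y\in\GL_d(\A_A)$ satisfying $YX_1=X_2\varphi(Y)$. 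The key observation is that the relation $Y=X_2\,\varphi(Y)\,X_1^{-1}$ together with the fact that $\varphi$ strictly increases $T$-adic order modulo~$p$ forces the higher-order coefficients of~$Y$ to be algebraic functions of a bounded amount of lower-order data. This realises $\Isom(M_1,M_2)$ as a closed subscheme of a finite-type affine space, giving affineness and finite presentation of the diagonal.

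For (1), the plan is to build a smooth atlas for $\cC_{d,h}^a$ directly. Any projective $\A_A^+$-module is Zariski locally free (for $A$ a $\Z/p^a$-algebra), so after a Zariski cover we may assume $\gM=(\A_A^+)^d$. A $\varphi$-structure of $F$-height at most~$h$ is then recorded by a matrix $X\in M_d(\A_A^+)$ equipped with a companion $Y\in M_d(\A_A^+)$ satisfying $XY=YX=F^h\cdot I_d$, which encodes that $F^h$ kills $\coker\Phi_\gM$. Because $F$ is congruent to a positive power of~$T$ modulo~$p$, the relation $XY=F^h I_d$ together with the fact that we are killed by $p^a$ forces the $T$-adic coefficients of $X$ and $Y$ beyond a certain bounded order to be determined by the lower-order coefficients. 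This presents the framed stack as a closed subscheme of a finite-type affine scheme of truncated matrix pairs, and the forgetful morphism from this scheme to $\cC_{d,h}^a$ is a smooth surjection (a torsor for the appropriate truncation of $\GL_d(\A_A^+)$). Affineness of the diagonal of $\cC_{d,h}^a$ is a variant of (3): automorphisms of a finite-height module are $\varphi$-equivariant automorphisms of the underlying projective $\A_A^+$-module, again cut out by finitely many equations after truncation.

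For (2), the morphism $\cC_{d,h}^a\to\cR_{d,\free}^a$ sends $\gM$ to $\gM\otimes_{\A_A^+}\A_A$. Given $\Spec A\to\cR_{d,\free}^a$ classifying an \'etale $\varphi$-module $M$ over $\A_A$, the fiber product parameterises $\varphi$-stable $\A_A^+$-lattices of $F$-height at most~$h$ inside~$M$. Such lattices form a bounded family, realised as a closed subspace of a truncated positive loop space for $\GL_d$, whence the morphism is representable by algebraic spaces of finite presentation. Properness follows by the valuative criterion: given a complete DVR $R$, a height-$h$ $\varphi$-module over $\A_{\Frac(R)}^+$ whose generic fiber extends to an \'etale $\varphi$-module over $\A_R$ has a unique extension to a height-$h$ $\varphi$-module over $\A_R^+$, which amounts to saying that the scheme of height-$h$ $\varphi$-stable lattices is proper in the appropriate sense. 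The main obstacle throughout is the algebraicity step in (1): one must make precise the assertion that the relation $XY=F^h I_d$ modulo~$p^a$ confines all genuine moduli-theoretic data to a finite-type truncation, using quantitatively the topological nilpotence of~$F$ modulo~$p$. Once that bound is established, the remaining arguments are standard applications of \emph{fpqc} descent and the valuative criterion to the resulting finite-type schemes.
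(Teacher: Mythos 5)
Your treatments of parts (2) and (3) follow the same route as the paper (and as Pappas--Rapoport): bound the $T$-adic poles of the intertwining matrix $Y$ by playing the relation $\varphi(Y)=X_2^{-1}YX_1$ off against the fact that $\varphi$ multiplies pole orders by roughly $p$, then observe that the same relation expresses the degree-$k$ coefficients of $Y$ in terms of coefficients of degree about $k/p$, so that everything is a polynomial function of finitely many coordinates. Note that it is precisely the presence of $\varphi$ in the equation that makes this work.

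Part (1), however, has a genuine gap, and it is at the crux of the theorem. You claim that the relation $XY=YX=F^hI_d$ in $M_d(\A_A^+)$, ``together with the fact that we are killed by $p^a$,'' forces the coefficients of $X$ and $Y$ beyond a bounded $T$-adic order to be determined by the lower-order ones, so that the framed moduli space is a closed subscheme of a finite-type affine scheme. This is false: the equation $XY=F^hI_d$ contains no occurrence of $\varphi$, so there is no mechanism coupling high-degree coefficients to low-degree ones. Concretely, take $d=1$, $a=1$, $F=T$, $h=1$: every pair $X=T\,u(T)$, $Y=u(T)^{-1}$ with $u\in A[[T]]^{\times}$ an arbitrary unit satisfies $XY=T$, so the solution space contains an infinite-dimensional affine space and the framed object is (a piece of) the positive loop group, not a finite-type scheme. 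Correspondingly, the change-of-basis group acting is the full infinite-dimensional $\GL_d(\A_A^+)$, not a truncation of it, so your proposed atlas is neither finite-type nor a torsor for a finite-type group. The finite presentation of $\cC_{d,h}^a$ is visible only in the quotient, and the ingredient that makes the quotient algebraic is exactly Lemma~\ref{lem: phi conjugacy approximation}: for a sufficiently deep congruence subgroup $U_n$, $\varphi$-conjugation by $U_n$ on the bounded locus $V_m$ can be traded for left translation by $U_n$, so the quotient becomes $[(V_m/U_n)\,/\,(\GL_d(\A_A^+)/U_n)]$ --- a quotient of a finite-type scheme by a smooth finite-type group scheme. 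Without this lemma (or a substitute for it) your argument for (1) does not close, and since (2) relies on (1) to make the fibres concrete, the properness discussion is also left without a finite-type object to apply the valuative criterion to.
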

We very briefly indicate the main ideas of the proof. One of the key
points is the following~\cite[Prop.\ 2.2]{MR2562795}
(see~\cite[Lem.\ 5.2.9]{EGstacktheoreticimages} for this
version). Assume that~$A$ is a $\Z/p^a\Z$-algebra. For  $n\ge
0$, write 
\[U_n=1+T^nM_d(\A_A^+),\] \[V_n=\{A\in\GL_d(\A_A)\mid A,A^{-1}\in T^{-n}M_d(\A_A^+)\}.\]
\begin{lem}
    \label{lem: phi conjugacy approximation} 
For each~$m\ge 0$ there is an~$n(m)\ge 0$ \emph{(}implicitly depending also on~$a$\emph{)} such that if~$n\ge n(m)$, then:
\begin{enumerate}
\item For each $g\in U_n$, $B\in V_m$, there is a unique $h\in U_n$
  such that $g^{-1}B\varphi(g)=h^{-1}B$.
\item For each $h\in U_n$, $B\in V_m$ there is a unique $g\in U_n$
  such that $g^{-1}B\varphi(g)=h^{-1}B$.
\end{enumerate}
\end{lem}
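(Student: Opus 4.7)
The plan is to reduce both (1) and (2) to a single mechanism: the fact that $\varphi$ multiplies $T$-adic orders by approximately~$p$, so that even after conjugating by $B \in V_m$ (which costs at most $T^{-2m}$) we still gain $T$-adic depth once $n$ is large enough, making the basic equation amenable to direct solution in (1) and a contraction-mapping argument in (2).

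To make this precise, observe that since $\varphi \equiv \mathrm{Frob}\pmod p$ and $\varphi(T) \in \A^+$, we may write $\varphi(T) = T^p + p\cdot s$ for some $s \in \A^+$. Expanding $\varphi(T)^n$ modulo $p^a$ via the binomial theorem shows $\varphi(T^n) \in T^{p(n-a+1)}\A_A^+$ provided $n \ge a-1$. Hence if $X \in T^n M_d(\A_A^+)$ with $n \ge a-1$, then $\varphi(X) \in T^{p(n-a+1)} M_d(\A_A^+)$, and for $B \in V_m$ one has $B\varphi(X)B^{-1} \in T^{p(n-a+1)-2m} M_d(\A_A^+)$. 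I would therefore take $n(m)$ to be any integer strictly greater than $\max\!\bigl(a-1,\,(p(a-1)+2m)/(p-1)\bigr)$, so that $p(n-a+1)-2m \ge n+1$ whenever $n \ge n(m)$.

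For (1), the identity $g^{-1}B\varphi(g) = h^{-1} B$ is equivalent to the explicit formula $h = B\varphi(g)^{-1} B^{-1} g$, so uniqueness is immediate. Writing $g = 1+Y$ with $Y \in T^n M_d(\A_A^+)$, a geometric-series expansion gives $\varphi(g)^{-1} - 1 \in T^{p(n-a+1)} M_d(\A_A^+)$; plugging into the formula for $h$ and using the key estimate then yields $h-1 \in T^n M_d(\A_A^+)$, i.e.\ $h \in U_n$.

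For (2), I would rewrite the equation as $g = B\varphi(g) B^{-1} h$ and seek a fixed point of the operator $\Psi_h\colon U_n \to U_n$ defined by $\Psi_h(g) := B\varphi(g) B^{-1} h$. A computation parallel to (1) shows $\Psi_h$ does land in $U_n$, and if $g, g' \in U_n$ satisfy $g - g' \in T^j M_d(\A_A^+)$ then $\Psi_h(g)-\Psi_h(g') = B\varphi(g-g')B^{-1} h \in T^{p(j-a+1)-2m}M_d(\A_A^+)$, which by the choice of $n(m)$ lies strictly deeper than $T^j$. Iterating from $g_0 = h$ and invoking $T$-adic completeness of $\A_A^+$ (automatic since $A$ is a $\Z/p^a$-algebra) produces, via the Banach fixed-point principle, a unique $g \in U_n$ with $\Psi_h(g)=g$. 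The only genuine obstacle is the bookkeeping: one must confirm that the $p$-adic corrections in $\varphi(T)$ never disrupt the single estimate on $\varphi$'s effect on the $T$-adic order, so that a uniform $n(m)$ really works for all $n \ge n(m)$ in both parts.
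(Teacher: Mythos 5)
Your proposal is correct and follows essentially the same route as the paper (which only sketches the argument, deferring to Pappas--Rapoport and \cite[Lem.\ 5.2.9]{EGstacktheoreticimages}): the key estimate that $\varphi$ sends $T^nM_d(\A_A^+)$ into $T^{p(n-a+1)}M_d(\A_A^+)$ because the $p$-adic correction to $\varphi(T)=T^p$ is killed after $a$ binomial terms, combined with the loss of $T^{-2m}$ from conjugation by $B\in V_m$, a direct formula $h=B\varphi(g)^{-1}B^{-1}g$ for part~(1), and the successive-approximation/contraction argument for part~(2). Your choice of $n(m)$ and the $T$-adic completeness of $\A_A^+$ make the fixed-point iteration and the uniqueness arguments go through as stated.
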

In each case the uniqueness statement is easy, and the existence is
proved by a (slightly tricky) successive approximation argument. Given
the lemma, the first part of Theorem~\ref{thm: C is a $p$-adic formal algebraic stack and related
    properties} follows from standard results about the affine
  Grassmannian: the point is that working locally, the data of a
  finite height $\A_A^+$-module is the data of the matrix in $M_d(\A_A^+)$ of the
  corresponding linearized~$\varphi$, and the finite height condition
implies that this is in~$V_n$ for some~$n$ depending on~$a$
and~$h$. Changes of basis correspond to~$\varphi$-conjugacy, and
Lemma~\ref{lem: phi conjugacy approximation} lets us replace
$\varphi$-conjugacy by left multiplication provided we work in a
sufficiently deep congruence subgroup, and this is enough to get the
result.

The rest of Theorem~\ref{thm: C is a $p$-adic formal algebraic stack
  and related properties} is also proved by reducing to explicit
statements about matrices over~$\A_A$. The key point in each case is
to obtain a bound on the $T$-adic poles in some matrix entries, and 
the bound on the height~$h$ gives a bound on how different the
denominators of~$g$ and~$\varphi(g)$ can be (for some matrix~$g$),
which can be played off against the fact that the poles
of~$\varphi(g)$ have approximately~$p$ times the order of the poles of~$g$.

We then deduce that Theorem~\ref{thm: C is a $p$-adic formal algebraic
  stack and related properties} holds with~$\cR_{d,\free}^a$ replaced
by~$\cR_d^a$; roughly speaking the idea is to regard a projective
$\A_A$-module as a pair consisting of an \emph{fpqc}-locally free
projective~$\A_A$-module of higher rank, together with an
idempotent. We also make use of the following technical
statement~\cite[Lem.\ 5.1.23]{EGstacktheoreticimages}: if $M$ is a
finitely generated projective $A((u))$-module, then there exists
an~$n_0\ge 1$ such that for all $n\ge n_0$, $M$ is Nisnevich (and in
particular \emph{fpqc}) locally free as an $A((u^n))$-module. In
particular we find it useful to regard a projective $A((u))$-module as
a locally free $A((u^n))$-module together with an action of~$u$.

\subsection{Scheme-theoretic images}The main result of~\cite{EGstacktheoreticimages}
is the following~\cite[Thm.\ 5.4.20]{EGstacktheoreticimages}.

\begin{thm}
  \label{thm: R is a ind-algebraic}
  $\cR^a_d$ is a limit preserving Ind-algebraic stack, whose diagonal is
    representable by algebraic spaces, affine, and of finite
    presentation.  
\end{thm}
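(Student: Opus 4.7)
The plan is to apply Theorem~\ref{thm: scheme theoretic images} to the proper morphisms $\cC^a_{d,h}\to\cR^a_d$ supplied by Theorem~\ref{thm: C is a $p$-adic formal algebraic stack and related properties}(ii), producing algebraic closed substacks $\cZ^a_{d,h}\hookrightarrow\cR^a_d$ whose colimit in~$h$ exhibits $\cR^a_d$ as Ind-algebraic. To invoke that theorem I must first establish that $\cR^a_d$ is limit preserving and that its diagonal is representable by algebraic spaces, affine, and of finite presentation; the latter is also the third assertion of Theorem~\ref{thm: R is a ind-algebraic} itself.

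First I would check limit preservation. Since we work modulo a fixed~$p^a$, the $p$-adic completion defining $\A_A$ truncates, and one has $\A_A = \varinjlim \A_{A_\lambda}$ as rings for any filtered colimit $A=\varinjlim A_\lambda$ of $\Z/p^a$-algebras. Combined with standard descent of finitely presented modules and morphisms along filtered colimits of rings, this gives that every object of $\cR^a_d(A)$ descends to some $\cR^a_d(A_\lambda)$. For the diagonal, given $M_1,M_2\in\cR^a_d(A)$, I would form the internal Hom $N:=\underline{\Hom}_{\A_A}(M_1,M_2)$, which is again a finite projective \'etale $\varphi$-module over~$\A_A$. The sheaf $\underline{\Isom}(M_1,M_2)$ is an open subfunctor of the $\varphi$-fixed points functor $B\mapsto (N\otimes_{\A_A}\A_B)^{\Phi_N=1}$. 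The key claim is that this fixed-point functor is represented by an affine $A$-scheme of finite presentation: choosing a basis of~$N$ locally, the condition $\Phi_N(v)=v$ becomes a system of infinitely many linear equations in the coefficients of~$v$, but because $\Phi_N$ is an isomorphism the coefficients in high powers of~$T$ are determined by those in low powers via the Frobenius twist $T\mapsto T^p$, so only finitely many equations suffice.

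With limit preservation and the diagonal in hand I can apply Theorem~\ref{thm: scheme theoretic images} to the proper morphism $\cC^a_{d,h}\to\cR^a_d$. The effectivity hypothesis on versal rings amounts to the fact that an \'etale $\varphi$-module over a complete local Noetherian $\Z/p^a$-algebra arising as the scheme-theoretic image of a family of height-$h$ modules can itself be equipped with a compatible $\varphi$-stable lattice of height at most~$h$; this is verified by assembling compatible lattices on each Artinian quotient, which exist by Nakayama-type arguments given that one starts from an honest finite-height family. The resulting algebraic closed substacks $\cZ^a_{d,h}\subset\cR^a_d$ are naturally ordered by inclusion in~$h$, and I would show $\cR^a_d = \varinjlim_h \cZ^a_{d,h}$: for any finitely generated projective \'etale $\varphi$-module~$M$ over~$\A_A$ with~$A$ of finite type over~$\Z/p^a$, I would produce, after passing to an affine open cover of $\Spec A$, a $\varphi$-stable $\A_A^+$-submodule $\gM\subset M$ which is finitely generated projective and of $F$-height at most some~$h$. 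The construction is the classical one: starting with any finitely generated $\A_A^+$-submodule $\gM_0$ that spans~$M$, one considers the sum $\gM_n:=\sum_{i\le n}\varphi^i(\gM_0)$ after clearing by a suitable power of~$F$, and shows $\gM_n$ stabilizes to the sought lattice.

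The main obstacle is this last step, namely the uniform production of finite-height $\varphi$-stable lattices in coefficient families, together with the verification that the resulting inclusions $\cZ^a_{d,h}\hookrightarrow\cZ^a_{d,h'}$ are closed immersions that genuinely exhaust $\cR^a_d$. Over a field this is classical~$p$-adic Hodge theory; in families one must work locally on~$\Spec A$ (some properties like freeness may only hold \emph{fpqc}-locally, whence the technical detour through $\cR^a_{d,\free}$ and the refinement from locally free to projective via the $A((u))$ versus $A((u^n))$ comparison noted in the excerpt), and carefully track that the height bound~$h$ can be chosen to control a whole affine chart. This, rather than the formal machinery of scheme-theoretic images, is where the real content of the theorem lies.
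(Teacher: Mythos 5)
Your overall architecture matches the paper's exactly: first establish limit preservation and the diagonal, then realise $\cR^a_d$ as $\varinjlim_h$ of the scheme-theoretic images of the proper morphisms $\cC^a_{d,h}\to\cR^a_d$ via Theorem~\ref{thm: scheme theoretic images}. However, two of your steps have genuine gaps.

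First, limit preservation. The identity $\A_A=\varinjlim_\lambda\A_{A_\lambda}$ is false: modulo $p^a$ one has $\A_A=(W(k)\otimes_{\Zp}A)((T))$, and a Laurent series over $A=\varinjlim_\lambda A_\lambda$ has infinitely many positive-degree coefficients, which need not all lie in a single $A_\lambda$ (power series rings do not commute with filtered colimits). So "standard descent of finitely presented modules along filtered colimits of rings" does not apply, and the \'etale condition by itself only bounds the Laurent \emph{tails} of the matrix of $\Phi$, not its positive part. The actual argument must use the $\varphi$-structure: after reducing to the free case, the module is given by a matrix $B\in V_m$, and Lemma~\ref{lem: phi conjugacy approximation} shows that $\varphi$-conjugation by the congruence subgroup $U_n$ coincides with left multiplication by $U_n$, so the isomorphism class of $(\A_A^d,B)$ depends only on $B$ modulo a fixed power of $T$, i.e.\ on finitely many coefficients, which do descend. (A smaller quibble of the same flavour: in your exhaustion step, $\sum_{i\le n}\varphi^i(\gM_0)$ does not stabilise in general, since the poles of $\varphi^i(\gM_0)$ grow like $p^i$; one should instead simply rescale a local basis by a large power of $F$, after which $\varphi$-stability is immediate and no stabilisation is needed.)

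Second, and more seriously, the effectivity of versal rings, which is where the paper locates the real content. A morphism $\Spf R\to\cR^a_d$ is an \'etale $\varphi$-module over the $\m_R$-adic \emph{completion} of $\A_R$ --- a ring strictly larger than $\A_R$, since it permits infinite Laurent tails tending to zero $\m_R$-adically --- and the difficulty is to descend to $\A_R$ itself. Your proposal to "assemble compatible lattices on each Artinian quotient by Nakayama-type arguments" does not engage with this. The paper instead algebraizes the proper morphism $\cC_{d,h}\times_{\cR_d^a}\Spf R\to\Spf R$, pushes forward the universal finite-height module to get a finitely generated (but \emph{not necessarily projective}) $\varphi$-module $\gM_R$ over $\A^+_R$, uses the theorem on formal functions to identify the completion of $\gM_R[1/T]$ with the given formal family, and then invokes Theorem~\ref{thm:effectivity of projectivity}: projectivity of an \'etale $\varphi$-module over $\A_R$ can be detected on its $\m_R$-adic completion. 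That last statement is a genuine theorem, not a formality, and it is entirely absent from your argument; without it you do not obtain a point of $\cR^a_d(\Spec R)$, and the hypothesis of Theorem~\ref{thm: scheme theoretic images} is not verified.
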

The key point is of course the Ind-algebraicity. In fact we show that
$\cR^a_d=\varinjlim_h\cR_{d,h}^a$ , where $\cR_{d,h}^a$ is the
scheme-theoretic image (in the sense of Theorem~\ref{thm: scheme theoretic images})
of the morphism $\cC_{d,h}^a\to\cR_d^a$. It is at least intuitively
reasonable that we have $\cR^a_d=\varinjlim_h\cR_{d,h}^a$ (this
says that \'etale locally, every \'etale $\varphi$-module comes from a
finite height $\varphi$-module, which in the free case is immediate by
just choosing a basis and scaling it by a sufficiently large power
of~$F$), and the hard part is to show that~$\cR_{d,h}^a$ is actually
an algebraic stack.

The key
point in applying Theorem~\ref{thm: scheme theoretic images} is to show 
that~$\cR_{d,h}^a$ admits \emph{effective} Noetherian
versal rings. Apart from the effectivity, this is at least morally
quite straightforward: the versal rings for~$\cR_d^a$ correspond to (infinite-dimensional)
unrestricted Galois deformation rings, and those for ~$\cR_{d,h}^a$
are closely related to their finite height analogues, which are Noetherian. In particular,
if we are in the Kummer setting and we take~$h=1$, then~$\cR_d^a$
really admits the unrestricted deformation rings for~$G_{K_\infty}$ as
versal rings, and ~$\cR_{d,1}^a$ admits a version of the height 1 deformation
rings, which can be identified with finite flat deformation rings
for~$G_K$ (which are in particular Noetherian).

The real work is in proving the effectivity,
i.e.\ that the versal morphism $\Spf R\to\cR_{d,h}^a$ can be upgraded
to a morphism $\Spec R\to\cR_{d,h}^a$. The key point here is to check
that the composite morphism $\Spf R\to\cR_{d}^a$ comes from a morphism
$\Spec R\to\cR_{d}^a$; it is then relatively straightforward to check
that this morphism factors through~$\cR_{d,h}^a$, using arguments
which are very similar to those that we will give in
Lecture~\ref{subsec: X is Ind algebraic}. Slightly more explicitly,
having a morphism $\Spf R\to\cR_{d}^a$ means that we have an \'etale
$\varphi$-module over the $\m_R$-adic completion of~$\A_R$ (i.e.\
where we are allowed to have infinite Laurent tails, as long as they
tend to zero $\m_R$-adically), and we need to show that this arises
from a projective \'etale $\varphi$-module over~$\A_R$ itself.

The reason this is true is that there is a universal (not
necessarily projective)
$\varphi$-module~$\gM_R$ over~$\A_R^+$ of $F$-height at most~$h$,
arising as the pushforward of the universal $\varphi$-module over~$\cC_{d,h}^a$. We then set
$M_{R^h}=\gM_{R^h}[1/T]$, which gives us a (not necessarily
projective) \'etale $\varphi$-module over~$\A_R$, whose $\m_R$-adic
completion agrees with the $\varphi$-module obtained from  $\Spf
R\to\cR_{d}^a$. (This compatibility with the $\m_R$-adic completion is
not completely obvious, but ultimately follows from the theorem on
formal functions.) It remains to check that that~$M_R$ is projective;
this is immediate from
the following general theorem (\cite[Thm.\ 5.5.20]{EGstacktheoreticimages}). 


\begin{theorem}
	\label{thm:effectivity of projectivity}
	Let $R$ be 
a complete Noetherian local $\Z/p^a$-algebra with maximal
ideal~$\m$, 
      let $M$ be an
      \'etale $\varphi$-module over~$\A_R$,
and suppose that the $\mathfrak m$-adic completion $\widehat{M}$
is projective, or equivalently, free
{\em (}over  the $\mathfrak m$-adic completion
of~$\A_R${\em )}.
Then $M$ itself is projective {\em (}over $\A_R${\em )}.
\end{theorem}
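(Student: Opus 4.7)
The plan is to strengthen the conclusion and show that $M$ is in fact \emph{free} of rank $d$ over $\A_R$; by localizing on $\Spec R$ we may arrange that $\widehat{M}$ is free of rank $d$ over $\widehat{\A_R}$. Since for any finitely generated $\A_R$-module $M$ we have $\widehat{M} = M\otimes_{\A_R}\widehat{\A_R}$ and $\widehat{\A_R}/\mathfrak m\widehat{\A_R}= \A_{R/\mathfrak m}$, the identity $M/\mathfrak m M = \widehat{M}/\mathfrak m\widehat{M}$ shows that $M/\mathfrak m M$ is free of rank $d$ over $\A_{R/\mathfrak m}$. I would lift an $\A_{R/\mathfrak m}$-basis of $M/\mathfrak m M$ to elements $f_1,\ldots,f_d\in M$; Nakayama's lemma applied to the $\mathfrak m$-adically complete $\widehat{\A_R}$-module $\widehat{M}$ then guarantees that $(f_1,\ldots,f_d)$ is already a basis of $\widehat{M}$.

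Next I would consider the $\A_R$-linear morphism $\theta\colon \A_R^{\oplus d}\to M$ sending standard basis vectors to the $f_i$. After $\mathfrak m$-adic completion $\theta$ becomes an isomorphism, so if $K$ and $C$ denote its kernel and cokernel (both finitely generated $\A_R$-modules since $\A_R$ is Noetherian), flatness of $\A_R\to\widehat{\A_R}$ yields $\widehat{K}=\widehat{C}=0$. Equivalently, $K = \mathfrak m^nK$ and $C=\mathfrak m^nC$ for every $n\ge 1$, and it remains to deduce that $K=C=0$, whereupon $\theta$ is an isomorphism and $M$ is free.

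The principal obstacle is that $\mathfrak m\A_R$ is \emph{not} contained in the Jacobson radical of the non-local ring $\A_R$---there are many maximal ideals of $\A_R$ not meeting $V(\mathfrak m\A_R)$---so an unadorned Nakayama argument cannot finish the proof, and correspondingly the flat map $\A_R\to\widehat{\A_R}$ is \emph{not} faithfully flat. My strategy to overcome this is to exploit the étale $\varphi$-structure on $M$: the isomorphism $\Phi_M\colon \varphi^*M\iso M$ together with the faithful flatness of $\varphi\colon \A_R\to\A_R$ recorded in Situation~\ref{subsubsec:general framework} forces the fiber-rank function $\mathfrak p\mapsto\dim_{\kappa(\mathfrak p)}\!\bigl(M\otimes_{\A_R}\kappa(\mathfrak p)\bigr)$ to be invariant under the endomorphism of $\Spec\A_R$ induced by $\varphi$. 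Combining this $\varphi$-invariance with upper semi-continuity of the fiber-rank function, and with the known value $d$ of this function on the central fiber $\Spec\A_{R/\mathfrak m}\subset\Spec\A_R$, I would aim to conclude that the fiber rank is identically $d$ on $\Spec\A_R$; for a finitely presented module over a Noetherian ring, constancy of the fiber rank implies local freeness, hence projectivity.

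The hardest part of the argument will be making the rank-constancy step fully rigorous: since $\varphi$ is $R$-linear, it preserves each fiber of $\Spec\A_R\to\Spec R$, and on its own the $\varphi$-invariance of rank therefore only controls the behaviour within individual fibres. To transport the rank from the central fibre to the other fibres, I expect to need an auxiliary input, for example an argument showing that $M$ is $R$-flat (which would follow if the \emph{fppf}-local structure of $\widehat{M}$ together with the flat map $\A_R\to\widehat{\A_R}$ can be shown to descend flatness of $\widehat{M}$ over $R$ to $M$), or alternatively a direct structural analysis of finitely generated étale $\varphi$-modules over Laurent-type rings $\A_R$ using their presentation as $\A_R^+$-modules of the form $\gM[1/T]$ for some finite height $\gM$. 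Once the rank constancy is in hand, the remainder of the proof is formal.
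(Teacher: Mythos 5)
The paper does not give its own proof of this theorem --- it is quoted from \cite[Thm.~5.5.20]{EGstacktheoreticimages} --- so I will assess your proposal on its own terms. Your reduction to studying $\theta\colon\A_R^{\oplus d}\to M$ is sound, and your diagnosis of the central difficulty is correct: $\m\A_R$ is not in the Jacobson radical of $\A_R$, the flat map $\A_R\to\widehat{\A_R}$ is not faithfully flat, and since $\varphi$ is $R$-linear, $\varphi$-invariance of the fibre-rank function constrains it only within a single fibre of $\Spec\A_R\to\Spec R$ and cannot by itself push information off the special fibre. But you have only \emph{named} this obstacle, not overcome it: neither of the auxiliary inputs you gesture at (proving $R$-flatness of $M$, or a structural analysis of a finite-height lattice $\gM\subset M$ with $\gM[1/T]=M$ over the $(\m,T)$-adically complete ring $\A_R^+$) is actually carried out, and the real content of the theorem is concentrated exactly there. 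A simplification you may find useful: only surjectivity of $\theta$ is genuinely at issue, since $K=\m K$ together with $K\subseteq\A_R^{\oplus d}$ and the $\m$-adic separatedness of $\A_R$ already forces $K\subseteq\bigcap_n\m^n\A_R^{\oplus d}=0$.

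There is also an error in your closing sentence. It is not true that for a finitely presented module over a Noetherian ring, constancy of the fibre-rank function implies local freeness: over $A=k[\epsilon]/(\epsilon^2)$ the module $A/(\epsilon)$ is finitely presented of constant fibre rank $1$, but is neither flat nor projective. The usual criterion requires the base to be reduced (or the module to be flat), and $\A_R$ need not be reduced, already for $R=\F_p[\epsilon]/(\epsilon^2)$. So even if you established rank-constancy, you would still need a further step --- say, converting it, via the $\varphi$-stability of the Fitting ideals of $M$, into surjectivity of $\theta$ --- before you could conclude.
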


  Finally, passing to the limit over~$a\ge 1$ and using some results
  of~\cite{Emertonformalstacks}, we obtain the following.
\begin{cor}
  \label{cor: basic properties of C and R p adic stacks}\leavevmode
  \begin{enumerate}
  \item $\cC_{d,h}$ is a $p$-adic formal algebraic stack of finite
    presentation over $\Spf\Zp$, with affine diagonal.
 \item  $\cR_d$ is a limit preserving Ind-algebraic stack, whose diagonal is
    representable by algebraic spaces, affine, and of finite
    presentation.
  \item The morphism $\cC_{d,h}\to\cR_d$ is representable by
    algebraic spaces, proper, and of finite presentation.
  \item The diagonal morphism
    $\Delta:\cR_d\to\cR_d\times_{\Spf\Zp}\cR_d$ is
    representable by algebraic spaces, affine, and of finite
    presentation.
   \end{enumerate}
\end{cor}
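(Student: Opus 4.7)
The plan is to deduce the corollary by passing to the $2$-colimit as $a$ varies, using the $\Z/p^a$-level statements of Theorem~\ref{thm: C is a $p$-adic formal algebraic stack and related properties} and Theorem~\ref{thm: R is a ind-algebraic} together with the foundational material on formal algebraic stacks developed in~\cite{Emertonformalstacks}. The key structural observation is that the transition morphism $\cC_{d,h}^a\hookrightarrow\cC_{d,h}^{a+1}$ (respectively $\cR_d^a\hookrightarrow\cR_d^{a+1}$) is a closed immersion, indeed a thickening, since it is the base change along $\Spec\Z/p^a\hookrightarrow\Spec\Z/p^{a+1}$ of the identity on the functor classifying \'etale $\varphi$-modules (respectively $\varphi$-modules of bounded height). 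Thus $\cC_{d,h}=\varinjlim_a\cC_{d,h}^a$ and $\cR_d=\varinjlim_a\cR_d^a$ are colimits along a sequence of thickenings of algebraic (respectively Ind-algebraic) stacks.

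For part~(1), the fact that each $\cC_{d,h}^a$ is algebraic of finite presentation over $\Spec\Z/p^a$, together with the fact that the transition maps are thickenings indexed by $a$, is exactly the input needed to invoke the characterization of $p$-adic formal algebraic stacks in~\cite{Emertonformalstacks}: the colimit $\cC_{d,h}$ is a $p$-adic formal algebraic stack of finite presentation over $\Spf\Z_p$. Affineness of the diagonal passes to the colimit because the diagonal of $\cC_{d,h}$, evaluated on any test scheme over $\Spec\Z/p^a$, recovers the diagonal of $\cC_{d,h}^a$. For part~(3), representability by algebraic spaces, properness, and finite presentation of $\cC_{d,h}^a\to\cR_d^a$ are each compatible with passage to the colimit, since all of these are checked on morphisms from affine test schemes, and any such scheme (being quasi-compact) factors through some finite stage $\cR_d^a$.

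For part~(2), limit preservation of $\cR_d=\varinjlim_a\cR_d^a$ follows because each $\cR_d^a$ is limit preserving and the colimit is filtered. To see that $\cR_d$ is Ind-algebraic, recall from the proof of Theorem~\ref{thm: R is a ind-algebraic} that $\cR_d^a=\varinjlim_h\cR_{d,h}^a$ with each $\cR_{d,h}^a$ algebraic; reindexing, $\cR_d=\varinjlim_{(a,h)}\cR_{d,h}^a$ is a filtered colimit of algebraic stacks along closed immersions, hence Ind-algebraic. For the diagonal in part~(4), a morphism $T\to\cR_d\times_{\Spf\Z_p}\cR_d$ from an affine scheme $T$ on which $p$ is nilpotent factors through some $\cR_d^a\times_{\Z/p^a}\cR_d^a$, so the base change of the diagonal is computed at that finite level and inherits representability by algebraic spaces, affineness, and finite presentation from Theorem~\ref{thm: R is a ind-algebraic}.

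The main obstacle is purely formal: one must be careful that ``$p$-adic formal algebraic stack'' really is characterized as a sequential colimit along thickenings of algebraic stacks of the stated form, and that the various geometric properties (affine diagonal, representability by algebraic spaces, properness, finite presentation) are compatible with this colimit construction. Both of these are precisely the kind of bookkeeping carried out in~\cite{Emertonformalstacks}, so the corollary is obtained by feeding the $\Z/p^a$-level statements into that general machinery.
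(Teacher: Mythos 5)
Your proposal is correct and follows essentially the same route as the paper, which obtains the corollary precisely by passing to the limit over $a\ge 1$ from Theorems~\ref{thm: C is a $p$-adic formal algebraic stack and related properties} and~\ref{thm: R is a ind-algebraic} (with $\cR_{d,\free}^a$ replaced by $\cR_d^a$) and invoking the general results of~\cite{Emertonformalstacks} on colimits of algebraic stacks along thickenings. Your elaboration of why the transition maps are thickenings and why the relevant properties are detected on quasi-compact test objects is exactly the bookkeeping the paper delegates to that reference.
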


\section{Stacks of~$(\varphi,\Gamma)$-modules} 
\subsection{Moduli stacks of
  \texorpdfstring{$(\varphi,\Gamma)$}{(phi,Gamma)}-modules}\label{subsec:
defn of Xd}
Recall that for simplicity we are always assuming that~$K/\Qp$ is
unramified when we discuss $(\varphi,\Gamma)$-modules.
\begin{df} \label{defn: Xd}
	We let $\cX_{K,d}$ denote the moduli stack of projective
	\'etale $(\varphi,\Gamma)$-modules of rank~$d$. More
        precisely, if~$A$ is a $p$-adically complete $\Zp$-algebra,
        then we define $\cX_{K,d}(A)$ (i.e., the groupoid of morphisms
        $\Spf A\to\cX_{K,d}$) to be the groupoid of 
        projective \'etale~$(\varphi,\Gamma)$-modules  of rank~$d$ with~$A$-coefficients,
 with morphisms given by isomorphisms. If $A\to B$ is a morphism of complete
        $\Zp$-algebras, and $M$ is an object of $\cX_{K,d}(A)$, then the
        pull-back of~$M$ to~$\cX_{K,d}(B)$ is defined to be the tensor
        product $\A_{K,B}\otimes_{\A_{K,A}}M$. 
       It again follows from Theorem~\ref{thm: fpqc locality of projective and locally free} that $\cX_{K,d}$ is indeed a stack.   
     \end{df}

     Note that by
the equivalence between $(\varphi,\Gamma)$-modules
     and $G_K$-representations over finite $\Zp$-algebras, 
the
     $\Fpbar$-points of~$\cX_{K,d}$ are in bijection with representations
     $\rhobar:G_K\to\GL_d(\Fpbar)$. More generally,  fix a point $\Spec \F\to\cX_{K,d}$ 
for some finite field~$\F$, giving rise to a continuous representation
$\rhobar: G_K \to \GL_d(\F)$, and
let~$R^{\square}_{\rhobar}$ denote the universal framed
deformation $W(\F)$-algebra for lifts of~$\rhobar$. Then it is easy to
check that the natural morphism $\Spf
R^{\square}_{\rhobar}\to\cX_{K,d}$
(which again is defined via
the equivalence between $(\varphi,\Gamma)$-modules
     and $G_K$-representations over finite $\Z_p$-algebras)
is versal.

      One of the main results that we will prove is that $\cX_{K,d}$
      is a Noetherian formal algebraic stack.  However, the proof of
      this is quite involved, and in this lecture we establish the
      preliminary result that $\cX_{K,d}$ is an Ind-algebraic stack,
      which we deduce from the results that we have proved for stacks
      of \'etale $\varphi$-modules.

\begin{df}
We let~$\cR_{K,d}$ (frequently abbreviated to~$\cR_d$) denote the moduli stack of rank $d$ projective \'etale
$\varphi$-modules, taking $\A$ to be $\A_K$. 
\end{df}

We begin by 
studying $\Gamma$-actions on our $\varphi$-modules.  We
choose a topological generator $\gamma$ of $\Gamma$, and let
$\Gamma_{\disc} := \langle \gamma \rangle;$ so
$\Gamma_{\disc} \cong \Z$.  Note that since $\Gamma_{\disc}$ is dense
in~$\Gamma$, in order to endow $M$ with the structure of an \'etale
$(\varphi,\Gamma)$-module, it suffices to equip $M$ with a continuous
action of $\Gamma_{\disc}$ (where we equip $\Gamma_{\disc}$ with the
topology induced on it by $\Gamma$).


There is a canonical action of~$\Gamma_{\disc}$ on~$\cR_d$ (that is, a canonical morphism $\gamma:\cR_d\to\cR_d$):
 if~$M$ is an object of~$\cR_d(A)$, then $\gamma(M)$ is given
by $\gamma^*M:=\A_{K,A}\otimes_{\gamma,\A_{K,A}}M$. 
Then we set 
\[\cR_d^{\Gamma_{\disc}}:=
\cR_d\underset{\Delta,\cR_d\times\cR_d,\Gamma_\gamma}{\times}\cR_d, \]where
 $\Delta$ is the diagonal of~$\cR_d$ and $\Gamma_\gamma$ is the
 graph of~$\gamma$, so that $\Gamma_\gamma(x)=(x,\gamma(x))$.

 We claim that $\cR_d^{\Gamma_{\disc}}$ is nothing other than the
 moduli stack of projective \'etale $\varphi$-modules of rank~$d$
 equipped with a semilinear action of $\Gamma_{\disc}$. This is an
 exercise in unwinding the usual construction of the 2-fibre product:
 $\cR_d^{\Gamma_\disc}$ consists of tuples
 $(x,y,\alpha,\beta)$, with $x,y$ being objects of~$\cR_d$, and
 $\alpha:x\isoto y$ and~$\beta:\gamma(x)\isoto y$ being
 isomorphisms. This is equivalent to the category fibred in groupoids
 given by pairs~$(x,\iota)$ consisting of an object~$x$ of~$\cR_d$ and
 an isomorphism~$\iota:\gamma(x)\isoto x$. Thus an object
 of~$\cR_d^{\Gamma_\disc}(A)$ is a projective \'etale $\varphi$-module
 of rank~$d$ with $A$-coefficients~$M$, together with an isomorphism
 of $\varphi$-modules $\iota:\gamma^*M\isoto M$; and this isomorphism
 is precisely the data of a semilinear action
 of~$\Gamma_\disc=\langle\gamma\rangle$ on~$M$, as required.

 Since~$\cR_d$ is an Ind-algebraic stack, so
 is~$\cR_d^{\Gamma_\disc}$; indeed it follows from Corollary~\ref{cor:
   basic properties of C and R p adic stacks} that
 $\cR^{\Gamma_{\disc}}_d$ is a limit preserving Ind-algebraic stack,
 whose diagonal is representable by algebraic spaces, affine, and of
 finite presentation.

 Restricting the $\Gamma$-action on an \'etale
 $(\varphi,\Gamma)$-module to $\Gamma_{\disc}$, we obtain a morphism
 $\cX_{K,d} \to \cR_d^{\Gamma_{\disc}},$ which is fully faithful.
 Thus $\cX_{K,d}$ may be regarded as a substack of
 $\cR_d^{\Gamma_{\disc}}$; in particular, we deduce that its diagonal
 is representable by algebraic spaces, affine, and of finite
 presentation. Although $\cX_{K,d}$ is a substack of the Ind-algebraic
 stack $\cR_d^{\Gamma_{\disc}}$, it is not a closed substack, but
 should rather be thought of as a certain formal completion (as in the
 discussion of the rank one case in Lecture~\ref{subsec:
  discussion of rank 1 case}); in
 particular, since it is not a closed substack, we cannot immediately
 deduce the Ind-algebraicity of $\cX_{K,d}$ from the Ind-algebraicity
 of $\cR_d^{\Gamma_{\disc}}$. Instead, we will argue as in the proof
 that~$\cR_d$ is Ind-algebraic, and exhibit~$\cX_{K,d}$ as the
 scheme-theoretic image of an Ind-algebraic stack.

 From now on we will typically drop~$K$ from the notation, simply
 writing $\cX_d$, $\cR_d$ and so on. 
 To go further we need to understand
 the continuity condition in the definition of~$\cX_d$ more
 carefully. It is easy to check that we have
 $\gamma(T)-T\in(p,T)T\A_A^+$, and using this one can check the
 following special case of~\cite[Lem.\ D.26]{emertongeepicture}:

 \begin{lem}
	\label{lem:testing continuity on M mod T}
        Suppose that $A$ is a $\Z/p^a$-algebra for
        some~$a\ge 1$. Let~$M$
        be a finite projective $\A_A$-module, equipped with a
        semilinear action of~$\Gamma_{\disc}$. Then
        the following are equivalent:
        \begin{enumerate}
        \item\label{item: gamma cts} The action of~$\Gamma_{\disc}$ extends to a continuous
          action of~$\Gamma$.
          \item\label{item: gamma minus 1 squeezing M to T} For any lattice~$\gM\subseteq M$, there exists~$s\ge 0$
          such that $(\gamma^{p^s}-1)(\gM)\subseteq T\gM$. \emph{(}Here a
          \emph{lattice} is a finitely generated $\Aplus_{K,A}$-submodule
    $\gM\subseteq M$  whose $\AAA_{K,A}$-span
    is~$M$.\emph{)}
         \item For some lattice~$\gM\subseteq M$ and some~$s\ge 0$, we have
          $(\gamma^{p^s}-1)(\gM)\subseteq T\gM$.
        \item\label{item: gamma minus 1 top nilpt mod p}  The action of~$\gamma-1$ on~$M\otimes_{\Z/p^a}\Fp$
          is topologically nilpotent.
        \end{enumerate}
  \end{lem}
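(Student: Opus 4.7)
The plan is to prove $(1)\Rightarrow(2)\Rightarrow(3)\Rightarrow(1)$ together with $(1)\Leftrightarrow(4)$. Since $(2)\Rightarrow(3)$ is tautological, this reduces to the four implications $(1)\Rightarrow(2)$, $(3)\Rightarrow(1)$, $(1)\Rightarrow(4)$ and $(4)\Rightarrow(3)$. Throughout I would use that $A$ is a $\Z/p^a$-algebra, so that $p$ is nilpotent in $\A_A^+$; this means that via devissage along the filtration $\{p^i\gM\}$, many statements about $\gM$ reduce to their analogues modulo $p$.

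For $(1)\Rightarrow(2)$, continuity at $(1,0)\in\Gamma\times M$ forces the preimage of $T\gM$ in $\Gamma$ to contain $\gamma^{p^s\Z}$ for some $s$. For $(1)\Rightarrow(4)$, continuity descends to $M\otimes\Fp$, and combined with Noetherianness of the coefficient ring this forces nilpotence of $\gamma-1$ on $\overline{\gM}/T\overline{\gM}$ (for any lattice $\overline{\gM}\subseteq M\otimes\Fp$), which is equivalent to topological nilpotence of $\gamma-1$ on $M\otimes\Fp$. For $(4)\Rightarrow(3)$, one produces an $n$ with $(\gamma-1)^n(\overline{\gM})\subseteq T\overline{\gM}$ by the same Noetherian argument, and then the binomial expansion
\[\gamma^{p^s}-1=\sum_{k=1}^{p^s}\binom{p^s}{k}(\gamma-1)^k\]
yields $(\gamma^{p^s}-1)(\overline{\gM})\subseteq T\overline{\gM}$ for $s\gg 0$: terms with $k\ge n$ factor through $(\gamma-1)^n$, while terms with $k<n$ are killed by mod-$p$ vanishing of $\binom{p^s}{k}$. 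Devissage along $\{p^i\gM\}$ lifts this back to $\gM$ modulo $p^a$.

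The technical heart, and principal obstacle, is $(3)\Rightarrow(1)$. The plan is to prove inductively that there exist $s_n$ with $(\gamma^{p^{s_n}}-1)(\gM)\subseteq (p,T)^n\gM$; letting $n\to\infty$ then exhibits $\gamma^{p^s}\to 1$ in the operator topology on $\gM$ and gives continuity of the $\Gamma_{\disc}$-action on $M$. The inductive step uses
\[\gamma^{p^{s+1}}-1 = (1+(\gamma^{p^s}-1))^p-1 = \sum_{j=1}^p\binom{p}{j}(\gamma^{p^s}-1)^j\]
combined with the semi-linearity identity
\[(\gamma^{p^s}-1)(am) = (\gamma^{p^s}(a)-a)\,\gamma^{p^s}(m) + a\,(\gamma^{p^s}-1)(m),\]
so that iterating produces factors of both $(\gamma^{p^s}-1)$ applied to elements of $\gM$ and $\gamma^{p^s}(a)-a$ for $a\in\A_A^+$. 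The decisive input is $\gamma(1+T)=(1+T)^{\chi(\gamma)}$, which forces $\gamma(T)-T\in(p,T)T\A_A^+$, and more generally makes $\gamma^{p^s}(T)-T$ lie in arbitrarily deep powers of $(p,T)$ as $s\to\infty$; together with the semi-linear bookkeeping this drives the induction on $n$. Once the $\Gamma_{\disc}$-action is continuous on $\gM$, density of $\Gamma_{\disc}$ in $\Gamma$ together with completeness of $\gM$ for the $(p,T)$-adic topology yields a unique continuous extension of the action to the full procyclic $\Gamma$.
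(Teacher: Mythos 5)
The paper does not prove this lemma directly; it cites [Lem.\ D.24] of the main reference, recording only that the decisive input is $\gamma(T)-T\in(p,T)T\A_A^+$. Your implication graph, the binomial expansions, the semi-linearity identity, and the $(p,T)$-adic deepening of $\gamma^{p^s}(T)-T$ are all the right tools, and the outline is consistent with the paper's hint.

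A few corrections are needed. First, your appeals to ``Noetherianness of the coefficient ring'' in $(1)\Rightarrow(4)$ and $(4)\Rightarrow(3)$ are misplaced: $A$ is an arbitrary $\Z/p^a$-algebra here. The relevant finiteness is that a lattice $\gM$ is finitely generated over $\A^+_{K,A}$, so $\overline{\gM}/T\overline{\gM}$ is finitely generated over $k\otimes_{\Fp}(A/p)$ and discrete; a continuous $\Gamma$-action on such a module factors through a finite quotient (check triviality on the finitely many generators). Noetherianness plays no role. Second, the formula $\gamma(1+T)=(1+T)^{\chi(\gamma)}$ holds for the variable $T_K'$, not for $T=T_K=\operatorname{tr}_{\A_K'/\A_K}(T_K')$; no such closed formula holds for $T_K$. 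You should instead invoke the paper's stated fact $\gamma(T)-T\in(p,T)T\A_A^+$ directly, together with continuity of the $\Gamma$-action on $\A^+_K$, which already yields that $\gamma^{p^s}(T)-T\to 0$ $(p,T)$-adically. Third, in $(4)\Rightarrow(3)$, having $(\gamma-1)^n(\overline{\gM})\subseteq T\overline{\gM}$ does not formally give $(\gamma-1)^k(\overline{\gM})\subseteq T\overline{\gM}$ for $k\ge n$: you must also check that $\gamma-1$ carries $T\overline{\gM}$ into $T\overline{\gM}$, which again uses the semi-linearity identity and $\gamma(T)-T\in T^2\A^+_{A/p}$ --- the very bookkeeping you carry out in $(3)\Rightarrow(1)$ but skip here.
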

It is easy to use this criterion, together with the fact that $\cR_d^{\Gamma_\disc}$ is limit
  preserving, to show that $\cX_d$ is limit preserving;
  see~\cite[Lem.\ 3.2.19]{emertongeepicture}.
 
\subsection{Weak Wach modules}\label{subsec: Wach modules}
In this section we introduce the notion of a weak Wach module of
height at most~$h$ and level at most~$s$. These will play a purely technical
auxiliary role for us, and will be used only in order to show
that~$\cX_d$ is an Ind-algebraic stack; we won't use their relation to
crystalline representations.

        By Lemma~\ref{lem:testing continuity on M mod T}, if $A$ is a $\Z/p^a$-algebra for some~$a\ge 1$, and
        $\gM$ is a rank~$d$ projective $\varphi$-module of
        $T$-height $\leq h$ over $A$, such that~$\gM[1/T]$ is equipped
        with a semilinear action of $\Gamma_{\disc}$, then this
        action extends to a 
        continuous action of~$\Gamma$ if and only if for some~$s\ge 0$ we have $(\gamma^{p^s}-1)(\gM)\subseteq
        T\gM$. This motivates the following definition.

\begin{df}
	\label{def:weak Wach modules}	A {\em rank $d$ projective weak Wach module of $T$-height
    $\leq h$ and level $\le s$}
	 is a rank~$d$ projective $\varphi$-module $\gM$ 
	over $\A^+_{K,A}$, 
        which is of $T$-height $\leq h$, 
   such that $\gM[1/T]$ is equipped with a semilinear action
   of~$\Gamma_{\disc}$ which satisfies $(\gamma^{p^s}-1)(\gM)~\subseteq~T\gM$.
\end{df}

\begin{df}
	We let $\cWW_{d,h}$ denote the moduli stack of rank $d$
       	projective weak Wach modules of $T$-height $\leq h$. 
         We let~$\cWW_{d,h,s}$ denote the substack of rank $d$
       	projective weak Wach modules of $T$-height $\leq h$ and
        level~$\le s$.  
\end{df}
We will next show that the stacks~$\cWW_{d,h,s}$ are $p$-adic formal
algebraic stacks of finite presentation over~$\Spf \Zp$. Since the
canonical morphism $\varinjlim_s \cWW_{d,h,s} \to \cWW_{d,h}$ is an
isomorphism (by definition), this will show in particular
that~$\cWW_{d,h}$  is an Ind-algebraic stack; we will also see that
the transition maps in this injective limit are closed immersions.

Recall that we have the $p$-adic formal
algebraic stack
$\cC_{d,h}$ classifying rank $d$ projective $\varphi$-modules over~$\A_{K,A}^+$
of $T$-height at most~$h$.
We consider the fibre product
$\cR_d^{\Gamma_{\disc}}\times_{\cR_d}\cC_{d,h}$, where the
map~$\cR_d^{\Gamma_{\disc}}\to\cR_d$ is the canonical morphism given by
forgetting
the~$\Gamma_{\disc}$ action;  this is the moduli stack of rank $d$
projective $\varphi$-modules~$\gM$ over~$\A_{K,A}^+$ of
$T$-height at most~$h$, equipped with a semilinear action
of~$\Gamma_{\disc}$ on~$\gM[1/T]$. It follows from Corollary~\ref{cor:
  basic properties of C and R p adic stacks} that 
$\cR_d^{\Gamma_{\disc}}\times_{\cR_d}\cC_{d,h}$ is a
  $p$-adic formal algebraic stack of finite presentation over~$\Spf\Zp$.

  Restricting the $\Gamma$-action on a weak Wach module
to $\Gamma_{\disc}$, 
we may regard $\cWW_{d,h}$ as a substack of
$\cR_d^{\Gamma_{\disc}}\times_{\cR_d}\cC_{d,h}$. The following
is~\cite[Prop.\ 3.3.5]{emertongeepicture}.  
\begin{prop}
  \label{prop:inductive description of W} For $s \geq 1,$ the morphism
	$$\xymatrix{\cWW_{d,h,s}
          \ar
          [r]
          & \cR_d^{\Gamma_{\disc}}\times_{\cR_d}\cC_{d,h}\\ }$$ is a
        closed immersion of finite presentation.
        In particular, each of the stacks $\cWW_{d,h,s}$ is a $p$-adic
          formal algebraic stack of finite presentation over
          $\Spf \Zp$; and for each $s'\ge s$, the canonical monomorphism $
          \cWW_{d,h,s} \hookrightarrow \cWW_{d,h,s'}$ is a closed
          immersion of finite presentation.
        \end{prop}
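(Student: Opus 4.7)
The plan is to exhibit $\cWW_{d,h,s}\hookrightarrow\cY:=\cR_d^{\Gamma_{\disc}}\times_{\cR_d}\cC_{d,h}$ as a closed immersion of finite presentation by checking the property smooth-locally on $\cY$ modulo each $p^a$, and then to deduce the remaining statements formally. Since $\cY$ is a $p$-adic formal algebraic stack of finite presentation over $\Spf\Zp$ by the discussion just before the proposition, it suffices to show that each base change $\cWW_{d,h,s}^a\hookrightarrow\cY^a$ over $\Spec\Z/p^a$ is a closed immersion of finite presentation of algebraic stacks. I would then choose a smooth morphism $\Spec R\to\cY^a$ with $R$ a finitely presented $\Z/p^a$-algebra on which the pullback of the universal $\varphi$-module $\gM$ becomes free over $\A^+_{K,R}$; this is possible because $\cC_{d,h}^a$ is algebraic of finite presentation and projective modules over $\A^+_{K,R}$ are Zariski-locally free. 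Fix a basis.

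In such a basis, $\gamma^{p^s}$ acts on $\gM[1/T]$ by some matrix $G\in M_d(\A_{K,R})$. A direct check---using that $\Gamma$ acts trivially on $W(k)\otimes R$ and carries $T$ into $T\cdot\A^+_{K,R}$---shows that the condition $(\gamma^{p^s}-1)\gM\subseteq T\gM$ is equivalent to the single condition $G-I\in T\cdot M_d(\A^+_{K,R})$. Each entry of $G-I$ is a fixed element of the Laurent series ring $\A_{K,R}$ and so has only finitely many nonzero coefficients of $T^n$ with $n\le 0$. Requiring each such coefficient to vanish in the finite free $R$-module $W_a(k)\otimes R$ produces finitely many equations in $R$, and hence a closed subscheme $\Spec R/J\hookrightarrow\Spec R$ of finite presentation. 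Smooth descent of closed immersions (and of finite presentation) then delivers the desired closed immersion $\cWW_{d,h,s}^a\hookrightarrow\cY^a$, and letting $a$ vary yields the stated closed immersion of finite presentation of $p$-adic formal algebraic stacks.

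The remaining claims follow formally. That $\cWW_{d,h,s}$ is itself a $p$-adic formal algebraic stack of finite presentation over $\Spf\Zp$ is immediate from its realization as a closed substack of finite presentation in $\cY$. For the containment $\cWW_{d,h,s}\subseteq\cWW_{d,h,s'}$ when $s'\ge s$, I would use the telescoping identity
\[\gamma^{p^{s'}}-1\;=\;(\gamma^{p^s}-1)\sum_{j=0}^{p^{s'-s}-1}\gamma^{jp^s}:\]
if $(\gamma^{p^s}-1)\gM\subseteq T\gM$, then $\gamma^{p^s}$ preserves $\gM$, hence so does each $\gamma^{jp^s}$, and the right-hand side therefore carries $\gM$ into $T\gM$. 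Since both $\cWW_{d,h,s}$ and $\cWW_{d,h,s'}$ are closed substacks of finite presentation of $\cY$, the inclusion between them is also a closed immersion of finite presentation.

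The main worry is that the pole orders of the universal $G$ might vary unboundedly across $\cY$ and obstruct finite presentation. This turns out not to be an obstacle, because on any individual smooth affine cover $\Spec R$ the universal matrix $G$ is one fixed element of $M_d(\A_{K,R})$ with a fixed finite pole order, and finite presentation is a local condition. Conceptually one can moreover extract an \emph{a priori} bound of the order $h/(p-1)$ on the pole order of $G$ modulo $p$ from the commutation $G\cdot\gamma^{p^s}(C)=C\cdot\varphi(G)$ together with the finite-height estimate $C^{-1}\in T^{-h}M_d(\A^+_{K,R})$; this bound is not strictly needed for the proof but explains why imposing the condition $(\gamma^{p^s}-1)\gM\subseteq T\gM$ is so natural in the presence of the finite height hypothesis.
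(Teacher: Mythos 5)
Your proof is correct and takes essentially the same route as the paper's: reduce to the free case over a smooth affine cover, and observe that modulo $p^a$ the condition $(\gamma^{p^s}-1)\gM\subseteq T\gM$ becomes the vanishing of the finitely many coefficients of $T^n$, $n\le 0$, of the matrix $G-I$, which cuts out a finitely presented closed subscheme that descends. The formal deductions of the remaining assertions (including the telescoping identity giving $\cWW_{d,h,s}\subseteq\cWW_{d,h,s'}$) are also fine.
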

        The proof of this is fairly straightforward: by definition, we
        need to show that the condition that
        ~$(\gamma^{p^s}-1)(\gM)\subseteq T\gM$ is a closed condition,
        and is determined by finitely many equations. We do this by
        reducing to the free case and considering the equations on the
        level of matrices.

\subsection{$\cX_d$ is an Ind-algebraic stack}
\label{subsec: X is Ind algebraic}

By definition, we have a $2$-Cartesian diagram
\numequation
\label{eqn:C to R square}
\xymatrix{\cWW_{d,h} \ar[r]\ar[d] &
	\cR_d^{\Gamma_{\disc}}\times_{\cR_d}\cC_{d,h} \ar[d] \\
\cX_d \ar[r] & \cR_d^{\Gamma_{\disc}} }
\end{equation}
If $h' \geq h$ then
the closed immersion $\cC_{d,h} \hookrightarrow \cC_{d,h'}$ is
compatible with the morphisms from each of its source and target
to $\cR_d$, 
and so 
we obtain a closed immersion 
\numequation
\label{eqn:closed immersion of Wachs}
\cWW_{d,h} \hookrightarrow \cWW_{d,h'}.
\end{equation}

By construction,
the morphisms $\cWW_{d,h}\to\cX_d$  are compatible,
as $h$ varies,
with the closed immersions~(\ref{eqn:closed immersion of Wachs}). 
Thus we also obtain a morphism
\numequation
\label{eqn:Ind W to X}
\varinjlim_h \cWW_{d,h} \to \cX_d.
\end{equation}
Roughly speaking, we will prove that~$\cX_d$ is an Ind-algebraic stack
by showing that it is the ``scheme-theoretic image'' of the morphism
$\varinjlim_h \cWW_{d,h} \to \cR_d^{\Gamma_{\disc}}$ induced
by~\eqref{eqn:Ind W to X}. More precisely, choose $s \geq 0,$ and consider the composite
        \numequation
        \label{eqn:W to R composite}
        \cWW_{d,h,s} \to \cWW_{d,h} \to \cX_d \to \cR_d^{\Gamma_{\disc}}.
        \end{equation}
        This admits the alternative factorization
        $$\cWW_{d,h,s} \to \cWW_{d,h} \to 	\cR_d^{\Gamma_{\disc}}\times_{\cR_d}\cC_{d,h} 
        \to \cR_d^{\Gamma_{\disc}}.$$
        Proposition~\ref{prop:inductive description of W} shows
        that the composite of the first two arrows is a closed 
        embedding of finite presentation,
        while  Corollary~\ref{cor: basic properties of C and R p adic stacks} 
        shows that the third arrow is representable by algebraic spaces, proper, 
        and of finite presentation. 
        Thus~(\ref{eqn:W to R composite}) is representable by
        algebraic spaces, proper, and of
        finite presentation.   

	Fix an integer $a \geq 1,$ and write $\cWW^a_{d,h,s} := \cWW_{d,h,s}
	\times_{\Spf \Zp} \Spec \Z/p^a$.   
	Proposition~\ref{prop:inductive description of W}
	shows that $\cWW_{d,h,s}$ is a $p$-adic formal
	algebraic stack of finite presentation over $\Spf \Zp$,
	and so $\cWW^a_{d,h,s}$ is an algebraic stack, 
	and a closed substack of $\cWW_{d,h,s}$.

Note that since at this point we don't know that $\cX_d$
is Ind-algebraic, we can't directly define a scheme-theoretic image of 
$\cW^a_{d,h,s}$ in $\cX_d$.  It might be possible to do this using
the formalism of \cite{EGstacktheoreticimages}; we take a slightly
different approach.        
\begin{df}  We let $\cX^a_{d,h,s}$ denote the
scheme-theoretic image of the composite 
	\numequation
	\label{eqn:another composite}
	\cWW^a_{d,h,s} \hookrightarrow \cWW_{d,h,s}
	\buildrel \text{(\ref{eqn:W to R composite})} \over
	\longrightarrow \cR_d^{\Gamma_{\disc}}.
        \end{equation}
\end{df}

This is a morphism of Ind-algebraic stacks, and the scheme-theoretic
image has the obvious meaning: since $\cR_d^{\Gamma_{\disc}}$ is an
Ind-algebraic stack, 
        constructed as the $2$-colimit of a directed system of algebraic
        stacks whose transition morphisms are closed immersions,
	the morphism~\eqref{eqn:another composite},
whose domain is a quasi-compact algebraic stack,
	factors through a closed algebraic substack $\cZ$ of
	$\cR_d^{\Gamma_{\disc}}.$   We then define $\cX^a_{d,h,s}$
	to be the 
        scheme-theoretic image of
	$\cWW^a_{d,h,s}$ in $\cZ$.   Then
	$\cX^a_{d,h,s}$ 
        is a closed algebraic substack of $\cR_d^{\Gamma_{\disc}}$,
        and is independent of the choice of~$\cZ$.

        Our next goal is to prove that $\cX^a_{d,h,s}$ is a
        (necessarily closed) substack of~$\cX^a_d$. Our argument for
        this is a little indirect. By definition, it is enough to
        check that  if~$A$ is a finite type $\Z/p^a$-algebra, then for any
  morphism $\Spec A\to\cX^a_{d,h,s}$, the composite morphism
  $\Spec A\to\cX^a_{d,h,s}\to\cR_d^{\Gamma_{\disc}}$ factors
  through~$\cX_d$. More concretely, 
  if $M$ denotes the \'etale $\varphi$-module over~$A$, 
  endowed with a $\Gamma_{\disc}$-action,
  associated to the given point $\Spec A \to \cR_d^{\Gamma_{\disc}},$
  then we must show that the $\Gamma_{\disc}$-action
  on $M$ is continuous. By Lemma~\ref{lem:testing continuity on M mod
    T}, we need to show that~$M$ contains a (not necessarily
  projective) lattice~$\gM$ such that for some~$s\ge 0$, we have
  $(\gamma^{p^s}-1)(\gM)\subseteq T\gM$.

  To do this, we note that the natural map
  $A \to B := \varprojlim_i A_i$ is injective, where~$A_i$ runs over
  the Artinian quotients of~$A$. It is enough to find such a lattice
  for~$M_B$, and using that lattices of
  bounded height are uniformly commensurable, it is in fact enough to find a
  lattice of some fixed height for each~$M_{A_i}$. We are therefore
  reduced to the following lemma~\cite[Lem.\ 3.4.8]{emertongeepicture}.
	\begin{lem}
		\label{lem:Artinian points of scheme theoretic images}
		Suppose that $M$ is a projective \'etale $\varphi$-module of rank $d$
		over a finite type Artinian $\Z/p^a$-algebra $A$,
		and that~$M$ is endowed with an action of $\Gamma_{\disc}$,
		such that the corresponding morphism $\Spec A \to
		\cR_d^{\Gamma_{\disc}}$ 
		factors through $\cX^a_{d,h,s}$.
		Then $M$ contains a $\varphi$-invariant
		lattice 
                $\gM$ of $T$-height $\leq h$,
		such that $(\gamma^{p^s}-1) (\gM) \subseteq
		T\gM.$
              \end{lem}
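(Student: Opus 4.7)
The plan is to exploit the defining property of $\cX^a_{d,h,s}$ as the scheme-theoretic image of the proper, scheme-theoretically dominant morphism $\cW^a_{d,h,s} \to \cR_d^{\Gamma_{\disc}}$, so that the induced morphism $\cW^a_{d,h,s} \to \cX^a_{d,h,s}$ is itself proper and scheme-theoretically dominant, hence topologically surjective. I would first decompose $A$ into its local factors so as to reduce to the case where $A$ is Artinian local, with residue field $k$. Since $A$ is of finite type over $\Z/p^a$, the field $k$ is finite, hence perfect, so every finite extension of $k$ is separable and lifts to a finite \'etale \emph{(}hence faithfully flat\emph{)} extension of $A$.

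I would next produce a faithfully flat finite \'etale Galois extension $A \hookrightarrow A'$, with group $G$, together with a lift $\Spec A' \to \cW^a_{d,h,s}$ of the composite $\Spec A' \to \Spec A \to \cX^a_{d,h,s}$. Topological surjectivity, applied using a smooth atlas $U \to \cW^a_{d,h,s}$, produces a closed point of $U \times_{\cX^a_{d,h,s}} \Spec A$ above the closed point of $\Spec A$; its residue field is some finite extension $k''/k$, which after enlarging to a finite Galois closure $k'/k$ lifts to $A \hookrightarrow A'$ as above. To extend the resulting $k'$-valued lift to an $A'$-valued lift, one uses the formal smoothness of $U \to \cW^a_{d,h,s}$ together with the properness of $\cW^a_{d,h,s} \to \cX^a_{d,h,s}$, arranged so that the extension is compatible with the prescribed $A$-point of $\cX^a_{d,h,s}$.

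The $A'$-valued lift provides a projective weak Wach module $\gM_{A'} \subseteq M_{A'} := A' \otimes_A M$ of $T$-height $\leq h$ and level $\leq s$. To descend, set
\[\gM'' := \sum_{g \in G} g(\gM_{A'}) \subseteq M_{A'},\]
a $G$-stable enlargement. A finite sum of $\varphi$-invariant $\A_{K,A'}^+$-lattices of $T$-height $\leq h$ is again a $\varphi$-invariant lattice of $T$-height $\leq h$ \emph{(}using that $\varphi$ is flat on $\A_{K,A'}^+$, so that injectivity of $\Phi_{\gM''}$ follows from that of the $\Phi_{g(\gM_{A'})}$\emph{)}, and since $G$ commutes with both $\varphi$ and $\Gamma_{\disc}$, the condition $(\gamma^{p^s}-1)(\gM'') \subseteq T\gM''$ is preserved. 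Being $G$-stable inside $M_{A'}$, the module $\gM''$ descends by faithfully flat Galois descent to $\gM := (\gM'')^G = \gM'' \cap M \subseteq M$, with $\A_{K,A'}^+ \otimes_{\A_{K,A}^+} \gM \cong \gM''$; the faithful flatness of $\A_{K,A}^+ \to \A_{K,A'}^+$ \emph{(}Proposition~\ref{prop: maps of coefficient rings are faithfully flat injections}\emph{)} then allows the finite generation, $T$-torsion freeness, $\varphi$-invariance, $T$-height $\leq h$, and Wach-level conditions to descend from $\gM''$ to the required lattice $\gM \subseteq M$.

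The main technical obstacle is the lifting step in the second paragraph: while proper surjectivity easily gives the field-valued lift to $\cW^a_{d,h,s}$, extending it to the Artinian thickening $\Spec A'$ requires combining the formal smoothness of smooth atlases of $\cW^a_{d,h,s}$ with the properness of the map to $\cX^a_{d,h,s}$, subject to compatibility with the given $\cX^a_{d,h,s}$-point. Once this lift is in hand, the Galois descent argument is essentially formal.
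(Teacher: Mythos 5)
There is a genuine gap at the central step of your argument: the claim that the given $A$-point of $\cX^a_{d,h,s}$ lifts, after a finite \'etale extension $A \to A'$, to an $A'$-point of $\cW^a_{d,h,s}$. Properness plus scheme-theoretic dominance of $\cW^a_{d,h,s} \to \cX^a_{d,h,s}$ does give topological surjectivity, so the \emph{closed} point lifts over a finite extension of the residue field; but nothing you invoke produces the extension of that lift to the Artinian thickening $\Spec A'$. The valuative criterion of properness concerns valuation rings, not Artinian rings, and the formal smoothness of an atlas $U \to \cW^a_{d,h,s}$ lets you lift infinitesimally \emph{along} $U \to \cW^a_{d,h,s}$, not against the proper map $\cW^a_{d,h,s} \to \cX^a_{d,h,s}$ (which is essentially the opposite of smooth). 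The failure is real, not merely technical: for the blow-up $X = \mathrm{Bl}_0\,\bA^2 \to \bA^2$, which is proper and scheme-theoretically dominant, the point $\Spec k[u,v]/(u,v)^2 \to \bA^2$ supported at the origin admits no lift to $X$, even after faithfully flat base change, because a lift would have to single out one tangent direction. Blow-up--like behaviour is endemic in exactly this setting (compare the blow-up of $\Spf\cO[[t]]$ at $(p,t)$ in Lecture~8), so one cannot expect Artinian points of the scheme-theoretic image to lift to $\cW^a_{d,h,s}$.

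The paper's proof is structured to avoid this entirely. One reduces to the Artin local case (as you do) and passes to the universal framed deformation ring $R$ of the fixed $\varphi$-module with $\Gamma_{\disc}$-action; the key observation is that the subfunctor of deformations admitting a lattice of the required type is representable by a \emph{closed} subscheme $\Spf S \subseteq \Spf R$. One then shows $\Spf S$ contains the scheme-theoretic image of $\cW_{d,h,s}\times_{\cR_d^{\Gamma_{\disc}}}\Spf R \to \Spf R$ using the criterion of Lemma~\ref{lem: criterion for Artin to map to scheme theoretic image}, whose hypothesis only requires checking Artinian points that genuinely \emph{do} admit sections (for those, the section is a weak Wach module, hence a lattice, hence factors through $S$). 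In other words, the logic is reversed: rather than lifting points of the image to $\cW$, one shows the closed locus of ``admits a lattice'' swallows the scheme-theoretic image. Your Galois-descent step (summing $g(\gM_{A'})$ over $g \in G$ and descending along the finite \'etale extension $\A^+_{K,A} \to \A^+_{K,A'}$) is fine as far as it goes, but it rests on a lift that you cannot produce.
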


              Our proof of this lemma is a little involved. We can
              immediately reduce to the Artin local case, and then to
              a problem about the universal framed deformation rings
              of a fixed $\varphi$-module equipped with an action
              of~$\Gamma_{\disc}$. Writing~$R$ for this universal
              deformation ring, we can consider the subfunctor of
              deformations which admit a lattice~$\gM$ of the required
              type, and it is straightforward to check that this is
              representable by a quotient~$S$ of~$R$. The statement of
              the lemma reduces to showing that~$\Spf S$ contains
              the scheme-theoretic image~$\Spf T$ of  the morphism \[X:=\cW_{d,h,s}\times_{\cR_d^{\Gamma_{\disc}}}\Spf
                R\to\Spf R.\]
              We prove this using the following criterion~\cite[Lem.\ A.32]{emertongeepicture}.

              \begin{lem}\label{lem: criterion for Artin to map to scheme theoretic
    image}	Let $R \to S$ be a continuous surjection of
  pro-Artinian local rings,
	and let $X\to \Spf R$ be a finite type morphism
	of formal algebraic spaces. 
        
	Make the following assumption: 
	if $A$ is any finite-type Artinian local $R$-algebra
	for which the canonical morphism $R\to A$ factors through 
	a discrete quotient of $R$,
	and for which the canonical morphism $X_A \to \Spec A$
	admits a section,
	then the canonical morphism $R \to A$ furthermore factors
        through $S$.

        Then the scheme-theoretic image of~$X\to\Spf R$ is a closed
        formal subscheme of~$\Spf S$.
\end{lem}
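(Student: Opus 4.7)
The plan is to show directly that the morphism $f: X \to \Spf R$ factors through the closed immersion $\Spf S \hookrightarrow \Spf R$; since the scheme-theoretic image $\Spf T$ is by definition the smallest closed formal subscheme of $\Spf R$ through which $f$ factors, this will imply $\Spf T \subseteq \Spf S$, so that $\Spf T$ is a closed formal subscheme of $\Spf S$ as required.

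First I would write $\Spf R = \varinjlim_n \Spec R_n$ and $\Spf S = \varinjlim_n \Spec S_n$ as filtered colimits of closed immersions, with each $R_n$ Artinian local and $R_n \twoheadrightarrow S_n$ compatible with $R \twoheadrightarrow S$. Setting $X_n := X \times_{\Spf R} \Spec R_n$, which is an algebraic space of finite type over $\Spec R_n$, it suffices to prove that each $X_n \to \Spec R_n$ factors through $\Spec S_n$. Fix $n$ and choose an étale surjection $U \to X_n$ with $U = \Spec B$ an affine scheme of finite type over $\Spec R_n$ (such a $U$ exists because $X_n$ is a quasi-compact algebraic space of finite type over the Artinian ring $R_n$). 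Because $\mathcal O_{X_n} \hookrightarrow \mathcal O_U$ by étale descent, it is enough to show that $U \to \Spec R_n$ factors through $\Spec S_n$, equivalently that the ideal $J_n := \ker(R_n \twoheadrightarrow S_n)$ maps to zero in $B$.

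The core of the argument is the following Noetherian reduction. Since $B$ is a finite-type algebra over the Artinian ring $R_n$, it is Noetherian. A standard application of the Krull intersection theorem (applied at maximal ideals of $B$) shows that an element $b \in B$ is zero if and only if its image in the Artinian local quotient $B/\mathfrak{m}^N$ vanishes for every maximal ideal $\mathfrak{m} \subseteq B$ and every integer $N \geq 1$. Each such quotient $A := B/\mathfrak{m}^N$ is a finite-type Artinian local $R_n$-algebra, and the composite $R \twoheadrightarrow R_n \to A$ factors through the discrete quotient $R_n$ of $R$. Consequently, to conclude that $J_n$ dies in $B$, it suffices to check that for every such $A$, the composite $R_n \to A$ factors through $S_n$, which in turn follows if $R \to A$ factors through $S$.

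To invoke the hypothesis of the lemma we must exhibit a section of $X_A \to \Spec A$. But the tautological composite $\Spec A \to \Spec B = U \to X_n$ is a morphism of $\Spec R_n$-schemes, and therefore gives rise, by base change, to a section of $X_A = X_n \times_{\Spec R_n} \Spec A \to \Spec A$. The assumption of the lemma then forces $R \to A$ to factor through $S$, which closes the argument. The main technical step is the Noetherian reduction in the third paragraph; everything else is a formal manipulation of the hypothesis. It is precisely this reduction that converts the lemma's input (a condition tested against Artinian local test rings) into control of the structure map $R \to B$ on the finite-type affine patch $U$, and hence of the scheme-theoretic image itself.
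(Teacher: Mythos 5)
Your proof is correct, and it takes essentially the same route as the reference the survey cites for this lemma (\cite[Lem.\ A.30]{emertongeepicture}): reduce to the discrete Artinian quotients $R_n$, pass to an affine \'etale cover $\Spec B \to X_n$, feed the tautological sections over the Artinian local quotients $B/\gm^N$ into the hypothesis, and conclude via Krull's intersection theorem that the relevant ideal dies in $B$. The only cosmetic remark is that what you prove --- that $X\to\Spf R$ itself factors through $\Spf S$ --- is (for a quasi-compact morphism) equivalent to, not strictly stronger than, the stated containment of the scheme-theoretic image.
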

The result then follows by unwinding the definitions: the point is
that admitting a section to~$X_A$ in particular gives a morphism $\Spec A\to\cW_{d,h,s}$, and the
          corresponding weak Wach module is a lattice of the kind
          being considered, which gives the required factorisation through~$S$.

Finally, we can prove that~$\cX_d$ is Ind-algebraic.
\begin{prop}
	\label{prop:X is an Ind-stack basic case}
The canonical morphism
	$\varinjlim \cX^a_{d,h,s} \to \cX_d$ is an isomorphism.
	Thus $\cX_d$ is an Ind-algebraic stack, and may in fact
	be written as the inductive limit of
        algebraic stacks of finite presentation, with the transition maps
        being closed immersions.
      \end{prop}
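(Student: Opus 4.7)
The plan is to show three things: each $\cX^a_{d,h,s}$ is an algebraic stack of finite presentation, the transition maps are closed immersions, and the natural morphism $\varinjlim \cX^a_{d,h,s} \to \cX_d$ is essentially surjective (it is already a monomorphism since each $\cX^a_{d,h,s}$ has just been shown to be a substack of $\cX_d$). The first two assertions are essentially built into the definition of $\cX^a_{d,h,s}$ as the scheme-theoretic image of~(\ref{eqn:another composite}): finite presentation follows from that of $\cW^a_{d,h,s}$ (Proposition~\ref{prop:inductive description of W}) together with the properness and finite presentation of the composite~(\ref{eqn:W to R composite}). For the transition maps, the nested closed immersions $\cW^a_{d,h,s} \hookrightarrow \cW^{a'}_{d,h',s'}$ for $a \le a'$, $h \le h'$, $s \le s'$ (from Proposition~\ref{prop:inductive description of W} and~(\ref{eqn:closed immersion of Wachs})) induce a nested containment of their scheme-theoretic images, each of which is a closed substack of the common ambient Ind-algebraic stack $\cR_d^{\Gamma_{\disc}}$; an inclusion between two closed substacks of an Ind-algebraic stack is automatically a closed immersion.

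The substantive step is essential surjectivity. Since $\cX_d$ is limit preserving, it suffices to show that every $A$-point $(M,\gamma)$ of $\cX_d$, for $A$ a finite type $\Z/p^a$-algebra, factors through some $\cX^a_{d,h,s}$. My plan is to lift the induced morphism $\Spec A \to \cR_d^{\Gamma_{\disc}}$ to a morphism $\Spec A \to \cW^a_{d,h,s}$ for suitable $h,s$; by the definition of $\cX^a_{d,h,s}$ as the scheme-theoretic image of~(\ref{eqn:another composite}), the composite will then automatically factor through~$\cX^a_{d,h,s}$. Producing such a lift amounts to finding a finite projective $\varphi$-stable $\A^+_{K,A}$-lattice $\gM \subseteq M$ of $T$-height $\le h$ satisfying $(\gamma^{p^s}-1)\gM \subseteq T\gM$. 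A $\varphi$-stable lattice of bounded $T$-height can be built by scaling an \emph{fppf}-local basis of~$M$ and saturating under~$\varphi$: because $A$ is topologically of finite type over $\Z/p^a$, the matrix of $\varphi$ in any fixed basis has uniformly bounded $T$-denominators, ensuring that the saturation stabilizes into a lattice of height at most some~$h$. Applying Lemma~\ref{lem:testing continuity on M mod T}\,(\ref{item: gamma minus 1 squeezing M to T}) to this lattice then supplies the requisite level~$s$.

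The main obstacle is ensuring that the $\varphi$-stable lattice produced is a projective $\A^+_{K,A}$-module of constant rank~$d$, not merely a $T$-torsion free submodule. The construction sketched is inherently \emph{fppf}-local on $\Spec A$, so one only obtains an \emph{fppf}-local lift to $\cW^a_{d,h,s}$; this is nevertheless enough, because factorization through a closed substack of $\cR_d^{\Gamma_{\disc}}$ descends along \emph{fppf} covers. A more intrinsic alternative, closer in spirit to the argument establishing Lemma~\ref{lem:Artinian points of scheme theoretic images}, would be to verify essential surjectivity first at finite-type Artinian local points (where the existence of a weak Wach lattice is easier to arrange), and then promote this to arbitrary finite-type~$A$ using the limit-preserving property of both sides together with a uniform bound on $(h,s)$ across the Artinian quotients of~$A$.
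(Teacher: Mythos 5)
The gap in your argument is in the claim that one can find an \emph{fppf}-local basis of the projective \'etale $\varphi$-module~$M$ over~$\A_{K,A}$. This is not known to exist: as the paper itself emphasizes (see the discussion around Theorem~\ref{thm: fpqc locality of projective and locally free} and the remark that ``we don't know whether $\cR_{d,\free}^a = \cR_d^a$''), it is an open question whether a finitely generated projective $\A_A$-module becomes free after an \emph{fpqc} (let alone \emph{fppf}) base change on~$\Spec A$. So the cover over which you propose to work may simply not exist, and your descent step — while sound in principle, since factorization through the closed substack $\cX^a_{d,h,s}$ does descend along \emph{fppf} covers — never gets off the ground.

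The paper's proof circumvents this by appealing to \cite[Prop.~5.4.7]{EGstacktheoreticimages}, which produces a \emph{scheme-theoretically dominant} morphism $\Spec B \to \Spec A$ (not necessarily flat!) over which $M_B$ becomes free. The construction is by Noetherian induction, using that a projective $\A_A^+$-module is free over a dense open subset of $\Spec A$. With such a cover in hand, one takes a free $\varphi$-stable lattice $\gM \subseteq M_B$ of bounded height, applies Lemma~\ref{lem:testing continuity on M mod T} to get the bound on~$s$, and concludes that $\Spec B \to \cX_d$ factors through $\cX^a_{d,h,s}$. The descent step is then different from yours: one observes that $\cX^a_{d,h,s} \times_{\cX_d} \Spec A$ is a closed subscheme of $\Spec A$ through which $\Spec B \to \Spec A$ factors, and since $\Spec B \to \Spec A$ is scheme-theoretically dominant, this closed subscheme must be all of $\Spec A$. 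So both the choice of cover and the descent mechanism differ from what you propose, and the paper's choice is forced by the unavailability of \emph{fppf}-local freeness.

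Your proposed alternative via Artinian points also does not lead to a shortcut: passing from a factorization at every Artinian quotient to a factorization over all of~$A$ requires precisely the kind of uniformity in $(h,s)$ whose verification is the content of Lemma~\ref{lem:Artinian points of scheme theoretic images} (and its supporting Lemma~\ref{lem: criterion for Artin to map to scheme theoretic image}), which was already needed to establish that $\cX^a_{d,h,s} \subseteq \cX_d$; invoking it again here would be circular rather than a genuine alternative route. The remaining structural claims in your opening paragraph (finite presentation of the $\cX^a_{d,h,s}$, closed immersions between them) are correct and agree with the paper.
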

\begin{proof}We just need to show that if  $T = \Spec A$ for a Noetherian
  $\Z/p^a$-algebra~$A$, then  any morphism
  $T\to\cX_d$
  factors through some~$\cX^a_{d,h,s}$,
  or equivalently, that the closed immersion
  \numequation
  \label{eqn:closed immersion into test scheme}
  \cX^a_{d,h,s}\times_{\cX_d} T \to T
  \end{equation}
  is an isomorphism,
  for some choice of $h$ and $s$.

  If $M$ denotes the \'etale $(\varphi,\Gamma)$-module corresponding
  to the morphism $\Spec A \to \cX_d$,
  then \cite[Prop.~5.4.7]{EGstacktheoreticimages}
  shows that
  we may find a scheme-theoretically dominant morphism 
  $\Spec B \to \Spec A$ such that $M_B$ is free of rank $d$. (In
  outline, the proof of that result is to note that~$\gM$ is free if and only
  if~$\gM/T\gM$ is free, so that~$\gM$ is automatically free over a dense open
  subset of~$\Spec A$, and use Noetherian induction.)
  If we show that the composite $\Spec B \to \Spec A \to \cX_d$
  factors through $\cX^a_{d,h,s}$ for some $h$ and $s$,
  then we see that the morphism $\Spec B \to \Spec A$
  factors through the closed subscheme
  $\cX^a_{d,h,s} \times_{\cX_d} \Spec A$ of $\Spec A.$ 
  Since $\Spec B \to \Spec A$ is scheme-theoretically dominant,
  this implies that~(\ref{eqn:closed immersion into test scheme})
  is indeed an isomorphism, as required.

  Since $M_B$ is free,
  we may choose a $\varphi$-invariant free lattice $\gM \subseteq M_B$,
  of height $\leq h$ for some sufficiently large value of $h$.
  Since the $\Gamma_{\disc}$-action on $M$, and hence
  on $M_B$, is continuous by assumption,
  Lemma~\ref{lem:testing continuity on M mod T}
  then shows that $(\gamma^{p^s}-1)(\gM)\subseteq T\gM$ 
  for some sufficiently large value of $s$.
  Then $\gM$ gives rise to a $B$-valued point
  of $\cW^a_{d,h,s}$, whose image in $\cR^{\Gamma_{\disc}}_d$
  is equal to the \'etale $\varphi$-module $M_B$.
  Thus the morphism $\Spec B \to \cX_d$
  corresponding to $M_B$
  does indeed factor through $\cX^a_{d,h,s}$.
%
%
%
%
%
%
\end{proof}  

\section{Crystalline and semistable moduli stacks}\label{sec: crystalline and
  semistable}From now on we allow $K/\Qp$ to be arbitrary.
\subsection{Breuil--Kisin--Fargues modules admitting all descents}The
connection between Breuil--Kisin modules and crystalline
representations was first established by Kisin in~\cite{KisinCrys},
where he showed that if $K_\infty$ is as in Example~\ref{ex:kummer},
and $\rho:G_K\to\GL_d(\Zpbar)$ is a lattice in a crystalline
representation with Hodge--Tate weights in~$[0,h]$, then the \'etale
$\varphi$-module corresponding to~$\rho|_{G_{K_\infty}}$ (via the
functors explained in Lecture~\ref{subsec: phi modules and Galois
  representations}) arises from a (unique) Breuil--Kisin module of
height at most~$h$. Kisin also showed that on crystalline
representations, the restriction from~$G_K$ to~$G_{K_\infty}$ is fully
faithful. Unfortunately, when $h>1$, Kisin's description of the
Breuil--Kisin modules which arise from lattices in crystalline
representations is transcendental in nature (it is phrased in terms of
a certain meromorphic connection, which only exists after an extension
of scalars, admitting at worst log poles), and it is unclear (to us)
how to define crystalline moduli stacks purely in terms of
Breuil--Kisin modules for~$K_\infty$.

Our approach to constructing crystalline and semistable moduli stacks
relies on the observation that while most Breuil--Kisin modules do not give
crystalline or semistable representations of~$G_K$, this is ``mostly''
for the simple reason that the corresponding
$G_{K_\infty}$-representations do not even extend to~$G_K$. However,
if~$\rho$ is a $G_K$-representation such that~$\rho|_{G_{K_\infty}}$
admits a Breuil--Kisin module, then~$\rho$ is ``almost''
semistable: in particular, $\rho$ is potentially semistable, and
becomes semistable over~$K(\pi^{1/p^m})$ for some~$m$ depending only
on~$K$.  Using this result it is straightforward
to deduce that~$\rho$ is semistable if and only
if~$\rho|_{G_{K_\infty}}$ admits a Breuil--Kisin module \emph{for all
  possible choices of~\emph{(}$\pi$ and\emph{)}
  $(\pi^{1/p^n})_{n\ge 0}$}. This is the main idea behind our
construction of the stacks.

In fact, this statement (that ~$\rho|_{G_{K_\infty}}$
admits a Breuil--Kisin module only if~$\rho$ is potentially
semistable) was only proved after~\cite{emertongeepicture} was
completed, by Hui Gao~\cite{2019arXiv190508555G}. Instead, we use a
slightly weaker statement, which is proved by TG and Tong Liu
in~\cite[App.\ D]{emertongeepicture} (see Theorem~\ref{thm: admits all
  descents if and only if semistable} below).

For each choice of uniformiser~$\pi$ of~$K$, and each
choice~$\pi^\flat\in\cO_\C^\flat$ of $p$-power roots of~$\pi$, we
write~$\gS_{\piflat}$ for~$\gS=W(k)[[u]]$, regarded as a subring
of~$\Ainf$ via $u\mapsto[\piflat]$. For each choice of~$\piflat$ and
each~$s\ge 0$ we write~$K_{\pi^\flat,s}$ for $K(\pi^{1/p^s})$,
and~$K_{\pi^\flat,\infty}$ for~$\cup_sK_{\pi^\flat,s}$.
Write~$E_\pi(u)$ for the Eisenstein polynomial for~$\pi$,
and~$E_{\piflat}$ for its image in~$\Ainf$.  There is a natural ring
homomorphism $\theta:\Ainf\to\cO_\C$, which
satisfies~$\theta([x])=x^\sharp$ for any~$x\in\cO_{\C}^\flat$; and
~$E_{\piflat}$ is a generator of~$\ker\theta$.

\begin{defn}\label{defn: BKF GK module}
  A Breuil--Kisin--Fargues $G_K$-module of height at most~$h$ is a Breuil--Kisin--Fargues
  module of height at most~$h$ which is equipped with a semilinear $G_K$-action which commutes
  with~$\varphi$. 
\end{defn}

\begin{rem}
Note that if~$\gMt$ is a Breuil--Kisin--Fargues $G_K$-module, then
$W(\C^\flat)\otimes_{\Ainf}\gMt$ is naturally a 
$(\varphi,G_K)$-module
in the sense of Definition~\ref{defn: GK module over Ainf}.
\end{rem}



\begin{defn}
\label{defn: descending BKF to BK}Let $\gMt$ be a
  Breuil--Kisin--Fargues $G_K$-module of height at most~$h$ with a semilinear $G_K$-action. Then
  we say that~$\gMt$ \emph{admits all descents}  if the following
  conditions hold.
  \begin{enumerate}
  \item\label{item: existence of descent Zp version} For every choice of~$\pi$ and~$\piflat$, there is a
    Breuil--Kisin module~$\gM_{\pi^\flat}$ of height at most~$h$ with
    $\gM_{\pi^\flat}\subset(\gMt)^{G_{K_{\pi^\flat,\infty}}}$
    for which the induced morphism
    $\Ainf\otimes_{\gS_{\pi^\flat}}\gM_{\pi^\flat} \to \gMt$
    is an isomorphism.
  \item\label{appendix item: M mod u descends} The $W(k)$-submodule $\gM_{\piflat}/[\piflat]\gM_{\piflat}$ of
  $W(\overline{k})\otimes_{\Ainf}\gMt$ is independent of the
    choice of~$\pi$ and~$\piflat$.
  \item\label{appendix item: M mod E descends} The $\cO_K$-submodule $\varphi ^*\gM_{\piflat}/E_{\piflat}\varphi^*\gM_{\piflat}$
    of  $\cO_\C\otimes_{\theta, \Ainf}\varphi^*\gMt$  is independent of the
    choice of~$\pi$ and~$\piflat$.
  \end{enumerate}
\end{defn}


\begin{defn}
  \label{defn: crystalline descending BKF module}Let~$\gMt$ be a
  Breuil--Kisin--Fargues $G_K$-module which
  admits all descents. We say that~$\gMt$ is furthermore
  \emph{crystalline} if for each choice of~$\pi$ and~$\piflat$, and each~$g\in G_K$, we
  have \[(g-1)(\gM_{\piflat})\subset
  \varphi ^{-1}([\varepsilon]-1)[\piflat]\gMt.  \]
\end{defn}
There is an equivalence of categories between the category of
~$(\varphi,G_K)$-modules $M$ of rank~$d$ and the category of
free~$\Zp$-modules~$T$ of rank~$d$ which are equipped with a
continuous action of~$G_K$, with the Galois representation corresponding
to~$M$ being given by $T(M)=M^{\varphi=1}$. Write~$V(M):=T(M)\otimes_{\Zp}\Qp$.

We deduce the following theorem~\cite[Thm.\ F.11]{emertongeepicture} from the results of Tong Liu's
paper~\cite{1302.1888} and Laurent Fargues' correspondence between
Breuil--Kisin--Fargues modules and $\BdR^+$-lattices \cite[Thm.\
4.28]{2016arXiv160203148B}.
\begin{thm}[T.G.\ and Tong Liu]
  \label{thm: admits all descents if and only if semistable} Let~$M$
  be a $(\varphi,G_K)$-module. Then~$V(M)$ is semistable with
  Hodge--Tate weights in~$[0,h]$ if and only if there is a
  \emph{(}necessarily unique\emph{)}
  Breuil--Kisin--Fargues $G_K$-module $\gMt$ which is of height at
  most~$h$, which admits all descents, and which satisfies
  $M=W(\C^\flat)\otimes_{\Ainf}\gMt$.

  Furthermore, $V(M)$ is
  crystalline if and only if~$\gMt$ is crystalline.
\end{thm}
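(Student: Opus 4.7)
The plan is to deduce the theorem by combining the main results of Tong Liu's \cite{1302.1888} with Fargues' equivalence \cite[Thm.\ 4.28]{2016arXiv160203148B}. Liu characterises semistable representations of $G_K$ via \emph{systems} of Kisin modules over the various $\gS_{\piflat}$, one for each choice of $(\pi,\piflat)$, glued together by compatibilities inside an ambient $\Ainf$-module. The content of Theorem~\ref{thm: admits all descents if and only if semistable} is that this ambient object can be taken to be a Breuil--Kisin--Fargues $G_K$-module, with the $G_K$-action and the compatibilities of Definition~\ref{defn: descending BKF to BK} encoding precisely the underlying $\Z_p$-lattice, $\Dst$, and Hodge filtration; Fargues' correspondence guarantees that a BKF module with the expected $\BdR^+$-lattice exists and is unique.

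For the direction ``semistable $\Rightarrow$ admits all descents'', I would start with a semistable $V=V(M)$ with Hodge--Tate weights in $[0,h]$. By Kisin's theorem, for each $(\pi,\piflat)$ there is a unique Kisin module $\gM_{\piflat}$ of height at most $h$ whose \'etale realisation recovers the $G_{K_{\piflat,\infty}}$-representation attached to~$V$. I would set $\gMt:=\Ainf\otimes_{\gS_{\piflat}}\gM_{\piflat}$ for one choice and verify independence of $\piflat$ by comparing each candidate inside $M$, using the injectivity of $\Ainf\hookrightarrow W(\C^\flat)$. Galois equivariance, and condition~\ref{item: existence of descent Zp version}, follow because uniqueness of $\gM_{\piflat}$ forces $g(\gM_{\piflat})=\gM_{g(\piflat)}$ for $g\in G_K$. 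Condition~\ref{appendix item: M mod u descends} is obtained by identifying $\gM_{\piflat}/[\piflat]\gM_{\piflat}\otimes_{W(k)}W(\overline k)$ with $\Dst(V)\otimes_{K_0}W(\overline k)$, manifestly canonical; condition~\ref{appendix item: M mod E descends} follows from the analogous identification of $\varphi^*\gM_{\piflat}/E_{\piflat}\varphi^*\gM_{\piflat}\otimes_{\cO_K}\cO_{\C}$ with $\DdR(V)\otimes_K\cO_{\C}$.

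Conversely, if $\gMt$ admits all descents, the family $\{\gM_{\piflat}\}$ is exactly the input needed in Liu's criterion to reconstruct from $T(M)$ a weakly admissible filtered $(\varphi,N)$-module: condition~\ref{appendix item: M mod u descends} supplies the underlying $\Dst$ together with its Frobenius, condition~\ref{appendix item: M mod E descends} supplies the Hodge filtration, and the $G_K$-action on $\gMt$ provides the monodromy operator $N$; Liu's theorem then promotes $V(M)$ to a semistable representation with Hodge--Tate weights in $[0,h]$. For the crystalline refinement, reducing $(g-1)\gM_{\piflat}\subseteq\varphi^{-1}([\varepsilon]-1)[\piflat]\gMt$ modulo $[\piflat]$ and using the Kummer cocycle $g(\piflat)=[\varepsilon]^{c(g)}\piflat$ shows that the induced action of $g-1$ on $\gM_{\piflat}/[\piflat]\gM_{\piflat}$ is proportional to $c(g)\cdot N$ acting on $\Dst(V)$, so the bound for every $g\in G_K$ is equivalent to $N=0$, i.e.\ to $V$ being crystalline. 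The main obstacle I expect is the bookkeeping needed to align the submodule $\varphi^{-1}([\varepsilon]-1)[\piflat]\gMt$ with the canonical ``log-pole'' subobject that Liu uses to detect monodromy, and to establish uniqueness of each $\gM_{\piflat}$ together with its position inside $\gMt$ in sufficient generality that Galois equivariance is automatic; once those identifications are pinned down, the theorem is largely a repackaging of \cite{1302.1888} in the BKF language made available by Fargues' correspondence.
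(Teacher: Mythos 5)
Your overall route---deducing the theorem from Tong Liu's compatibility results in \cite{1302.1888} together with Fargues' correspondence between Breuil--Kisin--Fargues modules and $\BdR^+$-lattices---is precisely the one the paper indicates, and your forward direction is essentially the intended argument: Kisin's uniqueness forces $g(\gM_{\piflat})=\gM_{g(\piflat)}$, and the two compatibility conditions are checked by identifying the reductions modulo $[\piflat]$ and modulo $E_{\piflat}$ with $\Dst(V)$ and $\DdR(V)$ respectively.

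The converse, however, contains a genuine gap where you write that conditions (2) and (3) ``supply'' $\Dst$ and the Hodge filtration, the $G_K$-action supplies $N$, and ``Liu's theorem then promotes $V(M)$ to a semistable representation.'' Assembling candidate filtered $(\varphi,N)$-module data out of the descent datum does not show that $V(M)$ is semistable: semistability is the assertion that $\Dst(V)$ has full rank, and no amount of auxiliary linear algebra built from $\gMt$ certifies that without first proving $V$ is de Rham (this is exactly where Fargues' theorem must do real work, by showing that the descent conditions make the associated $\BdR^+$-lattice $G_K$-stable, after which one invokes the $p$-adic monodromy theorem to get potential semistability and then descends). Moreover, the implication ``admits a Breuil--Kisin descent $\Rightarrow$ (potentially) semistable'' is \emph{not} available in Liu's published results---the survey is explicit that the strong form of this statement was only proved afterwards by Gao, and that the theorem at hand required a new argument occupying an appendix. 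So the converse is the new content of the theorem, not ``largely a repackaging,'' and the main obstacle is not the bookkeeping you identify but the passage from finite $E$-height over the various $K_{\piflat,\infty}$ to de Rham-ness and then semistability of the $G_K$-representation itself. Your reduction of the crystalline condition modulo $[\piflat]$ to the vanishing of $N$ is, by contrast, the right idea.
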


More recently, Bhargav Bhatt and Peter Scholze~\cite{bhatt2021prismatic} have proved a
classification of lattices in crystalline $G_K$-representations in
terms of prismatic $F$-crystals; this puts
Theorem~\ref{thm: admits all descents if and only if semistable} (in
the crystalline case) in its natural context.

\subsection{Definition of the crystalline stacks}For
simplicity of exposition, we will only address the crystalline case
from now on; the proofs in the semistable case are identical (in fact
slightly simpler, because the crystalline case has the extra condition
on the Breuil--Kisin--Fargues modules). The actual definition of these
stacks, and the proof of their basic properties, is quite involved,
but the basic idea is quite simple: we will take the scheme-theoretic
image in~$\cX_d$ of the moduli stack of Breuil--Kisin--Fargues
$G_K$-modules which admit all descents. There is one wrinkle: we do
not know that the stack of  Breuil--Kisin--Fargues
$G_K$-modules admitting all descents is reasonably behaved, and we
instead work with a refinement, for which the $G_K$-action is
``canonical'', in the following sense.

We have the following reinterpretation
of some of the results and arguments of~\cite[\S2]{MR2745530} (see~\cite[\S 4.3,
4.4]{emertongeepicture}).

\begin{prop}\label{prop: Caruso Liu Galois action on Kisin}
  For any fixed $a,h$, and any sufficiently large $N$, there is a
  positive integer~$s(a,h,N)$ with the property that for any finite type
  $\Z/p^a$-algebra~$A$, any
  projective Breuil--Kisin
  module~$\gM$  of height at most~$h$, and 
  any~$s\ge s(a,h,N)$, there is a unique continuous action of~ $G_{K_s}=G_{K(\pi^{1/p^s})}$ on
  $\gMt:=\AAinf{A}\otimes_{\gS_{A}}\gM$ which commutes with~$\varphi$
  and is semilinear with respect to the natural action of~$G_{K_s}$
  on~$\AAinf{A},$ with the additional property that for
  all~$g\in G_{K_s}$ we have $(g-1)(\gM)\subset
  u^N\gMt$. 

Conversely, for any Breuil--Kisin--Fargues $G_K$-module~$\gMt$
\emph{(}with $\Zp$-coefficients\emph{)}
 corresponding as above to a semistable
$G_K$ representation with Hodge--Tate weights in~$[0,h]$, and for any
choice of~$\piflat$ with corresponding descent~$\gM_{\piflat}$, 
 if~$s\ge s(a,h,N)$ then 
 the restriction to~$G_{K_{\piflat,s}}$ of the action of~$G_K$ 
 on $\gMt/p^a$ 
 agrees with the action of the previous paragraph.
\end{prop}
\begin{proof}
  The proof of the first part is quite straightforward; the point is
  that by a standard argument with the~$\varphi$-structure, one can
  uniquely upgrade approximate homomorphisms (i.e.\ maps which are
  homomorphisms modulo~$u^N$) between Breuil--Kisin(--Fargues) modules
  to actual homomorphisms (see~\cite[Lem.\
  4.3.2]{emertongeepicture}). We can write down an ``approximate
  action'' of~$G_{K_s}$ on~$\gMt$, by just letting it act trivially
  on~$\gM$ (after fixing some basis - since the action is semilinear,
  it doesn't literally make sense to ask that it be trivial).

  The second part is proved in~\cite{MR2745530}.
\end{proof}

Note that Definition~\ref{defn: crystalline descending BKF
  module} carries over in an obvious fashion to the case with
coefficients in a topologically of finite type $p$-adically complete
$\Zp$-algebra~$A$ (see~\cite[Defn.\ 4.2.4]{emertongeepicture}). Using
the faithful flatness results of Proposition~\ref{prop: maps of coefficient rings
  are faithfully flat injections}, one can check that the existence of
a descent to a Breuil--Kisin module depends only on~$\pi$, and not
on~$\piflat$, and that any such descent is unique (\cite[Lem.\ 4.2.7,
4.2.8]{emertongeepicture}).

\begin{defn}For any~$h\ge 0$ we let~$\cC_{d,\crys,h}^a$ 
  denote the
 limit preserving category of groupoids over
  $\Spec\Z/p^a$ determined by decreeing, for any finite
  type $\Z/p^a$-algebra~$A$, that
  $\cC_{d,\crys,h}^a(A)$ is the
  groupoid of Breuil--Kisin--Fargues $G_K$-modules with
  $A$-coefficients, which are of height at most~$h$, which admit
  all
  descents, whose $G_K$-actions are canonical, and which are crystalline. We let~$\cC_{d,\crys,h}:=\varinjlim_a\cC_{d,\crys,h}^a$.
\end{defn}
It is easy to check that if $A$ is a $p$-adically complete
  $\Zp$-algebra which is topologically of finite type, then $\cC_{d,\crys,h}(A)$ 
  is the groupoid of
  Breuil--Kisin--Fargues $G_K$-modules with $A$-coefficients, which
  are of height at most~$h$, which admit all descents, whose
  $G_K$-actions are canonical, and which are crystalline.
There is a natural morphism \[\cC_{d,\crys,h}\to\cX_d,\] which
 is defined, for finite type 
$\Z/p^a$-algebras, via
$\gMt \mapsto W(\C^\flat)_A \otimes_{\AAinf{A}} \gMt,$
with the target object being regarded as an $A$-valued point of $\cX_d$
via the equivalence of Proposition~\ref{prop: equivalences of
  categories to Ainf}.

For now we will admit the following theorem~\cite[Thm.\
4.5.20]{emertongeepicture}, which we prove below.
\begin{thm}\label{thm: semi stable stack is formal algebraic}
 $\cC_{d,\crys,h}$ is a $p$-adic formal algebraic stack
of finite presentation and affine diagonal.
The morphism
$\cC_{d,\crys,h}\to\cX_d$
is representable by algebraic
  spaces, proper, and of finite presentation. 
\end{thm}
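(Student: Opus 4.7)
The plan is to fix a single choice of uniformiser~$\pi$ of~$K$ and a compatible system of $p$-power roots~$\piflat$, and to construct~$\cC_{d,\crys,h}$ as a closed substack of an auxiliary stack built over the $p$-adic formal algebraic stack~$\cC_{d,h}$ of Breuil--Kisin modules of $E(u)$-height at most~$h$. By the uniqueness of the descent (one of the basic lemmas alluded to in this setup), the Breuil--Kisin--Fargues module $\gMt = \Ainf\otimes_{\gS_{\piflat}}\gM_{\piflat}$ is determined by the single Breuil--Kisin module~$\gM_{\piflat}$, so that specifying a Breuil--Kisin--Fargues $G_K$-module admitting all descents amounts to specifying such a~$\gM_{\piflat}$ together with a compatible $G_K$-action and a list of closed compatibility and integrality conditions.

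First I would build an auxiliary stack $\cC_{d,h}^{G_K,\piflat}$ parametrising pairs $(\gM,\sigma)$ where~$\gM$ is an object of~$\cC_{d,h}$ and~$\sigma$ is a continuous $W(\C^\flat)_A$-semilinear action of~$G_K$ on~$W(\C^\flat)_A\otimes_{\gS_A}\gM$, commuting with~$\varphi$ and extending the canonical $G_{K_{\piflat,\infty}}$-action. This is directly analogous to the construction of~$\cX_d$ in Section~\ref{subsec: defn of Xd}: it is realised as a fibred product of~$\cC_{d,h}$ with a stack of $G_K$-actions, with continuity testable on a single residual level by a straightforward adaptation of Lemma~\ref{lem:testing continuity on M mod T} (replacing~$\Gamma$ by~$G_K/G_{K_{\piflat,\infty}}$, $T$ by~$[\piflat]$, and working with a fixed set of topological generators of a sufficiently small normal open subgroup). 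The argument of Proposition~\ref{prop:inductive description of W} then exhibits~$\cC_{d,h}^{G_K,\piflat}$ as a rising union of closed substacks of a $p$-adic formal algebraic stack of finite presentation with affine diagonal; combined with the properness of~$\cC_{d,h}\to\cR_d$ in Corollary~\ref{cor: basic properties of C and R p adic stacks} and the equivalence of Proposition~\ref{prop: equivalences of categories to Ainf}, the canonical map $\cC_{d,h}^{G_K,\piflat}\to\cX_d$ is representable by algebraic spaces, proper, and of finite presentation.

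Next I would cut out~$\cC_{d,\crys,h}$ inside~$\cC_{d,h}^{G_K,\piflat}$ as a closed substack defined by the remaining conditions. The crystalline condition $(g-1)(\gM_{\piflat})\subseteq \varphi^{-1}([\varepsilon]-1)[\piflat]\gMt$ of Definition~\ref{defn: crystalline descending BKF module} is manifestly a closed condition of finite presentation, testable on a finite set of topological generators of~$G_K$. Conditions (ii) and (iii) of Definition~\ref{defn: descending BKF to BK} (independence of $\gM_\piflat/[\piflat]\gM_\piflat$ and of $\varphi^*\gM_\piflat/E_\piflat\varphi^*\gM_\piflat$ on the varying choice of~$(\pi,\piflat)$) can likewise be encoded as closed coincidence-of-submodule conditions once the other descents are in hand.

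The principal obstacle is Condition (i) of Definition~\ref{defn: descending BKF to BK}: one must show that requiring a Breuil--Kisin descent to exist for \emph{every} choice of~$(\pi',(\pi')^\flat)$ is a condition of finite presentation, not just a countable intersection of closed conditions. Here the crucial input is Theorem~\ref{thm: admits all descents if and only if semistable}, which on Artinian points identifies the property ``admits all descents of height~$\le h$'' with the Hodge-theoretic property that the associated Galois representation be semistable with Hodge--Tate weights in~$[0,h]$. Using the rigidity of semistable lifts in bounded ramification, one should be able to reduce the infinite family of descent conditions to the existence of descents along finitely many carefully chosen~$\pi'_j$ lying in a single totally ramified extension of~$K$; the scheme-theoretic image (in the sense of Theorem~\ref{thm: scheme theoretic images}) of the fibre product over~$\cX_d$ of finitely many such stacks~$\cC^{G_K,(\pi'_j)^\flat}_{d,h}$ will then cut out~$\cC_{d,\crys,h}$ as a closed substack of finite presentation, from which the $p$-adic formal algebraicity and the properness of $\cC_{d,\crys,h}\to\cX_d$ are immediate. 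Making this reduction from ``all descents'' to a finite set of effective closed equations rigorous is the heart of the proof and will draw essentially on the integral $p$-adic Hodge theory developed in~\cite[App.\ D]{emertongeepicture}.
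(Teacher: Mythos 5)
Your overall strategy is the right one, and it matches the shape of the paper's argument: build an auxiliary $p$-adic formal algebraic stack over $\cC_{d,h}$ carrying a Galois action, and exhibit $\cC_{d,\crys,h}$ as a closed substack via a monomorphism-plus-closedness argument. But there are two real issues.

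First, the auxiliary stack. You propose to parametrise directly pairs $(\gM,\sigma)$ with $\sigma$ a continuous $W(\C^\flat)_A$-semilinear $G_K$-action, and to encode continuity by an adaptation of Lemma~\ref{lem:testing continuity on M mod T} applied to $G_K/G_{K_{\piflat,\infty}}$. That lemma leans heavily on $\Gamma$ being procyclic (the condition is phrased in terms of $\gamma^{p^s}-1$), and $G_K/G_{K_{\piflat,\infty}}$ is not of that type; your claim that continuity is ``testable on a single residual level'' for this profinite group is not at all straightforward, and you should not expect a routine adaptation. The paper avoids this entirely: by Proposition~\ref{prop: Caruso Liu Galois action on Kisin} a Breuil--Kisin module of height $\le h$ with $\Z/p^a$-coefficients carries a \emph{canonical}, automatically continuous, $G_{K_{\piflat,s}}$-action (constructed by successive approximation from the trivial action), so one gets a canonical morphism $\cC^a_{\piflat,d,h}\to\cX^a_{K_{\piflat,s},d}$. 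The auxiliary stack is then defined as the $2$-fibre product $\cC^a_{\piflat,s,d,h} := \cX^a_{K,d}\times_{\cX^a_{K_{\piflat,s},d}}\cC^a_{\piflat,d,h}$, so continuity for the full $G_K$-action is inherited from $\cX^a_{K,d}$, where it is already encoded; no new continuity criterion is needed. Algebraicity and finite presentation of $\cC^a_{\piflat,s,d,h}$, as well as the desired properness over $\cX^a_{K,d}$, then come directly from Corollary~\ref{cor: basic properties of C and R p adic stacks} and the diagonal properties of $\cX_{K,d}$.

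Second, and this is the genuine gap: your treatment of the ``admits all descents'' condition. You propose to invoke Theorem~\ref{thm: admits all descents if and only if semistable} to convert the uncountable family of descent conditions into a finite one, together with a scheme-theoretic image of a fibre product over finitely many choices of uniformiser. This is not what the paper does, and it does not work as stated. Theorem~\ref{thm: admits all descents if and only if semistable} is a statement about modules over $\Z_p$ (equivalently, $\Zbar_p$-valued points); it provides a characterisation of crystalline/semistable \emph{representations}, not a tool for cutting out a closed locus over a finite type $\Z/p^a$-algebra. Moreover, ``rigidity of semistable lifts in bounded ramification'' is not a precise result you can cite, and the claim that descents along finitely many carefully chosen $\pi'_j$ imply descents along all uniformisers is unsubstantiated. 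The paper instead works with a \emph{single} fixed $\piflat$ and shows directly (Proposition~\ref{prop: BKF closed in BK}) that the induced morphism $\cC^a_{d,\crys,h}\to\cC^a_{\piflat,s,d,h}$ is a closed immersion: the monomorphism part is the easy observation that $\gMt=\AAinf{A}\otimes_{\gS_{\piflat,A}}\gM_{\piflat}$ and the $G_K$-action are determined; the closedness of the ``$G_K$-stable and admits all descents'' conditions is established by the machinery of~\cite[App.\ B]{emertongeepicture} (in particular Lemma B.28, which shows vanishing loci of morphisms of Breuil--Kisin--Fargues modules are closed), not by any reduction to finitely many uniformisers via semistability theory. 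Finite presentation of the closed substack is then automatic because the ambient stack $\cC^a_{\piflat,s,d,h}$ is Noetherian. You would need to replace your semistability-based step by this kind of direct closedness argument to have a complete proof.
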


Let $\cC_{d,\crys,h}^{\fl}$ denote the flat part of~$\cC_{d,\crys,h}$
(i.e.\ the maximal substack which is flat over~$\Spf\Zp$; see~\cite[Ex.\
9.11]{Emertonformalstacks}). Then we define $\cX_d^{\crys,h}$ to be
the scheme-theoretic image of the morphism
$\cC_{d,\crys,h}^{\fl}\to\cX_d$. The following result
is essentially~\cite[Thm.\ 4.8.12]{emertongeepicture} (for the
purposes of exposition, we are ignoring inertial types and Hodge--Tate
weights for now).
\begin{thm}\label{thm: existence of ss stack}The closed substack~$\cX_{d}^{\crys,h}$ 
   of~$\cX_d$ is a   $p$-adic formal
  algebraic stack, 
  which
  is of finite type and flat over~$\Spf\Zp$, and  is
  uniquely 
  determined as a $\Zp$-flat
  closed substack of~$\cX_d$ by the following property:
  if~$\Acirc$ is a finite flat~$\Zp$-algebra, then
  $\cX_d^{\crys,h}(\Acirc)$ is precisely the
  subgroupoid of~$\cX_d(\Acirc)$ consisting of
  $G_K$-representations which are crystalline with Hodge--Tate weights
  contained in~$[0,h]$.
\end{thm}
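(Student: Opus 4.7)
The plan is to realise $\cX_d^{\crys,h}$ as the scheme-theoretic image of the proper morphism $\cC_{d,\crys,h}^{\fl}\to\cX_d$, and then to identify its finite-flat points using Theorem~\ref{thm: admits all descents if and only if semistable}. By Theorem~\ref{thm: semi stable stack is formal algebraic}, the stack $\cC_{d,\crys,h}$ is a $p$-adic formal algebraic stack of finite presentation, and its morphism to $\cX_d$ is proper; the flat part $\cC_{d,\crys,h}^{\fl}$ is a closed substack (cf.\ \cite[Ex.\ 9.11]{Emertonformalstacks}), so it remains a $p$-adic formal algebraic stack of finite type, and the induced morphism $\cC_{d,\crys,h}^{\fl}\to\cX_d$ is still representable by algebraic spaces and proper.

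To apply Theorem~\ref{thm: scheme theoretic images}, I need to verify its hypotheses for $\cF = \cX_d$: $\cX_d$ is limit preserving (by Section~\ref{subsec: defn of Xd}) and has diagonal representable by algebraic spaces (by its embedding into $\cR_d^{\Gamma_{\disc}}$, via Proposition~\ref{prop:X is an Ind-stack basic case}). The versal rings to $\cX_d$ at finite type points are the universal framed deformation rings~$R_{\rhobar}^{\square}$, which are Noetherian; the effectivity property reduces to Grothendieck's existence theorem, applied to the proper pullback $\cC_{d,\crys,h,R}^{\fl}\to\Spf R$, together with the fact that any formal \'etale $(\varphi,\Gamma)$-module over the scheme-theoretic image $\Spf S\hookrightarrow\Spf R$ arises (by Theorem~\ref{thm:effectivity of projectivity}, applied as in Section~\ref{subsec: X is Ind algebraic}) from an honest \'etale $(\varphi,\Gamma)$-module over $\Spec S$. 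This yields an algebraic closed substack $\cX_d^{\crys,h}\hookrightarrow\cX_d$ with $\cC_{d,\crys,h}^{\fl}\to\cX_d^{\crys,h}$ proper and scheme-theoretically dominant. Finite type-ness of $\cX_d^{\crys,h}$ is inherited from $\cC_{d,\crys,h}^{\fl}$, and $\Zp$-flatness from the scheme-theoretic dominance of a flat morphism.

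For the characterisation of points, let $\Acirc$ be a finite flat $\Zp$-algebra; I want to identify $\cX_d^{\crys,h}(\Acirc)$ with the subgroupoid of crystalline $G_K$-representations of Hodge--Tate weights in $[0,h]$. Given an object of $\cC_{d,\crys,h}^{\fl}(\Acirc)$, Theorem~\ref{thm: admits all descents if and only if semistable} (applied after extending scalars to $\Acirc[1/p]$, which is a product of finite extensions of $\Qp$, and using that any $G_K$-stable lattice in a crystalline representation gives a Breuil--Kisin--Fargues $G_K$-module with all descents that is crystalline) shows that its image in $\cX_d(\Acirc)$ is a crystalline representation of the required kind, and hence that $\cX_d^{\crys,h}(\Acirc)$ lies inside this subgroupoid. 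Conversely, given a crystalline $\rho:G_K\to\GL_d(\Acirc)$ with weights in $[0,h]$, the same theorem produces an object of $\cC_{d,\crys,h}^{\fl}(\Acirc)$ mapping to it, and this factors through $\cX_d^{\crys,h}$ by construction; hence every such $\rho$ defines a point of $\cX_d^{\crys,h}(\Acirc)$. The main obstacle here is to pin down that properness of $\cC_{d,\crys,h}^{\fl}\to\cX_d^{\crys,h}$ together with the valuative criterion really guarantees that every crystalline $\Acirc$-point of $\cX_d$ which lies in $\cX_d^{\crys,h}$ actually lifts to~$\cC_{d,\crys,h}^{\fl}(\Acirc)$; this uses that $\Acirc$ is a product of discrete valuation rings, so the valuative criterion for proper representable morphisms of $p$-adic formal algebraic stacks applies.

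For uniqueness, suppose $\cY\hookrightarrow\cX_d$ is any other $\Zp$-flat closed substack with the same finite-flat-$\Zp$-algebra points as $\cX_d^{\crys,h}$. By $\Zp$-flatness of both substacks, each is determined by the closure of its ``classical locus'' of finite flat $\Zp$-algebra points inside $\cX_d$ (here I use that a $\Zp$-flat finite type formal algebraic stack is reduced to its classical points in the sense that the ideal cutting out a closed subspace is determined by vanishing on all finite flat~$\Acirc$-points, since finite type $\Zp$-flat implies Jacobson on the generic fibre and such points are dense). Since these two classical loci coincide, so do $\cY$ and $\cX_d^{\crys,h}$.
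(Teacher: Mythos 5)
The proposal takes essentially the same starting point as the paper — defining $\cX_d^{\crys,h}$ as the scheme-theoretic image of $\cC_{d,\crys,h}^{\fl}\to\cX_d$ — but the argument for the characterisation of $\Acirc$-points has a genuine gap. The paper's own proof is to first identify the versal rings of $\cX_d^{\crys,h}$ with Kisin's crystalline framed deformation rings $R_{\rhobar}^{\crys,h}$ (Proposition~\ref{prop: versal rings for pst stacks}, proved via Lemma~\ref{lem: criterion for Artin to map to scheme theoretic image}), and then to deduce the characterisation of $\Acirc$-points from the defining property of $R_{\rhobar}^{\crys,h}$. Your argument instead tries to lift a given $\Acirc$-point of $\cX_d^{\crys,h}$ directly to $\cC_{d,\crys,h}^{\fl}(\Acirc)$ using the valuative criterion and then invoke Theorem~\ref{thm: admits all descents if and only if semistable}. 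But the valuative criterion for the proper map $\cC_{d,\crys,h}^{\fl}\to\cX_d^{\crys,h}$ requires you to \emph{first} produce a lift at the generic point, which amounts to knowing the $A=\Acirc[1/p]$-point is already crystalline (so that the T.G.--Liu theorem supplies a BKF module admitting all descents) — precisely the conclusion you are trying to establish. The scheme-theoretic image of a proper, scheme-theoretically dominant morphism need not receive a lift of every $\Acirc$-point without some further input, and that input is exactly what the versal-ring identification provides. Your own sentence flagging this as ``the main obstacle'' also mis-states what needs lifting: the issue is whether every point of $\cX_d^{\crys,h}(\Acirc)$ lifts, not whether crystalline points that happen to land there do.

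Two smaller issues. First, a finite flat $\Zp$-algebra $\Acirc$ is \emph{not} in general a product of discrete valuation rings (e.g.\ $\Zp[x]/((x-p)^2)$ is finite free over $\Zp$ but has a nilpotent), so the invocation of the valuative criterion as stated is not even well-posed without additional reduction. Second, Theorem~\ref{thm: scheme theoretic images} as stated requires the source $\cX$ to be an \emph{algebraic} stack, whereas $\cC_{d,\crys,h}^{\fl}$ is a $p$-adic \emph{formal} algebraic stack; one either works modulo $p^a$ and passes to the limit, or (as the paper does) invokes the tailored statement~\cite[Prop.~A.21]{emertongeepicture} that the scheme-theoretic image of a $p$-adic formal algebraic stack under a proper morphism is again a $p$-adic formal algebraic stack. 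Your uniqueness argument is roughly in the right spirit but is stated too loosely — the claim that a $\Zp$-flat finite type closed substack is determined by its finite flat $\Zp$-algebra points deserves either a precise reference or an actual proof, since the substacks involved are non-reduced formal objects.
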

That $\cX_{d}^{\crys,h}$ is a $p$-adic formal algebraic stack and is of finite type and flat over~$\Spf\Zp$  follows
from its construction and~\cite[Prop.\ A.21]{emertongeepicture} (that
is, ``the scheme-theoretic image of a $p$-adic formal algebraic stack
is a $p$-adic formal algebraic stack''). The characterisation of
its~$\Acirc$-points can be reduced to the local case, and thus to the
following important statement about versal rings: fix a point $\Spec \F\to\cX_d(\F)$ 
for some finite field $\F$, giving rise to a continuous representation $\rhobar: G_K \to \GL_d(\F)$.
Let~$R^{\crys,h}_{\rhobar}$ denote the universal framed
deformation $W(\F)$-algebra for lifts of~$\rhobar$ which are
crystalline with Hodge--Tate weights in~$[0,h]$ (which exists
by~\cite{MR2373358}).  Then there is an induced morphism $\Spf
R^{\crys,h}_{\rhobar}\to\cX_d$, and we have~\cite[Prop.\ 4.8.10]{emertongeepicture}:
\begin{prop}
  \label{prop: versal rings for pst stacks}The morphism  $\Spf R^{\crys,h}_{\rhobar}\to\cX_d$
factors
through a versal morphism $\Spf
R^{\crys,h}_{\rhobar}\to\cX_d^{\crys,h}$.
\end{prop}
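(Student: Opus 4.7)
The plan is to prove the stronger statement that $\Spf R^{\crys,h}_{\rhobar}$ coincides, as a closed formal subscheme of $\Spf R^{\square}_{\rhobar}$, with the fibre product $\Spf S := \cX_d^{\crys,h}\times_{\cX_d}\Spf R^{\square}_{\rhobar}$, where $R^{\square}_{\rhobar}$ is the unrestricted framed deformation ring and $\Spf R^{\square}_{\rhobar}\to\cX_d$ is the versal morphism already recorded in Section~\ref{subsec: defn of Xd}. The morphism $\Spf R^{\crys,h}_{\rhobar}\to\cX_d$ of the proposition is defined via the quotient map $R^{\square}_{\rhobar}\twoheadrightarrow R^{\crys,h}_{\rhobar}$, so once this identification is in hand the factorization through $\cX_d^{\crys,h}$ is automatic; and versality of the resulting $\Spf R^{\crys,h}_{\rhobar}\to\cX_d^{\crys,h}$ then follows from versality of $\Spf R^{\square}_{\rhobar}\to\cX_d$ by base change along the closed immersion $\cX_d^{\crys,h}\hookrightarrow\cX_d$, since any lifting problem for $\cX_d^{\crys,h}$ can be solved first in the ambient $\cX_d$ using versality of $R^{\square}_{\rhobar}$, and the solution then automatically lies in the pullback.

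To make the identification, I would first observe that by the effectivity condition underlying the construction of $\cX_d^{\crys,h}$ as a scheme-theoretic image (Theorem~\ref{thm: scheme theoretic images}), $\Spf S$ is precisely the scheme-theoretic image of the proper morphism $\cY_R := \cC^{\fl}_{d,\crys,h}\times_{\cX_d}\Spf R^{\square}_{\rhobar}\to\Spf R^{\square}_{\rhobar}$, and is therefore $\Zp$-flat (a property inherited from $\cC^{\fl}_{d,\crys,h}$). Since $R^{\crys,h}_{\rhobar}$ is also $\Zp$-flat and Noetherian by Kisin's construction, both $\Spf S$ and $\Spf R^{\crys,h}_{\rhobar}$ sit inside the Noetherian $\Spf R^{\square}_{\rhobar}$ as $\Zp$-flat closed formal subschemes, and it suffices to check their coincidence after inverting $p$. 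Since the rigid generic fibres of complete local Noetherian $\Zp$-flat rings are Jacobson, equality of these rigid generic fibres in turn reduces to comparing $\cO$-valued points, with $\cO$ running over rings of integers of finite extensions $E/\Qp$. By Kisin's definition, $\Spf R^{\crys,h}_{\rhobar}(\cO)$ consists exactly of the framed crystalline lifts of $\rhobar$ to $\cO$ with Hodge--Tate weights in $[0,h]$. Given such a lift, Theorem~\ref{thm: admits all descents if and only if semistable} produces a crystalline Breuil--Kisin--Fargues $G_K$-module of height at most $h$ over $\cO$ admitting all descents, i.e.\ an $\cO$-point of $\cY_R$ lying above it, so the lift lies in $\Spf S(\cO)$. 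Conversely, for any $\cO$-point of $\Spf S$, the valuative criterion of properness applied to $\cY_R\to\Spf S$ on rigid generic fibres produces (after possibly enlarging $E$ to resolve isotropy) an $\cO$-point of $\cY_R$ above it, and Theorem~\ref{thm: admits all descents if and only if semistable} again identifies the underlying Galois representation as crystalline with Hodge--Tate weights in $[0,h]$, so the point factors through $\Spf R^{\crys,h}_{\rhobar}$.

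The hard part will be this second direction of the $\cO$-point comparison: extracting from an $\cO$-point of the scheme-theoretic image $\Spf S$ an actual $\cO$-point of the proper source $\cY_R$, so that Theorem~\ref{thm: admits all descents if and only if semistable} can be invoked to conclude crystallinity. This requires the valuative criterion of properness for proper morphisms of formal algebraic stacks, together with the fact that on rigid generic fibres the scheme-theoretic image of a proper morphism agrees, up to finite extensions of scalars needed to clear stacky isotropy, with its set-theoretic image. A secondary technical point is the passage from equality of $\cO$-points inside the rigid generic fibre back to equality of $\Zp$-flat closed formal subschemes of $\Spf R^{\square}_{\rhobar}$, which rests on Jacobson-type density in the rigid generic fibres of $\Zp$-flat complete local Noetherian rings.
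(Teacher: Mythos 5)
Your overall target --- identifying $\Spf R^{\crys,h}_{\rhobar}$ with the fibre product $\Spf S=\cX_d^{\crys,h}\times_{\cX_d}\Spf R^{\square}_{\rhobar}$, which is indeed the versal ring of $\cX_d^{\crys,h}$ by the compatibility of scheme-theoretic images with versal rings from \cite{EGstacktheoreticimages} --- is the right one, and your identification of the key inputs (Theorem~\ref{thm: admits all descents if and only if semistable}, properness of the morphism from the stack of Breuil--Kisin--Fargues modules, $\Zp$-flatness of both sides) matches the paper's. The containment $\Spf R^{\crys,h}_{\rhobar}\subseteq\Spf S$, which is the factorization assertion of the proposition, does go through as you describe: $R^{\crys,h}_{\rhobar}[1/p]$ is reduced (by Kisin's construction) and Jacobson, its closed points are exactly the crystalline $\cO'$-points, and each of these lands in the closed subscheme $\Spec S[1/p]$ by Theorem~\ref{thm: admits all descents if and only if semistable}; here only reducedness of the \emph{source} of the containment is needed, and $\Zp$-flatness upgrades the generic-fibre containment to an integral one.

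The gap is in the opposite containment $\Spf S\subseteq\Spf R^{\crys,h}_{\rhobar}$, which is exactly what versality requires. Knowing that every $\cO'$-point of $\Spec S[1/p]$ is crystalline only shows that the \emph{radical} of $\ker(R^{\square}_{\rhobar}[1/p]\to S[1/p])$ contains $\ker(R^{\square}_{\rhobar}[1/p]\to R^{\crys,h}_{\rhobar}[1/p])$, i.e.\ that $(\Spec S[1/p])_{\red}\subseteq\Spec R^{\crys,h}_{\rhobar}[1/p]$. To promote this to a scheme-theoretic containment you would need $S[1/p]$ to be reduced, and this is not available at this stage: $S$ is the scheme-theoretic image of a base change of $\cC^{\fl}_{d,\crys,h}$, which is only known to be $\cO$-flat, not generically reduced. (Generic reducedness of $\cX_d^{\crys,h}$ is ultimately a \emph{consequence} of this proposition, via the identification of its versal rings with Kisin's reduced rings, so assuming it here would be circular.) The paper closes this direction instead with Lemma~\ref{lem: criterion for Artin to map to scheme theoretic image}, which tests against arbitrary, possibly non-reduced, Artinian $A$ admitting a section of the proper cover and therefore does not lose nilpotents; verifying its hypothesis is where the ``algebraization and work with $p$ inverted'' step enters, since one must make sense of ``crystalline'' for a torsion Artinian point by lifting it to a $\Zp$-flat situation before invoking Theorem~\ref{thm: admits all descents if and only if semistable}. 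To complete your route you would need either to supply such an Artinian-point argument or to prove generic reducedness of $\cC^{\fl}_{d,\crys,h}$; neither is a formality.
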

\begin{proof}This is proved using Lemma~\ref{lem: criterion for Artin to map to scheme theoretic
    image}, in a similar way to the way we used it in the sketch proof
  of Lemma~\ref{lem:Artinian points of scheme theoretic images}; the
  actual argument is slightly more complicated as we prove an
  algebraization statement and then work with~$p$ inverted.
\end{proof}


We now prove Theorem~\ref{thm: semi
  stable stack is formal algebraic}. We will deduce it from the
corresponding statements for the stacks~$\cC_{d,h}$ of Breuil--Kisin
modules, which we discussed in Lecture~\ref{sec:stacks of phi
  modules}. 

Then for any~$h\ge 0$ and any choice of~$\pi^\flat$, we write~$\cC_{\piflat,d,h}$ for the moduli stack of
rank~$d$ Breuil--Kisin modules for~$\gS_{\pi^\flat,A}$ of height at
most~$h$, and~$\cR_{\piflat,d}$ for the corresponding stack of \'etale
$\varphi$-modules. We have a natural morphism
$\cX_{K,d}\to\cR_{\piflat,d}$ given by Proposition~\ref{prop:
  equivalences of categories to Ainf} and restriction from~$G_K$ to~$G_{K_\infty}$; for each~$s\ge 1$, 
we can factor this as\[\cX_{K,d}\to\cX_{K_{\piflat,s},d}\to\cR_{\piflat,d}. \]
Then for any fixed $a,h$, and any~$N$ and~$s\ge s(a,h,N)$ as in
  Proposition~\ref{prop: Caruso Liu Galois action on Kisin}, there is a canonical morphism
  $\cC^a_{\piflat,d,h}\to\cX^a_{K_{\piflat,s},d}$ obtained from the canonical
  action of Proposition~\ref{prop: Caruso Liu Galois action on Kisin},
which fits into a commutative
  triangle 
  \[\xymatrix{& \cC^a_{\piflat,d,h}\ar[dl]\ar[d]\\ \cX^a_{K_{\piflat,s},d}\ar[r]&\cR^a_{\piflat,d}}\]

We then define a
stack~$\cC^a_{\piflat,s,d,h}$ by the requirement that it fits into a 
2-Cartesian diagram \numequation\label{eqn: 2 cartesian canonical C}\xymatrix{\cC^a_{\piflat,s,d,h} \ar[r]\ar[d] &  \cC^a_{\piflat,d,h}\ar[d]\\
  \cX^a_{K,d}\ar[r] & \cX^a_{K_{\piflat,s},d}}\end{equation}
The lower horizontal arrow in
this diagram is again defined via Proposition~\ref{prop:
  equivalences of categories to Ainf}, and it is not so hard to show
(using the properties that we have proved about~$\cX_{K,d}$ and its diagonal)
that it
is representable by algebraic spaces and of 
finite presentation.
Since~$\cC^a_{\piflat,d,h}$ is an
algebraic stack of finite presentation over $\Z/p^a$,
it follows that~$\cC^a_{\piflat,s,d,h}$   is also an algebraic
stack of finite presentation over $\Z/p^a$.
It is also straightforward to check that the right hand vertical arrow in this diagram is representable by
algebraic spaces, proper, and of
finite presentation, so that the same is true of the left
hand vertical arrow.

Given all this, the key  statement remaining to be proved is the
following~\cite[Prop.\ 4.5.15]{emertongeepicture}.
\begin{prop}
  \label{prop: BKF closed in BK}For each~$\piflat$ and each~$s$ as
  above, there is a  natural
  closed immersion $\cC^a_{d,\crys,h}\to
  \cC^a_{\piflat,s,d,h}$. In particular, ~$\cC^a_{d,\crys,h}$ is an
 algebraic stack of finite presentation over~$\Z/p^a$, and its
 diagonal is affine. 
\end{prop}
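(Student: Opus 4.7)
The plan is to produce a natural morphism $\cC^a_{d,\crys,h}\to\cC^a_{\piflat,s,d,h}$ and then to exhibit its image as a closed substack of finite presentation. To construct the morphism, I start from an $A$-point $\gMt$ of $\cC^a_{d,\crys,h}$; the descent $\gM_{\piflat}$ is a projective Breuil--Kisin module over $\gS_{\piflat,A}$ of height at most $h$, giving an $A$-point of $\cC^a_{\piflat,d,h}$, while the étale $(\varphi,\Gamma)$-module underlying $\gMt$ via Proposition~\ref{prop: equivalences of categories to Ainf} gives an $A$-point of $\cX^a_{K,d}$. The images of these two data in $\cX^a_{K_{\piflat,s},d}$ agree because of the converse assertion in Proposition~\ref{prop: Caruso Liu Galois action on Kisin}: the restriction of the $G_K$-action on $\gMt$ to $G_{K_{\piflat,s}}$ is forced to coincide with the canonical action produced from $\gM_{\piflat}$. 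The Cartesian description~\eqref{eqn: 2 cartesian canonical C} then yields the desired map.

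Next, to exhibit this morphism as a closed immersion of finite presentation, I would work with the universal pair $(\gM,\gMt)$ over $\cC^a_{\piflat,s,d,h}$ and cut out the remaining clauses of Definition~\ref{defn: crystalline descending BKF module}. These clauses split into: (a) for every other pair $(\pi',\piflatprime)$, existence of a Breuil--Kisin descent $\gM_{\piflatprime}$ of height at most $h$ inside $\gMt^{G_{K_{\piflatprime,\infty}}}$; (b) the equalities of $W(\overline{k})$- and $\cO_K$-submodules in items~\eqref{appendix item: M mod u descends} and~\eqref{appendix item: M mod E descends} of Definition~\ref{defn: descending BKF to BK}; (c) the crystalline containment $(g-1)(\gM_{\piflat})\subset\varphi^{-1}([\varepsilon]-1)[\piflat]\gMt$ for all $g\in G_K$. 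Clause (b) asserts the equality of two finitely generated submodules inside a common ambient projective module, a routinely closed condition of finite presentation. For clause (c), the action of $G_{K_{\piflat,\infty}}$ fixes $\gM_{\piflat}$, so only the quotient $G_K/G_{K_{\piflat,\infty}}$ is relevant; this group is topologically finitely generated, and continuity of the $G_K$-action lets me reduce to finitely many elements $g$. For each such $g$, membership of the image of $g-1$ in a prescribed submodule is a linear condition on matrix entries (relative to any local framing of $\gM_{\piflat}$), and hence cuts out a closed subscheme of finite presentation.

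The main obstacle is clause (a). The key inputs are that, by Lemmas~4.2.7--4.2.8 of the paper, any descent along $\piflatprime$ is unique when it exists, and that by the converse part of Proposition~\ref{prop: Caruso Liu Galois action on Kisin} its canonical $G_{K_{\piflatprime,s}}$-action must agree with the restriction of the given $G_K$-action. Thus (a) reduces to the question of whether a certain $\gS_{\piflatprime,A}$-submodule of $\gMt$, naturally constructed from $\gM_{\piflat}$ together with the full $G_K$-action, actually fits into a Breuil--Kisin module of the prescribed rank and height; this is a boundedness condition on denominators, and I would cut it out by closed conditions of finite presentation following the matrix-theoretic template of the proof of Proposition~\ref{prop:inductive description of W}: reduce fpqc-locally to the free case, phrase the condition as membership in the congruence sets $V_n$ of Lemma~\ref{lem: phi conjugacy approximation} for an $n$ controlled by the height bound $h$, and use the finite-height hypothesis to suppress denominators. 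Once the closed immersion is in hand, algebraicity, finite presentation, and affineness of the diagonal of $\cC^a_{d,\crys,h}$ are inherited immediately from the corresponding properties of $\cC^a_{\piflat,s,d,h}$ already established from the Cartesian diagram~\eqref{eqn: 2 cartesian canonical C}.
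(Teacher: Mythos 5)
Your construction of the morphism is the same as the paper's: the two composites to $\cX^a_{K_{\piflat,s},d}$ agree because the canonical action of Proposition~\ref{prop: Caruso Liu Galois action on Kisin} coincides with the restriction of the given $G_K$-action, and the Cartesian square~\eqref{eqn: 2 cartesian canonical C} produces the map. The overall architecture (cut the image out of $\cC^a_{\piflat,s,d,h}$ by the remaining clauses of Definitions~\ref{defn: descending BKF to BK} and~\ref{defn: crystalline descending BKF module}) is also the paper's. But the execution of the hard step has a genuine gap, concentrated in your clause (a). The condition ``for \emph{every} other pair $(\pi',\piflatprime)$ a Breuil--Kisin descent of height $\le h$ exists'' is an existence statement quantified over uncountably many choices; you assert that the putative $\gM_{\piflatprime}$ is ``naturally constructed from $\gM_{\piflat}$ together with the full $G_K$-action,'' but no such construction is given, and none is obvious: the uniqueness statements (\cite[Lem.\ 4.2.7, 4.2.8]{emertongeepicture}) tell you a descent is unique \emph{if} it exists, not how to produce a candidate whose integrality you could then test by the matrix/congruence method of Lemma~\ref{lem: phi conjugacy approximation}. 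Without either an explicit candidate or a reduction of the infinitely many existence conditions to finitely many checkable ones, the closedness of this locus is not established.

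A second, pervasive issue is that you repeatedly declare conditions ``routinely closed of finite presentation'' when the ambient objects are finitely generated modules over $\AAinf{A}$, $W(\overline{k})\otimes_{\Zp}A$ or $\cO_\C\otimes A$ --- rings that are not Noetherian and modules that are far from finite over $A$. The statements that vanishing loci of morphisms of such modules (equivalently, containment or equality of submodules, or your ``linear conditions on matrix entries'' with values in a quotient of $\AAinf{A}$) cut out closed subschemes of finite presentation of $\Spec A$ are true but are precisely the technical content of~\cite[App.\ B, Lem.\ B.28]{emertongeepicture}, on which the paper's proof leans repeatedly; they do not follow from general nonsense. Finally, exhibiting the image as a closed substack only yields a closed immersion once you know the map is a monomorphism; this is easy (the pair $(\gMt, G_K\text{-action})$ is recovered from $\gM_{\piflat}$ together with the underlying point of $\cX^a_{K,d}$), but it needs to be said, and your proposal never addresses it.
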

\begin{proof}We have a morphism $\cC^a_{d,\crys,h}\to\cX^a_{K,d}$ (given
  by extending scalars to~$W(\C^\flat)$), and it follows that there
  is a natural morphism $\cC^a_{d,\crys,h}\to\cC^a_{\piflat,d,h}$, defined via
  $\gMt \mapsto \gM_{\piflat}$. The composite morphisms
  $\cC^a_{d,\crys,h} \to \cC^a_{\piflat,d,h} \to
  \cX^a_{K_{\piflat,s},d}$ and
  $\cC^a_{d,\crys,h} \to \cX^a_{K,d}\to \cX^a_{K_{\piflat,s},d}$
  coincide by construction, so we have an induced morphism
\numequation
\label{eqn:induced map}
\cC^a_{d,\crys,h}\to \cC^a_{\piflat,s,d,h}
\end{equation}
which we need to show is a closed immersion.

  To see that~\eqref{eqn:induced map}
is at least a monomorphism, it is enough to note
  that if~$A$ is a finite type $\Z/p^a$-algebra,
  and~$\gMt$ is a Breuil--Kisin--Fargues module  over $A$ which admits all descents,
then~$\gMt=\AAinf{A}\otimes_{\gS_{\piflat,A}}\gM_{\piflat}$ is
  determined by~$\gM_{\piflat}$, and the $G_K$-action on~$\gMt$ is
  determined by the $G_K$-action on
  $W(\C^\flat)_A\otimes_{\AAinf{A}}\gMt$.

Finally, the proof~\eqref{eqn:induced map}
is a closed immersion is a bit more involved: we need to show that the
conditions that $\gMt=\AAinf{A}\otimes_{\gS_{\piflat,A}}\gM_{\piflat}$
is $G_K$-stable and admits all descents are closed conditions. We make
repeated use of the results of~\cite[App.\ B]{emertongeepicture},
particularly ~\cite[Lem.\ B.28]{emertongeepicture} to show that the
vanishing loci of various morphisms of Breuil--Kisin--Fargues modules
are closed.\end{proof}

\subsection{Potentially crystalline moduli
  stacks with fixed Hodge--Tate weights}For applications, it is
important to be able to fix the Hodge--Tate weights in our crystalline
moduli stacks, and also to consider representations
which are only potentially crystalline. To this end, let~$L/K$ be a
fixed finite Galois extension; then there is an obvious notion of a
Breuil--Kisin--Fargues $G_K$-module which admits all descents
over~$L$, and an obvious extension of the above results to give stacks
of $G_K$-representations which become crystalline over~$L$ with
Hodge--Tate weights contained in~$[0,h]$. We
write~$\cC^{L/K,\fl}_{d,\crys,h}$ for the corresponding stack of
Breuil--Kisin--Fargues modules.

Since the inertia types and Hodge--Tate weights are discrete
invariants, we at least morally expect these stacks to decompose as a
disjoint union of stacks of potentially crystalline representations
with fixed inertial and Hodge types; so the key point should be to see
how to read this data off from the Breuil--Kisin--Fargues modules.

Fix a finite extension~$E/\Qp$ with ring of integers~$\cO$, uniformizer~$\varpi$, and residue
field~$\F$, and assume that~$E$ is large enough to contain the images
of all embeddings $K\into\Qpbar$. We will abusively write~$\cX_d$ for the base
change of~$\cX_d$ to~$\Spf\cO$, without further comment. From now
on~$\Acirc$ will denote a $p$-adically complete flat $\cO$-algebra
which is topologically of finite type over~$\cO$, and we write $A=\Acirc[1/p]$.

Let~$\gM_{\Acirc}$ be a Breuil--Kisin--Fargues $G_K$-module with~$\Acirc$-coefficients which
  admits all descents over~$L$, and write
  $\overline{\gM}_{\Acirc}:=\gM_{\Acirc,\piflat}/[\piflat]\gM_{\Acirc,\piflat}$
  (for some choice of~$\piflat$, with~$\pi$ now denoting a uniformiser
  of~$L$). Then~$\overline{\gM}_{\Acirc}$ has a natural $W(l)\otimes_{\Zp}A$-semilinear
  action of~$\Gal(L/K)$, which is defined as follows: if~$g\in \Gal(L/K)$,
  then~$g(\gM_{\Acirc,\piflat})=\gM_{\Acirc,g(\piflat)}$, so the
  morphism $g:\gM_{\Acirc,\piflat}\to g(\gM_{\Acirc,\piflat})=\gM_{\Acirc,g(\piflat)}$
  induces a morphism \[g: \gM_{\Acirc,\piflat}/[\piflat]\gM_{\Acirc,\piflat} \to
    \gM_{\Acirc,g(\piflat)}/[g(\piflat)]\gM_{\Acirc,g(\piflat)},\]and the source and
  target are both
  canonically identified with~$\overline{\gM}_{\Acirc}$.

    This action of~$\Gal(L/K)$ on~$\overline{\gM}_{\Acirc}$ induces an
  $L_0\otimes_{\Qp}A$-linear action of $I_{L/K}$ on the projective
  $L_0\otimes_{\Qp}A$-module
  $\overline{\gM}_{\Acirc}\otimes_{\Acirc}A$. Fix a choice of
  embedding $\sigma:L_0\into E$, and
  let~$e_\sigma\in L_0\otimes_{\Qp} E$ be the corresponding
  idempotent. Then
  $e_\sigma(\overline{\gM}_{\Acirc}\otimes_{\Acirc}A$) is a projective
  $A$-module of rank~$d$, with an $A$-linear action
  of~$I_{L/K}$. Up to canonical isomorphism, this module does
  not depend on the choice of~$\sigma$. We  write
    \[\WD(\gMt_{\Acirc}):=e_\sigma(\overline{\gM}_{\Acirc}\otimes_{\Acirc}A),\]
    a projective~$A$-module of rank~$d$ with an $A$-linear action of
    $I_{L/K}$. It is easy to check that this is compatible with base
    change (of $p$-adically complete flat $\cO$-algebras which are
    topologically of finite type over~$\cO$).

We now turn to Hodge types. Fix some choice of~$\piflat$ a uniformiser of~$L$, write~$\gM_{\Acirc}$
for~$\gM_{\piflat,\Acirc}$, and~$u$ for~$[\piflat]$. For
each~$0\le i\le h$ we define
$\Fil^i\varphi^*\gM_{\Acirc}=\Phi_{\gM_{\Acirc}}^{-1}(E(u)^i\gM_{\Acirc})$,
and we set $\Fil^i\varphi^*\gM_{\Acirc}=\varphi^*\gM_{\Acirc}$
for~$i<0$. Then for each~$0\le i\le h$, \[(\Fil^i\varphi^*\gM_{\Acirc}/E(u)\Fil^{i-1}\varphi^*\gM_{\Acirc})\otimes_{\Acirc}A\] is a finite
  projective $L\otimes_{\Qp}A$-module, whose formation is
  compatible with base change (see~\cite[Prop.\
  4.7.2]{emertongeepicture}). There is a natural action of $\Gal(L/K)$,
	which is semilinear with respect to the action
	of $\Gal(L/K)$ on $L\otimes_{\Q_p} A$ induced by its action
	on the first factor.
	Since $L/K$ is a Galois extension, the tensor product
	$L\otimes_{\Q_p} A$ is an \'etale $\Gal(L/K)$-extension 
	of $K\otimes_{\Q_p} A$, and so \'etale descent allows
	us to descend 
	$(\varphi^*\gM_{\Acirc}/E(u)\varphi^*\gM_{\Acirc})
	\otimes_{A^{\circ}} A$  
	to a filtered module over $K\otimes_{\Q_p} A$;
	concretely, this descent is achieved by taking $\Gal(L/K)$-invariants.
This leads to the following definition. 

\begin{df}
	\label{def:DdR L/K case}
	In the preceding situation, we write
	$$D_{\dR}(\gMt_{\Acirc}) := 
	\bigl((\varphi^*\gM_{\Acirc}/E(u)\varphi^*\gM_{\Acirc})
	\otimes_{A^{\circ}} A\bigr)^{\Gal(L/K)},$$  
		and more generally, for each $i \geq 0,$
		we write
	$$\Fil^i
	D_{\dR}(\gMt_{A^{\circ}}) :=
	\bigl((\Fil^i\varphi^*\gM_{\Acirc}/E(u)\Fil^{i-1}\varphi^*\gM_{\Acirc})\otimes_{\Acirc}A\bigr)^{\Gal(L/K)}$$
(and for $i < 0$, we write 
	$\Fil^i D_{\dR}(\gMt_{A^{\circ}}) :=
	D_{\dR}(\gMt_{A^{\circ}})$).
The property of being a finite rank projective module is preserved
under \'etale descent, and so we find that $D_{\dR}(\gMt_{\Acirc})$
is a rank $d$ projective $K\otimes_{\Q_p} A$-module, filtered by projective
submodules. 

	Since $A$ is an $E$-algebra,
	we have the product decomposition  $K\otimes_{\Q_p} A
	\iso \prod_{\sigma: K \hookrightarrow E} A,$
	and so, if we write $e_{\sigma}$ for the idempotent
	corresponding to the factor labeled by $\sigma$
	in this decomposition,
	we find that 
	$$D_{\dR}(\gMt_{A^{\circ}}) = 
	\prod_{\sigma: K \hookrightarrow E} e_{\sigma} D_{\dR}(\gMt_{A^{\circ}}),$$
	where each
	$e_{\sigma}D_{\dR}(\gMt_{A^{\circ}})$
	is a projective $A$-module of rank~$d$.
	For each $i$, we write
	$$\Fil^i 
	e_{\sigma}D_{\dR}(\gMt_{A^{\circ}})
	=
	e_{\sigma}\Fil^i D_{\dR}(\gMt_{A^{\circ}}).$$
	Each $\Fil^i
	e_{\sigma}D_{\dR}(\gMt_{A^{\circ}})$ is again a projective $A$-module.
\end{df}

\begin{df}
	\label{def:Hodge type}
A Hodge type~$\lambdau$ of rank $d$ is by definition a set of tuples of
integers $\{\lambda_{\sigma,i}\}_{\sigma:K\into\Qpbar,1\le i\le d}$
with $\lambda_{\sigma,i}\ge \lambda_{\sigma,i+1}$ for all~$\sigma$ and
all $1\le i\le d-1$. We say that it is \emph{effective} if all
$\lambda_{\sigma,i}\ge 0$, and \emph{bounded by~$h$} if
all~$\lambda_{\sigma,i}\le h$. 

If $\underline{D} := (D_{\sigma})_{\sigma:K \hookrightarrow E}$
is a collection
of rank $d$ vector bundles over $\Spec A$, labeled (as indicated) by the embeddings
$\sigma:K \hookrightarrow E$,
then we say that $\underline{D}$ has Hodge type $\lambdau$ if
  $\Fil^i D_{\sigma}$
  has constant rank equal 
  to $\#\{j\mid\lambda_{\sigma|K,j}\ge i\}.$
\end{df}
As the notation suggests, the Hodge type of~$V(\gMt)$ agrees with the
Hodge type of $D_{\dR}(\gMt)$. Putting this all together, we
have~\cite[Prop.\ 4.8.2]{emertongeepicture}:

\begin{prop}
  \label{prop: HT weights are a closed condition}
  Let~$L/K$ be a finite
  Galois extension. Then the stack
 $\cC_{d,\crys,h}^{L/K,\fl}$
is a scheme-theoretic union of closed substacks
  $\cC_{d,\crys,h}^{L/K,\fl,\lambdau,\tau}$, where~$\lambdau$ runs
  over all effective Hodge types that are bounded by~$h$, and~$\tau$ runs over
  all $d$-dimensional $E$-representations of~$I_{L/K}$. These latter closed substacks are
  uniquely characterised by the following property: if~$\Acirc$ is a
  finite flat~$\cO$-algebra, then an $\Acirc$-point of
  $\cC_{d,\crys,h}^{L/K,\fl}$\emph{)} is a point of
  $\cC_{d,\crys,h}^{L/K,\fl,\lambdau,\tau}$ if and only if
  the corresponding Breuil--Kisin--Fargues module~$\gMt_{\Acirc}$ has
  Hodge type~$\lambdau$ 
  and inertial type~$\tau$.
\end{prop}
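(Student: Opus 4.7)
The plan is to decompose $\cC_{d,\crys,h}^{L/K,\fl}$ into clopen (hence closed) substacks according to the values of the two locally constant invariants $\lambdau$ (the Hodge type, extracted from $D_{\dR}$) and $\tau$ (the inertial type, extracted from $\WD$). The crucial inputs are already in place: the referenced Prop.\ 4.7.1 ensures that each $\Fil^i \varphi^*\gM_{\Acirc}/E(u)\Fil^{i-1}\varphi^*\gM_{\Acirc}$ is a projective $L\otimes_{\Qp}A$-module whose formation commutes with base change, and the analogous statement for $\overline{\gM}_{\Acirc}\otimes_{\Acirc}A$ is immediate from the construction of Def.\ 4.8.1.

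First I would establish the decomposition by Hodge type. Applying the idempotents $e_\sigma$ under the isomorphism $K \otimes_{\Qp} A \cong \prod_{\sigma} A$ produces projective $A$-modules $e_\sigma D_{\dR}(\gMt_{\Acirc})$ and $\Fil^i e_\sigma D_{\dR}(\gMt_{\Acirc})$ of rank $d$ and of some subrank, respectively. The rank of a finitely generated projective module is a locally constant function on $\Spec A$, so the integer tuple $\{\rk_A \Fil^i e_\sigma D_{\dR}\}_{i, \sigma}$ is locally constant. Since this tuple determines $\lambdau$, and since all possible ranks are bounded (because the height bound $h$ forces $0 \le \lambda_{\sigma,i} \le h$), we obtain a decomposition of the generic fiber into finitely many clopen pieces indexed by effective Hodge types bounded by $h$.

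For the inertial type, I would treat $\WD(\gMt_{\Acirc})$ as a projective $A$-module of rank $d$ carrying an $A$-linear action of the finite group $I_{L/K}$. Because $A$ is an $E$-algebra with $\chara E = 0$ and $E$ is large enough to split every representation of $I_{L/K}$, Maschke's theorem decomposes the group algebra $E[I_{L/K}]$ as a product of matrix algebras, and the isotypic components of $\WD(\gMt_{\Acirc})$ relative to this decomposition are themselves finitely generated projective $A$-modules of locally constant rank. Since there are only finitely many isomorphism classes of $d$-dimensional $E$-representations of $I_{L/K}$, the tuple of isotypic ranks — which determines $\tau$ — takes only finitely many values, and gives a second finite clopen stratification. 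Combining the two stratifications produces a finite clopen decomposition of the generic fiber of $\cC_{d,\crys,h}^{L/K,\fl}$ indexed by pairs $(\lambdau,\tau)$.

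To pass from the generic fiber back to the formal stack, I would use that $\cC_{d,\crys,h}^{L/K,\fl}$ is by construction flat over $\Spf\cO$: each clopen substack of the generic fiber has a unique flat extension to a closed substack of $\cC_{d,\crys,h}^{L/K,\fl}$ (its scheme-theoretic image, or equivalently its Zariski closure), and these closed substacks partition the flat stack, hence give a scheme-theoretic union. The uniqueness characterization in terms of finite flat $\cO$-algebras $\Acirc$ is then immediate from the base-change compatibility of $D_{\dR}$ and $\WD$, together with the fact that for such $\Acirc$ the invariants $(\lambdau, \tau)$ of $\gMt_{\Acirc}$ agree with the Hodge--Tate weights and inertial type of the Galois representation $V(\gMt_{\Acirc})$ produced by Theorem~\ref{thm: admits all descents if and only if semistable}. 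The main obstacle I anticipate is verifying cleanly that the $\WD$ construction, which uses a choice of embedding $\sigma\colon L_0 \hookrightarrow E$ and a descent through a finite \'etale $\Gal(L/K)$-cover, really produces an $I_{L/K}$-representation whose isomorphism class is locally constant in $A$-families; once this local constancy and base-change compatibility is secured, the remainder of the argument is formal from flatness and the finiteness of the index set.
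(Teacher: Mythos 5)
Your approach is correct and is almost certainly the same as the one taken in \cite{emertongeepicture}. The key ingredients are all present: the invariants $D_{\dR}$ and $\WD$ are projective modules over the generic fiber whose formation commutes with base change (the referenced Prop.~4.7.1 and the surrounding discussion for $\WD$), so the rank functions of the filtration steps and of the $I_{L/K}$-isotypic pieces are locally constant and bounded, giving a finite clopen stratification of the generic fiber indexed by $(\lambdau,\tau)$; one then takes flat closures. Your identification of the main technical obstacle --- that the local constancy and base-change compatibility of $\WD$ is what has to be secured --- is exactly the right thing to flag, and is exactly what the cited lemmas supply.

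One small but worth-noting correction: the closed substacks $\cC_{d,\crys,h}^{L/K,\fl,\lambdau,\tau}$ do not \emph{partition} the flat stack in the set-theoretic sense. The clopen pieces of the generic fiber are disjoint, but their flat closures can and typically do intersect along the mod-$\varpi$ fiber; a toy model is $\Spf\Zp[x]/(x^2-px)$, whose generic fiber splits as two points $\{x=0\}\sqcup\{x=p\}$ while the two flat closures $V(x)$ and $V(x-p)$ meet at the closed point of the special fiber. What your construction does give, and what the proposition actually asserts, is that the intersection of the defining ideals is zero --- i.e.\ the \emph{scheme-theoretic} union is the whole flat stack --- because the clopen pieces cover $\Spec A$ so the intersection of the ideals $J_i$ cutting them out is zero, and hence so is the intersection of the saturated ideals $J_i\cap\Acirc$. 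So ``partition'' should be read as ``scheme-theoretically cover''; with that adjustment the argument goes through. (Uniqueness of the flat extension, which you invoke, is also correct: the $\Zp$-flat closed formal subscheme with a prescribed generic fiber is cut out by the $p$-saturation $J\cap\Acirc$ and is unique.)
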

\begin{rem}\label{rem: not obvious Hodge independent of piflat}It is
  not obvious (at least to us) from the definition that the Hodge
  filtration on $D_{\dR}(\gMt)$ is independent of the choice
  of~$\piflat$; rather, we deduce this independence from its
  compatibility with the Hodge filtration on~$D_{\dR}(V(\gMt))$.
\end{rem}

We then define the corresponding stacks
$\cX_{d,}^{\crys,\lambdau,\tau}$; we can extend the definition to
possibly negative Hodge--Tate weights by twisting by an appropriate
power of the cyclotomic character. Finally we can deduce~\cite[Thm.\ 4.8.14]{emertongeepicture}:

\begin{thm}
  \label{thm: dimension of ss stack}  
The algebraic
stack~$\cX_{d}^{\crys,\lambdau,\tau}\times_{\Spf\cO}\Spec \F$ is 
  equidimensional of dimension \[\sum_{\sigma}\#\{1\le i<j\le
  d|\lambda_{\sigma,i}>\lambda_{\sigma,j}\}.\]%

In particular, if~$\lambdau$ is regular, then the algebraic stack~$\cX_{d}^{\crys,\lambdau,\tau}\times_{\Spf\cO}\Spec \F$ 
is  equidimensional of
  dimension~$[K:\Qp]d(d-1)/2$.   
\end{thm}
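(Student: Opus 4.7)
The plan is to compute the local dimension at each finite type point of the special fiber using a versal ring, and to identify that versal ring with a framed potentially crystalline deformation ring of Kisin whose dimension is known. Since $\cX_{d}^{\crys,\lambdau,\tau}\times_{\Spf\cO}\Spec\F$ is an algebraic stack of finite presentation over $\Spec\F$ (by Theorem~\ref{thm: existence of ss stack}), equidimensionality will follow from showing that the local dimension at every finite type point equals $n:=\sum_{\sigma}\#\{1\le i<j\le d\mid \lambda_{\sigma,i}>\lambda_{\sigma,j}\}$.

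First I would refine Proposition~\ref{prop: versal rings for pst stacks} to the setting with fixed Hodge type and inertial type. Given a finite type point $x:\Spec\F'\to\cX_{d}^{\crys,\lambdau,\tau}$ corresponding to a representation $\rhobar:G_K\to\GL_d(\F')$, Kisin's construction from~\cite{MR2373358} produces a complete Noetherian local $W(\F')$-algebra $R_{\rhobar}^{\square,\crys,\lambdau,\tau}$ pro-representing the groupoid of framed crystalline lifts of $\rhobar$ with Hodge type $\lambdau$ and inertial type $\tau$. Combining the scheme-theoretic decomposition in Proposition~\ref{prop: HT weights are a closed condition} with the characterisation of finite flat $\cO$-points in Theorem~\ref{thm: existence of ss stack}, I would check that the composite $\Spf R_{\rhobar}^{\square,\crys,\lambdau,\tau}\to\Spf R_{\rhobar}^{\crys,h}\to\cX_d^{\crys,h}$ factors through the closed substack $\cX_{d}^{\crys,\lambdau,\tau}$, and that the resulting morphism $\Spf R_{\rhobar}^{\square,\crys,\lambdau,\tau}\to\cX_{d}^{\crys,\lambdau,\tau}$ is versal at $x$. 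As in the proof of Proposition~\ref{prop: versal rings for pst stacks}, the Artinian criterion of Lemma~\ref{lem: criterion for Artin to map to scheme theoretic image} is the key tool.

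Next I would invoke Kisin's dimension formula: whenever $R_{\rhobar}^{\square,\crys,\lambdau,\tau}$ is non-zero, it is $\cO$-flat and its generic fiber is formally smooth over $E$ of dimension $d^2+n$. By $\cO$-flatness, the special fiber $R_{\rhobar}^{\square,\crys,\lambdau,\tau}/\varpi$ is equidimensional of Krull dimension $d^2+n$. The formal neighbourhood of $\rhobar$ in $\cX_d$ has the shape of a quotient stack by the formal completion of $\GL_d$ (the framings form a torsor under this group), so the standard formula $\dim [X/G]=\dim X-\dim G$ yields a local dimension of $(d^2+n)-d^2=n$ at every finite type point of the special fiber. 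Since every irreducible component of the special fiber contains a finite type point, and each such point has local dimension $n$, the stack is equidimensional of dimension $n$. The regular case is then elementary: if $\lambda_{\sigma,1}>\cdots>\lambda_{\sigma,d}$ for every $\sigma$, then $\#\{1\le i<j\le d\mid\lambda_{\sigma,i}>\lambda_{\sigma,j}\}=\binom{d}{2}$, and summing over the $[K:\Qp]$ embeddings $\sigma:K\hookrightarrow\Qpbar$ gives $[K:\Qp]d(d-1)/2$.

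The main obstacle will be the first step: rigorously bridging the stack-theoretic construction of $\cX_{d}^{\crys,\lambdau,\tau}$ as a scheme-theoretic image of the Breuil--Kisin--Fargues stack with the explicit Galois-theoretic definition of Kisin's ring $R_{\rhobar}^{\square,\crys,\lambdau,\tau}$. Concretely, one has to verify that the Hodge and inertial type conditions imposed via Proposition~\ref{prop: HT weights are a closed condition} cut out the same formal closed subscheme of the unrestricted framed deformation space as Kisin's conditions do. The matching is essentially forced by the fact that both are $\cO$-flat and agree on finite flat $\cO$-points (by Theorem~\ref{thm: existence of ss stack}), but making this precise requires the Artinian criterion of Lemma~\ref{lem: criterion for Artin to map to scheme theoretic image} together with the compatibility of $D_{\dR}(\gMt)$ with the usual de Rham realisation noted in Remark~\ref{rem: not obvious Hodge independent of piflat}.
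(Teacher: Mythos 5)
Your proposal is correct and takes essentially the same approach as the paper, which likewise deduces the dimension formula from Proposition~\ref{prop: versal rings for pst stacks} (identifying the versal rings to $\cX_d^{\crys,\lambdau,\tau}$ with Kisin's framed deformation rings $R_{\rhobar}^{\crys,\lambdau,\tau}$) together with the dimension computation in~\cite{MR2373358}, which as you note is carried out on the generic fibre using weakly admissible modules and transferred to the special fibre via $\cO$-flatness. The obstacle you flag — refining Proposition~\ref{prop: versal rings for pst stacks} to fix $\lambdau$ and $\tau$, using Lemma~\ref{lem: criterion for Artin to map to scheme theoretic image} and the agreement of $\cO$-points from Theorem~\ref{thm: existence of ss stack} — is precisely the content the paper implicitly relies on, and your plan to resolve it is the right one.
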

This follows from Proposition~\ref{prop: versal rings for pst stacks},
from which it follows that the versal rings
to~$\cX_{d}^{\crys,\lambdau,\tau}$ are given by the corresponding
deformation rings $
R^{\crys,\underline{\lambda},\tau}_{\rhobar}$, and the computation of
the dimensions of these deformation rings in~\cite{MR2373358} (which
comes down to a computation on the generic fibre, using weakly
admissible modules).

\section{The Herr Complex, and families of extensions}\label{sec: Herr
complex and Ext groups}We continue to allow~$K/\Qp$ to be an arbitrary
finite extension.

\subsection{Herr Complex}

Let $M$ be a projective \'etale $(\varphi,\Gamma)$-module over
$\bA_{K,A}$, where~$A$ can be any $\Z_p$-algebra, but to make nontrivial statements we will often think about the case where $A$ is of finite type over $\Z/p^a$ for some $a \geq 1$. Then we define the \textit{Herr complex}
	\[ \cC^\bullet(M) = [M \xra{(\varphi - 1, \gamma-1)} M \oplus M \xra{(\gamma - 1) \oplus (1 - \varphi)} M] \]
concentrated in degrees $0,1,2$. Note that this is a complex of
$A$-modules, rather than $\bA_{K,A}$-modules (the terms are
$\bA_{K,A}$-modules, but the maps are only $\bA_{K,A}$-semilinear with
respect to the $\varphi$- or $\gamma$-actions). The following
is~\cite[Thm.\ 5.1.22]{emertongeepicture}.

\begin{theorem}\label{thm: properties of Herr complex}
Suppose $A$ is a Noetherian $\Z/p^a$-algebra with $A/p$
countable. Then the  Herr complex~$\cC^\bullet(M)$ is a perfect
complex of $A$-modules. If  $A$ is of finite type over~$\Z/p^a$, and~$B$ is a finite type $A$-algebra, then
there is a natural quasi-isomorphism $\cC^\bullet(M)\otimes^{\mathbb{L}}_AB\isoto\cC^\bullet(M\otimes_{\A_{K,A}}\A_{K,B}).$ 
\end{theorem}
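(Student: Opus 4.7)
The plan is to exhibit $\cC^\bullet(M)$ as quasi-isomorphic to a bounded complex of finitely generated projective $A$-modules, and then to deduce the base change statement from the naturality of such a model. Working Zariski-locally on $\Spec A$, I would first reduce to the case where $M$ is free as an $\A_{K,A}$-module. Then, following the strategy of Section~\ref{subsec: X is Ind algebraic}, I would choose a $\varphi$-stable, free, finite $F$-height $\A^+_{K,A}$-lattice $\gM\subseteq M$ with $(\gamma^{p^s}-1)(\gM)\subseteq T\gM$ for some $s\ge 0$, as guaranteed by Lemma~\ref{lem:testing continuity on M mod T}. Replacing $\gamma$ by $\gamma^{p^s}$ (equivalently, passing to an open subgroup of $\Gamma$) is a harmless modification: the original Herr complex is related to the new one by an elementary two-term Koszul complex, and perfectness/base change transfer along such a resolution. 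So I would assume $s=0$.

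The technical heart of the argument is a two-sided analysis of $\varphi-1$ and $\gamma-1$ on the filtration $\{T^{-n}\gM\}_{n\ge 0}$ of $M$. On the positive lattice $\gM$, the hypothesis $(\gamma-1)(\gM)\subseteq T\gM$ makes $\gamma-1$ topologically nilpotent for the $(p,T)$-adic topology, and combined with the nilpotence of $p$ on $A$ and the finite $F$-height bound on $\varphi$, standard Iwasawa-theoretic techniques applied to the $(p,T)$-adically complete $\gM$ should yield a subcomplex of $\cC^\bullet(M)$ whose cohomology is a finitely generated $A$-module. On the ``negative'' direction $M/\gM=\varinjlim_n T^{-n}\gM/\gM$, the \'etale condition, namely that $\Phi_M\colon\varphi^*M\to M$ is an isomorphism, together with the fact that $\varphi$ sends $T^{-n}\gM$ into $T^{-pn}\gM$ up to bounded error, should force $\varphi-1$ to act as an approximate isomorphism on this negative part, yielding finite-dimensional cokernel contributions. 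The countability hypothesis on $A/p$ enters at this step to guarantee Mittag--Leffler-type vanishing for the derived inverse limits arising when passing from the lattice-level picture to $\cC^\bullet(M)$ itself.

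The main obstacle will be gluing the positive and negative analyses together rigorously, and controlling the interaction of the two operators on the quotients $\gM/T^n\gM$ uniformly in $n$: one needs the combined cohomology of the $2\times 2$ Koszul-like complex to be finitely generated over $A$, not just each of the one-variable subcomplexes. The delicate estimate is how the growth of $T$-adic denominators under $\varphi$ interacts with the stabilising condition $(\gamma-1)\gM\subseteq T\gM$ on $\gamma$.

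Finally, for the base change statement, once a model $\cP^\bullet\simeq \cC^\bullet(M)$ by finite projective $A$-modules has been constructed, I would apply the same recipe to $M_B:=M\otimes_{\A_{K,A}}\A_{K,B}$ with the lattice $\gM_B:=\A^+_{K,B}\otimes_{\A^+_{K,A}}\gM$, which inherits finite height and the $\Gamma$-stability property by the flatness assertions of Proposition~\ref{prop: maps of coefficient rings are faithfully flat injections}. This yields a model $\cP^\bullet_B\simeq \cC^\bullet(M_B)$, and functoriality of the construction provides a natural map $\cP^\bullet\otimes^{\mathbb L}_A B\to \cP^\bullet_B$. That this map is a quasi-isomorphism should be immediate from the explicit construction and flatness of $\A^+_{K,A}\to \A^+_{K,B}$, giving the desired quasi-isomorphism $\cC^\bullet(M)\otimes^{\mathbb L}_A B\isoto \cC^\bullet(M_B)$.
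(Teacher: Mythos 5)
There is a genuine gap at what you yourself identify as the ``technical heart'': the claim that on the negative part $M/\gM$ the \'etale condition forces $\varphi-1$ to act as an approximate isomorphism with finite-dimensional cokernel contributions. Injectivity is fine ($\varphi$ roughly multiplies pole orders by $p$, so $\varphi-1$ strictly increases them), but surjectivity fails badly. Already for $M=\A_{K,A}$ with the standard $\varphi$ and $A=\F_p$, the image of $\varphi-1$ in $A((T))/A[[T]]$ only reaches leading terms $T^{-pn}$ up to lower-order corrections, so the cokernel of $\varphi-1$ on the negative part is infinite-dimensional over $A$. Consequently your two one-variable analyses do not combine to give finiteness of $H^1$ and $H^2$: the entire content of the theorem is that the $\gamma-1$ direction kills this infinite-dimensional leftover, and your proposal contains no mechanism for that. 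The ingredient you are missing is the left inverse $\psi$ of $\varphi$ and the key lemma that $\gamma-1$ acts invertibly on $M^{\psi=0}$; this is what lets one replace $\cC^\bullet(M)$ by the quasi-isomorphic complex built from $(\psi-1,\gamma-1)$, whose cohomology is a subquotient of finitely generated $A[[T]]$-modules (``compact''), while the cohomology of the original complex is a subquotient of finitely generated $A((T))/A[[T]]$-modules (``discrete''); finiteness then follows from ``discrete and compact implies finite''. Relatedly, your use of the countability hypothesis is misplaced: it is not needed for any Mittag--Leffler argument, but rather (via the theory of Polish groups) to show that the algebraic isomorphisms on cohomology induced by the comparison of the two complexes are topological isomorphisms, which is what makes the discrete/compact argument legitimate.

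Two smaller points. First, the reduction to $A$ an $\F_p$-algebra (flatness of $\A_{K,A}$ over Noetherian $A$ plus Nakayama for complexes) is worth making explicit before any lattice is chosen; it simplifies everything and is how the argument actually starts. Second, for the base change statement your appeal to flatness of $\A^+_{K,A}\to\A^+_{K,B}$ is not available for an arbitrary finite type $A$-algebra $B$ (the flatness results you cite assume $A\to B$ flat), and in any case rerunning a construction that has not been carried out does not yield the comparison map; the efficient route is to use perfectness to reduce formally to the case that $B$ is a finite $A$-module, where the identification $\A_{K,B}=\A_{K,A}\otimes_A B$ makes the claim immediate.
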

\begin{proof}
Because $A$ is Noetherian, $\bA_{K,A}$ is flat over $A$. Since $M$ is
a projective~$\A_{K,A}$-module, it is therefore also a flat
$A$-module, so the Herr complex is a complex of flat $A$-modules
(which are however not finite over $A$, but only over $\bA_{K,A}$). To
check that~$\cC^\bullet(M)$ is perfect, it therefore suffices to show
that the cohomology $A$-modules are of finite type over $A$. By
Nakayama's lemma for complexes, it in fact suffices to prove the
theorem in the case that $A$ is an $\F_p$-algebra, so we assume this
from now on.

If $A$ is in fact a  finite-dimensional  $\Fp$-vector space, Herr~\cite{MR1693457} showed that
$\cC^\bullet(M)$ computes $H^\bullet(G_K, \rho)$, where $\rho$ is the
associated $G_K$-representation corresponding to $M$; so the statement
we want to prove is analogous to proving a version of the finiteness
of Galois cohomology in the presence of coefficients. Our proof largely
follows that of Herr.

We don't  have an equivalence between
$(\varphi,\Gamma)$-modules and Galois representations for general
coefficient rings, but we have similar behavior, as in the following lemma.

\begin{lemma}
$H^i(\cC^\bullet(M_1^\vee \otimes M_2)) =
\Ext_{(\varphi,\Gamma)/\bA_{K,A}}^i(M_1, M_2)$  for $i = 0,1$. In addition, $H^2(\cC^\bullet(\ad M))$ controls the obstructions to infinitesimal deformations of $M$.
\end{lemma}
It may well be the case that
$H^2(\cC^\bullet(M_1^\vee \otimes M_2)) =
\Ext_{(\varphi,\Gamma)/\bA_{K,A}}^2(M_1, M_2)$, but we have not
attempted to verify this.

There is an $A$-linear map $\psi: \bA_{K,A} \thra \bA_{K,A}$, which is given by
$\psi = \frac{1}{p} \tr\varphi$,  and an induced map $\psi: M \thra M$,
which is defined so that for~$a\in A$ and $m\in M$ we have $\psi(\varphi(a)m) = a \psi(m)$ and $\psi(a\varphi(m)) = \psi(a)m$.

Since~$\varphi$ commutes with~$\Gamma$, so does~$\psi$, and we have an
induced morphism $(1 - \gamma): M^{\psi = 0} \to M^{\psi = 0}$. In fact,
this morphism is an isomorphism. The proof of this takes some work,
but at least morally it has the following explanation in the simplest
case, when~$A=\Fp$, $K = \Qp$
and~$M=\A_{\Qp}$. Working instead with~$\Gammat$
   and $(\bA_{\Qp}')^+ = \Z_p[[\Z_p]]$, 
we have $\F_p[[\Z_p]]^{\psi = 0} = \F_p[[\Z_p^\times]] $ (as an
$\F_p[\Gammat]$-module)
$= \F_p[\Delta] \otimes_{\Fp} \F_p[[x]]$, where $x = \gamma - 1$. Thus we see
that~$(\gamma-1)$ acts on~$\Fp[[x]]$ by multiplication by~$x$, which
is injective.   After passing to  the fraction field of $\F_p[[\Z_p]]$,
i.e.\ after  passing to $\bA_{\Qp}'$, i.e.\ after  inverting~$T'_{\Q_p}$,
one finds, upon passing to $\psi$-invariants, that  this  corresponds
to  inverting~$x$; i.e.\ 
that $(\bA_{\Qp}')^{\psi = 0} = \F_p[[\Delta]\times \F_p((x)).$
Thus multiplication by~$\gamma  -1  = x$  now  acts invertibly.

Using the~$\psi$ operator, we can give an alternative description of
the Herr complex as follows. Let $\cC_\psi^\bullet(M)$ be the complex in degrees
$0,1,2$ given by \[\xymatrix{0\ar[r]&M\ar[rr]^{(\psi -1,\gamma-1)}&&
      M\oplus M\ar[rr]^{(\gamma-1)\oplus(1 - \psi)}&&M\ar[r]&
    0. }\]

  We have a morphism of complexes
  $\cC^\bullet(M)\to\cC^\bullet_\psi(M)$ given by
\[  \xymatrix{0\ar[r]&M\ar[d]^{1}\ar[rr]^{(\varphi-1,\gamma-1)}&&
      M\oplus M\ar[d]^{(-\psi,1)}\ar[rr]^{(\gamma-1)\oplus(1-\varphi)}&&M\ar[d]^{-\psi}\ar[r]&
    0 \\ 0\ar[r]&M\ar[rr]^{(\psi-1,\gamma-1)}&&
      M\oplus M\ar[rr]^{(\gamma-1)\oplus(1-\psi)}&&M\ar[r]&
    0 } \]
That this is a morphism of complexes follows from the facts that
  $\psi\circ\varphi=\id$, and that~$\psi$ commutes with~$\gamma$; and 
since $(1 - \gamma): M^{\psi = 0} \to M^{\psi = 0}$ is an isomorphism,
we see that 
is in fact a quasi-isomorphism.

With some more work, we can show that this morphism induces
topological isomorphisms of the cohomology groups. We deduce the
required finiteness from this: the basic idea is that a Frobenius
truncation argument shows that the cohomology groups of
$\cC^\bullet(M)$ are subquotients of finitely generated
$A((T))/A[[T]]$-modules, and thus have the discrete topology, while
the cohomology groups of $\cC^\bullet_\psi(M)$ are subquotients of
finitely generated $A[[T]]$-modules (so the slogan is: ``discrete and
compact implies finite''). It is in these arguments that we use the
countability assumption on~$A$, in order to use 
the theory of Polish groups.

Finally, to see that the obvious map of complexes
$\cC^\bullet(M)\otimes_AB\isoto\cC^\bullet(M\otimes_{\A_{K,A}}\A_{K,B})$
induces a quasi-isomorphism, we use an argument of~\cite{MR3230818} to
formally reduce to the case that~$B$ is a finite $A$-module, in which
case the claim follows easily as $\A_{K,B}=\A_{K,A}\otimes_AB$.
\end{proof}

\subsection{Constructing families of extensions}
\label{subsec:families}
Our next goal is  to cover the underlying reduced substack
 $\cX_{d,\red}$ by certain families of \'etale  $(\varphi,\Gamma)$-modules,
having  rather  explicit parameterizations,
with the goal of showing  that~$\cX_{d,\red}$ is of finite presentation over~$\F_p,$ and
of bounding its dimension from above,   

We begin  with the case of $K = \Q_p$ and $d  = 1$,
and to this  end
return briefly to the discussions of Lecture~\ref{subsec: FL vs
  WD}. 
By local class field theory (see also Lecture~\ref{subsec: discussion of rank 1 case}), the $\Fpbartimes$-valued characters of
$G_K$ are all of the form $\omega^i\ur_\alpha$, for $i = 0, \dots p-2$
and $\alpha \in \Fpbartimes$. Going over to the setting
of~$(\varphi,\Gamma)$-modules, we can take the trivial
$(\varphi,\Gamma)$-module over $\bA_{\Qp, \F_p[\alpha,\alpha^{-1}]}$
and twist $\varphi$ by $\alpha$ and $\Gamma$ by $\omega^i$, to get a
$\bG_m$ worth of representations (one for each~$\alpha\in \Fbar_p^{\times}$; 
here $i$ is being kept fixed), but
also a $\bG_m$ worth of scalar automorphisms. As we run over~$i$ we obtain a map
	\[ \bigsqcup_{i=0}^{p-2} [\bG_m/\bG_m] \to \cX_{1,\red} \]
        which with a little work can be shown to be an isomorphism.


Now let $d = 2$.  Any  irreducible representation has the form
	\[ \ur_\alpha \Ind_{G_{\Q_{p^2}}}^{G_{\Qp}} \omega_2^i \]
(for  some $0<i<p^2-1$ with $(p+1)\nmid i$).
This gives us a point $[\bG_m/\bG_m] \hra \cX_{2,\red}$ (again the $\alpha$ gives a $\bG_m$ and then we quotient by scalars). But we know (looking  ahead to
Theorem~\ref{thm:reduced dimension})
that $\dim \cX_{2,\red} = 1$ and that it's equidimensional, so the
irreducible representations lie in a substack of positive
codimension. We can now consider families of reducible
representations, so we want to look at representations coming from
$\Ext^1(\ur_\alpha\omega^i, \ur_\beta \omega^j)$. For generic choices
of~$\alpha,\beta$ there's a unique
nontrivial extension, and the intuition is that as $\alpha,\beta$ vary,
the extension class varies with them, and then you mod out by scalar endomorphisms
to get  the expected one-dimensional stack (two dimensions
coming from the choice of $\alpha,\beta$).

In fact, we want to apply this same technique in more generality. The idea is to
begin with some representation, and then to  iteratively build its spaces of extensions by some given irreducible representations. More generally, assume that we're given a family $\barr\rho_T \to T$ of rank $d$ \'etale $(\varphi,\Gamma)$-modules, where $T$ is a reduced finite type variety over $\Fpbar$. (This is our starting 
representation --- of course in practice we work with a $(\varphi,\Gamma)$-module,
but we think of it as being a family of Galois representations.)
Then take $\barr\sigma$ an irreducible Galois representation (or,
more properly, the associated \'etale $(\varphi,\Gamma)$-module).
As a technical hypothesis,
assume furthermore  that the spaces
$\Ext^2_{G_K}(\barr\sigma, \barr\rho_t)$ are of constant dimension when we vary~ $t$ over the closed points of~$T$.

The base change property for $\cC^\bullet(M)$ in Theorem~\ref{thm: properties of Herr complex} involves a derived tensor product, so there's a spectral sequence relating $\cC^\bullet(M)$ and $\cC^\bullet(M \otimes_A B)$, but since $H^2$ is the highest degree, it satisfies naive base change.
Therefore, our assumption  on the $\Ext^2$  having constant  degree
implies that  $H^2(\cC^\bullet(\barr\sigma^\vee \otimes \barr\rho_T))$ is actually a vector bundle over~$T$.
This  implies that $\tau_{\leq 1} \cC^\bullet(\barr\sigma^\vee \otimes
\barr\rho_T)$ is still perfect, and can thus be modeled as a two term complex $[C^0 \to
Z^1]$ concentrated in degrees $0,1$, whose terms are finite locally free over
$T$. Note that we have a surjection $Z^1 \thra H^1(\cC^\bullet(\barr\sigma^\vee \otimes \barr\rho_T)) = \Ext^1(\barr\sigma, \barr\rho_T)$.

Now let $V$ be the vector bundle over $T$ corresponding to $Z^1$. Let
$\barr\rho_V$ be the pullback of $\barr\rho_T$ to $V$. By analogy with
the 2-dimensional case, we want to study a ``universal extension''
$\cE_V$ of $\barr{\rho}_V$ by $\barr{\sigma}$, which should fit in the
short exact sequence
	\[ 0 \to \barr\rho_V \to \cE_V \to \barr\sigma \to 0. \]
To construct this thing, we consider
	\[ \Ext^1(\barr\sigma, \barr\rho_T \otimes (Z^1)^\vee) = \Ext^1(\barr\sigma, \barr\rho_T) \otimes (Z^1)^\vee, \]
which is a quotient of
	\[ Z^1 \otimes (Z^1)^\vee .\]
This tensor  product  has a ``trace element'', whose image
in  $\Ext^1(\barr\sigma, \barr\rho_T \otimes (Z^1)^\vee)$ is an extension class
corresponding to a short exact sequence
	\[ 0 \to \barr\rho_T \otimes (Z^1)^\vee \to \text{ 
extension } \to \barr\sigma \to 0. \]
If we extend scalars from $\Fbar_p$  to  $\cO_V$, and then pushforward under the map
$$\rho_T \otimes_{\cO_T}  (Z^1)^{\vee} \otimes_{\F_p} \cO_V \to \rho_T\otimes_{\cO_T} \cO_V$$
induced by the inclusion $(Z^1)^{\vee} \hookrightarrow \Sym^{\bullet}  (Z^1)^{\vee}
=  \cO_V$ together  with the multiplicative structure of~$\cO_V$,
we obtain the desired extension
	\[ 0 \to \barr\rho_T \otimes_{\cO_T} \cO_V \to \cE_V \to 
\barr\sigma\otimes_{\Fbar_p}\cO_V \to 0. \]

Here  $\cE_V$ is now an  \'etale $(\varphi,\Gamma)$-module
over~$V$, and so is classified by a map $V \to \cX_{d,\red}$
(where $  d  = \dim \barr\rho_T + \dim \barr\sigma$). 
The object $\cE_V$ will admit various automorphisms, 
and  if we compute them precisely, we can hope to get an  embedding
of  a quotient  stack of $V$  into $\cX_{d,\red}$. 
Iterating this construction, we obtain families of \'etale
$(\varphi,\Gamma)$-modules parameterized by quotient  
stacks of the  total spaces of the various  vector  bundles $V$
appearing via applications of the preceding construction,
which will ultimately cover the various
$\cX_{d,\red}$.  (Since any Galois representation over $\Fbar_p$
can  be  written  as an iterated  extension of irreducible 
representations.)

\subsection{Explicit families and labelling by Serre
  weights}\label{subsec: explicit families and Serre weights}
The iterative construction that we just explained  ultimately allows us  to get  a description of the
underlying reduced stacks~$\cX_{d,\red}$ for  general~$d$.
Before stating our general result, we return to the case of~$K=\Qp$
and~$d=2$. 
Again, while the stack $\cX_2$ is really a stack of $(\varphi,\Gamma)$-modules, 
for the purposes of gaining intuition, it is reasonable to imagine 
our families of extensions as being 
extensions of Galois
representations, and we will describe things from this perspective 
without further comment for the rest of
this section. For simplicity we assume that~$p>2$ throughout this discussion.



Take $T = \bG_m = \Spec \F_p[\alpha,\alpha^{-1}]$ and $\barr\rho_T =
\ur_\alpha \omega^i$. For the moment let $\barr\sigma~=~1$ (this will
suffice to describe all the geometric phenomena; twisting by a
1-dimensional family $\ur_\beta\omega^j$ for some $0\le j<p-1$ gives
the remaining cases). Fix some~$0\le i<p-1$. Then
	\[ \Ext^2(1,\ur_\alpha\omega^i) = H^2(G_{\Qp},\ur_\alpha\omega^i) \]
which (for example by Tate local duality) vanishes unless $i = 1$ and $\alpha = 1$, in which case it's 1-dimensional.

We begin with the case that~$2\le i\le p-2$. Then
		\[ \Ext^2(1,\ur_\alpha\omega^i) = 0, \]
	and
		\[ \Ext^0(1, \ur_\alpha\omega^i) = (\ur_\alpha\omega^i)^{G_{\Qp}} = 0 \]
	so actually (by Tate's local Euler characteristic formula) 
$\Ext^1(1, \ur_\alpha\omega^i)$ has constant rank $1$, so $\Ext^1(1, \barr\rho_T)$ is a line bundle. Its total space $V$ is then a copy of $\bA^1\times \Gm$
that parameterizes a family of extensions
		\[ \begin{pmatrix}\ur_\alpha\omega^i   & * \\ 0 & 1\end{pmatrix}. \]
We can  add one dimension to  this family by forming unramified twists  --- thus
obtaining a family parameterized by $V \times \bG_m = \bA^1\times \bG_m \times \bG_m$,
parameterizing extensions of the form
		\[ \begin{pmatrix}\ur_{\alpha\beta}\omega^i   & * \\ 0 & \ur_{\beta}\end{pmatrix}. \]
There is an action of (a different copy of) $\Gm\times \Gm$ on this family, arising
from the action of $\Gm$ by automorphisms
on each of the characters
$\ur_{\alpha\beta} \omega^i$, 
and~$\ur_{\beta}$. 
If we let $(a,b)$ denote the coordinates on this copy $\Gm\times\Gm$ that is acting,
and let $x$  denote the coordinate on $\A^1$ (thus $x$ provides a coordinate
on each of the $1$-dimensional spaces $\Ext^1(\ur_{\beta},\ur_{\alpha\beta}\omega^i)$)
--- so that the coordinates
on $V  = \A^1\times \bG_m\times \bG_m$ are~$(x,\alpha,\beta)$ --- then this action is given
by the formula
$(a,b) \cdot (x,\alpha,\beta) = (a b^{-1} x, \alpha,\beta).$
(The fact that the stabilizer of any particular  point  $(x,\alpha,\beta)$
is the diagonal copy of $\Gm$ if $x \neq 0$, and the entirety of $\Gm\times\Gm$
when $x = 0$,
is a manifestation of the fact that the only automorphisms of a non-split extension
of distinct characters 
are the scalar  matrices, while the
automorphisms of a split extension (i.e.\ a direct sum) of distinct characters
are the diagonal matrices.)
Thus we get  an embedding
$$[ (\bA^1\times\bG_m\times \bG_m)/(\bG_m\times \bG_m)]
\hookrightarrow \cX_{2,\red},$$
whose source visibly has dimension $3 - 2 = 1$.
%
%
%
%

We next turn to the case $i = 0$, where Tate local duality gives us that
		\[ \Ext^2(1, \ur_\alpha) = H^2(G_{\Qp}, \ur_\alpha) = H^0(G_{\Qp}, \ur_\alpha \otimes \epsilon^{-1}) = 0 \]
	for all $\alpha \in \Fpbartimes$. However, we now run into the
        issue that $\Ext^0$ is not a vector bundle, because it has
        rank $0$ everywhere except for when $\alpha = 1$, and at this
        point the rank jumps from $0$ to $1$;
correspondingly, the rank of $\Ext^1(1,\ur_{\alpha})$ jumps
from~$1$ to~$2$.
        In the framework of Lecture~\ref{subsec:families},
this means that we can't choose the complex $C^0 \to Z^1$ so that
$C^0 = 0$ and $Z^1 = \Ext^1$; we have to instead allow $C^0$ to have rank~$1$,
and $Z^1$ to have rank $2$.
If we go  through the construction of that subsection,
we end up with a family parameterized by  the total space 
$V$ of a rank~$2$ vector  bundle on~$\Gm$.   We can add in the
unramified twists, to obtain a family parameterized by~$V\times \bG_m$,
and we obtain 
a map
\numequation
\label{eqn:i = 0 case}
V\times\bG_m \to \cX_{2,\red}
\end{equation}
classifying the 
family of Galois representations that  $V\times \bG_m$  parameterizes. 
This map won't be an embedding,
of course, because various different points on $V\times \bG_m$
will give rise to isomorphic Galois representations.  
Just as in the case $2 \leq i \leq p-2$ considered above,
there is a scaling action of $\Gm \times \Gm$ on the various $\Ext^1$ spaces.
Also, any two points in of $V$  that lie over the same point
$\alpha \in \Gm(\Fbar_p)$,
and which lie in the same fibre of the surjection 
\numequation
\label{eqn: Z to  Ext}
Z^1_{\alpha} \to \Ext^1( 1,\ur_{\alpha})
\end{equation}
 (here we write $Z^1_{\alpha}$ to denote the fibre
of $Z^1$ over the point~$\alpha$) parameterize the same extension
of $1$ by~$\ur_\alpha$. 
And there are even more isomorphisms that can occur between the
representations parameterized by this family: for example,
if we restrict to the family of split extensions $\ur_{\alpha\beta} \oplus
\ur_\beta$ parameterized by $\bG_m\times \bG_m$ (the ``zero section''
of $V\times \bG_m$, which we also refer to as the ``split locus''
of the family), then the points
$(\alpha, \beta)$ and $(\alpha^{-1}, \alpha\beta)$
both parameterize the direct sum
$\ur_{\alpha\beta} \oplus \ur_{\beta}$.   Since these ``extra isomorphisms''
occur only on a proper subvariety of~$V\times\bG_m$, though, they don't play a 
role in determining the dimension  of the image of~\eqref{eqn:i = 0 case}.
Indeed, since 
the kernel of the surjection~\eqref{eqn: Z  to Ext}
generically (more precisely, whenever $\alpha \neq 1$) has dimension~$1$,
we find that the image of~\eqref{eqn:i  = 0 case}
is an irreducible constructible substack of $\cX_{2,\red}$ of
dimension $4 - 2 -  1 = 1$.
The structure of this substack along the image of the split locus,
as well as along the image
locus where~$\alpha = 1$,
is rather complicated; for example,
along the split locus we get 
a fold singularity (corresponding to the identification of pairs 
of points 
$(\alpha, \beta)$ and~$(\alpha^{-1}, \alpha\beta)$),
degenerating to some kind of
cusp singularities along the points where furthermore $\alpha =
1$. (See~\cite{sandermultiplicities} for some computations of the
versal rings at these points.)
%
%

Finally, in the case $i = 1$, 
Tate local duality gives
		\[ \dim \Ext^2(1, \ur_\alpha\omega) = \dim
                  H^2(G_{\Qp}, \ur_\alpha\omega) = \dim H^0(G_{\Qp},
                  \ur_\alpha) = \begin{cases} 1 & \alpha = 1 \\ 0 & \alpha \neq 1 \end{cases} \]
	so $\Ext^2$ is no longer a vector bundle. On the  other hand,  we
have
		\[ H^0(G_{\Qp}, \ur_\alpha\omega) = 0. \]
Following  the strategy of Lecture~\ref{subsec:families},
we thus break $\Gm$ up into two pieces: $T_0 := \Gm \setminus\set{\alpha = 1 }$,
and $T_1 := \{\alpha = 1\}$.  Over $T_0$,
we see that $\Ext^2(1,\ur_\alpha\omega)$
has constant rank~$0$, and we can construct a family analogously to the
case $2  \leq i \leq p-1$, parameterized by the total space
of a line bundle over~$\Gm\setminus \set{\alpha = 1}$.  Adding
in the unramified twists, this leads to an embedding
$$[(\A^1 \times (\Gm\setminus \set{\alpha = 1} ) \times  \Gm) / (\Gm\times \Gm)]
\hookrightarrow \cX_{2,\red}.$$

Passing now to $T_1$ (which is just a single  point! --- over which
 $\Ext^2(1,\ur_\alpha\omega)$  has constant rank~$1$),
we obtain a family parameterized by
the $2$-dimensional vector space $\Ext^1(1,\omega).$
Again adding in unramified twists, we obtain
an embedding
$$[(\A^2 \times \Gm)/ (\Gm\times \Gm)] \hookrightarrow \cX_{2,\red}.$$

Each of the finite number of families that we have just described
(along with the families obtained by taking a  twist  of one of these
families by $\omega^j$ for some $j = 1,\ldots,p-2$)
gives rise to a $1$-dimensional irreducible  constructible substack of~$\cX_{2,\red}$. 
By construction, every $\Fbar_p$-point of $\cX_{2,\red}$  that
corresponds to a {\em reducible} Galois representation lies exactly
one of these subsets.

We have already seen that the $\Fbar_p$-points corresponding to {\em irreducible}
Galois representations lie in a finite union of images of the $0$-dimensional
stack $[\Gm/\Gm]$.   We thus deduce that $\cX_{2,\red}$ is $1$-dimensional,  
and that the closure of each of the $1$-dimensional and mutually disjoint
constructible substacks that we've just constructed must be an irreducible
component of $\cX_{2,\red}$.
%
We now explain how we can label these
components by so-called \emph{Serre weights}. 

Temporarily return to the case of
general~$K$, $d$, and let~$k$ be the residue field of~$K$. Then by
definition, a Serre weight is an (isomorphism class of)
irreducible $\Fpbar$-representations of~$\GL_n(k)$. These are
determined by their highest weights, which are tuples of integers
~$\{k_{\sigmabar,i}\}_{\sigmabar:k\into\Fpbar,1\le i\le d}$
with the properties that \begin{itemize}
\item $p-1\ge k_{\sigmabar,i}-k_{\sigmabar,i+1}\ge 0$ for each $1\le i\le
  d-1$, and
\item $p-1\ge k_{\sigmabar,d}\ge 0$, and not every~$k_{\sigmabar,d}$
  is equal to~$p-1$.
\end{itemize} We refer the reader to Lecture~\ref{sec: geometric BM}
for some explanations and motivation for the appearance of the
representation theory of~$\GL_n(k)$ in the geometry of our stacks.

Returning to the case that~$K=\Qp$ and~$d=2$, the Serre weights are
the representations $\det^{j}\Sym^{i-1}\overline{\F}_p^2$ with $0\le j<p-1$, $1\le
i\le p$; in the notation above, these have highest
weight~$(k_1,k_2)=(j+i-1,j)$. 
Our labelling of components by Serre weights is then as follows.\footnote{For
accuracy, in case the reader tries to carefully compare our present
discussion with the corresponding discussion in~\cite{emertongeepicture},
we note that our labelling here differs from the one given in that reference;
the component that here we label by the Serre weight~$\sigma$
will there be labelled by the Serre weight $\sigma^{\vee}\otimes \det^{-1}$.
We have chosen our present convention just because it is notationally
easier to work with extensions of $1$ by powers of $\omega$ rather than 
with extensions of powers of $\omega^{-1}$ by~$1$.  
If you like, we have applied the ``Cartier  duality'' involution
--- given by $\rhobar \mapsto \rhobar^{\vee} \otimes \omega$ ---
to $\cX_{2,\red}$.}
An
irreducible component which generically parameterizes representations of
the form \[\begin{pmatrix}\ur_{\alpha\beta}\omega^{i+j}   & * \\ 0 &
    \ur_{\beta}\omega^{j}\end{pmatrix} \] is labelled by the highest
weight~$(i+j-1,j)$ (that is, by the representation
$\det^j\Sym^{i-1}\overline{\F}_p^2$). This is unambiguous except for the
possibilities $i=1,p$, which we distinguish as follows. In the
discussion above, we saw that if~$i=1$ there is one irreducible
component for which~$\alpha=1$ at a dense set of points, and we label
this component by~$\det^j\Sym^{p-1}\overline{\F}_p^2$; and there is one
component where~$\alpha\ne 1$ at a dense set of points, which we label by~$\det^j\Sym^{0}\overline{\F}_p^2$.



We can inductively extend the the construction technique
that we have just described for~$d=2$, so as to prove
the following general theorem. The notion of being ``generically maximally nonsplit of niveau
  one'' means that for an open set of
  points, the corresponding $G_K$-representation is a successive
  extension of characters in a unique way; to such a representation
  one can associate a Serre weight~$\underline{k}$, via a
  straightforward extension of the labelling we just explained in the
  case~$K=\Qp$ and~$d=2$. 

      \begin{thm} 
	\label{thm:Xdred is algebraic}\leavevmode
        \begin{enumerate}
        \item 	The Ind-algebraic stack $\cX_{d,\red}$
is an algebraic stack, of finite presentation over $\F$.
\item  We can write~$(\cX_{d,\red})_{\Fpbar}$ 
as a union of closed algebraic substacks of finite
presentation
over~$\Fpbar$ 
\[(\cX_{d,\red})_{\Fpbar}=\cX_{d,\red,\Fpbar}^{\negligible}\cup\bigcup_{\underline{k}}\cX_{d,\red,\Fpbar}^{\underline{k}},\]
where:
\begin{itemize}
\item $\cX_{d,\red,\Fpbar}^{\negligible}$ is empty if~$d=1$, and otherwise is
  non-empty of dimension
  strictly less than~$[K:\Qp]d(d-1)/2$. 
\item each $\cX_{d,\red,\Fpbar}^{\underline{k}}$ is a closed 
irreducible substack of dimension 
  ~$[K:\Qp]d(d-1)/2$, and is generically maximally nonsplit of niveau
  one and weight~$\underline{k}$.
\end{itemize}


\item  
If we fix an irreducible representation
$\alphabar: G_K \to \GL_a(\Fbar_p)$ {\em (}for some $a \geq 1${\em )},
then the locus of $\rhobar$ in
$\cX_{d,\red}(\Fbar_p)$ for which
$\dim \Ext^2_{G_K}(\alphabar,\rhobar) \geq r$ is of dimension at most
\[[K:\Qp]d(d-1)/2-r.\]
        \end{enumerate}
\end{thm}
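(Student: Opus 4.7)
The plan is to combine upper semicontinuity of $\Ext^2$ via the Herr complex, Tate local duality, and induction on $d$ using the family construction from Subsection~\ref{subsec:families}. By Theorem~\ref{thm: properties of Herr complex} the Herr complex $\cC^\bullet(\alphabar^\vee \otimes M)$ is perfect on any finite-type open substack of $\cX_{d,\red}$, and computes $\Ext^i_{G_K}(\alphabar, \rhobar)$ in geometric fibers via Herr's theorem. Standard semicontinuity for perfect complexes then shows that the locus $Z_r \subseteq \cX_{d,\red}$ on which $\dim \Ext^2_{G_K}(\alphabar, \rhobar) \geq r$ is closed, so the task reduces to bounding $\dim Z_r$.

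By Tate local duality, $\Ext^2_{G_K}(\alphabar, \rhobar) \cong \Hom_{G_K}(\rhobar, \alphabar(1))^\vee$, where $\alphabar(1) := \alphabar \otimes \omega$ is absolutely irreducible of dimension~$a$. Thus $\rhobar \in Z_r$ if and only if $\rhobar$ admits an $r$-dimensional space of surjections onto $\alphabar(1)$, which forces $\alphabar(1)$ to appear with multiplicity at least~$r$ among the Jordan--H\"older constituents of $\rhobar$; in particular $ar \leq d$.

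I would argue by induction on~$d$. The base case $d = 1$ is immediate from the description of $\cX_{1,\red}$ as a disjoint union of gerbes $[\bullet/\Gm]$ of dimension~$-1$. For $d \geq 2$ and $\rhobar \in Z_r$, any non-zero element of $\Hom_{G_K}(\rhobar, \alphabar(1))$ is a surjection, and its kernel $\rhobar'$ (of rank~$d - a$) satisfies $\dim \Hom_{G_K}(\rhobar', \alphabar(1)) \geq r - 1$ by the long exact sequence for $\Hom(-, \alphabar(1))$; hence $\rhobar'$ lies in the analogous jump locus inside $\cX_{d-a,\red}$. Thus $Z_r$ is covered by images of the families from Subsection~\ref{subsec:families}, parameterizing universal extensions of $\alphabar(1)$ by the family of~$\rhobar'$'s. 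Stratifying by the constant values of $\dim \Ext^i(\alphabar(1), \rhobar')$ for $i = 0, 2$ so that the construction applies, the generic fiber dimension is $\dim \Ext^1(\alphabar(1), \rhobar') = [K:\Qp]a(d-a)$ by the Euler characteristic formula; combined with the inductive bound $[K:\Qp](d-a)(d-a-1)/2 - (r-1)$ and a $-1$ correction for the $\Aut(\alphabar(1)) = \Gm$-quotient on the universal extension, and using the identity $(d-a)(d-a-1)/2 + a(d-a) = d(d-1)/2 - a(a-1)/2$, this yields the bound with slack $[K:\Qp]a(a-1)/2 \geq 0$.

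The main obstacle will be controlling the non-generic strata where $\Ext^0(\alphabar(1), \rhobar')$ or $\Ext^2(\alphabar(1), \rhobar')$ fails to vanish, since on these the fiber dimension $\dim \Ext^1(\alphabar(1), \rhobar')$ grows. Upper semicontinuity makes these strata closed; applying the inductive hypothesis to $\alphabar(1)$ inside $\cX_{d-a,\red}$ (and, via a second Tate duality, to a similar locus governing embeddings of $\alphabar(1)$ into~$\rhobar'$) bounds the dimensions of these strata tightly enough that the overall estimate is preserved. Iterated across the induction, the slack $[K:\Qp]a(a-1)/2$ absorbs these technical corrections as well as those coming from quotienting by the remaining scalar automorphisms.
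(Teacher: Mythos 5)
Your proposal only addresses part~(3) of the theorem. Parts~(1) and~(2) --- that $\cX_{d,\red}$ is an algebraic stack of finite presentation, and the decomposition into the closed substacks $\cX_{d,\red,\Fpbar}^{\underline{k}}$ and $\cX_{d,\red,\Fpbar}^{\negligible}$ --- carry the main structural content, and none of this is touched. More importantly, these parts cannot be decoupled from part~(3): your covering of $Z_r$ uses families parameterized by the jump locus inside $\cX_{d-a,\red}$, and to even form the families of Subsection~\ref{subsec:families} you need a smooth cover $T \to \cX_{d-a,\red}$ by a finite type scheme, i.e.\ you need part~(1) in rank $d-a$; and you quote part~(3) in rank $d-a$ for the dimension of the base. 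So the argument must be set up as a simultaneous induction on $d$ proving all three assertions together, which the proposal does not do and in fact presents as circular (you are ``covering $Z_r$ by the families,'' but those families are precisely what parts~(1), (2) assert exist).

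The second gap is the control of the non-generic strata, which you correctly flag as ``the main obstacle'' but then dispose of too quickly. The fiber dimension $\dim\Ext^1(\alphabar(1),\rhobar')$ jumps where $\Ext^0(\alphabar(1),\rhobar')$ or $\Ext^2(\alphabar(1),\rhobar')$ is nonzero. The latter is controlled by the inductive hypothesis in part~(3) applied to the fixed irreducible $\alphabar(1)$, since it has the right variance: fixed source, variable target. But the former is not: Tate duality gives $\dim\Ext^0(\alphabar(1),\rhobar') = \dim\Ext^2(\rhobar',\alphabar(1)\otimes\omega)$, which has the variable $\rhobar'$ in the \emph{source} and a fixed irreducible in the \emph{target}, whereas part~(3) as stated bounds loci of the form $\{\rhobar : \dim\Ext^2(\text{fixed}, \rhobar)\geq r\}$. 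A ``second Tate duality'' does not fix the variance (it just takes you back where you started). What you actually need is to combine Tate duality with the fact that Cartier duality $\rhobar' \mapsto (\rhobar')^\vee$ induces an auto-equivalence of $\cX_{d-a,\red}$: rewrite $\Ext^2(\rhobar',\alphabar(1)\otimes\omega) \cong \Ext^2((\alphabar(1)\otimes\omega)^\vee, (\rhobar')^\vee)$, then apply the inductive hypothesis to the fixed irreducible $(\alphabar(1)\otimes\omega)^\vee$ and the variable $(\rhobar')^\vee$ running over the stack in rank $d-a$. Without this extra ingredient, the dimension count on the thin strata does not go through. That said, the generic-stratum arithmetic in the third paragraph (Euler characteristic $[K:\Q_p]a(d-a)$, the identity $(d-a)(d-a-1)/2 + a(d-a) = d(d-1)/2 - a(a-1)/2$, the $-1$ for the $\Gm$ of scalar automorphisms of $\alphabar(1)$) is correct and does deliver the stated bound with slack $[K:\Q_p]a(a-1)/2$, and the paper's own inductive proof does proceed in a similar spirit via families of extensions, so the outline for part~(3) is on the right track once these two issues are repaired.
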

\begin{rem}
  Note that in part~(3), the locus of points in question
  corresponds to a closed substack of~$(\cX_{d,\red})_\Fpbar$, by
  upper-semicontinuity of fibre dimension. 
\end{rem}

\begin{rem}\label{rem: different k give different components}
  Since the~$\cX_{d,\red,\Fpbar}^{\underline{k}}$ are irreducible, have
  dimension equal to that of~$(\cX_{d,\red})_\Fpbar$, and 
  have pairwise disjoint open substacks (corresponding to maximally
  nonsplit representations of niveau~$1$ and weight~$\underline{k}$),
  they are in fact distinct irreducible components of~$(\cX_{d,\red})_\Fpbar$.
\end{rem}

\begin{remark}
	We can, and will, be much more precise about the structure of $\cX_{d,\red}$.
	Namely, in Lecture~\ref{sec: geometric BM} we combine
        Theorem~\ref{thm:Xdred is algebraic} with the results of Lecture~\ref{sec: Extensions and Crystalline
  Lifts}
        to show that $\cX_{d,\red}$ is equidimensional of dimension
        $[K:\Q_p]d(d-1)/2$; 
        accordingly, the
        irreducible components of~$(\cX_{d,\red})_\Fpbar$ are
        precisely the
        ~$\cX_{d,\red,\Fpbar}^{\underline{k}}$, and in particular
        are in bijection with the Serre
        weights~$\underline{k}$. 
      \end{remark}
	\begin{remark}
          The upper bound of
          Theorem~\ref{thm:Xdred is algebraic}~(3) is quite crude when
          $[K:\Q_p] > 1$, although it is reasonably sharp in the case
          $K = \Q_p$.  However, it suffices for our purposes. (In
          fact, in the inductive proof of Theorem~\ref{thm:Xdred is
            algebraic} we use a slightly more complicated upper bound
          to control the dimensions of various extension groups, but
          the stated bound is all that we will need going forward.) 
	\end{remark}

\section{Crystalline Lifts}\label{sec: Extensions and Crystalline
  Lifts}        \subsection{Crystalline lifts}
The following theorem is the
main result of this lecture.

\begin{theorem}\label{thm:  main crystalline lifts}
If $\barr\rho: G_K \to \GL_d(\Fpbar)$ is continuous, then there exists
$\rho^\circ: G_K \to \GL_d(\Zpbar)$ lifting $\barr\rho$ such that the
corresponding $p$-adic Galois representation $\rho: G_K \to
\GL_d(\Qpbar)$ is crystalline with regular Hodge-Tate weights. 
\end{theorem}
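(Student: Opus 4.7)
The plan is to leverage the stratification of $(\cX_{d,\red})_\Fpbar$ by Serre weights (Theorem~\ref{thm:Xdred is algebraic}) together with the dimension computation for crystalline stacks (Theorem~\ref{thm: dimension of ss stack}). Any continuous $\bar\rho$ determines an $\Fpbar$-point of $(\cX_{d,\red})_\Fpbar$, so it suffices to show that for each Serre weight $\underline{k}$, some crystalline stack $\cX_d^{\crys,\underline{\lambda},\tau}$ with $\underline{\lambda}$ regular has image, after reducing mod $p$, containing the irreducible component $\cX_{d,\red,\Fpbar}^{\underline{k}}$; the remaining points on $\cX_{d,\red,\Fpbar}^{\negligible}$ will then be picked up automatically, since the iterative construction of the families in Lecture~\ref{sec: Herr complex and Ext groups} exhibits every $\Fpbar$-point as lying in the closure of the maximally-nonsplit-of-niveau-one locus of some component.

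For each $\underline{k}=(k_{\bar\sigma,i})$, I would take the regular Hodge type $\lambda_{\sigma,i}:=k_{\bar\sigma,i}+(d-i)$ and trivial inertial type $\tau$. The key step is to construct, at a generic point $\bar\rho$ of the component --- an iterated extension of explicit niveau-one characters $\bar\chi_1,\dots,\bar\chi_d$ --- an explicit crystalline lift. First lift each $\bar\chi_i$ to a crystalline character $\chi_i$ with Hodge--Tate weights $(\lambda_{\sigma,i})_\sigma$; this is possible because the residual character is niveau one, so the cyclotomic twist can be adjusted freely within the prescribed reduction. Then lift the successive extension classes using that, for strictly decreasing Hodge--Tate weights, the crystalline $\Ext^1$ between $\chi_i$ and $\chi_j$ has the expected dimension by Tate's local Euler characteristic formula and surjects onto its mod-$p$ Galois $\Ext^1$ via a Bloch--Kato style computation.

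Having constructed crystalline lifts on a dense open subset of each $\cX_{d,\red,\Fpbar}^{\underline{k}}$, the image of $\cX_d^{\crys,\underline{\lambda},\tau}\times_{\Spf\cO}\Spec\F$ in $(\cX_{d,\red})_\Fpbar$ is a closed substack of dimension at most $[K:\Qp]d(d-1)/2$ meeting the $[K:\Qp]d(d-1)/2$-dimensional irreducible component $\cX_{d,\red,\Fpbar}^{\underline{k}}$ in a dense subset, and hence contains the whole component. Summing over Serre weights $\underline{k}$ then covers $(\cX_{d,\red})_\Fpbar$ and produces a crystalline lift with regular Hodge--Tate weights for every $\bar\rho$.

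The hard part will be the explicit step-by-step lifting of extension classes at the generic point of each component. One must verify that the crystalline $\Ext^1$ has the dimension predicted by Tate's formula and surjects onto its mod-$p$ counterpart, which is delicate for boundary Serre weights (where some $k_{\bar\sigma,i}$ is close to $0$ or $p-1$); here a careful case analysis using the explicit structure of crystalline characters and the Bloch--Kato exponential is required. A subsidiary technical point is to ensure that the lifts can be chosen compatibly as $\bar\rho$ varies in a family, so that they actually define a morphism from a positive-dimensional base to $\cX_d^{\crys,\underline{\lambda},\tau}$ realising the top-dimensional image in $\cX_{d,\red,\Fpbar}^{\underline{k}}$.
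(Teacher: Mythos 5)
There is a genuine gap, and it sits at the heart of your strategy. Your plan only produces lifts for points of $\bigcup_{\underline{k}}\cX_{d,\red,\Fpbar}^{\underline{k}}$, and you dispose of the remaining locus $\cX_{d,\red,\Fpbar}^{\negligible}$ (which is nonempty whenever $d>1$) by asserting that every $\Fpbar$-point lies in the closure of the maximally-nonsplit-niveau-one locus of some component. That assertion is exactly the statement that $\cX_{d,\red}$ is equidimensional with irreducible components the $\cX_{d,\red,\Fpbar}^{\underline{k}}$ (Theorem~\ref{thm:reduced dimension}) --- and in the paper that theorem is \emph{deduced from} the crystalline lifts theorem, not available beforehand. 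The iterative construction of Lecture~\ref{sec: Herr complex and Ext groups} only shows that the negligible locus is covered by certain separate, lower-dimensional constructible families (built from higher-niveau irreducibles, split loci, etc.); it gives no containment of those families in the closures of the $\cX_{d,\red,\Fpbar}^{\underline{k}}$. (Such a containment is in fact a delicate, purely stacky phenomenon with no analogue for families of Galois representations, since a sub-$(\varphi,\Gamma)$-module over $\Fpbar((t))$ need not saturate over $\Fpbar[[t]]$.) So as written your argument is circular for all $\rhobar$ on the negligible locus.

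The extension-lifting step at generic points is also harder than you allow. The paper's whole point (see the discussion around~\eqref{eqn: Tong W}) is that one \emph{cannot} fix crystalline lifts $\chi_i$ of the Jordan--H\"older characters and expect $H^1_f(G_K,\chi_i\chi_j^{-1})$ to surject onto $H^1(G_K,\barchi_i\barchi_j^{-1})$: when $\barchi_i\barchi_j^{-1}=\varepsilonbar$ this fails for \emph{every} choice of crystalline $\chi_i,\chi_j$ with the prescribed reductions, since the torsion in $H^2$ cuts the image down by one dimension. Moreover, with your minimal choice $\lambda_{\sigma,i}=k_{\sigmabar,i}+(d-i)$ the labelled weight gaps can all equal $1$, in which case not every extension of the lifted characters is crystalline (tr\`es ramifi\'ee classes survive), so you would additionally need surjectivity from the Bloch--Kato subspace, which is false. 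The paper's actual proof is an induction on Jordan--H\"older factors (Theorem~\ref{thm:crystalline lifts}) in which the crystalline lift of the sub is \emph{not} fixed but is allowed to move within an irreducible component of the crystalline deformation space; the obstruction class is then killed by a blow-up/line-bundle argument whose input is the codimension bound on the $\Ext^2$-jumping locus (Theorem~\ref{thm: codim bound crystalline deformation ring}, itself deduced from Theorem~\ref{thm:Xdred is algebraic}(3)). That is the global geometric input your proposal would need to replace, and a generic-point-plus-closure argument does not supply it.
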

\begin{rem}
In fact, the lift~$\rho^\circ$ in Theorem~\ref{thm:  main crystalline
  lifts} can be chosen to be potentially diagonalizable in the sense
of~\cite{BLGGT}, which implies in turn that if $p \nmid 2d$, then
$\barr\rho$ can be globalized to come from an automorphic form
(see~\cite[Thm.\ 1.2.3]{emertongeepicture}).
\end{rem}

While the statement of Theorem~\ref{thm: main crystalline lifts} is
purely local (in the sense that the representation~$\rhobar$ is
fixed), and our proof is for the most part via a local argument, we
will also make crucial use of Theorem~\ref{thm:Xdred is algebraic}.




We begin by explaining the ``obvious''
 strategy to prove Theorem~\ref{thm: main crystalline
   lifts}. Firstly consider the case
 that~$\rhobar:G_K\to\GL_d(\Fpbar)$ is irreducible. Then it is easy to
 show that~$\rhobar$ is induced from a character of the unramified
 extension~$K'/K$ of degree~$d$, and furthermore it is easy to show
 that this character can be lifted to a crystalline character, and
 because~$K'/K$ is unramified, the induction of a crystalline
 character of~$G_{K'}$ to~$G_K$ gives a crystalline
 representation. Furthermore, one has considerable control over the
 Hodge--Tate weights of such a lift.

 An immediate consequence is that all semisimple~$\rhobar$ have
 crystalline lifts. However, not all mod~$p$ representations are
 semisimple, so the remaining problem is to show that we can lift
 extensions of representations. To get a feeling
 for this, consider the two-dimensional
 case, and so suppose that $\rhobar:G_K\to\GL_2(\Fpbar)$ is of the form \[
   \begin{pmatrix}
     \chibar_1 &*\\0&\chibar_2
   \end{pmatrix}.\] Choose, as we may, crystalline lifts~$\chi_1:G_K\to\Zpbartimes$, $\chi_2:G_K\to\Zpbartimes$
 of~$\chibar_1$, $\chibar_2$ respectively, and suppose that the labelled
 Hodge--Tate weights of~$\chi_1$ are \emph{slightly greater} than those of~$\chi_2$,
in the sense that for each embedding $K\into\Qpbar$ the corresponding
Hodge--Tate weight of~$\chi_1$ is greater than that of~$\chi_2$, and
for at least one embedding, the gap is greater than~$1$. Then any
 extension of~$\chi_2$ by~$\chi_1$ is automatically crystalline (for
 example by  the formulae in~\cite[Prop.\
  1.24]{MR1263527}). Since
 extensions are classified by~$H^1$, it is therefore enough to show
 that given any class in~$H^1(G_K,\chibar_1\chibar_2^{-1})$, we can
 choose~$\chi_1$, $\chi_2$ in such a way that this class can be lifted
 to~$H^1(G_K,\chi_1\chi_2^{-1})$.

 As we will soon see, this can always be arranged, and in ``most''
 cases the natural map $H^1(G_K,\chi_1\chi_2^{-1})\to
 H^1(G_K,\chibar_1\chibar_2^{-1})$ is actually surjective. It is then
 natural to imagine that one could inductively prove the existence of
 lifts in all dimensions by induction on the number of
 Jordan--H\"older factors of~$\rhobar$. Quite a few people have tried
 to do this, but no-one succeeded, as far as we know. The basic problem
 seems to be that it is in general not possible to fix the lifts of
 each Jordan--H\"older factor when arguing inductively; rather, one
 keeps having to go back and adjust the previous choices in light of
 the next step.

 For concreteness, here
 are some examples (due to Tong Liu) which are difficult to do by hand, and give  a
 sense of why it's hard to work inductively. For the rest of the
 lecture we write~$\varepsilon$ for the cyclotomic character, and~$\varepsilonbar$ for the
 mod~$p$ cyclotomic character. Firstly, consider

 \numequation\label{eqn: Tong W}\overline{W}=
   \begin{pmatrix}
     \varepsilonbar  &*  &*  &* \\
&   \varepsilonbar &0  &*\\
&    & \varepsilonbar &*\\
&    &   &  1
   \end{pmatrix}
\end{equation}where~$*$ could be zero or not. It is quite easy to convince
yourself that there are enough ``degrees of freedom'' (by choosing
for example different unramified twists of the characters that you
lift to, and lifting to different extension classes) to manage to make
the lift, but quite tricky to come up with an actual argument. If that
one is too easy, you can try something like
\[
  \begin{pmatrix}
    \varepsilonbar^2  &*    &*   &*\\
&    \varepsilonbar^2 &0     &*\\
 &&   \varepsilonbar^2 &*\\
   &&& \overline{W}
  \end{pmatrix}.
\]

 The key to our approach is the following theorem. 
 Note that when we use this theorem inductively, we are not
 guaranteeing that you can fix lifts of each Jordan--H\"older factor
 once and for all before considering the various possible extension
 classes, and this increased flexibility is important in our argument.
 \begin{theorem}
	\label{thm:crystalline lifts}
	Suppose given a representation $\rhobar_d: G_K \to \GL_d(\Fbar_p)$
	that admits a lift
	$\rho_d^{\circ} : G_K \to \GL_d(\Zbar_p)$ which 
	is crystalline with labelled Hodge--Tate weights
        $\underline{\lambda}$. 
	Let $0 \to \rhobar_d \to \rhobar_{d+a} \to \alphabar \to 0$
	be any extension of $G_K$-representations over $\Fbar_p$,
	with $\alphabar: G_K \to \GL_a(\Fbar_p)$ irreducible,
	and let $\alpha^{\circ}: G_K \to \GL_a(\Zbar_p)$ be any
        crystalline lifting
	of $\alphabar$ with labelled Hodge--Tate
        weights~$\underline{\lambda}'$, which we assume to be slightly
        less than~$\underline{\lambda}$.

	Then we may find a lifting of the given extension to an extension
	$$0 \to \theta^{\circ}_d \to \theta_{d+a}^{\circ} \to \alpha^{\circ} \to 0$$
	of $G_K$-representations over $\Zbar_p$, where
        $\theta^{\circ}_d: G_K \to \GL_d(\Zbar_p)$ again has the
        property that the associated $p$-adic representation
        $\theta_d: G_K \to \GL_d(\Qbar_p)$ is crystalline with
        labelled Hodge--Tate weights $\underline{\lambda}$.
        Furthermore, $\theta_{d+a}^{\circ}$ is crystalline, and we may
        choose $\theta_d^{\circ}$ to lie on the same irreducible
        component of
        $\Spec R_{\rhobar_d}^{\underline{\lambda}, \crys}$ that
        $\rho^{\circ}_d$ does.
      \end{theorem}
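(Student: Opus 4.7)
The plan is to produce the desired lift by first replacing $\rho_d^{\circ}$ by a better chosen $\theta_d^{\circ}$ on the given component of $\Spec R_{\rhobar_d}^{\underline{\lambda},\crys}$, and then lifting the mod $p$ extension class $\rhobar_{d+a}$ to a characteristic zero extension of $\alpha^{\circ}$ by this $\theta_d^{\circ}$. The ``slightly less'' hypothesis on $\underline{\lambda}'$ is designed so that any $p$-adic extension of $\alpha^{\circ}$ by a crystalline representation with labelled Hodge--Tate weights $\underline{\lambda}$ is automatically crystalline (by explicit analysis of extensions of crystalline representations, as in~\cite{MR1263527}); in particular $\theta_{d+a}^{\circ}$ will be crystalline as soon as we can produce it as an extension. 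Thus the main task reduces to finding a $\theta_d^{\circ}$ for which the obstruction to lifting the given mod $p$ extension class vanishes.

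This obstruction lives in $\Ext^2_{G_K}(\alpha^{\circ},\theta_d^{\circ})$, which controls the cokernel of the reduction map $\Ext^1_{G_K}(\alpha^{\circ},\theta_d^{\circ})\to \Ext^1_{G_K}(\alphabar,\rhobar_d)$. Even when $\Ext^2_{G_K}(\alphabar,\rhobar_d)$ is nonzero, the $p$-adic analogue at a sufficiently generic characteristic zero lift of $\rhobar_d$ on the chosen component may vanish. The goal is then to find such a $\theta_d^{\circ}$, lying on the same irreducible component as $\rho_d^{\circ}$, for which $\Ext^2_{G_K}(\alpha^{\circ},\theta_d^{\circ})=0$; once achieved, local Tate duality (or a direct obstruction calculation) lets us lift the given mod $p$ extension to the desired crystalline $\theta_{d+a}^{\circ}$.

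To produce such a $\theta_d^{\circ}$, I will use the moduli stack. Let $\cC\subset \cX_d^{\crys,\underline{\lambda}}$ be the irreducible component which, via the versal morphism of Proposition~\ref{prop: versal rings for pst stacks}, corresponds to the chosen component of $\Spec R_{\rhobar_d}^{\underline{\lambda},\crys}$. By Theorem~\ref{thm: dimension of ss stack}, the special fibre $\cC_{\Fpbar}$ is equidimensional of dimension $[K:\Q_p]d(d-1)/2$, matching $\dim \cX_{d,\red}$. Combining this with Theorem~\ref{thm:Xdred is algebraic}(3), the closed substack $Z\subset \cX_{d,\red}$ where $\dim \Ext^2_{G_K}(\alphabar,-)\geq 1$ has dimension strictly less than $[K:\Q_p]d(d-1)/2$, so $\cC_{\Fpbar}\not\subseteq Z$ and the complement $U:=\cC_{\Fpbar}\setminus Z$ is dense open. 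Upper semicontinuity of $\Ext^2$ in flat families then upgrades $U$ to a dense open substack of $\cC$ (over $\Spf\cO$) on which the $p$-adic group $\Ext^2_{G_K}(\alpha^{\circ},-)$ vanishes.

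The final step, and the main technical obstacle, is to ensure this good locus on $\cC$ meets the characteristic zero generic fibre of the chosen component of $\Spec R_{\rhobar_d}^{\underline{\lambda},\crys}$, thereby furnishing a $\theta_d^{\circ}$ that truly lifts $\rhobar_d$ (rather than some other residual representation in $U$). The subtlety is that $\Spec R[1/p]$ only sees lifts of $\rhobar_d$, while $U$ is detected across all of $\cC$. I plan to handle this via a specialization argument exploiting the versal morphism: since $\cC$ is irreducible of dimension $[K:\Q_p]d(d-1)/2$ and the versal ring at $\rhobar_d$ (restricted to the chosen component) has generic fibre of the corresponding dimension $[K:\Q_p]d(d-1)/2+d^2$, the preimage of the dense open good locus is itself dense open in $\Spec R[1/p]$ and so contains $\Zbar_p$-points. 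Choosing $\theta_d^{\circ}$ to be such a point produces the crystalline replacement on the same component, the obstruction vanishes, and the desired crystalline extension $\theta_{d+a}^{\circ}$ exists, completing the proof.
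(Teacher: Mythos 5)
Your setup is right: the ``slightly less'' condition on $\underline{\lambda}'$ reduces everything to lifting the extension class, the obstruction lives in an $H^2$, and the dimension bound of Theorem~\ref{thm:Xdred is algebraic}~(3) (via Theorem~\ref{thm: codim bound crystalline deformation ring}) is indeed the key geometric input. But the way you propose to kill the obstruction cannot work. You seek a lift $\theta_d^{\circ}$ of $\rhobar_d$ with $\Ext^2_{G_K}(\alpha^{\circ},\theta_d^{\circ})=0$. Since $G_K$ has cohomological dimension $2$, the exact sequence associated to $0\to M\xrightarrow{\varpi}M\to M/\varpi\to 0$ gives $H^2(G_K,M)/\varpi\cong H^2(G_K,M/\varpi)$, so by Nakayama $\Ext^2_{G_K}(\alpha^{\circ},\theta_d^{\circ})=0$ if and only if $\Ext^2_{G_K}(\alphabar,\rhobar_d)=0$ --- and the latter is precisely the case in which there was nothing to prove. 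Your genericity argument conflates two different things: the open locus $U$ where $\Ext^2_{G_K}(\alphabar,-)$ vanishes is a locus of \emph{residual} representations in the special fibre of the stack, whereas every $\Zbar_p$-point of $\Spec R^{\underline{\lambda},\crys}_{\rhobar_d}$ reduces to the fixed $\rhobar_d$, i.e.\ specializes to the (possibly bad) closed point, never to a point of $U$. The support of $\Ext^2_{G_K}(\alphabar,\rho^{\univ})$ is contained in the special fibre, so its complement trivially contains all of $\Spec R[1/p]$; this tells you nothing about $\Ext^2$ of an individual lift.

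The paper's actual argument is structured differently, and the difference is essential: one does not make the group $\Ext^2_{G_K}(\alpha^{\circ},\theta_d^{\circ})$ vanish (impossible, as above), but rather makes the \emph{particular obstruction class} of the given extension vanish at a well-chosen point, with $\theta_d^{\circ}$ depending on the class being lifted. Concretely, the codimension bound shows that $H^2$ of the universal family over the chosen component of $\Spec R^{\underline{\lambda},\crys}_{\rhobar_d}$ is supported in codimension at least $2$ (codimension $\ge 1$ inside the special fibre). One models the relevant complex by finite projective modules, blows up along (the ideal cutting out) this support so that the image of the boundary map becomes the invertible ideal sheaf $\cI$ of the exceptional divisor and $H^1$ commutes with base change, and then lifts the given mod~$p$ cocycle arbitrarily to obtain a section $\delta\tc$ of $\cI$. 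Because the original support had codimension $\ge 2$, the ideal is not principal, so this section is not nowhere-vanishing; restricted to the exceptional fibre it is a section of a nontrivial line bundle and must have a zero. That zero lies over the closed point of $\Spec R^{\underline{\lambda},\crys}_{\rhobar_d}$, hence corresponds to a genuine crystalline lift $\theta_d^{\circ}$ of $\rhobar_d$ on the given component at which the chosen cocycle lifts. This ``division by the obstruction ideal'' step is the idea missing from your proposal, and without it the argument collapses exactly in the obstructed cases (e.g.\ $\chibar=\varepsilonbar$ in the two-dimensional discussion) that the theorem is designed to handle.
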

      Given Theorem~\ref{thm:crystalline lifts}, it is easy to prove
      Theorem~\ref{thm: main crystalline lifts} by induction
      on~$d$. In fact, without much additional difficulty one can
      prove the following stronger version, which is useful in
      applications.
\begin{thm}
  \label{thm: strong existence of crystalline lifts}Let $K/\Qp$ be a
  finite extension, and let~$\rhobar:G_K\to\GL_d(\Fpbar)$ be a
  continuous representation. Then~$\rhobar$ admits a lift to a
  crystalline representation~$\rho^\circ:G_K\to\GL_d(\Zpbar)$ of some
  regular labelled Hodge--Tate weights~$\underline{\lambda}$. Furthermore:
  \begin{enumerate}
  \item $\rho^\circ$ can be taken to be potentially diagonalizable.
  \item If every Jordan--H\"older factor of~$\rhobar$ is
    one-dimensional, then~$\rho^\circ$ can be taken to be
    ordinary.
  \item $\rho^\circ$ can be taken to be potentially diagonalizable,
    and~$\underline{\lambda}$ can be taken to be a lift of a Serre weight.
  \item $\rho^\circ$ can be taken to be potentially diagonalizable,
    and~$\underline{\lambda}$ can be taken to have arbitrarily spread-out
    Hodge--Tate weights: that is, for any $C>0$, we can
    choose~$\rho^\circ$ such that for each~$\sigma:K\into\Qpbar$, we
    have $\lambda_{\sigma,i}-\lambda_{\sigma,i+1}\ge C$ for each $1\le i\le n-1$.
  \end{enumerate}

\end{thm}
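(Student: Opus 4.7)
The plan is to deduce Theorem~\ref{thm: strong existence of crystalline lifts} from Theorem~\ref{thm:crystalline lifts} by induction on the number of Jordan--H\"older constituents of~$\rhobar$, starting from the irreducible case. For the base case, I would use that any irreducible $\barr\alpha: G_K \to \GL_a(\Fpbar)$ is of the form $\Ind_{G_{K'}}^{G_K}\chibar$ for $K'/K$ the unramified extension of degree~$a$ and $\chibar:G_{K'}\to\Fpbartimes$ some character. Lifting $\chibar$ to a crystalline character $\chi$ of $G_{K'}$ with any prescribed labelled Hodge--Tate weights (which is essentially immediate by local class field theory, matching each embedding $K'\hookrightarrow\Qpbar$ with any integer), the induced representation $\Ind_{G_{K'}}^{G_K}\chi$ is crystalline with controllable Hodge--Tate weights (since $K'/K$ is unramified), and is potentially diagonalizable --- indeed, on restriction to $G_{K'}$ it becomes a direct sum of crystalline characters, which are ordinary and hence trivially potentially diagonalizable.

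For the inductive step, given a reducible $\rhobar$ of dimension~$d+a$, write it as an extension $0\to\rhobar_d\to\rhobar\to\alphabar\to 0$ with $\alphabar$ irreducible. By the inductive hypothesis (applied in dimension~$d$) we obtain $\rho_d^{\circ}$, and by the base case we obtain $\alpha^\circ$ with any desired regular labelled Hodge--Tate weights $\underline{\lambda}'$. I would then choose $\underline{\lambda}'$ to be slightly less than the Hodge--Tate weights $\underline{\lambda}$ of $\rho_d^\circ$, and apply Theorem~\ref{thm:crystalline lifts} to obtain a crystalline lift $\rho^\circ=\theta_{d+a}^\circ$ of $\rhobar$. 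Crucially, that theorem allows us to require the replacement $\theta_d^\circ$ to lie on the same irreducible component of $\Spec R_{\rhobar_d}^{\underline{\lambda},\crys}$ as $\rho_d^\circ$, and this is the mechanism by which potential diagonalizability is propagated: potential diagonalizability is a property of components of crystalline deformation rings, so remaining on the same component preserves it. Moreover, because the Hodge--Tate weights of $\alpha^\circ$ are strictly below all those of $\theta_d^\circ$ at each embedding, the extension $\theta_{d+a}^\circ$ is a successive extension (on restriction to a suitable finite extension) of crystalline characters with strictly decreasing weights, hence ordinary over that extension, and thus potentially diagonalizable.

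Given this framework, I would dispatch the additional refinements as follows. For~(2), when every Jordan--H\"older factor of $\rhobar$ is a character, the inductive construction produces an upper-triangular crystalline representation with strictly decreasing labelled Hodge--Tate weights, which is ordinary by definition. For~(4), at each inductive step we simply choose the gap between the Hodge--Tate weights of $\alpha^\circ$ and those of $\rho_d^\circ$ to be at least any prescribed $C>0$; this is compatible with the ``slightly less'' hypothesis. For~(3), I would arrange the weights inductively so that the final $\underline{\lambda}$ matches the numerical range of a Serre weight: since Theorem~\ref{thm:crystalline lifts} only requires the gap to be strict and at least~$1$ at some embedding, we have sufficient room to fit all the weights into the standard Serre-weight range, by choosing at each step the smallest admissible weights for $\alpha^\circ$.

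The main obstacle will be reconciling the numerical constraints for~(3) (which want the Hodge--Tate weight gaps to be uniformly bounded by~$p-1$) with the propagation of potential diagonalizability (which wants the gaps to be large enough that the extension becomes ordinary after base change). The balance is delicate: we need the gaps at each inductive step to be at least~$1$ (to invoke Theorem~\ref{thm:crystalline lifts}) and large enough to force ordinariness after restriction to some finite extension, while keeping the total accumulated range of weights small enough that they correspond to a lift of a Serre weight. Verifying that these constraints are simultaneously satisfiable requires a careful bookkeeping argument, and is where the bulk of the technical work lies.
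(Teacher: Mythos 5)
Your overall architecture --- induction on the number of Jordan--H\"older factors, with the base case handled by inducing a crystalline character from an unramified extension and the inductive step supplied by Theorem~\ref{thm:crystalline lifts} --- is exactly the intended route. The serious gap is in your mechanism for propagating potential diagonalizability to the extension $\theta_{d+a}^\circ$. You argue that $\theta_{d+a}^\circ$ becomes a successive extension of crystalline characters with decreasing weights after a finite base change, hence is potentially ordinary and so potentially diagonalizable. But your inductive hypothesis only gives that $\theta_d^\circ$ is potentially \emph{diagonalizable}, which is strictly weaker than potentially \emph{ordinary}: a potentially diagonalizable representation merely \emph{connects to} a sum of characters on a component of a crystalline lifting ring after restriction; it need not itself become a successive extension of characters after any finite base change. (Theorem~\ref{thm:crystalline lifts} only constrains $\theta_d^\circ$ to lie on the same component as $\rho_d^\circ$, so $\theta_d^\circ$ could perfectly well be, say, an irreducible non-ordinary crystalline representation on the component containing your induced base-case lift.) The correct argument is different: since every Hodge--Tate weight of $\theta_d^\circ$ exceeds every Hodge--Tate weight of $\alpha^\circ$ at each embedding, the crystalline extensions of $\alpha^\circ$ by $\theta_d^\circ$ form a connected (linear) family containing the split extension, so $\theta_{d+a}^\circ$ connects, in the sense of~\cite{BLGGT}, to $\theta_d^\circ\oplus\alpha^\circ$; a direct sum of potentially diagonalizable representations is potentially diagonalizable, and connecting preserves this property. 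A cousin of this issue affects your treatment of~(2): the construction does not literally produce an upper-triangular representation, again because $\theta_d^\circ$ is only pinned down up to its component, so you need the standard fact that for regular weights ordinarity is constant on irreducible components of the crystalline lifting ring.

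For part~(3) your bookkeeping cannot work as described, and the ``delicate balance'' you identify is an artifact of the flawed mechanism above: once one connects to the direct sum, no lower bound on the weight gaps is needed beyond the hypothesis of Theorem~\ref{thm:crystalline lifts}. The genuine obstacle is that very hypothesis: ``slightly less'' requires the consecutive weight gap to be at least $1$ at every embedding and strictly greater than $1$ at some embedding, whereas a lift of a Serre weight $\underline{k}$ has consecutive gaps $k_{\sigmabar,i}-k_{\sigmabar,i+1}+1$, which all equal $1$ exactly when $k_{\sigmabar,i}=k_{\sigmabar,i+1}$ for every $\sigmabar$ (e.g.\ for the trivial weight). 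At such a step Theorem~\ref{thm:crystalline lifts} does not apply --- one is precisely in the situation, discussed before that theorem, where not every extension of the lifted characters is crystalline --- so choosing ``the smallest admissible weights for $\alpha^\circ$'' does not resolve anything, and an additional idea is required there. Parts~(1) and~(4) are fine once the potential diagonalizability argument is repaired as above.
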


We now discuss the proof of Theorem~\ref{thm:crystalline lifts}. 
Return to the two-dimensional case, and
write~$\chibar=\chibar_1\chibar_2^{-1}$, $\chi=\chi_1\chi_2^{-1}$, so
that we want to lift classes in~$H^1(G_K,\chibar)$
to~$H^1(G_K,\chi)$. Fix a sufficiently large finite extension~$E/\Qp$ with ring of
integers~$\cO$, uniformizer~$\varpi$, and residue field~$\F$, so that
in particular~$\chi$ takes values in~$\cO^\times$. 
Taking cohomology
of
\[0\to\cO(\chi)\stackrel{\varpi}{\to}\cO(\chi)\to\F(\chibar)\to 0,\]we
see that we have an exact sequence
\[H^1(G_K,\chi)\otimes_{\cO}\F\to H^1(G_K,\chibar)\to
  H^2(G_K,\chi)[\varpi].\] By Tate local duality, $H^2(G_K,\chi)$ is
dual to $H^0(G_K,(E/\cO)(\chi^{-1})(1))$, with the~$(1)$ denoting a
Tate twist, so we see in particular that
if~$\chibar\ne\varepsilonbar$, then there is nothing to
prove. However, if~$\chibar=\varepsilonbar$, then there \emph{is} some
work to do.

One thing that we could do in this case is to take~$\chi=\varepsilon$, in
which case~$H^2(G_K,\chi)[\varpi]$ vanishes, and we see that we can
lift to semistable (not necessarily crystalline) extensions. One
might wonder if this means that in general we should just relax things
and try to only produce semistable lifts, rather than crystalline
ones, but in practice this doesn't seem to be the case - the inductive
arguments still run into trouble on examples like~\eqref{eqn: Tong W}
above. Indeed, let us try to handle~\eqref{eqn: Tong W} by lifting
inductively, allowing ourselves to use semistable lifts. The
bottom right $3\times 3$ matrix is a sum of two extensions
with~$\chibar=\varepsilonbar$, so we've just seen that we can lift it to
a representation~$U$ of the form \[U=
  \begin{pmatrix}
    \varepsilon & 0&*\\ &\varepsilon&*\\&&1
  \end{pmatrix}.
\]Then~\[\overline{W}=
  \begin{pmatrix}
    \varepsilonbar &*\\&\overline{U}
  \end{pmatrix},\]so we could try to consider lifts of the form \[W= \begin{pmatrix}
    \varepsilon &*\\&U
  \end{pmatrix}. \]However, if the classes giving the two extensions
of~$\varepsilonbar$ by~$\varepsilonbar$ aren't twists by~$\varepsilonbar$ of
the unramified extension of the trivial character by itself, then
these lifts won't be semistable (because the only semistable
extensions of the trivial character by itself are unramified). So we
are forced instead to consider something like \[W= \begin{pmatrix}
    \varepsilon^p &*\\&U
  \end{pmatrix}, \]and then we run into difficulties with the
existence of torsion in~$H^2$.

So let's return to the case of~$\chibar=\varepsilonbar$, and consider the
problem of finding crystalline lifts. Rather than
taking~$\chi=\varepsilon$, we take~$\chi$ to be an unramified
twist of~$\varepsilon^p$, so that all extensions are crystalline. In this
case one finds that ~$H^2(G_K,\chi)[\varpi]$ is 1-dimensional, so
there is a possible obstruction to lifting; and
indeed~$H^1(G_K,\varepsilonbar)$ is 2-dimensional,
while~$H^1(G_K,\chi)\otimes_{\cO}\F$ is 1-dimensional, so no
fixed~$\chi$ can provide all the lifts that we need. It is possible to analyse
the situation directly using Kummer theory, and one finds that given
any class in~$H^1(G_K,\varepsilonbar)$, one can choose~$\chi$ so that a
lift exists.

It is then natural to consider the moduli space of all such~$\chi$,
and the universal extension over this space. The universal~$\chi$ is
the character~$\chi_t:G_K\to\cO[[t]]^\times$ given
by~$\chi_t=\varepsilon^{p}\lambda_{1+t}$, 
where~$\lambda_x$ is the
unramified character sending a (geometric) Frobenius element
to~$x$. Then by Tate local duality and local class field theory, it is easy
to see that~$H^2(G_K,\chi_t)=\cO[[t]]/(p,t)=\cO/p$. One checks
that~$H^1(G_K,\chi_t)$ is a free $\cO[[t]]$-module of rank~$1$, but that
the corresponding ``universal family'' of extensions over~$\Spec
\cO[[t]]$ is not in fact universal; more precisely, the fibre over any given
$x:\cO[[t]]\to\cO$, $t\mapsto a$ sits in a short exact sequence

\numequation\label{eqn: base change of H1}0\to H^1(G_K,\chi_t)\otimes_{\cO[[t]],x}\cO\to
  H^1(G_K,\varepsilon^p\lambda_{1+a})\to \cO[[t]]/(p,t)\to 0.\end{equation} So we see would
like to have a way of ``rescaling'' $H^1(G_K,\chi_t)$ by dividing
by~$(p,t)$, so as to have a universal family that sees every
extension; and then we could try to check that this family surjects
onto~$H^1(G_K,\varepsilonbar)$. To do this, we blow up~$\Spec\cO[[t]]$
along~$(p,t)$, giving a 
scheme~$X$, which is flat
over~$\Zp$, and for which~$(p,t)$ pulls back to an ideal
sheaf~$\cI$. Over~$X$, we do have a universal
family~$H^1(G_K,\chi_X)$, and one can check that its base change to any
point~$\tx:\Spec\cO\to X$ gives the full~$H^1$.

More explicitly, since $H^0$ plays no role, we may assume that the
Herr complex is  computed by a
complex $C^1 \to C^2$ of locally free modules over 
 $\cO[[t]]$.
Since $H^2 = \cO[[t]]/(p,t),$ we can imagine that this complex looks like
\numequation
\label{eqn:complex}
\cO[[t]]^2 \buildrel (p,t) \over \longrightarrow \cO[[t]].
\end{equation}
(Note that this is
consistent with~\eqref{eqn: base change of H1}.)
In particular, the image of the coboundary map is the ideal $I = (p,t)$.
When we blow up $\Spec \cO[[t]]$ at $(p,t)$ to obtain the scheme~$X$,
the complex~\eqref{eqn:complex} pulls back over $X$ to a complex
\numequation
\label{eqn:complex on X}
\tC^1 = \cO_{X}^{\oplus 2} \to \cO_{X} = \tC^2,
\end{equation}
and the image of the coboundary map is now locally free, equal to the
ideal sheaf $\cI$ of the exceptional divisor.   
The  kerrnel $\tZ^1$ is  then a locally  free  sheaf (being the kernel  of a surjection
between  locally  free sheaves), and gives the desired ``universal family'' of extensions.

To see this, consider
a class in $H^1(G_K,\varepsilonbar)$ arising from a
cocycle~$\cbar$, and choose some $c: \cO[[t]] \to C^1$ which specializes
at $(\varpi,t) = 0$ (the  closed point of $\Spec   \cO[[t]]$) to~$\cbar$.  
This pulls back to a map
$\tc: \cO_{X} \to \tC^1,$ which composed with the coboundary gives a map
$\delta \tc: \cO_{X} \to \cI$.  Since $\cI$ is a non-trivial
invertible sheaf, this map (thought of as a section of the line bundle
associated to $\cI$) has a zero at some closed point $x \in X$.
We choose an affine open neighbourhood $U$ of $x$ in~$X$,
and consider the complex $\tC^1_U  \to \tC^2_U$ obtained by restricting~\eqref{eqn:complex on X}
to~$U$ (or, equivalently, by pulling back~\eqref{eqn:complex} to~$U$).
Now (letting $\cI_U$ denote the restriction of $\cI$ to $U$) since $\cI_U$ is locally
free, and since $U$ is affine, the surjection $\tC^1_U \to \cI_U$ splits, and we choose a splitting
$\tC^1_U = \tZ^1_U \oplus \cI_U$.  

The restriction $\tc_U$ of $\tc$ to $U$ is then 
a section of $\tZ^1_U \oplus \cI_U$ whose projection to the second factor vanishes at~$x$.
Hence, if we let $\tz_U$ denote the projection of $\tc$ to the first factor, then
$\tz_U$ and $\tc_U$ coincide at~$x$; 
thus $\tz_U$ is a cocycle over $U$  
which lifts our original cocycle~$\cbar$.
(Projecting $\tc$ to the first factor has achieved the desiered ``division
by~$(p,t)$''.)
If we specialize this cocycle over any $\cO$-valued point of $U$  passing through~$x$
(and since $U$ is  flat over~$\Z_p$,  such a point always exists after extending~$\cO$
if necessary),  then we obtain the desired lift of~$\cbar$.


The proof of Theorem~\ref{thm:crystalline lifts} is via a generalisation
of this construction,  with the following theorem 
being  the general analogue that we need of the fact
used above that~$H^2(G_K,\chi_t)=\cO[[t]]/(p,t)$.
(To see the relationship with the previous argument, which used 
non-triviality of the line bundle associated to $\cI$, note that 
if $\delta \tc$ induced a surjection $\cO_{\cX} \to \cI$,
then we would see that $c$ induces a surjection
$\cO[[t]] \to I$, hence $I$ would be principal, and $H^2 = \cO[[t]]/I$ would be
supported in codimension one, whereas in fact it is supported only in
codimension two.)
      \begin{thm}\label{thm: codim bound crystalline deformation
          ring}
  For any regular tuple of labelled Hodge--Tate weights
  $\underline{\lambda}$, and any fixed irreducible representation $\alphabar: G_K \to \GL_a(\Fbar_p)$,  the locus of points
  $x\in \Spec R^{\underline{\lambda},\crys}/\varpi$ for which
	$$\dim_{\kappa(x)}
	\kappa(x)\otimes_{R^{\square}_{\rhobar}} \Ext^2_{G_K}(\alphabar,\rho^{\univ})
        \geq r$$ has codimension at least~$r$.
      \end{thm}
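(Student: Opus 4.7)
The plan is to deduce this from Theorem~\ref{thm:Xdred is algebraic}(3) by pulling back codimension estimates along the versal morphism $\Spf R^{\underline{\lambda},\crys}_{\rhobar}\to\cX_d^{\crys,\underline{\lambda}}$ of Proposition~\ref{prop: versal rings for pst stacks}. The crystalline special fibre $\cX_d^{\crys,\underline{\lambda}}\times_{\cO}\F$ is a closed substack of $(\cX_{d,\red})_{\F}$, and by Theorem~\ref{thm: dimension of ss stack} (using the regularity of $\underline{\lambda}$) it is equidimensional of dimension $[K:\Qp]d(d-1)/2$, which matches the dimension of $(\cX_{d,\red})_{\F}$. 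Codimension-of-closed-substack statements therefore agree inside the two stacks.

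First, I would check that the locus in question is the preimage of a closed substack on the crystalline stack, and that forming $\Ext^2$ commutes with the relevant base changes. Using the equivalence between projective \'etale $(\varphi,\Gamma)$-modules over~$R^\square_{\rhobar}$ and lifts of $\rhobar$, and the corresponding equivalence for~$\alphabar$ (Lecture~\ref{subsec: phi modules and Galois representations}), the $R^\square_{\rhobar}$-module $\Ext^2_{G_K}(\alphabar,\rho^{\univ})$ is computed as the degree~$2$ cohomology of the Herr complex of the $(\varphi,\Gamma)$-module $M_{\alphabar}^{\vee}\otimes M^{\univ}$, which by Theorem~\ref{thm: properties of Herr complex} is a perfect complex whose formation commutes with base change. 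Because $\Ext^2$ sits in the top degree, its fibre dimension is upper-semicontinuous and commutes with base change on~$\Spec R^{\underline{\lambda},\crys}/\varpi$. The ``$\geq r$'' locus is therefore a closed subscheme, and it is the pullback of the analogous closed substack in $\cX_d^{\crys,\underline{\lambda}}\times_{\cO}\F$.

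Next, the versal morphism of Proposition~\ref{prop: versal rings for pst stacks} is formally smooth; taking into account the $\GL_d$-torsor coming from framing, its reduction $\Spf R^{\underline{\lambda},\crys}_{\rhobar}/\varpi \to \cX_d^{\crys,\underline{\lambda}}\times_{\cO}\F$ is formally smooth of relative dimension~$d^2$. In particular codimensions of closed substacks are preserved under this pullback, so it suffices to bound the codimension of the ``$\geq r$'' locus inside $\cX_d^{\crys,\underline{\lambda}}\times_{\cO}\F$. But this locus, viewed inside $(\cX_{d,\red})_{\F}$, is exactly the locus appearing in Theorem~\ref{thm:Xdred is algebraic}(3), and so has dimension at most $[K:\Qp]d(d-1)/2 - r$. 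Since $\cX_d^{\crys,\underline{\lambda}}\times_{\cO}\F$ is equidimensional of dimension $[K:\Qp]d(d-1)/2$, the codimension there is at least~$r$, and pulling back gives the desired estimate on $\Spec R^{\underline{\lambda},\crys}/\varpi$.

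The main obstacle I expect is the bookkeeping of the base-change and semicontinuity assertions for $\Ext^2$ in the stacky context, especially making sure that the ``$\dim\geq r$'' locus really is a well-defined closed substack of $\cX_d^{\crys,\underline{\lambda}}\times_{\cO}\F$ (rather than only of the versal ring); this requires the perfectness and base-change statement of Theorem~\ref{thm: properties of Herr complex} to propagate smoothly through the $\GL_d$-framing quotient. Given that, the rest is a combination of the equidimensionality in Theorem~\ref{thm: dimension of ss stack}, the inclusion in $\cX_{d,\red}$, and the codimension bound of Theorem~\ref{thm:Xdred is algebraic}(3).
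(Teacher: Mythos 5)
Your proposal is correct and takes essentially the same route as the paper's own proof: reduce, via the effective versal morphism of Proposition~\ref{prop: versal rings for pst stacks}, to a codimension bound on the support of $\Ext^2_{G_K}(\alphabar,-)$ in the special fibre of $\cX_d$, and then combine the equidimensionality of $\cX_d^{\crys,\underline{\lambda}}\times_{\cO}\F$ (Theorem~\ref{thm: dimension of ss stack}) with the dimension bound of Theorem~\ref{thm:Xdred is algebraic}(3). Your additional bookkeeping about perfectness and base change of the Herr complex, which justifies that the $\geq r$ locus is closed and compatible with pullback, is exactly the implicit content of the paper's brief sketch.
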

      \begin{proof}
        See~\cite[Thm.\ 6.1.1]{emertongeepicture}. The basic idea is
        that since the stack $\cX_d^{\underline{\lambda},\crys}$ is a
        $p$-adic formal algebraic stack, its special fibre is an
        algebraic stack, and the ring
        $R^{\underline{\lambda},\crys}/\varpi$ is an effective versal
        ring at some finite type point. It therefore suffices to prove
        a corresponding codimension bound on the support of
        $\Ext^2_{G_K}(\alphabar,-)$ in the special fibre of~$\cX_d$,
        and this is provided by  Theorem~\ref{thm:Xdred
  is algebraic}~(3).
      \end{proof}


\section{Geometric Breuil--M\'ezard}\label{sec: geometric BM}As in the
previous lectures, we will concentrate on the potentially crystalline
case when formulating the Breuil--M\'ezard conjecture; most of what we say goes
over unchanged to the potentially semistable version, and we refer the
reader to~\cite[\S 8]{emertongeepicture} for the details.

\subsection{The irreducible components of $\cX_{d,\red}$}\label{subsec:
  irred components}We can now complete the analysis of the irreducible
components of~$\cX_{d,\red}$. Recall that by Theorem~\ref{thm:Xdred is algebraic}, $\cX_{d,\red}$ is an algebraic stack of finite presentation
over~$\F$, and has dimension~$[K:\Qp]d(d-1)/2$. Furthermore, for each
Serre weight~$\underline{k}$, there is a corresponding irreducible
component~$\cX_{d,\red,\Fpbar}^{\underline{k}}$
of~$(\cX_{d,\red})_{\Fpbar}$, and the components for
different weights~$\underline{k}$ are distinct. Using our results on
crystalline lifts, we can now show that these are the only irreducible
components. The following is~\cite[Thm.\ 6.5.1]{emertongeepicture}.

\begin{thm}
  \label{thm:reduced dimension} $\cX_{d,\red}$ is equidimensional of
  dimension $[K:\Q_p] d(d-1)/2$, and the irreducible components
  of~$(\cX_{d,\red})_\Fpbar$ are precisely the various closed
  substacks~$\cX_{d,\red,\Fpbar}^{\underline{k}}$; in particular,
  $(\cX_{d,\red})_\Fpbar$ is maximally nonsplit of niveau~$1$.
  Furthermore each~$\cX_{d,\red,\Fpbar}^{\underline{k}}$ can be
  defined over~$\F$, i.e.\ is the base change of an irreducible
  component~$\cX_{d,\red}^{\underline{k}}$ of~$\cX_{d,\red}$.
\end{thm}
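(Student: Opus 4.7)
The plan is to combine the decomposition provided by Theorem~\ref{thm:Xdred is algebraic}(2) with the existence of crystalline lifts (Theorem~\ref{thm: strong existence of crystalline lifts}) and the dimension formula for potentially crystalline moduli stacks (Theorem~\ref{thm: dimension of ss stack}). From Theorem~\ref{thm:Xdred is algebraic} together with Remark~\ref{rem: different k give different components} we already know that
$(\cX_{d,\red})_{\Fpbar}=\cX_{d,\red,\Fpbar}^{\negligible}\cup\bigcup_{\underline{k}}\cX_{d,\red,\Fpbar}^{\underline{k}}$,
that the $\cX_{d,\red,\Fpbar}^{\underline{k}}$ are pairwise distinct irreducible components of dimension $[K:\Qp]d(d-1)/2$, and that $\cX_{d,\red,\Fpbar}^{\negligible}$ has strictly smaller dimension. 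It will therefore suffice to show, set-theoretically, that $\cX_{d,\red,\Fpbar}^{\negligible}\subseteq\bigcup_{\underline{k}}\cX_{d,\red,\Fpbar}^{\underline{k}}$: once this is established, the $\cX_{d,\red,\Fpbar}^{\underline{k}}$ account for every irreducible component, yielding both the equidimensionality and the enumeration of components.

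To prove the containment, I would start from an arbitrary $\Fpbar$-point $\rhobar:G_K\to\GL_d(\Fpbar)$ of $\cX_{d,\red,\Fpbar}^{\negligible}$. By Theorem~\ref{thm: strong existence of crystalline lifts}, $\rhobar$ admits a crystalline lift $\rho^\circ:G_K\to\GL_d(\Zpbar)$ with some regular labelled Hodge--Tate weights $\underline{\lambda}$. Via the versal morphism $\Spf R_{\rhobar}^{\crys,\underline{\lambda}}\to\cX_d^{\crys,\underline{\lambda}}$ of Proposition~\ref{prop: versal rings for pst stacks}, the lift $\rho^\circ$ realises $\rhobar$ as an $\Fpbar$-point of the reduced special fibre of $\cX_d^{\crys,\underline{\lambda}}$. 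By Theorem~\ref{thm: dimension of ss stack}, this reduced special fibre is a closed substack of $(\cX_{d,\red})_{\Fpbar}$ equidimensional of dimension $[K:\Qp]d(d-1)/2$, so $\rhobar$ lies on an irreducible component $Z$ of it, and $Z$ is itself an irreducible closed substack of $(\cX_{d,\red})_{\Fpbar}$ of dimension $[K:\Qp]d(d-1)/2$. Any irreducible component of $(\cX_{d,\red})_{\Fpbar}$ containing $Z$ must then have dimension at least $[K:\Qp]d(d-1)/2$, ruling out inclusion in $\cX_{d,\red,\Fpbar}^{\negligible}$; such a component must therefore coincide with some $\cX_{d,\red,\Fpbar}^{\underline{k}}$, and by irreducibility and equality of dimensions we get $Z=\cX_{d,\red,\Fpbar}^{\underline{k}}$, whence $\rhobar\in\cX_{d,\red,\Fpbar}^{\underline{k}}$, as required.

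Finally, for the descent to $\F$, $\Gal(\Fpbar/\F)$ acts on $(\cX_{d,\red})_{\Fpbar}$ by permuting irreducible components, and each Galois orbit descends to a single irreducible component of $\cX_{d,\red}$. The Serre weight label $\underline{k}$ attached to $\cX_{d,\red,\Fpbar}^{\underline{k}}$ is intrinsic (it is read off the generic maximally nonsplit niveau-one representation parameterised by that component, cf.\ Lecture~\ref{sec: Herr complex and Ext groups}), and under the standing assumption that $\F$ contains the residue field $k$ of $K$, the embeddings $\sigmabar:k\into\Fpbar$ that index $\underline{k}$ are each fixed by $\Gal(\Fpbar/\F)$. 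Consequently every $\cX_{d,\red,\Fpbar}^{\underline{k}}$ is $\Gal(\Fpbar/\F)$-stable and descends to an irreducible component $\cX_{d,\red}^{\underline{k}}$ of $\cX_{d,\red}$. The main substantive ingredient is the crystalline lifting theorem from Lecture~\ref{sec: Extensions and Crystalline Lifts}; the rest of the argument is dimension-matching bookkeeping between the lower bound built into the extensions constructions of Lecture~\ref{sec: Herr complex and Ext groups} and the matching upper bound supplied by the cover by crystalline special fibres of known dimension.
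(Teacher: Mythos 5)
Your proposal is correct and follows essentially the same route as the paper: both arguments reduce everything to the crystalline lifting theorem combined with the equidimensionality of the special fibres of the stacks $\cX_d^{\crys,\lambdau}$ for regular $\lambdau$, and both handle descent to $\F$ by observing that $\Gal(\Fpbar/\F)$ preserves the property of being maximally nonsplit of niveau one of a given weight. The only (immaterial) difference is organisational: you show directly that every point of the small locus lies on some $\cX_{d,\red,\Fpbar}^{\underline{k}}$, whereas the paper bounds the dimension of an arbitrary irreducible component from below by choosing a point in it lying on no other component.
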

\begin{proof}
  To see that the~$\cX_{d,\red,\Fpbar}^{\underline{k}}$ may all be
  defined over~$\F$, we need to show that the action
  of~$\Gal(\Fpbar/\F)$ on the irreducible components
  of~$(\cX_{d,\red})_\Fpbar$ is trivial. This follows immediately by
  considering its action on the maximally nonsplit representations of
  niveau~$1$ (since the action of~$\Gal(\Fpbar/\F)$ preserves the
  property of being maximally nonsplit of niveau~$1$ and
  weight~$\underline{k}$).  It is therefore enough to prove that each
  irreducible component of $(\cX_{d,\red})_\Fpbar$ is of dimension of
  at least $[K:\Q_p] d(d-1)/2$; this follows by choosing a closed
  point not contained in any other irreducible component, and noting
  that (by Theorem~\ref{thm:  main crystalline lifts}) it is
  contained in the special fibre of some~$\cX_d^{\crys,\lambdau}$
  with~$\lambdau$ regular.
\end{proof}
We can also prove~\cite[Prop.\ 6.5.2]{emertongeepicture}:
\begin{prop}\label{prop: X is not padic formal algebraic}
   $\cX_d$ is not a $p$-adic formal algebraic stack.
\end{prop}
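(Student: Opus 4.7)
I would argue by contradiction: suppose $\cX_d$ is a $p$-adic formal algebraic stack. Since $\cX_{d,\red}$ is algebraic of finite presentation over $\F$ by Theorem~\ref{thm:reduced dimension}, the underlying topological space $|\cX_d|=|\cX_{d,\red}|$ is Noetherian of dimension $[K:\Q_p]d(d-1)/2$, so $\cX_d$ is quasi-compact. Using the Ind-presentation of Proposition~\ref{prop:X is an Ind-stack basic case}, quasi-compactness of $\cX_d \bmod p$ would force $\cX_d\bmod p = \cX_{d,h_0,s_0}^1$ for some $(h_0,s_0)$, so $\cX_d \bmod p$ would be an algebraic stack of finite type.

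The plan is then to derive a contradiction by computing versal rings. First I would choose a finite type point $\rhobar: G_K \to \GL_d(\F)$ sufficiently generic that $H^0(G_K,\mathrm{ad}\,\rhobar) = \F$ and $H^2(G_K,\mathrm{ad}\,\rhobar) = 0$ (for example by inducing a sufficiently generic character from the unramified degree-$d$ extension, so that $H^2$-vanishing can be arranged). The versal ring of $\cX_d$ at $\rhobar$ is identified with the framed Galois deformation ring $R^\square_{\rhobar}$, which by Tate local duality and the local Euler characteristic formula is formally smooth over $W(\F)$ of Krull dimension $1 + [K:\Q_p]d^2 + d^2$.

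Under the hypothesis that $\cX_d$ is a $p$-adic formal algebraic stack, the versal morphism $\mathrm{Spf}\,R^\square_{\rhobar}\to\cX_d$ extends to a local smooth presentation. After accounting for the $\widehat{\GL_d}$-framing, the dimension of $\cX_d$ at $\rhobar$ is $1 + [K:\Q_p]d^2$, and of $\cX_d \bmod p$ at $\rhobar$ is $[K:\Q_p]d^2$. Since this must be compatible with $\dim |\cX_d| = [K:\Q_p]d(d-1)/2$, the excess $[K:\Q_p]d(d+1)/2$ would need to be absorbed by the formal directions of the $p$-adic formal algebraic structure.

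The main obstacle is to rule out this absorption for $d\ge 2$. The analogous absorption \emph{does} occur for $d=1$ (where $\cX_1$ is in fact a $p$-adic formal algebraic stack, with the Iwasawa-theoretic $\mathrm{Spf}\,\Z_p[[T]]$-factor contributing the required formal direction, see Lecture~\ref{subsec: discussion of rank 1 case}). For $d\ge 2$, the hard step is to show that the $(\varphi,\Gamma)$-module deformations detected by $R^\square_{\rhobar}$ include infinitesimal directions (for instance, changes in the $\varphi$-action of sufficiently high $T$-order) that cannot be realized by any formal direction in a $p$-adic formal algebraic presentation, since such formal directions would have to factor through some $\cX^1_{d,h_0,s_0}$ by the stabilization of the first paragraph, whereas the corresponding Wach-type heights can be made to exceed any fixed bound. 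This gives the desired contradiction.
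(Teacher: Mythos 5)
There is a genuine gap. Your first two paragraphs are on the right track and match the paper's strategy: assume $\cX_d$ is a $p$-adic formal algebraic stack, observe that its special fibre $\cXbar_d:=\cX_d\times_{\Zp}\F$ is then an algebraic stack of finite type over~$\F$, and pick an unobstructed $\rhobar$ whose framed deformation ring is formally smooth of dimension $d^2+[K:\Qp]d^2$ mod~$\varpi$. But at that point you are already done, and your third and fourth paragraphs introduce a spurious difficulty and then fail to resolve it. The defining feature of a \emph{$p$-adic} formal algebraic stack (as opposed to a general formal algebraic stack, which $\cX_d$ actually is) is that the only formal direction is the $p$-adic one, so $\cXbar_d$ is an honest algebraic stack: its dimension equals that of its underlying reduced substack $\cX_{d,\red}$, namely $[K:\Qp]d(d-1)/2$, and it is also computed at any finite type point by the versal ring minus the framing, i.e.\ $\dim R^{\square}_{\rhobar}/\varpi - d^2=[K:\Qp]d^2$ at your chosen $\rhobar$. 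There are no remaining ``formal directions'' available to absorb the discrepancy, so $[K:\Qp]d(d-1)/2=[K:\Qp]d^2$ is the contradiction — no further argument is needed. Your proposed ``hard step'' (ruling out absorption via unbounded Wach-type heights) is therefore unnecessary, and in any case it is only gestured at, not proved; as written, the proof does not reach a contradiction.

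A second, concrete error: your aside that $\cX_1$ \emph{is} a $p$-adic formal algebraic stack, with the $\Spf\Zp[[T]]$ factor ``contributing the required formal direction,'' is exactly backwards. A factor of $\Spf\Zp[[T]]$ is a formal direction transverse to $p$; its reduction mod $p$ is $\Spf\F[[T]]$, which is not algebraic. This is precisely why $\cX_1$ is a (Noetherian) formal algebraic stack but \emph{not} a $p$-adic one, and the dimension count above ($d^2=d(d-1)/2$ has no solution $d\ge 1$) covers $d=1$ as well. Since the proposition is stated for all $d$, your proof as written also fails to establish the case $d=1$.
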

\begin{proof}
  Assume that~$\cX_d$ is a $p$-adic formal algebraic stack, so that
  its special fibre~$\overline{\cX}_d:=\cX_d\times_\cO\F$ is an
  algebraic stack, which is furthermore of finite type
  over~$\F$. Since the underlying reduced substack
  of~$\overline{\cX}_d$ is~$\cX_{d,\red}$, which is equidimensional of
  dimension~$[K:\Qp]d(d-1)/2$, we see that~$\overline{\cX}_d$ also has
  dimension~$[K:\Qp]d(d-1)/2$.

  Computing with versal rings, we see that for every 
 $\rhobar:G_K\to\GL_d(\F)$ the unrestricted framed deformation ring~$R_{\rhobar}^\square/\varpi$ must have
  dimension~$d^2+[K:\Qp]d(d-1)/2$. However, it is known that there are
  representations~$\rhobar$ for which~$R_{\rhobar}^\square/\varpi$ is
  formally smooth of dimension $d^2+[K:\Qp]d^2$ (see for
  example~\cite[Lem.\ 3.3.1]{Allen2019}). Thus we must
  have~$d^2=d(d-1)/2$, a contradiction.
\end{proof}

\subsection{The qualitative geometric Breuil--M\'ezard
  conjecture}\label{subsec: qualitative BM}If~$\lambdau$ is a regular
Hodge type, and~$\tau$ is any inertial type, then the
stack~$\cX^{\crys,\lambdau,\tau}_d$ 
is a finite type $p$-adic formal algebraic stack over~$\cO$, which is
$\cO$-flat and equidimensional of dimension~$1+[K:\Qp]d(d-1)/2$. It
follows that its special fibre $\cXbar^{\crys,\lambdau,\tau}_d$  is
an  algebraic stack over~$\F$ which
is equidimensional of dimension~$[K:\Qp]d(d-1)/2$. Since
~$\cX^{\crys,\lambdau,\tau}_d$ is a 
closed substack of~$\cX_d$, its  special fibre $\cXbar^{\crys,\lambdau,\tau}_d$
is a closed substack of the
special fibre~$\cXbar_d$, and its irreducible components (with the
induced reduced substack structure) are therefore closed substacks of
the algebraic stack~$\cXbar_{d,\red}$.

Since~$\cXbar_{d,\red}$ is equidimensional of
dimension~$[K:\Qp]d(d-1)/2$, it follows that the irreducible
components of $\cXbar^{\crys,\lambda,\tau}_d$ are irreducible
components of~$\cXbar_{d,\red}$, and are therefore of the
form~$\cXbar_{d,\red}^{\underline{k}}$ for some Serre
weight~$\underline{k}$.

For each~$\underline{k}$, we
write~$\mu_{\underline{k}}(\cXbar^{\crys,\lambdau,\tau}_d)$ for the
multiplicity
of~$\cXbar_{d,\red}^{\underline{k}}$ as a component
of~$\cXbar^{\crys,\lambdau,\tau}_d$. 
We write~$Z_{\crys,\lambdau,\tau}=Z(\cXbar^{\crys,\lambdau,\tau}_d)$
for the corresponding cycle, i.e.\ for the formal sum
\numequation\label{eqn: cris HS multiplicity
  stack}Z_{\crys,\lambdau,\tau}=\sum_{\underline{k}}\mu_{\underline{k}}(\cXbar^{\crys,\lambdau,\tau}_d)\cdot\cXbar_d^{\underline{k}}, \end{equation}
which we regard as an element of the finitely generated free abelian
group~$\Z[\cX_{d,\red}]$ whose generators are the irreducible
components~$\cXbar_d^{\underline{k}}$.

Fix some representation~$\rhobar:G_K\to\GL_d(\F)$, corresponding to a
point $x:\Spec\F\to\cX_d$.   For each regular Hodge type~$\lambdau$ and inertial
type~$\tau$, we have an effective versal morphism
$\Spec
R_{\rhobar}^{\crys,\underline{\lambda},\tau}/\varpi\to\cXbar^{\crys,\lambdau,\tau}$.
For each~$\underline{k}$ we set 
\[\cC_{\underline{k}}(\rhobar):=\Spf
  R_{\rhobar}^\square\times_{\cX_d}\cX_d^{\underline{k}},\]which we
regard as a
cycle of dimension~$d^2+[K:\Qp]d(d-1)/2$ 
in~$\Spec R_{\rhobar}/\varpi$ (note that since~$\cX_d^{\underline{k}}$
is algebraic, it has effective versal rings, so we really get a
subscheme of $\Spec R_{\rhobar}/\varpi$, rather than of
$\Spf R_{\rhobar}/\varpi$). The following theorem gives a qualitative
version of the refined Breuil--M\'ezard
conjecture~\cite[Conj.\ 4.2.1]{emertongeerefinedBM}. While its statement is purely local, we do not know
how to prove it without making use of the stack~$\cX_d$.
\begin{thm}
  \label{thm: qualitative BM}Let $\rhobar:G_K\to\GL_d(\F)$ be a
  continuous representation. Then there are finitely many cycles of
  dimension~$d^2+[K:\Qp]d(d-1)/2$ in~$\Spec R_{\rhobar}^\square/\varpi$ such that
  for any regular Hodge type~$\lambdau$ and any inertial type~$\tau$,
  each of the special fibres~$\Spec
  R_{\rhobar}^{\crys,\underline{\lambda},\tau}/\varpi$  is set-theoretically
  supported on some union of these cycles.
\end{thm}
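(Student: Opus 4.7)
The plan is to pull back the finitely many irreducible components of $\cXbar_{d,\red}$ along the versal morphism from~$\Spec R_{\rhobar}^\square$ to~$\cX_d$, and use the fact (already established in the discussion preceding the theorem statement) that the special fibre $\cXbar^{\crys,\lambdau,\tau}_d$ is set-theoretically supported on a union of these components, for \emph{finitely many} Serre weights that are bounded independently of $\lambdau$ and $\tau$.

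First, I would invoke Theorem~\ref{thm:reduced dimension} to identify the irreducible components of $(\cX_{d,\red})_\Fpbar$ as exactly the $\cXbar_{d,\red}^{\underline{k}}$ as $\underline{k}$ ranges over the (finite) set of Serre weights, and note each is defined over~$\F$. Now the versal morphism $\Spf R_{\rhobar}^\square \to \cX_d$ (attached to the point $x:\Spec \F \to \cX_d$ cut out by $\rhobar$) is formally smooth of relative dimension $d^2$; hence the cycles $\cC_{\underline{k}}(\rhobar) := \Spf R_{\rhobar}^\square \times_{\cX_d} \cX_d^{\underline{k}}$ (interpreted, as in the excerpt, as honest subschemes of $\Spec R_{\rhobar}^\square/\varpi$ via effectivity of versal rings for the algebraic stack $\cX_d^{\underline{k}}$) have dimension $d^2 + [K:\Qp]d(d-1)/2$. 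There are only finitely many such cycles, as~$\underline{k}$ runs over Serre weights.

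Next, for a given regular Hodge type~$\lambdau$ and inertial type~$\tau$, Proposition~\ref{prop: versal rings for pst stacks} provides an effective versal morphism $\Spec R_{\rhobar}^{\crys,\lambdau,\tau}/\varpi \to \cXbar^{\crys,\lambdau,\tau}_d$, and this factors compatibly through the versal morphism $\Spf R_{\rhobar}^\square \to \cX_d$ via the closed immersion $\cXbar^{\crys,\lambdau,\tau}_d \hookrightarrow \cXbar_d$ and the surjection $R_{\rhobar}^\square \twoheadrightarrow R_{\rhobar}^{\crys,\lambdau,\tau}$. By the dimension/equidimensionality statements for $\cXbar^{\crys,\lambdau,\tau}_d$ recalled in Subsection~\ref{subsec: qualitative BM}, the underlying reduced substack is a union of components $\cXbar_{d,\red}^{\underline{k}}$ (with multiplicities $\mu_{\underline{k}}$), so set-theoretically $\cXbar^{\crys,\lambdau,\tau}_d$ is contained in $\bigcup_{\underline{k}\in S(\lambdau,\tau)} \cXbar_d^{\underline{k}}$ for some finite subset $S(\lambdau,\tau)$ of the set of Serre weights. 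Pulling back along the versal morphism, $\Spec R_{\rhobar}^{\crys,\lambdau,\tau}/\varpi$ is set-theoretically contained in $\bigcup_{\underline{k}\in S(\lambdau,\tau)} \cC_{\underline{k}}(\rhobar)$.

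Finally, since the set $S(\lambdau,\tau)$ is always a subset of the (finite) set of all Serre weights, the collection $\{\cC_{\underline{k}}(\rhobar)\}_{\underline{k}}$ gives the required universal finite list of cycles. The main conceptual obstacle has already been overcome in the preceding lectures: identifying the irreducible components of $\cXbar_{d,\red}$ with Serre weights (Theorem~\ref{thm:reduced dimension}), which rests on the crystalline lifts theorem of Lecture~\ref{sec: Extensions and Crystalline Lifts}. What remains is essentially bookkeeping: verifying the compatibility of the versal morphisms (so that the pullbacks of components on the stack side match the cycles $\cC_{\underline{k}}(\rhobar)$ on the deformation ring side), and checking that the dimensions add correctly via formal smoothness of the versal morphism (which contributes the $d^2$ coming from framing). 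The only technical point requiring some care is ensuring that the pullback produces genuine cycles in $\Spec R_{\rhobar}^\square/\varpi$ rather than merely formal subschemes; this is handled by the effectivity of versal rings, since each $\cX_d^{\underline{k}}$ is algebraic.
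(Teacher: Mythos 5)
Your proposal is correct and follows essentially the same route as the paper: both take the finite set of cycles to be the $\cC_{\underline{k}}(\rhobar)$, use the equidimensionality results of the preceding subsection to write $\cXbar^{\crys,\lambdau,\tau}_d$ as a union of the components $\cXbar_{d,\red}^{\underline{k}}$, and pull back along the versal morphism via $\Spec R_{\rhobar}^{\crys,\underline{\lambda},\tau}/\varpi = \Spf R^{\square}_{\rhobar}\times_{\cX_d}\cXbar^{\crys,\lambdau,\tau}_d$. The only cosmetic difference is that the paper records the full cycle identity with multiplicities $\mu_{\underline{k}}$ and deduces the set-theoretic statement from it, whereas you argue set-theoretically throughout.
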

\begin{proof}
  We have
  $\Spec R_{\rhobar}^{\crys,\underline{\lambda},\tau}/\varpi =\Spf
  R^{\square}_{\rhobar}\times_{\cX_d}\cXbar^{\crys,\lambdau,\tau}$. It
  follows from~\eqref{eqn: cris HS multiplicity stack}, together with the definition
  of~$\cC_{\underline{k}}(\rhobar)$, that we may write the underlying
  cycle as \numequation\label{eqn: cris HS multiplicity def
    ring}Z(\Spec
  R_{\rhobar}^{\crys,\underline{\lambda},\tau}/\varpi)=\sum_{\underline{k}}\mu_{\underline{k}}(\cXbar^{\crys,\lambdau,\tau}_d)\cdot\cC_{\underline{k}}(\rhobar).\end{equation}
The
theorem follows immediately (taking our finite set of cycles to be
the~$\cC_{\underline{k}}(\rhobar)$). 
\end{proof}
We can regard this theorem as isolating the ``refined'' part
of~\cite[Conj.\ 4.2.1]{emertongeerefinedBM}; that is, we have taken
the original numerical Breuil--M\'ezard conjecture, formulated a geometric
refinement of it, and then removed the numerical part of the
conjecture. The numerical part of the conjecture 
consists of relating the multiplicities
$\mu_{\underline{k}}(\cXbar^{\crys,\lambdau,\tau}_d)$ to the 
representation theory of~$\GL_n(k)$, as we now recall.

The following theorem is essentially  due to
Schneider--Zink \cite{MR1728541}.
\begin{thm}Let ~$\tau:I_K\to\GL_d(\Qpbar)$ be an inertial type. Then
  there is a finite-dimensional smooth irreducible
  $\Qpbar$-representation $\sigma^{\crys}(\tau)$ of~$\GL_d(\cO_K)$ with the property
  that if ~$\pi$ is an irreducible smooth $\Qpbar$-representation
  of~$\GL_d(K)$, then the $\Qpbar$-vector space
  $\Hom_{\GL_d(\cO_K)}(\sigma^{\crys}(\tau),\pi)$ has dimension at
  most~$1$, and is nonzero precisely
  if~$\rec_p(\pi)|_{I_F}\cong \tau$, and $N=0$ on~$\rec_p(\pi)$.
\end{thm}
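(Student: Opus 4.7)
The plan is to proceed via the Bernstein decomposition of the category of smooth $\Qpbar$-representations of $\GL_d(K)$, combined with the theory of types of Bushnell--Kutzko, in the form refined by Schneider--Zink.

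First I would decompose the inertial type as $\tau \cong \bigoplus_{i \in I} \tau_i^{\oplus m_i}$, where the $\tau_i$ are pairwise distinct irreducible $\Qpbar$-representations of $I_K$ that admit an extension to $W_K$. By the local Langlands correspondence for supercuspidals, each such $\tau_i$ (together with a choice of extension to $W_K$) corresponds to an irreducible supercuspidal representation $\pi_i$ of $\GL_{n_i}(K)$ with $n_i = \dim \tau_i$, well-defined up to unramified twist. Writing $M \cong \prod_i \GL_{n_i}^{m_i}$ for the corresponding standard Levi of $\GL_d(K)$, the pair $(M, \bigotimes_i \pi_i^{\otimes m_i})$ is a cuspidal support whose inertial equivalence class $\mathfrak{s}(\tau)$ singles out a unique Bernstein block. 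The irreducible $\pi$ satisfying $\rec_p(\pi)|_{I_K} \cong \tau$ and $N = 0$ on $\rec_p(\pi)$ are then exactly the irreducible subquotients of parabolic inductions $\Ind_P^{\GL_d(K)}(\chi_1 \otimes \cdots \otimes \chi_r)$ from this block in which no nontrivial Zelevinsky segment occurs, since it is precisely the segment structure that forces nonzero monodromy.

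Next I would construct $\sigma^{\crys}(\tau)$ following the Schneider--Zink recipe: starting from a parahoric subgroup $P_0 \subseteq \GL_d(\cO_K)$ whose reductive quotient matches the decomposition $\prod_i \GL_{m_i}(k_{n_i})$ (for $k_{n_i}$ the degree-$n_i$ extension of the residue field), one inflates to $P_0$ a tensor product of Deligne--Lusztig cuspidal representations determined by the tame part of the $\tau_i$, twisted by an appropriate wild character built from the wild part. One then sets $\sigma^{\crys}(\tau) := \Ind_{P_0}^{\GL_d(\cO_K)}$ of this inflation, and checks that the result is irreducible. In the tame, regular case this is immediate from Deligne--Lusztig theory; the general case is treated by induction on the wild/tame data, reducing to the case studied in~\cite{MR1728541}.

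The main obstacle, and really the heart of the theorem, is the multiplicity-one statement together with the ``only if'' direction. My plan would be to use the Bernstein decomposition to dispose of the ``only if'' immediately: by construction $\sigma^{\crys}(\tau)$ generates an object of the block $\mathfrak{s}(\tau)$, so $\Hom_{\GL_d(\cO_K)}(\sigma^{\crys}(\tau), \pi)$ can only be nonzero for $\pi$ lying in this block. For such $\pi$, realized as a subquotient of $\Ind_P^{\GL_d(K)}(\sigma)$ with $\sigma$ in the cuspidal support, Frobenius reciprocity and the Iwasawa decomposition $\GL_d(K) = \GL_d(\cO_K) \cdot P$ reduce the computation of the Hom space to a Jacquet-module calculation on $M$, which in turn is governed by an identification of the Hecke algebra $\mathcal{H}(\GL_d(\cO_K), \sigma^{\crys}(\tau))$ with an Iwahori--Hecke algebra of the unramified twists of the cuspidal support on $M$. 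Multiplicity one then follows from the corresponding statement on the Hecke side, while the vanishing on the $N \neq 0$ subquotients amounts to showing that those correspond to non-generic modules on which the relevant idempotent acts by zero. Establishing this Hecke-algebra isomorphism, and tracing through the Jacquet-module computation cleanly in the presence of wild ramification, is the step I expect to require the most care.
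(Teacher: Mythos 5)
The paper does not prove this theorem: it states it and immediately attributes it to Schneider--Zink~\cite{MR1728541}, so there is no internal argument to compare against. Your outline is a plausible reconstruction of the Schneider--Zink strategy (level-zero types built by parahoric induction of inflated Deligne--Lusztig representations for the tame data, twisted by Bushnell--Kutzko-style wild data, and then matched to Bernstein blocks via Hecke algebra isomorphisms), and you correctly identify that the heart of the matter --- the Hecke-algebra identification and the proof that the Hom space selects precisely the $N=0$ subquotient of the block --- is where the real work lies and is exactly what that reference establishes.

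A few points of caution on the details of your sketch. First, the decomposition $\tau \cong \bigoplus_i \tau_i^{\oplus m_i}$ into summands that individually extend to $W_K$ is not quite right as stated: an irreducible constituent of $\tau$ over $I_K$ need not extend to $W_K$; what extends is the sum over its Frobenius orbit, so the $\tau_i$ should be taken to be these orbit sums. Second, $\sigma^{\crys}(\tau)$ is emphatically not a Bushnell--Kutzko type in the usual sense: a BK type for the block $\mathfrak{s}(\tau)$ detects \emph{every} irreducible $\pi$ with that inertial support, whereas the whole point here is to construct a constituent of the maximal-compact restriction that detects only the $N=0$ member of the block. Third, your description of the $N\neq 0$ subquotients as ``non-generic modules'' has the genericity backwards: in the basic example of a reducible unramified principal series for $\GL_2$, the Steinberg twist (which has $N\neq 0$) is the \emph{generic} constituent, while the spherical/character constituent (with $N=0$) is the non-generic one. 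The selectivity you need is therefore not that $\sigma^{\crys}(\tau)$ kills the non-generic subquotients, but rather the opposite; this is the content of the combinatorial matching in Schneider--Zink between unipotent orbits for the ``residual'' finite reductive group, Deligne--Lusztig cuspidals, and the segment/multisegment data on the $p$-adic side, and it is worth being precise about which orbit corresponds to $N=0$.
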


For each regular Hodge
type~$\underline{\lambda}$ we let~$W(\lambdau)$ be the corresponding
representation of~$\GL_d(\cO_K)$, defined as follows: For
each~$\sigma:K\into\Qpbar$, we
write~$\xi_{\sigma,i}=\lambda_{\sigma,i}-(d-i)$, so that
$\xi_{\sigma,1}\ge\dots\ge\xi_{\sigma,d}$. We view each
$\xi_\sigma:=(\xi_{\sigma,1},\dots,\xi_{\sigma,d})$ as a dominant
weight of the algebraic group~$\GL_d$ (with respect to the upper
triangular Borel subgroup), and we write~$M_{\xi_\sigma}$ for the
algebraic $\cO_K$-representation of~$\GL_d(\cO_K)$ of highest
weight~$\xi_\sigma$. Then we define
$L_{\lambdau}:=\bigotimes_{\sigma}M_{\xi_\sigma}\otimes_{\cO_K,\sigma}\cO$.

For each~$\tau$ we let~$\sigma^{\crys,\circ}(\tau)$ denote a choice of $\GL_d(\cO_K)$-stable
$\cO$-lattice in~$\sigma^{\crys}(\tau)$, write
$\sigma^{\crys}(\lambda,\tau):=L_{\lambdau}\otimes_\cO\sigma^{\crys,\circ}(\tau)$,
and write
$\sigmabar^{\crys}(\lambda,\tau)$ 
for the semisimplification of the $\F$-representation of~$\GL_d(k)$
given by
$\sigma^{\crys}(\lambda,\tau)\otimes_\cO\F$.
 For
each Serre weight~$\underline{k}$, we write~$F_{\underline{k}}$ for
the corresponding irreducible $\F$-representation of~$\GL_d(k)$. Then there are unique
integers $n_{\underline{k}}^\crys(\lambda,\tau)$ such
    that \[\sigmabar^{\crys}(\lambda,\tau)\cong\oplus_{\underline{k}}F_{\underline{k}}^{\oplus
        n_{\underline{k}}^\crys(\lambda,\tau)}.\] 
Our ``universal'' geometric Breuil--M\'ezard conjecture is as
follows (see Lecture~\ref{subsec: Serre
  weights} below for some motivation for this conjecture).
\begin{conj}
  \label{conj: geometric BM}There are cycles~$Z_{\underline{k}}$ with
  the property that for each regular Hodge type~$\lambdau$ and each
  inertial type~$\tau$, we have
  $Z_{\crys,\lambdau,\tau}=\sum_{\underline{k}}n_{\underline{k}}^\crys(\lambda,\tau)\cdot
  Z_{\underline{k}}$.
\end{conj}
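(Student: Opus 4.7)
The plan is to take the candidate cycles $Z_{\underline{k}} := \cXbar_d^{\underline{k}}$ to be the irreducible components supplied by Theorem~\ref{thm:reduced dimension}, and to prove the required identity via Taylor--Wiles--Kisin patching. By the construction of~\eqref{eqn: cris HS multiplicity stack} and the effective versality of Proposition~\ref{prop: versal rings for pst stacks}, the desired identity on $\cX_d$ is equivalent, after pullback to $\Spec R_{\rhobar}^{\square}/\varpi$ at each finite type point $\rhobar$, to the assertion that the multiplicity of $\cC_{\underline{k}}(\rhobar)$ in $Z(\Spec R_{\rhobar}^{\crys,\lambdau,\tau}/\varpi)$ is precisely $n_{\underline{k}}^{\crys}(\lambdau,\tau)$. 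Thus the conjecture is really an assertion about the decomposition of Hilbert--Samuel cycles of potentially crystalline deformation rings, one that Theorem~\ref{thm: qualitative BM} already tells us is at least supported on the right finite list of cycles.

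To produce such a decomposition I would globalize: Theorem~\ref{thm: strong existence of crystalline lifts} provides potentially diagonalizable crystalline lifts of any Serre weight, and standard potential-automorphy and base-change arguments then realize $\rhobar$ as the local component at a $p$-adic place of a mod $p$ automorphic representation on a suitable definite unitary group. The Kisin--Taylor--Wiles method produces a maximal Cohen--Macaulay patched module $M_\infty$ over a patched framed deformation ring $R_\infty^{\square}$, carrying a commuting action of $\GL_d(\cO_K)$. For each smooth $\GL_d(\cO_K)$-representation $\sigma$ on an $\cO$-lattice one sets $M_\infty(\sigma) := M_\infty \otimes_{\cO[[\GL_d(\cO_K)]]} \sigma$; the standard hope is that $M_\infty(\sigma^{\crys,\circ}(\lambdau,\tau))$ is maximal Cohen--Macaulay and faithfully supported on the pullback of $\cXbar_d^{\crys,\lambdau,\tau}$, while $M_\infty(F_{\underline{k}})$ is faithfully supported on the pullback of $\cXbar_d^{\underline{k}}$.

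Granting these support properties, the cycle identity becomes formal. Since $M_\infty(-)$ is exact, the composition series
\[ [\sigmabar^{\crys}(\lambdau,\tau)] \;=\; \sum_{\underline{k}} n_{\underline{k}}^{\crys}(\lambdau,\tau)\,[F_{\underline{k}}] \]
in the Grothendieck group of mod~$p$ representations of $\GL_d(k)$ gives
\[ Z\bigl(M_\infty(\sigma^{\crys,\circ}(\lambdau,\tau))/\varpi\bigr) \;=\; \sum_{\underline{k}} n_{\underline{k}}^{\crys}(\lambdau,\tau)\cdot Z\bigl(M_\infty(F_{\underline{k}})/\varpi\bigr) \]
as cycles on $\Spec R_\infty^{\square}/\varpi$. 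Provided each $M_\infty(\sigma)$ is faithfully flat over the relevant quotient of $R_\infty^{\square}$ (so that its cycle equals that of its support with multiplicity one), this descends through the framing variables and versal morphisms to the statement of the conjecture on $\cX_d$, with $Z_{\underline{k}} = \cXbar_d^{\underline{k}}$.

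The hard part is precisely the faithful-flatness / multiplicity-one input: showing that the patched module is, on each potentially crystalline component, locally free of rank one. For $d=2$ and $K=\Q_p$ this is the content of Pa\v{s}k\={u}nas' work on the $p$-adic local Langlands correspondence; for general $K$ and $d$ it would require Ihara-avoidance type results for arbitrary inertial types, sufficient regularity (Cohen--Macaulay, generic reduceness) of the local deformation rings $R^{\crys,\lambdau,\tau}_{\rhobar}$, and strong local-global compatibility. A subsidiary, and also nontrivial, ingredient is to confirm that $M_\infty(F_{\underline{k}})$ is supported on the single component $\cXbar_d^{\underline{k}}$ rather than spread across several; this is really a Serre-weight statement, and can be approached by reducing to $\lambdau$ that lifts the single Serre weight $\underline{k}$ and using the flexibility in choice of lift afforded by Theorem~\ref{thm: strong existence of crystalline lifts}(3).
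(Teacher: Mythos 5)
This statement is a \emph{conjecture}, not a theorem: the paper does not prove Conjecture~\ref{conj: geometric BM}, and indeed it is open in general. As the paper records in Lecture~\ref{sec: geometric BM}, the full statement is known only when $K=\Q_p$ and $d=2$ (via Kisin, Pa\v{s}k\={u}nas, et al.), and when $d=2$, $p>2$ and $\lambdau=(\underline 0,\underline 1)$ (via~\cite{geekisin}); for $d>2$ only partial and generic results exist. So there is no ``paper's proof'' against which to check your argument --- you are proposing a proof of an open problem.

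Beyond that caveat, your ansatz $Z_{\underline k} := \cXbar_d^{\underline k}$ is wrong, and your ``subsidiary ingredient'' that $M_\infty(F_{\underline k})$ is supported on a single irreducible component is in general \emph{false}, not merely nontrivial. The paper points this out explicitly: for $d=2$, $p>2$, and arbitrary $K$, if $\underline{k}$ is a Steinberg weight then $Z_{\underline{k}} = \cX_2^{\underline{k}} + \cX_2^{\underline{\tilde k}}$ is a sum of two distinct irreducible components (see the explicit description in \S\ref{subsec: relating different BM conjectures}, drawing on~\cite{CEGSKisinwithdd}). More generally, the paper's discussion of how the numerical conjecture implies the geometric one (equations~\eqref{eqn: numerical version of BM cris} and~\eqref{eqn: cycle from multiplicities}) produces cycles $Z_{\underline{k}} = \sum_{\underline{k}'} \mu_{\underline{k}}(\rhobar_{\underline{k}'})\cdot \cXbar^{\underline{k}'}$ which are genuinely sums with coefficients, not single components. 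The conjecture is precisely that \emph{some} such cycles exist; it does not assert, and one should not expect, that they are irreducible. Taking $Z_{\underline{k}}$ to be the support cycle of $M_\infty(F_{\underline{k}})$ (as you suggest in the second half of your writeup) is the right instinct and is consistent with what the paper anticipates in Lecture~10, but that is incompatible with the ansatz $Z_{\underline k}=\cXbar_d^{\underline k}$ you opened with.

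The remaining structure of your argument --- globalize via Theorem~\ref{thm: strong existence of crystalline lifts}, patch, exploit exactness of $M_\infty(-)$ together with the equality $[\sigmabar^{\crys}(\lambdau,\tau)]=\sum_{\underline k} n_{\underline k}^{\crys}(\lambdau,\tau)[F_{\underline k}]$ --- is indeed the expected route and matches the speculative discussion at the end of Lecture~10. But you correctly flag, and I want to underline, that the inputs needed to turn this sketch into a proof (faithfulness / multiplicity-one of patched modules, Cohen--Macaulayness and control of special fibres of $R^{\crys,\lambdau,\tau}_{\rhobar}$ for arbitrary $\tau$, strong enough local-global compatibility) are not available for general $K$ and $d$, and are themselves among the hardest open problems in the subject. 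What the paper actually proves unconditionally is only the ``qualitative'' Theorem~\ref{thm: qualitative BM}, namely that the cycles of the various $\Spec R^{\crys,\lambdau,\tau}_{\rhobar}/\varpi$ are all supported on the finite list $\cC_{\underline k}(\rhobar)$; extracting the correct multiplicities is what remains conjectural.
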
 
    
\subsection{The relationship between the numerical, refined and
  geometric Breuil--M\'ezard conjectures}\label{subsec: relating
  different BM conjectures}In brief (see \cite[\S
8.3]{emertongeepicture} for the details), the relationship is as follows:
Conjecture~\ref{conj: geometric BM} implies (by pulling back to versal
rings) the geometric  conjecture
of~\cite{emertongeerefinedBM}, which in turn implies the numerical
conjecture.

Conversely, the numerical conjecture implies Conjecture~\ref{conj:
  geometric BM}; in fact, it is enough to know the numerical
conjecture for a single sufficiently generic~$\rhobar$ on each
irreducible component of~$\cX_{d,\red}$. To see this, recall that the
numerical conjecture for~$\rhobar$ is that there are integers
$\mu_{\underline{k}}(\rhobar)$ such that for all~$\underline{k}$, we
have \numequation\label{eqn: numerical version of BM cris}e(\Spec
R_{\rhobar}^{\crys,\underline{\lambda},\tau}/\varpi)=\sum_{\underline{k}}n_{\underline{k}}^\crys(\lambda,\tau)\mu_{\underline{k}}(\rhobar).\end{equation}

For each $\underline{k}$ we choose a point
$x_{\underline{k}}:\Spec\F\to\cXbar_{d,\red}$ which is contained
in~$\cXbar^{\underline{k}}$ and not in any~$\cXbar^{\underline{k}'}$
for $\underline{k}'\ne\underline{k}$. We furthermore demand that
$x_{\underline{k}}$ is a smooth point of~$\cXbar_{d,\red}$. (Since
$\cXbar_{d,\red}$ is reduced and of finite type over~$\F$, there is a dense set
of points of~$\cXbar^{\underline{k}}$ satisfying these conditions.)
Write~$\rhobar_{\underline{k}}:G_K\to\GL_d(\F)$ for the
representation corresponding to~$X_{\underline{k}}$, and assume that
the numerical conjecture holds for
each~$\rhobar_{\underline{k}}$. Then if we set
 \numequation\label{eqn: cycle from multiplicities}Z_{\underline{k}}:=\sum_{\underline{k}'}
  \mu_{\underline{k}}(\rhobar_{\underline{k}'})\cdot\cXbar^{\underline{k}'},\end{equation}
it is easy to check that Conjecture~\ref{conj: geometric BM} holds.

The numerical Breuil--M\'ezard conjecture (and consequently
Conjecture~\ref{conj: geometric BM}) holds if~$K=\Qp$
and~$d=2$. Most cases are proved
in Kisin's paper~\cite{KisinFM} 
and Pa{\v{s}}k{\=u}nas'
paper~\cite{paskunasBM}, 
and the remaining cases not handled by these papers are proved in the
papers~\cite{HuTan,sandermultiplicities,2018arXiv180307451T,2019arXiv190806174T}. The
conjecture also holds if~$d=2$, $p>2$,
$\lambdau=(\underline{0},\underline{1})$, and~$K$ and~$\tau$ are
arbitrary, by the main result of~\cite{geekisin}.   For some recent progress for~$d>2$ in suitably generic
  situations, we refer the reader to~\cite{le2020local}.

In the case that~$d=2$, $p>2$, and~$K$ is arbitrary, we can make the
cycles~$Z_{\underline{k}}$ completely explicit: we say that a Serre
weight~$\underline{k}$ for~$\GL_2$ is ``Steinberg'' if for
each~$\sigmabar$ we have
$k_{\sigmabar,1}-k_{\sigmabar,2}=p-1$. If~$\underline{k}$ is Steinberg
then we define~$\underline{\tilde{k}}$ by
$\tilde{k}_{\sigmabar,1}=\tilde{k}_{\sigmabar,2}=k_{\sigmabar,2}$. Then
if~$\underline{k}$ is not Steinberg, we have
$Z_{\underline{k}}=\cX_2^{\underline{k}}$, while if $\underline{k}$ is
Steinberg,
$Z_{\underline{k}}=\cX_2^{\underline{k}}+\cX_2^{\underline{\tilde{k}}}$.

  This explicit description follows from the results
  of~\cite{CEGSKisinwithdd}; for the details, see~\cite[Thm.\
  8.6.2]{emertongeepicture}. (Roughly speaking, the point is that it's
  easy to compute the tamely potentially Barsotti--Tate deformation rings for
  generic extensions of generic characters, and they're either zero or formally
  smooth.)

\subsection{The weight part of Serre's conjecture}\label{subsec: Serre
  weights}Finally, we very briefly explain some motivation for
Conjecture~\ref{conj: geometric BM}, and its relationship to the
weight part of Serre's conjecture. This connection was first explained
in the context of the numerical Breuil--M\'ezard conjecture
in~\cite{geekisin}. For more details, see for
example~\cite{2015arXiv150902527G} (particularly Sections~3
and~4). We will further expand on this discussion in the context of a
hypothetical $p$-adic local Langlands correspondence involving sheaves
on~$\cX_d$ in Lecture~\ref{subsec: conjectural p adic LL}.

We expect that the cycles~$Z_{\underline{k}}$ will be effective, in
the sense that they are combinations of the~$\cX_d^{\underline{k}}$
with non-negative coefficients. Indeed, we expect that after pulling
back to the special fibre of the universal deformation ring at
some~$\rhobar$, the cycle~$Z_{\underline{k}}$ will be precisely the
support of the Taylor--Wiles--Kisin patched modules of mod~$p$ automorphic
forms of weight~$\underline{k}$, and the support of a sheaf is an
effective cycle by definition. (Again, see for example~\cite[\S 3,
4]{2015arXiv150902527G} for more details; we are slightly simplifying
the situation by supposing that the Taylor--Wiles--Kisin method does not
introduce any patching variables, but these are essentially irrelevant
for our discussion in any case.)


The \emph{weight part of Serre's conjecture} is a prediction that the
possible Serre weights of mod~$p$ automorphic forms giving rise to a
fixed global Galois representation only depends on the restrictions of
this representation to decomposition groups of places above~$p$. If
this is the case, there is a way to associate a corresponding set of
Serre weights to each $\rhobar:G_K\to\GL_d(\Fpbar)$. Such a
description was originally given by Serre in the case $K=\Qp$
and~$d=2$; while Serre's recipe is completely explicit, in general it
seems to be unreasonable to hope for such a description.

However, if we admit the standard expectation (which is closely
related to the Fontaine--Mazur conjecture) that all irreducible
components of the generic fibres of crystalline deformation rings are
witnessed by automorphic forms, it follows formally from the
Taylor--Wiles--Kisin method that the cycle~$Z_{\underline{k}}$ is indeed (as
conjectured above) the
support of the corresponding patched module of weight~$\underline{k}$
modular forms, and from this one easily deduces that
representation~$\rhobar$ admits~$\underline{k}$ as a Serre weight if
and only if $Z_{\underline{k}}$ is supported at~$\rhobar$.


Equivalently, we can rephrase the weight part of Serre's conjecture in
the following way: to each irreducible component of~$\cX_{d,\red}$, we
assign the set of weights~$\underline{k}$ with the property
that~$Z_{\underline{k}}$ is supported on this component. Then for
each~$\rhobar$, the corresponding set of Serre weights is simply the
union of the sets of weights for the irreducible components
of~$\cX_{d,\red}$ which contain~$\rhobar$.

We expect that in particular the irreducible
component~$\cX_{d,\red}^{\underline{k}}$ is assigned the Serre
weight~$\underline{k}$, but that the list of Serre weights associated
to this irreducible component can be longer; indeed, it follows from
the description given above of the cycles~$Z_{\underline{k}}$ in the
case~$d=2$ that if $k_{\sigmabar,1}-k_{\sigmabar,2}=0$ for
all~$\sigmabar$, then there are two Serre weights associated
to~$\cX_{d,\red}^{\underline{k}}$, namely~$\underline{k}$ and the
corresponding Steinberg weight.

\section{Bernstein Centers, Moduli Spaces, and the Categorical
  $p$-adic Langlands program}
In this final lecture we discuss some of the general theory of moduli spaces
associated to algebraic stacks, and explain how it applies to the stacks
we have constructed in the previous lectures.  We will see that there
is a surprising connection between these ideas and the $p$-adic local Langlands 
correspondence.  

\subsection{Moduli spaces}
An algebraic stack $\cX$ has an underlying ``Zariski'' topological
space, 
formed by taking a smooth cover by a scheme, taking its topological space, and then taking a quotient topological space.

\begin{example}
If $\cX = [\bA^1/\Gm]$   (over some field~$k$,
with $t \in \Gm$ acting  on $x  \in \bA^1$
via multiplication, i.e.  $t\cdot x = tx$),
then the associated  topological space consists of two  points, one of which
is open and specializes to the other, which is closed.   These two
points correspond to the  two orbits of $\Gm$  acting on  $\bA^1$ --- the  open
orbit  and the closed  orbit.   The  closure  relation  between the points
corresponds  to the closure relation between these orbits.
\end{example}

The topological space underlying $\cX$ is {\em just} a topological space,
though; it doesn't come equipped with a structure sheaf making it into
a scheme.  So we can ask:
is there a morphism
$f: \cX \to X$ to $X$ a scheme, or, more generally, to an algebraic space,
which is the ``best possible approximation to the stack $\cX$''?
Somewhat more precisely,
one might call $X$ a \textit{moduli space associated to $\cX$} if $f$ is initial in the space of maps to algebraic spaces. Or one could say that
$f$ is an \textit{associated moduli space} morphism.

Unfortunately,
this doesn't really help you access $X$ or say anything concrete about it,
or even determine if such an~$X$ exists.
In \cite[Prop.~7.1.1]{MR3272912}, Jared Alper
gives properties that ensure that $f$ is initial for morphisms to
{\em locally separated} algebraic spaces.
The properties are:
\begin{enumerate}
	\item If $k$ is algebraically closed, the induced map
$\cX(k) \to X(k)$ identifies the target with the quotient of the source
by the equivalence relation generated by $x \sim y$ if $\barr{\set{x}} \cap \barr{\set{y}} \neq \es$.
	\item $f$ is a universal submersion.\footnote{A {\em submersion}
is a morphism which is surjective and which induces a quotient map on the
underlying topological spaces.}
	\item $f_*\cO_{\cX} = \cO_X$.
\end{enumerate}

As Alper  notes, the first property says that $X$ has the right points,
the second that it has the right topology, and the third that it has the
right functions.

\begin{example}
In the case of
$\cX = [\bA^1/\bG_m]$ over the algebraically  closed field~$k$,
there are two $k$-points: the origin, and the open point (corresponding to
the open orbit  of $\Gm$ on~$\bA^1$). 
Since the  open point  specializes  to  the closed point,
the associated  moduli space is just the single  point~$\Spec~k$.
\end{example}

This still leaves open the question of when  such an $X$ exists.
The theorem of Keel--Mori~\cite{MR1432041} (plus various technical
improvements) implies that if $\cX$ is Deligne--Mumford (see Definition~\ref{defn: algebraic stack})
then there exists $X$ which is a ``coarse moduli space'' (the map $\cX \to X$ induces a bijection on $k$-points for algebraically closed fields~$k$).
Unfortunately, this result doesn't apply in our context: firstly, our moduli stacks
of \'etale $(\varphi,\Gamma)$-modules are formal algebraic stacks, rather than
being actually algebraic; but, more significantly, they are not  Deligne--Mumford:
their points have infinite automorphism groups (every Galois representation
$\rhobar$ admits  at  least the scalar matrices as automorphisms).

In the context of more general (i.e.\  not-necessarily Deligne--Mumford)
algebraic stacks,
Alper~\cite{MR3237451,MR3272912}
has developed a theory of ``good'' and ``adequate'' moduli spaces.
His theory is related to earlier ideas in Geometric Invariant Theory  (GIT) --- 
the  difference between ``good''  and ``adequate'' (the latter is the more
general notion) is related to  the difference of behaviour of the representation
theory of reductive linear algebraic groups in positive characteristic versus in
characteristic zero.
One feature of this theory is that if $f: \cX \to X$ is good or adequate, then it is universally closed. Also, if $\cX$  admits a good or adequate
moduli space, then all stabilizers at closed $k$-points are reductive. Further,
the relation $x \sim y$ if $\barr{\set{x}} \cap \barr{\set{y}} \neq \es$ is
in fact  an equivalence relation. 

\begin{example}
If $G$ is reductive over $k$ and $A$ is a finite type $k$-algebra with a $G$-action, then $[\Spec A/G] \to \Spec A^G$ is adequate (or good in characteristic $0$).
People often refer to  $\Spec A^G$  as the GIT quotient of $\Spec A$  by~$G$.
\end{example}
 
\begin{example}
Take $\bP^1$, with $\bG_m$ acting via scaling.  This is similar to  the example
of $\bG_m$ acting on~$\bA^1$ considered above, but now there are {\em two}
closed orbits (each of  the  points  $0$  and $\infty$), and  the open orbit
specializes to both of them.   Thus 
the relation $x \sim y$ if $\barr{\set{x}} \cap \barr{\set{y}} \neq \es$ is
{\em not} an equivalence relation in this example, and so $[\mathbb P^1/\bG_m]$
does  not  admit an adequate moduli space.
\end{example}

\subsection{Closed points and stacks of Galois
  representations}\label{subsec: closed points GK reps}
As we have just seen, the notion of an underlying moduli space
for~$\cX$ involves understanding the topology on its underlying set of
$\Fpbar$-points, which we know biject with isomorphism classes of
representations $\rhobar:G_K\to\GL_d(\Fpbar)$.

The following is part of~\cite[Thm.\ 6.6.3]{emertongeepicture} (which
also completely describes the closure relations between points in
terms of ``virtual partial
semi-simplification'').

\begin{theorem}
\label{thm:closed points}
\leavevmode
\begin{enumerate}
\item
The finite type points
{\em (}equivalently, $\Fbar_p$-valued points{\em )}
of $(\cX_{\red})_{\Fbar_p}$ are in natural bijection with the isomorphism
classes of continuous representations $\rhobar: G_K \to \GL_d(\Fbar_p)$.
\item A finite type point
of $|(\cX_{\red})_{\Fbar_p}|$
is closed if and only if the associated Galois representation $\rhobar$ is semi-simple.
\item If $x \in |(\cX_{\red})_{\Fbar_p}|$ is a finite type point, corresponding
to the Galois representation~$\rhobar$, then the closure $\overline{\{x\}}$
contains a unique closed point, whose corresponding Galois
representation is the semi-simplification $\rhobar^{\ss}$ of~$\rhobar$.
\end{enumerate}
\end{theorem}

The proof of Theorem~\ref{thm:closed points} involves showing that
closure relations in~$\cX$ are actually realized by representations
of~$G_K$ (rather than just by $(\varphi,G_K)$-modules). A closely
related result is ~\cite[Thm.\ 6.7.2]{emertongeepicture}, which
makes precise the notion of the largest substack of~$\cX$ which genuinely
parameterizes $G_K$-representations.  To explain this, we recall first
that in~\cite{MR3831282} Wang-Erickson constructed
a formal algebraic stack $\cX^{\Gal}$ characterised by the following property: if~$A$ is a
$\Zp$-algebra in which $p$ is nilpotent, then $\cX^{\Gal}(A)$ is the
groupoid of continuous morphisms \[\rho:G_{K}\to\GL_d(A)\](where~$A$
has the discrete topology, and~$G_{K}$ its natural profinite
topology).

 Just as was the case with the stacks of Weil--Deligne representations discussed
 in Lecture~\ref{sec:intro},
 the geometry of $\cX^{\Gal}$ is quite
 different from that of~$\cX$; in particular, the members 
 of any connected family of~$G_K$-representations over $\Fbar_p$ have constant
 semisimplification, 
and so
we can write \[(\cX^{\Gal})_{W(\Fbar_p)}=\coprod_{D}\cX_{D}^{\Gal},\]where~$D$
runs over the isomorphism classes of $d$-dimensional semisimple
$\Fpbar$-representations of~$G_K$, and~$\cX_{D}^{\Gal}(A)$ is the
groupoid of those~$\rho$ the semisimplification of whose reductions
modulo~$p$ is~$D$. 

We then have the following result relating Wang-Erickson's stack to ours.
\begin{thm}
  \label{thm: map from Galois stack to ours}
  There is a natural monomorphism $\cX^{\Gal}\to\cX$, which induces a
  bijection on $\Fpbar$-points, and is
furthermore versal at these points. 
For any~$D$ as above, the
induced monomorphism  \[\cX_{D}^{\Gal}\into\cX_{W(\Fbar_p)}\]   induces a closed immersion
  on underlying topological spaces {\em (}or,  equivalently, on underlying
  reduced substacks{\em )}.
\end{thm}
\begin{proof}
Other than the claim regarding closed immersions,
this is a restatement of~\cite[Thm.\ 6.7.2]{emertongeepicture}, which
itself is a
straightforward consequence of our almost Galois descent results
described in Lecture~\ref{subsec: almost Galois descent}.

To prove the claim about closed immersions, 
we note that it follows from the description of specialization of $\Fbar_p$-points
given in Theorem~\ref{thm:closed points} that
the image of the monomorphism $\cX^{\Gal}_D\to\cX_{W(\Fbar_p)}$ is closed under specialization
of finite type points.
It then suffices
(since both $(\cX^{\Gal}_D)_{\red}$ and $(\cX_{\red})_{\Fbar_p}$ are finitely presented
algebraic stacks over $\Fbar_p$) to
show that this image is constructible.  
This follows from Chevalley's constructibility theorem, once we write
$(\cX^{\Gal}_D)_{\red}$ as the union of a finite number 
of families of extensions, using the ideas of
Lecture~\ref{sec: Herr complex and Ext groups}.
\end{proof}


In Galois deformation theory, the passage from Galois representations to
 pseudorepresentations is related to the ideas that we're discussing
 (and indeed Wang-Erickson's arguments use the theory of pseudorepresentations).
For example, one subtlety in the theory of pseudorepresentations
is that traces don't know about extension classes, so the passage
to pseudorepresentations  factors through semisimplification; indeed,
two representations $\rho_1,\rho_2: G_K \to \GL_d(\Fbar_p)$ have the
same underlying pseudorepresentation precisely if $\rho_1^{\semis}
=\rho_2^{\semis}$.   The  $d$-dimensional pseudorepresentations
of $G_K$ over $\Fbar_p$ are then in  bijection with the semisimple
$\rho:G_K \to \GL_d(\Fbar_p)$.
These phenomena mirror corresponding phenomena related to the points
of $\cX_{d,\red}$.  
Namely, as we have just discussed,
the closed points of $\cX_{d,\red}(\Fpbar)$ correspond to semisimple representations, and specialization to a closed point
 corresponds to semisimplification. That is, if $\rho$  is any element
of~$\cX_{d,\red}(\Fbar_p)$, then the closure $\barr{\set{\rho}}$ has a unique closed point, which is $\rho^{\semis}$.

Thus if we could find a moduli space map $\cX \to  X$  (whatever its precise
meaning in the context  of formal algebraic stacks), we might expect $X$
to be a moduli space of pseudorepresentations.   This is one of our main
motivations for studying associated moduli spaces in our context.

The fact that any point of
$\cX_{d,\red}(\Fbar_p)$ specializes to a  unique  closed point
implies that the relation
$x \sim y$ considered above {\em is}  an equivalence relation.
Since closed points of $\cX_{d,\red}$ correspond to  semisimple  representations,
we also see that the
stabilizers of closed points are reductive.
These are two of the properties that  are necessary to admit an adequate
moduli space.  Nevertheless, we will see that  $\cX_{d,\red}$ does not
admit an adequate moduli space when $d  > 1$, and so we seem to be outside the scope
of any generally developed theory.

\begin{example}
Consider $[(\bA^2 \setminus \set{0})/\bG_m]$, with  the  $\Gm$-action given via
$t\cdot (x,y) = (x, ty)$.  Then the open orbits in each of the vertical lines
satisfying $y \neq 0$
specialize to the intersection of the line with the horizontal axis, away from the origin. On the other hand, the vertical line $y = 0$ is a single closed orbit.
There's an obvious map $[(\bA^2\setminus \set{0})/\bG_m]\to \bA^1$,
given by $(x,y) \mapsto x$, 
which {\em is} the associated moduli space map
(in that it satisfies Alper's properties~(1), (2), (3) above; in fact,
it is even initial for morphisms to arbitrary algebraic spaces).
This map is \textit{not} adequate, for instance because it's not
closed. 
\end{example}

\begin{rem}
  The preceding example illustrates that in general, the quotient of a
  {\em quasi-affine} scheme by a reductive group can be much nastier
  than the quotient of an affine scheme by a reductive group.
\end{rem}
As we saw in Lecture~\ref{sec:intro}, stacks similar to that in the previous
example appear as moduli stacks of two-dimensional Fontaine--Laffaille modules,
and so also appear as irreducible  components in $\cX_{2,\red}$.
Thus $\cX_{2,\red}$ (and, more generally, $\cX_{d,\red}$ for $d > 1$) will
not admit  an adequate moduli space.

Nevertheless, we anticipate the following result.

\begin{expectedtheorem}[\cite{DottoEG}]\label{thm: expected chain of P1}
Assume that $p\ge 5$. Let $K = \Qp$, and fix a character  $\psi: G_{\Q_p} \to \Zbar_p^{\times}$.
Then $\cX_2^{\det = \psi}$ admits an associated formal algebraic moduli space $X$
{\em (}in an appropriately understood sense{\em )},
with $X_{\red}$ being a  certain chain  of $\bP^1$s.
The  points of $X(\Fbar_p)$  correspond to $2$-dimensional
pseudorepresentations  of $G_{\Q_p}$ over  $\Fbar_p$ with  determinant~$\psi$
{\em (}equivalently, to semi-simple $\rho: G_{\Q_p}  \to \GL_2(\Fbar_p)$
with $\det \rho = \psi${\em )}.
The complete local ring of  $X$  at  one  of its closed points is naturally
identified with
the corresponding  pseudodeformation ring.
\end{expectedtheorem}


\begin{example}
The preceding expected theorem fits nicely with the deformation
theory  of  $2$-dimensional crystalline  representations of~$G_{\Q_p}$.
The $\Zbar_p$-points of one of the crystalline stacks we have constructed
correspond to Galois-invariant  lattices in crystalline representations;
but when we  pass to pseudodeformations,  the different possible 
lattices all share a common image.

In particular if we fix the Hodge--Tate weights to be~$0,k-1$
with~$k\ge 2$, the images of these points in the moduli space $X$ will be a family of
crystalline representations $V_{k,a_p}$, parameterised by the
crystalline Frobenius~$a_p$ 
with $|a_p| \leq 1$, i.e.\ parameterized by the rigid analytic closed
unit disk. (That there is just a single parameter~$a_p$ is a simple
calculation with weakly admissible modules.)  The image of the
crystalline moduli stack itself will then be a certain formal model of
the closed unit disk, whose underlying reduced scheme embeds into the
chain of $\bP^1$s given by~$X_{\red}$.
%
Now the simplest  formal model of the closed unit disk is
$\wh{\bA^1}_{\Z_p}$,
whose special fibre $\bA^1$ certainly embeds into~$\bP^1$.
More general formal models  are obtained by blowing up points in
$\wh{\bA^1}_{\Z_p}$, and the special fibre then contains
additional~$\bP^1$s. In the case that~$k\le 2p+1$, this can be seen
explicitly by looking at the formulae for the reductions of
crystalline representations in~\cite[Thm.\ 5.2.1]{MR2906353}.

In fact, the results of~\cite[Thm.\ 5.2.1]{MR2906353} were an
important clue that our stacks should exist. Indeed, Mark Kisin
suggested in around 2004 that some kind of non-formal moduli (i.e.\
with $\rhobar$ varying) spaces of local crystalline Galois
representations should exist, motivated by the conjectures
of~\cite{BreuilGL2II} and the results
of~\cite{BergerBreuilunpublished}.
\end{example}

\subsection{Bernstein Centers}

When we investigate the relationship between automorphic forms
and Galois representations,\footnote{This is a huge topic, of course,
and we introduce it here only to provide motivation,  and in the
most abbreviated way possible!}
the basic link between the two concepts is that the Hecke eigenvalues of an
automorphic eigenform at primes not dividing the level should match 
with traces of Frobenius  at primes that  are unramified in the
associated  Galois representation.

At primes that divide the level, there is a more subtle  way to extract
eigenvalues from automorphic forms/automorphic representations.  Namely,
if $K$ is (as always) a finite extension of $\Q_p$,
then the abelian category of smooth representations of $\GL_d(K)$
on $\C$-vector spaces admits a commutative ring of endomorphisms,
called the {\em Bernstein centre} for~$\GL_d(K)$.   On any irreducible
representation, it will act via scalars (because of Schur's Lemma).  
On unramified representations we recover the usual Hecke eigenvalues.

If $f$ is an  automorphic Hecke eigenform, for $\GL_d$ over some
number field~$F$, generating an automorphic
representation $\pi$, and if $K$ arises as the completion of $F$ at
some prime $v$ above~$p$,  then $\pi$ has a local factor $\pi_v$ at~$v$,
which is a representation of~$\GL_d(K)$.  This gives rise to a system
of eigenvalues for the Bernstein centre.   If $v$ does not divide
the level of~$f$, this is just the usual collection of Hecke eigenvalues of~$f$.

On the other hand, if $\rho:G_F \to \GL_d(\Qbar_{\ell})$
is the global $\ell$-adic Galois representation (provably in some cases,
conjecturally in others) associated to~$f$, with $\ell$ chosen so
that $\ell \neq p$, then we can consider the
restriction of $\rho$ to a decomposition group at~$v$,
giving rise to a representation $\rho_v: G_K \to \GL_d(\Qbar_{\ell})$.
We know in some cases, and anticipate in general,
that $\rho_v$ and $\pi_v$ are related via the local Langlands correspondence
(once we fix an isomorphism $\imath: \C \iso \Qbar_{\ell}$).

On the other hand,
the pseudorepresentation associated to $\rho_v$ encodes the traces of $\rho_v$
on all the elements of $G_K$, and so we have two collections of numbers
associated to the local-at-$v$ aspects of our situation:   the eigenvalues
of the Bernstein centre acting on~$\pi_v$, and the pseudocharacter of~$\rho_v$.
It turns out (as a kind of numerical shadow of the local Langlands
correspondence) that these numbers also determine one another.

In fact, there is even an integral version of this statement.  Namely,
we can consider smooth representations of $\GL_d(K)$ on $\Z_{\ell}$-modules,
and form the corresponding $\Z_{\ell}$-Bernstein centre. 
And we can consider the moduli stacks
$\cV_Q = [\Spec A_Q/\GL_d]$
parameterizing local-at-$v$
Weil--Deligne representations
over $\Z_{\ell}$-algebras,
described in Lecture~\ref{sec:intro}.
This stack is the quotient of an affine scheme by
a reductive group, and so 
the associated moduli space is just
$\Spec A_Q^{\GL_d}$.

%

The following theorem of Helm--Moss encapsulates the  manner in which numerical data extracted from 
Galois representations matches with eigenvalues of the Bernstein centre.
\begin{theorem}[\cite{MR3867634}]
\label{thm:HM}
$\varprojlim_Q A_Q^{\GL_d}$ is the $\Z_\ell$-Bernstein center for $\GL_d(K)$.
\end{theorem}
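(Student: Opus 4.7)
The plan is to construct the isomorphism block-by-block using Bernstein decompositions on both sides and paste them together. First I would invoke Helm's integral Bernstein decomposition: the category of smooth $\Z_\ell$-representations of $\GL_d(K)$ decomposes into blocks indexed by suitable equivalence classes $\gs$ of supercuspidal supports, and the integral Bernstein center factors accordingly as $\gZ = \prod_{\gs} \gZ_{\gs}$. On the Galois side, $\varprojlim_Q A_Q^{\GL_d}$ is naturally the ring of regular functions on the moduli space of $\GL_d$-conjugacy classes of Frobenius-semisimple Weil--Deligne representations of $\WD_K$, and this moduli space decomposes as a disjoint union indexed by inertial types $\tau$ together with the conjugacy class of the monodromy operator $N$ acting on the corresponding isotypic components.

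Next I would construct the comparison map $\Psi: \varprojlim_Q A_Q^{\GL_d} \to \gZ$ using local Langlands in families. Concretely, an element $f \in A_Q^{\GL_d}$ is a $\GL_d$-invariant regular function on the Weil--Deligne variety $V_Q$, hence assigns a scalar to each absolutely irreducible $\Qlbar$-valued representation of $\WD_K/Q$. Via classical local Langlands (Harris--Taylor, Henniart), and its promotion to families by Emerton--Helm, this translates into a compatible family of scalars on absolutely irreducible smooth $\GL_d(K)$-representations whose parameter factors through $\WD_K/Q$; compatibility with parabolic induction (reflecting the geometric compatibility of the map $V_Q \to \Spec A_Q^{\GL_d}$ with Levi reduction on the representation-theoretic side) promotes this to a genuine element of $\gZ$.

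After inverting $\ell$, the fact that $\Psi$ is an isomorphism is a classical consequence of local Langlands: both sides are reduced finite-type $\Ql$-algebras whose $\Qlbar$-points match --- supercuspidal supports on one side, Frobenius-semisimple Weil--Deligne parameters on the other. For the integral statement, I would exploit Helm's explicit structure theorem presenting each $\gZ_{\gs}$ as an explicit ring of invariants of a finite group acting on a torus, and match this against a parallel structural description of the corresponding Galois block, in which (for fixed inertial type $\tau$ and monodromy class of $N$) the invariants $A_Q^{\GL_d}$ restrict to an analogous torus-invariant polynomial ring coming from the unramified twists of the Frobenius eigenvalues. Since $\varprojlim_Q A_Q^{\GL_d}$ is $\Z_\ell$-flat (being a limit of integer-coefficient invariants) and $\gZ$ is $\Z_\ell$-flat by Helm's theorem, injectivity of $\Psi$ follows from the generic isomorphism.

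The hard part will be surjectivity integrally, which comes down to showing that congruences between the various blocks are compatible on both sides. Different inertial types $\tau$ can have the same mod-$\ell$ reduction, so the Galois block decomposition is finer than the integral Bernstein block decomposition, and one must verify that $\Psi$ is surjective after these identifications are imposed. I would handle this by reducing modulo $\ell$ and showing that the induced map on $\Flbar$-points is a bijection, using the mod-$\ell$ semisimple local Langlands correspondence of Vigneras and its known compatibility with traces/pseudocharacters on both sides. A density argument, combined with the explicit generator-by-generator matching from Helm's structure theorem, would then promote this mod-$\ell$ bijection and the generic isomorphism to the desired integral isomorphism.
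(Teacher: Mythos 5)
The paper does not prove this statement; it is quoted directly from Helm--Moss \cite{MR3867634}, so there is no in-paper argument for your sketch to be measured against. Evaluating your proposal on its own terms, the overall shape is reasonable --- identify both sides after inverting $\ell$ via classical local Langlands, then try to control the integral structure --- but it leaves the two genuinely hard points unresolved, and these are exactly where the Helm--Moss argument lives.

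First, the step ``compatibility with parabolic induction\dots promotes this to a genuine element of $\gZ$'' is not a soft argument. An element of the integral Bernstein center is an endomorphism of the identity functor on \emph{all} smooth $\Z_\ell[\GL_d(K)]$-modules, including modules with no irreducible quotients and with torsion, not merely a compatible assignment of scalars to irreducible $\Qlbar$-representations. Passing from the latter to the former is precisely the content of ``local Langlands in families'' in the sense of Emerton--Helm, and establishing it is the substance of Helm--Moss rather than a preliminary reduction. The actual mechanism in \cite{MR3867634} runs through the Whittaker/Gelfand--Graev side (the Bernstein center is computed via its action on a projective generator built from Gelfand--Graev representations) together with a characterization of the relevant endomorphism ring via Rankin--Selberg gamma factors and a converse theorem, and the comparison across Levi subgroups uses Curtis homomorphisms. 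Your sketch does not touch any of this machinery, and without it there is no construction of the map in the integral direction that you need.

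Second, your surjectivity argument is circular as written. You propose: both sides are $\Z_\ell$-flat, the map is an isomorphism after inverting $\ell$, hence injective; for surjectivity, reduce mod $\ell$ and show a bijection on $\Flbar$-points via Vign\'eras' mod-$\ell$ correspondence, then ``promote.'' But a bijection on $\Flbar$-points of the spectra does not give surjectivity of a ring homomorphism between non-reduced, non-finite-type $\F_\ell$-algebras; you would need surjectivity of the mod-$\ell$ ring map itself (after which a graded Nakayama argument closes the loop). Establishing that mod-$\ell$ surjectivity is, however, essentially equivalent to the integral statement you are trying to prove --- the obstruction is precisely that the integral Bernstein center can contain elements not visible from its action on absolutely irreducible objects. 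Helm's structure theorem for the integral blocks (which you invoke) already encodes nontrivial congruences between blocks that become isomorphic in characteristic $\ell$, and the matching of those congruences with the Galois side is where the gamma-factor input is used. As it stands, your sketch defers the hard part to a ``density argument'' that is not available.
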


\begin{example}
If $g$ is an element of the Weil--Deligne group $\WD_K$, then ``trace of $\rho_v(g)$'' 
gives an element of each  
$\Spec A_Q^{\GL_d}$, and thus an element $h_{\sigma}$
of the Bernstein centre.  The value of
this  function at a particular~$\rho_v$ --- which is just the trace of
the particular matrix $\rho_v(g)$ --- will then correspond to the
eigenvalue of $h_{\sigma}$ on the representation $\pi_v$ associated 
to $\rho_v$ via local Langlands.  
\end{example}


We can then ask if there is an $\ell = p$ analogue of this  result.
Even in the case of $\GL_2(\Q_p)$, 
Expected Theorem~\ref{thm: expected chain of P1}
shows the associated formal moduli space to the stack $(\cX_2)^{\det = \psi}$
is {\em not} formally affine, so there is no obvious ring appearing
on the Galois/$(\varphi,\Gamma)$-module side to compare with a Bernstein centre.
However, we have the following result.

\begin{expectedtheorem}[\cite{DottoEG}]Assume that $p\ge 5$.
If $\cA$ is the abelian category of smooth
$\GL_2(\Qp)$-representations on $\Z_p$-modules which are locally
$p$-power torsion, with central character equal to $\psi \varepsilon$,
then $\cA$ localizes to a stack of categories over
the formal scheme~$X$ of
Expected Theorem~{\em \ref{thm: expected chain of P1}}.
\end{expectedtheorem}

Given this,
we can form a {\em sheaf} of Bernstein centres of $\cA$ over~$X$.
Of course, we also have the structure sheaf $\cO_X$.
Our expectation then is that these two sheaves can be identified.
Just as in the $\ell \neq p$ case, this identification will be
mediated via the  local Langlands correspondence --- but now we will
have to use the {\em $p$-adic local Langlands correspondence} for~$\GL_2(\Q_p)$.

Indeed,
if
$D \to \cX_2^{\det \chi}$ is the universal $(\varphi,\Gamma)$-module,
then $(D \boxtimes \bP^1)/(D^\natural \boxtimes \bP^1)$
(as defined by Colmez \cite{MR2642409})
should be a quasi-coherent sheaf of $\GL_2(\Qp)$-representations over $\cX_2$.
Then the identification between $\cO_X$ and  the sheaf of Bernstein
centres of $\cA$ will be determined
by saying that the two sheaves of rings act in the same way on this family.

This is all strongly related to, and uses, Pa{\v{s}}k{\=u}nas'
work~\cite{MR3150248}.

\bigskip

\subsection{A conjectural $p$-adic local Langlands
  correspondence}\label{subsec: conjectural p adic LL}
We return to the case of general~$K$ and $d$,
and end our lectures with a vague and speculative discussion of a possible $p$-adic
local Langlands correspondence, analogous to the conjectures made in
the $\ell\ne p$ setting
in~\cite{benzvi2020coherent,hellmann2020derived,zhu2020coherent}. We
also explain briefly how such a conjecture could explain the geometric
Breuil--M\'ezard conjecture of the previous lecture.

Each of the papers
\cite{benzvi2020coherent,hellmann2020derived,zhu2020coherent} 
proposes a certain conjectural enhancement of Theorem~\ref{thm:HM}.
We state a rough  form of this  conjecture here.

\begin{roughconj}
\label{conj:localization}
The category of $\GL_d(K)$-representations
on $\Z_{\ell}$-modules admits a fully faithful embedding
into the category of quasi-coherent sheaves
on the Ind-algebraic stack $\varinjlim_Q \cV_Q,$
compatibly with the identification of
Theorem~{\em \ref{thm:HM}}.
\end{roughconj}

We have omitted all technical details  and caveats from this statement
(in particular, it should be stated at a derived level).  The key
intuition, though, is that we are conjecturing that
we can localize $\GL_d(K)$-representations
in some fashion as sheaves of $\cO$-modules
over  the moduli stack of rank~$d$ Weil--Deligne
representations.   And although the action of the Bernstein centre
is not sufficient to  separate all irreducible representations
(e.g.\ any two such representations that admit a non-trivial extension,
such as the trivial and Steinberg representations  of $\GL_2(K)$,
will have the same system of eigenvalues under the Bernstein centre),
this more refined
localization process should be able to distinguish non-isomorphic irreducibles.


There is another important facet of the (conjectural) fully faithful embedding
of Conjecture~\ref{conj:localization} which
we want to explain, if  only in somewhat general and vague  terms.   
The essential point is that
there is {\em another} method of obtaining coherent sheaves
out of  $\GL_d(K)$-representations,
not on the stacks  $\cV_Q$ themselves, but on their versal rings at closed
points, which is to say, on formal deformation rings of  Galois representations.
This is the method of {\em Taylor--Wiles--Kisin} patching (which already
made an appearance in the previous lecture). 

Briefly,  
if we choose some global context related to $\GL_d(K)$ (e.g.\ a modular curve or Shimura
curve for~$\GL_2$, or a certain unitary Shimura variety more generally), 
and an irreducible\footnote{In  fact $r$ should satisfy a stronger condition than
irreducibility, but we will suppress all such technical details.}
 automorphic global Galois representation $r: G_F  \to\GL_d(\Fbar_p)$
(here $F$ is a totally  real or CM field of which $K$ is a certain completion,
and which is  related to the Shimura variety at hand),
and an $\ell$-adic  representation $\sigma$ of $\GL_d(\cO_K)$,
then we are able to ``patch'' appropriate Hecke localizations of the cohomology
of whichever Shimura variety is in play so as to obtain a coherent sheaf,
traditionally denoted  by~$M_{\infty}(\sigma)$,
which lives
over the formal neighbourhood\footnote{We are here ignoring the ``patching  variables''
that might have been  introduced to  effect  the patching process; these  {\em should}
be  ignorable,  although this is not known in general.}
of $r_{|G_K}$.   

The relationship between patching and Conjecture~\ref{conj:localization} 
can be stated very roughly  as follows:\footnote{For more  precise statements,
the reader can look at~\cite[Conj.~4.6.3]{zhu2020coherent}, which
treats the  function field analogue of the situation we are describing here ---
although the connection to patching is not made precise there.
The case of number fields should be discussed carefully (just at
the level of conjecture, to be  sure!) in  forthcoming work of Xinwen Zhu 
and M.E.}
In sufficiently good situations,\footnote{E.g.\ if $p$ is unramified in~$F$,
and if we choose the level and coefficients of our cohomology carefully enough.}
$M_{\infty}(\sigma)$
should be equal to the formal completion at $r_{|G_K}$
of the coherent sheaf associated to $\cInd_{\GL_d(\cO_K)}^{\GL_d(K)}\sigma$
by Conjecture~\ref{conj:localization}.
In general, the patched module $M_{\infty}(\sigma)$ will
be  related to this latter coherent sheaf,  but we will have to take into
account level structure at completions of $F$ other than~$K$,
the most subtle being the level structure  at the primes above~$\ell$;
and this latter
case amounts to studying patching for $p$-adic $\GL_n(\cO_K)$-representations.

The patching described above, in the $\ell \neq p$ case (see e.g.\ \cite{MR3769675,manning2020patching,manning2020iharas}),
is less commonly studied than the case of $\ell = p$, and 
we now return to the latter 
case.
In this context, 
the appropriate stacks over which  to state an analogue 
of Conjecture~\ref{conj:localization} are
our moduli stacks $\cX_d$ parameterizing \'etale $(\varphi,\Gamma)$-modules,
in place of the moduli stacks of Weil--Deligne representations.

Our hope, then, is that there is
a canonically defined quasi-coherent sheaf~$\Pi$ 
of $\GL_d(K)$-representations
on~$\cX_d$ which in particular enjoys the properties that it satisfies
local-global compatibility for completed cohomology of locally
symmetric spaces (in the sense of being compatible with Taylor--Wiles--Kisin
patching, as carried  out in e.g.~\cite{Gpatch}),
and explains the Breuil--M\'ezard conjecture and the
weight part of Serre's conjecture in a sense that we will explain
shortly.

The analogy with  Conjecture~\ref{conj:localization} comes by
considering the  functor
$\pi\mapsto\RHom_{\GL_d(K)}(\pi,\Pi)$
from the derived category of smooth representations of~$\GL_d(K)$ on
$\Zp$-algebras to the derived category of quasi-coherent sheaves
on~$\cX_d$; this functor should be fully faithful. This being the case, one could in
particular localize the former derived category over~$\cX_d$,
generalizing the case~$d=2$, $K=\Qp$ explained above. (However, in
this more general context, there does not seem to be any obvious
analogue of the moduli space for~$\cX_d$ that we considered above, and
it seems likely that one really has to consider this localization 
as taking place over the stack.)

In the case that $d=2$ and~$K=\Qp$, we explicitly construct a
candidate quasi-coherent sheaf ~$\Pi$ in~\cite{DottoEG}, by
generalising Colmez's construction from \cite{MR2642409} of the
representation he denotes
$(D\boxtimes\Pone)/(D^\natural\boxtimes\Pone)$. Note that after pulling back to
versal rings, we know that this sheaf realises the $p$-adic local Langlands
correspondence for~$\GL_2(\Qp)$ by results of Colmez and
Pa{\v{s}}k{\=u}nas~\cite{MR3150248} (see also~\cite{MR3732208} for
a perspective related to patching).
In particular, a construction of $\Pi$ in general would 
generalize the $p$-adic local Langlands correspondence from
the case of~$\GL_2(\Q_p)$ to the general case of~$\GL_d(K)$.

If~$\sigma$ is a representation
of~$\GL_n(\cO_K)$ on a finitely generated $\Zp$-module, 
then we can  apply the functor
$\RHom_{\GL_d(K)}(\text{--},\Pi)$
to $\pi:= \cInd_{\GL_d(\cO_K)}^{\GL_d(K)}\sigma^{\vee}$.  (The Pontrjagin dual
is included just so as to make the functor covariant.)  We anticipate that this
$\RHom$ should be supported in degree~$0$, and that the resulting
	sheaf $\cM(\sigma):=\Hom_{\GL_d(\Zp)}(\sigma^{\vee},\Pi)$
will  be coherent. (Just as in the $\ell \neq p$ case discussed
above, the formal completions of $\cM(\sigma)$ at $\Fbar_p$-points
of $\cX_{d,\red}$ should give the usual patched modules~$M_{\infty}(\sigma)$.)
These (conjectural) coherent sheaves allow us to connect  the present discussion to
the Breuil--M\'ezard and Serre weight conjectures. 

Indeed, we expect that if~$\underline{k}$ is a Serre weight (regarded as
usual as a representation of~$\GL_d(\cO_K)$ via inflation
from~$\GL_d(k)$), then the cycle~$Z_{\underline{k}}$ considered in
Lecture~\ref{sec: geometric BM} is simply the support of
$\cM(F_{\underline{k}})$. Furthermore, we expect that if~$\underline{\lambda}$ is
a regular Hodge type, $\tau$ is an inertial type,
and~$\sigma^{\circ}(\lambda,\tau)$ 
is the
representation considered in Lecture~\ref{sec: geometric BM}, then the
support of~$\cM(\sigma^{\circ}(\lambda,\tau))$ is
exactly~$\cX_d^{\crys,\underline{\lambda},\tau}$. This being the case,
Conjecture~\ref{conj: geometric BM} is an immediate consequence of the
additivity of supports, while the expectation that the
cycles~$Z_{\underline{k}}$ encode the weight part of Serre's
conjecture should be a consequence of a local-global compatibility for
completed cohomology of Shimura varieties (or more generally locally
symmetric spaces), generalising the results of~\cite{emerton2010local,
MR3732208}
for the modular curve.


This may all seem a speculation too far, but we note that while there
is in general no candidate for~$\Pi$,
the construction of~\cite{Gpatch} does produce such 
a candidate 
after
pulling back to the versal deformation rings;
and the conjectural explanation
above for the Breuil--M\'ezard conjecture describes exactly how the results
of~\cite{KisinFM,geekisin} are proved (in the setting of the
numerical conjecture, over a deformation ring).









\emergencystretch=3em
\bibliographystyle{amsalpha}
\bibliography{universalBM}

\newcommand{\etalchar}[1]{$^{#1}$}
\renewcommand{\MR}[1]{}\def\Dbar{\leavevmode\lower.6ex\hbox to 0pt{\hskip-.23ex
  \accent"16\hss}D} \def\cfac#1{\ifmmode\setbox7\hbox{$\accent"5E#1$}\else
  \setbox7\hbox{\accent"5E#1}\penalty 10000\relax\fi\raise 1\ht7
  \hbox{\lower1.15ex\hbox to 1\wd7{\hss\accent"13\hss}}\penalty 10000
  \hskip-1\wd7\penalty 10000\box7}
  \def\cftil#1{\ifmmode\setbox7\hbox{$\accent"5E#1$}\else
  \setbox7\hbox{\accent"5E#1}\penalty 10000\relax\fi\raise 1\ht7
  \hbox{\lower1.15ex\hbox to 1\wd7{\hss\accent"7E\hss}}\penalty 10000
  \hskip-1\wd7\penalty 10000\box7}
\providecommand{\bysame}{\leavevmode\hbox to3em{\hrulefill}\thinspace}
\providecommand{\MR}{\relax\ifhmode\unskip\space\fi MR }
\providecommand{\MRhref}[2]{%
  \href{http://www.ams.org/mathscinet-getitem?mr=#1}{#2}
}
\providecommand{\href}[2]{#2}
\begin{thebibliography}{BLGGT14}

\bibitem[All19]{Allen2019}
Patrick~B. Allen, \emph{On automorphic points in polarized deformation rings},
  American Journal of Mathematics \textbf{141} (2019), no.~1, 119--167.

\bibitem[Alp13]{MR3237451}
Jarod Alper, \emph{Good moduli spaces for {A}rtin stacks}, Ann. Inst. Fourier
  (Grenoble) \textbf{63} (2013), no.~6, 2349--2402. \MR{3237451}

\bibitem[Alp14]{MR3272912}
\bysame, \emph{Adequate moduli spaces and geometrically reductive group
  schemes}, Algebr. Geom. \textbf{1} (2014), no.~4, 489--531. \MR{3272912}

\bibitem[Ax70]{MR0263786}
James Ax, \emph{Zeros of polynomials over local fields---{T}he {G}alois
  action}, J. Algebra \textbf{15} (1970), 417--428. \MR{0263786}

\bibitem[BB05]{BergerBreuilunpublished}
Laurent Berger and Christophe Breuil, \emph{Sur la r\'eduction des
  repr\'esentations cristallines de dimension 2 en poids moyens}, Available at
  \url{https://arxiv.org/abs/math/0504388}.

\bibitem[Ber11]{MR2906353}
Laurent Berger, \emph{La correspondance de {L}anglands locale {$p$}-adique pour
  {${\rm GL}_2({\bf Q}_p)$}}, Ast\'{e}risque (2011), no.~339, Exp. No. 1017,
  viii, 157--180, S\'{e}minaire Bourbaki. Vol. 2009/2010. Expos\'{e}s
  1012--1026. \MR{2906353}

\bibitem[BLGGT14]{BLGGT}
Thomas Barnet-Lamb, Toby Gee, David Geraghty, and Richard Taylor,
  \emph{Potential automorphy and change of weight}, Ann. of Math. (2)
  \textbf{179} (2014), no.~2, 501--609. \MR{3152941}

\bibitem[BMS18]{2016arXiv160203148B}
Bhargav Bhatt, Matthew Morrow, and Peter Scholze, \emph{Integral {$p$}-adic
  {H}odge theory}, Publ. Math. Inst. Hautes \'{E}tudes Sci. \textbf{128}
  (2018), 219--397. \MR{3905467}

\bibitem[Bre03]{BreuilGL2II}
Christophe Breuil, \emph{Sur quelques repr\'esentations modulaires et
  {$p$}-adiques de {${\rm GL}\sb 2(\mathbf Q\sb p)$}. {II}}, J. Inst. Math.
  Jussieu \textbf{2} (2003), no.~1, 23--58.

\bibitem[BS21]{bhatt2021prismatic}
Bhargav Bhatt and Peter Scholze, \emph{Prismatic $f$-crystals and crystalline
  {G}alois representations}, 2021.

\bibitem[BZCHN20]{benzvi2020coherent}
David Ben-Zvi, Harrison Chen, David Helm, and David Nadler, \emph{Coherent
  {S}pringer theory and the categorical {D}eligne--{L}anglands correspondence},
  2020.

\bibitem[CEG{\etalchar{+}}16]{Gpatch}
Ana Caraiani, Matthew Emerton, Toby Gee, David Geraghty, Vytautas
  Pa\v{s}k\={u}nas, and Sug~Woo Shin, \emph{Patching and the {$p$}-adic local
  {L}anglands correspondence}, Camb. J. Math. \textbf{4} (2016), no.~2,
  197--287. \MR{3529394}

\bibitem[CEG{\etalchar{+}}18]{MR3732208}
\bysame, \emph{Patching and the {$p$}-adic {L}anglands program for {${\rm
  GL}_2(\Bbb Q_p)$}}, Compos. Math. \textbf{154} (2018), no.~3, 503--548.
  \MR{3732208}

\bibitem[CEGS19]{CEGSKisinwithdd}
Ana {Caraiani}, Matthew {Emerton}, Toby {Gee}, and David {Savitt},
  \emph{{Moduli stacks of two-dimensional Galois representations}}, arXiv
  e-prints (2019), arXiv:1908.07019.

\bibitem[CL11]{MR2745530}
Xavier Caruso and Tong Liu, \emph{Some bounds for ramification of
  {$p^n$}-torsion semi-stable representations}, J. Algebra \textbf{325} (2011),
  70--96. \MR{2745530 (2012b:11090)}

\bibitem[Col10]{MR2642409}
Pierre Colmez, \emph{Repr\'{e}sentations de {${\rm GL}_2(\bold Q_p)$} et
  {$(\phi,\Gamma)$}-modules}, Ast\'{e}risque (2010), no.~330, 281--509.
  \MR{2642409}

\bibitem[Dee01]{MR1805474}
Jonathan Dee, \emph{{$\Phi$}-{$\Gamma$} modules for families of {G}alois
  representations}, J. Algebra \textbf{235} (2001), no.~2, 636--664.
  \MR{1805474}

\bibitem[DEG]{DottoEG}
Andrea Dotto, Matthew Emerton, and Toby Gee, \emph{In preparation}.

\bibitem[DHKM20]{dat2020moduli}
Jean-François Dat, David Helm, Robert Kurinczuk, and Gilbert Moss,
  \emph{Moduli of {L}anglands {P}arameters}, 2020.

\bibitem[Dri06]{MR2181808}
Vladimir Drinfeld, \emph{Infinite-dimensional vector bundles in algebraic
  geometry: an introduction}, The unity of mathematics, Progr. Math., vol. 244,
  Birkh\"auser Boston, Boston, MA, 2006, pp.~263--304. \MR{2181808
  (2007d:14038)}

\bibitem[EG14]{emertongeerefinedBM}
Matthew Emerton and Toby Gee, \emph{A geometric perspective on the
  {B}reuil-{M}\'ezard conjecture}, J. Inst. Math. Jussieu \textbf{13} (2014),
  no.~1, 183--223. \MR{3134019}

\bibitem[EG21]{EGstacktheoreticimages}
\bysame, \emph{``{S}cheme-theoretic images'' of certain morphisms of stacks},
  Algebraic Geometry \textbf{8} (2021), no.~1, 1--132.

\bibitem[EG22]{emertongeepicture}
Matthew {Emerton} and Toby {Gee}, \emph{{Moduli stacks of \'etale
  $(\varphi,{\Gamma})$-modules and the existence of crystalline lifts}}, Annals
  of Math. Studies (to appear) (2022).

\bibitem[Eme]{Emertonformalstacks}
Matthew Emerton, \emph{Formal algebraic stacks}, Available at
  \url{http://www.math.uchicago.edu/~emerton/pdffiles/formal-stacks.pdf}.

\bibitem[Eme11]{emerton2010local}
\bysame, \emph{Local-global compatibility in the $p$-adic {L}anglands programme
  for $\operatorname{GL}_2/\mathbb{Q}$}, 2011.

\bibitem[FGK11]{MR2774689}
Kazuhiro Fujiwara, Ofer Gabber, and Fumiharu Kato, \emph{On {H}ausdorff
  completions of commutative rings in rigid geometry}, J. Algebra \textbf{332}
  (2011), 293--321. \MR{2774689}

\bibitem[FK18]{MR3752648}
Kazuhiro Fujiwara and Fumiharu Kato, \emph{Foundations of rigid geometry. {I}},
  EMS Monographs in Mathematics, European Mathematical Society (EMS),
  Z\"{u}rich, 2018. \MR{3752648}

\bibitem[{Gao}19]{2019arXiv190508555G}
Hui {Gao}, \emph{{{B}reuil--{K}isin modules and integral $p$-adic {H}odge
  theory}}, arXiv e-prints (2019), arXiv:1905.08555.

\bibitem[GHS18]{2015arXiv150902527G}
Toby Gee, Florian Herzig, and David Savitt, \emph{General {S}erre weight
  conjectures}, J. Eur. Math. Soc. (JEMS) \textbf{20} (2018), no.~12,
  2859--2949. \MR{3871496}

\bibitem[GK14]{geekisin}
Toby Gee and Mark Kisin, \emph{The {B}reuil--{M}\'ezard conjecture for
  potentially {B}arsotti--{T}ate representations}, Forum Math. Pi \textbf{2}
  (2014), e1 (56 pages). \MR{3292675}

\bibitem[Hel20]{hellmann2020derived}
Eugen Hellmann, \emph{On the derived category of the {I}wahori--{H}ecke
  algebra}, 2020.

\bibitem[Her98]{MR1693457}
Laurent Herr, \emph{Sur la cohomologie galoisienne des corps {$p$}-adiques},
  Bull. Soc. Math. France \textbf{126} (1998), no.~4, 563--600. \MR{1693457}

\bibitem[HM18]{MR3867634}
David Helm and Gilbert Moss, \emph{Converse theorems and the local {L}anglands
  correspondence in families}, Invent. Math. \textbf{214} (2018), no.~2,
  999--1022. \MR{3867634}

\bibitem[HT15]{HuTan}
Yongquan Hu and Fucheng Tan, \emph{The {B}reuil-{M}\'ezard conjecture for
  non-scalar split residual representations}, Ann. Sci. \'Ec. Norm. Sup\'er.
  (4) \textbf{48} (2015), no.~6, 1383--1421. \MR{3429471}

\bibitem[Kis06]{KisinCrys}
Mark Kisin, \emph{Crystalline representations and {$F$}-crystals}, Algebraic
  geometry and number theory, Progr. Math., vol. 253, Birkh\"auser Boston,
  Boston, MA, 2006, pp.~459--496.

\bibitem[Kis08]{MR2373358}
\bysame, \emph{Potentially semi-stable deformation rings}, J. Amer. Math. Soc.
  \textbf{21} (2008), no.~2, 513--546. \MR{2373358 (2009c:11194)}

\bibitem[Kis09]{KisinFM}
\bysame, \emph{The {F}ontaine-{M}azur conjecture for {${\rm GL}_2$}}, J. Amer.
  Math. Soc. \textbf{22} (2009), no.~3, 641--690. \MR{2505297 (2010j:11084)}

\bibitem[KM97]{MR1432041}
Se\'{a}n Keel and Shigefumi Mori, \emph{Quotients by groupoids}, Ann. of Math.
  (2) \textbf{145} (1997), no.~1, 193--213. \MR{1432041}

\bibitem[KPX14]{MR3230818}
Kiran~S. Kedlaya, Jonathan Pottharst, and Liang Xiao, \emph{Cohomology of
  arithmetic families of {$(\varphi,\Gamma)$}-modules}, J. Amer. Math. Soc.
  \textbf{27} (2014), no.~4, 1043--1115. \MR{3230818}

\bibitem[LHLM20]{le2020local}
Daniel Le, Bao V.~Le Hung, Brandon Levin, and Stefano Morra, \emph{Local models
  for {G}alois deformation rings and applications}, 2020.

\bibitem[Liu18]{1302.1888}
Tong Liu, \emph{Compatibility of {K}isin modules for different uniformizers},
  J. Reine Angew. Math. \textbf{740} (2018), 1--24. \MR{3824780}

\bibitem[Man21]{manning2020patching}
Jeffrey Manning, \emph{Patching and multiplicity {$2^k$} for {S}himura curves},
  Algebra Number Theory \textbf{15} (2021), no.~2, 387--434. \MR{4243652}

\bibitem[MS21]{manning2020iharas}
Jeffrey Manning and Jack Shotton, \emph{Ihara's lemma for {S}himura curves over
  totally real fields via patching}, Math. Ann. \textbf{379} (2021), no.~1-2,
  187--234. \MR{4211086}

\bibitem[Nek93]{MR1263527}
Jan Nekov{\'a}{\v{r}}, \emph{On {$p$}-adic height pairings}, S\'eminaire de
  {T}h\'eorie des {N}ombres, {P}aris, 1990--91, Progr. Math., vol. 108,
  Birkh\"auser Boston, Boston, MA, 1993, pp.~127--202. \MR{1263527}

\bibitem[Pa{\v{s}}13]{MR3150248}
Vytautas Pa{\v{s}}k{\=u}nas, \emph{The image of {C}olmez's {M}ontreal functor},
  Publ. Math. Inst. Hautes \'{E}tudes Sci. \textbf{118} (2013), 1--191.
  \MR{3150248}

\bibitem[Pa{\v{s}}15]{paskunasBM}
\bysame, \emph{On the {B}reuil--{M}\'ezard conjecture}, Duke Math. J.
  \textbf{164} (2015), no.~2, 297--359. \MR{3306557}

\bibitem[PR09]{MR2562795}
G.~Pappas and M.~Rapoport, \emph{{$\Phi$}-modules and coefficient spaces},
  Mosc. Math. J. \textbf{9} (2009), no.~3, 625--663, back matter. \MR{2562795
  (2011c:14074)}

\bibitem[San14]{sandermultiplicities}
Fabian Sander, \emph{Hilbert-{S}amuel multiplicities of certain deformation
  rings}, Math. Res. Lett. \textbf{21} (2014), no.~3, 605--615. \MR{3272032}

\bibitem[Sho18]{MR3769675}
Jack Shotton, \emph{The {B}reuil-{M}\'{e}zard conjecture when {$l\neq p$}},
  Duke Math. J. \textbf{167} (2018), no.~4, 603--678. \MR{3769675}

\bibitem[{Sta}]{stacks-project}
The {Stacks Project Authors}, \emph{\itshape {S}tacks {P}roject},
  \url{http://stacks.math.columbia.edu}.

\bibitem[SZ99]{MR1728541}
P.~Schneider and E.-W. Zink, \emph{{$K$}-types for the tempered components of a
  {$p$}-adic general linear group}, J. Reine Angew. Math. \textbf{517} (1999),
  161--208, With an appendix by Schneider and U. Stuhler. \MR{1728541
  (2001f:22029)}

\bibitem[Tun21a]{2018arXiv180307451T}
Shen-Ning Tung, \emph{On the automorphy of 2-dimensional potentially semistable
  deformation rings of {$G_{\mathbf{Q}_p}$}}, Algebra Number Theory \textbf{15}
  (2021), no.~9, 2173--2194. \MR{4355472}

\bibitem[Tun21b]{2019arXiv190806174T}
\bysame, \emph{On the modularity of 2-adic potentially semi-stable deformation
  rings}, Math. Z. \textbf{298} (2021), no.~1-2, 107--159. \MR{4257080}

\bibitem[WE18]{MR3831282}
Carl Wang-Erickson, \emph{Algebraic families of {G}alois representations and
  potentially semi-stable pseudodeformation rings}, Math. Ann. \textbf{371}
  (2018), no.~3-4, 1615--1681. \MR{3831282}

\bibitem[Zhu20]{zhu2020coherent}
Xinwen Zhu, \emph{Coherent sheaves on the stack of {L}anglands parameters},
  2020.

\end{thebibliography}
\end{document}
